\def\input@path{{figures/}}\makeatother
\newtheorem{theorem}{Theorem}
\newtheorem{corollary}[theorem]{Corollary}
\newtheorem{proposition}[theorem]{Proposition}
\newtheorem{lemma}[theorem]{Lemma}
\newtheorem*{theorem*}{Theorem}
\theoremstyle{definition}
\newtheorem{example}[theorem]{Example}
\newtheorem{problem}[theorem]{Problem}
\crefname{notation}{Notation}{Notations}
\crefname{problem}{Problem}{Problems}
\newcommand{\R}{\mathbb{R}} 
\newcommand{\HH}{\mathbb{H}} 
\newcommand{\K}{\mathbb{K}} 
\renewcommand{\c}[1]{\mathcal{#1}} 
\renewcommand{\b}[1]{\boldsymbol{#1}} 
\newcommand{\bb}[1]{\mathbb{#1}} 
\newcommand{\f}[1]{\mathfrak{#1}} 
\newcommand{\set}[2]{\left\{ #1 \;\middle|\; #2 \right\}} 
\newcommand{\bigset}[2]{\big\{ #1 \;\big|\; #2 \big\}} 
\newcommand{\ssm}{\smallsetminus} 
\newcommand{\dotprod}[2]{\left\langle \, #1 \; \middle| \; #2 \, \right\rangle} 
\newcommand{\one}{\b{1}} 
\newcommand{\eqdef}{\mbox{\,\raisebox{0.2ex}{\scriptsize\ensuremath{\mathrm:}}\ensuremath{=}\,}} 
\newcommand{\simplex}{\b{\triangle}} 
\DeclareMathOperator{\conv}{conv} 
\newcommand{\ie}{\textit{i.e.}~} 
\newcommand{\eg}{\textit{e.g.}~} 
\newcommand{\viceversa}{\textit{vice versa}} 
\newcommand{\aka}{\textit{a.k.a.}~} 
\definecolor{darkblue}{rgb}{0,0,0.7} 
\definecolor{green}{RGB}{57,181,74} 
\definecolor{violet}{RGB}{147,39,143} 
\newcommand{\darkblue}{\color{darkblue}} 
\newcommand{\defn}[1]{\textsl{\darkblue #1}} 
\newcommand{\para}[1]{\bigskip\noindent\uline{#1.}} 
\newcommand{\vincent}[1]{} 
\newcommand{\meet}{\wedge} 
\newcommand{\join}{\vee} 
\newcommand{\bigMeet}{\bigwedge} 
\newcommand{\bigJoin}{\bigvee} 
\newcommandx{\projDown}[1][1={}]{\smash{\pi_\downarrow^{#1}}} 
\newcommandx{\projUp}[1][1={}]{\smash{\pi^\uparrow_{#1}}} 
\newcommand{\con}{\mathrm{con}} 
\newcommandx{\RO}[1][1=D]{\mathcal{R}_{#1}} 
\newcommandx{\AR}[1][1=D]{\mathcal{AR}_{#1}} 
\newcommandx{\restrictionMap}[1][1={D,D'}]{\phi_{#1}} 
\newcommandx{\inverseRestrictionMap}[1][1={D,D'}]{\psi_{#1}} 
\newcommand{\rope}{\rho} 
\newcommand{\diagram}{\delta} 
\newcommand{\up}{\triangle} 
\newcommand{\down}{\bigtriangledown} 
\newcommand{\decorationDown}{\mho} 
\newcommand{\decorationUp}{\Omega} 
\newcommandx{\PR}[1][1={\c{H},\polytope{B}}]{\mathcal{R}_{#1}} 
\newcommandx{\Arrang}[1][1=D]{\mathcal{H}_{#1}} 
\newcommandx{\Fan}[1][1=D]{\mathcal{F}_{#1}} 
\newcommand{\polytope}[1]{\mathds{#1}} 
\newcommandx{\Zono}[1][1=D]{\polytope{Z}_{#1}} 
\newcommandx{\Asso}[1][1=n]{\polytope{A}_{#1}} 
\newcommandx{\shardPolytope}[1][1=\rope]{\polytope{SP}_{#1}} 
\newcommandx{\quotientope}[1][1=\equiv]{\polytope{Q}_{#1}} 
\newcommand{\shard}{\Sigma} 
\def\l@part{\@tocline{1}{8pt}{0pc}{}{}}
\def\l@section{\@tocline{1}{4pt}{0pc}{}{}}
\let\oldtocpart=\tocpart
\renewcommand{\tocpart}[2]{\sc\large\oldtocpart{#1}{#2}}
\let\oldtocsection=\tocsection
\renewcommand{\tocsection}[2]{\bf\oldtocsection{#1}{#2}}
\let\oldtocsubsubsection=\tocsubsubsection
\renewcommand{\tocsubsubsection}[2]{\quad\oldtocsubsubsection{#1}{#2}}
\title[Acyclic reorientation lattices and their lattice quotients]{Acyclic reorientation lattices \\ and their lattice quotients}
\thanks{The author was a CNRS researcher at \'Ecole Polytechnique when this work was done, and was partially supported by the French ANR grant CAPPS~17\,CE40\,0018 and CHARMS~19\,CE40\,0017.}
\author{Vincent Pilaud}
\address{Universitat de Barcelona}
\email{vincent.pilaud@ub.edu}
\urladdr{\url{https://www.ub.edu/comb/vincentpilaud/}}
\begin{document}

\begin{abstract}
We prove that the acyclic reorientation poset of a directed acyclic graph~$D$ is a lattice if and only if the transitive reduction of any induced subgraph of~$D$ is a forest.
We then show that the acyclic reorientation lattice is always congruence normal, semidistributive (thus congruence uniform) if and only if~$D$ is filled, and distributive if and only if~$D$ is a forest.
When the acyclic reorientation lattice is semidistributive, we introduce the ropes of~$D$ that encode the join irreducibles acyclic reorientations and exploit this combinatorial model in three directions.
First, we describe the canonical join and meet representations of acyclic reorientations in terms of non-crossing rope diagrams.
Second, we describe the congruences of the acyclic reorientation lattice in terms of lower ideals of a natural subrope order.
Third, we use Minkowski sums of shard polytopes of ropes to construct a quotientope for any congruence of the acyclic reorientation lattice.
\end{abstract}

\vspace*{.1cm}
\maketitle

\tableofcontents
\vspace*{-.5cm}


\addtocontents{toc}{\protect\setcounter{tocdepth}{1}}

\section*{Introduction and overview}
\label{sec:intro}

Fix a (finite and simple) directed graph~$D \eqdef (V, A)$.
A \defn{reorientation} of~$D$ is a directed graph with the same underlying undirected graph as~$D$.
It can be encoded by its set of reversed arcs with respect to~$D$.
The \defn{reorientation lattice}~$\RO$ is the boolean lattice formed by all reorientations of~$D$ ordered by inclusion of reversed sets (we denote this order by~$\le$).
Its minimal element is~$D$, its maximal element is the reverse~$\bar D$ of~$D$, its cover relations are given by flipping a single arc, and it is clearly self-dual under reversing all arcs.

\medskip
Assume now that~$D$ is a (finite and simple) directed acyclic graph.
The \defn{acyclic reorientation poset}~$\AR$ is the subposet of~$\RO$ induced by acyclic reorientations of~$D$.
Its minimal and maximal elements are still~$D$ and~$\bar D$, its cover relations are still given by flipping a single arc, and it is still self-dual under reversing all arcs.
For instance, the acyclic reorientation poset of any directed forest is a boolean lattice, and the acyclic reorientation poset of a tournament is isomorphic to the weak order on permutations.
Some examples are illustrated in \cref{fig:acyclicReorientationPosets}.

\medskip
These acyclic reorientations posets and the underlying acyclic orientation flip graphs have been extensively studied, in particular for counting~\cite{Stanley-acyclicOrientations, Lass}, traversing~\cite{SavageSquireWest, PruesseRuskey}, and generating~\cite{Squire, BarbosaSzwarcfiter} all acyclic orientations of a graph.
This paper considers these acyclic reorientation posets from a lattice theoretic perspective: after characterizing the directed acyclic graphs~$D$ for which~$\AR$ is a lattice, we explore lattice properties of~$\AR$, in particular the combinatorics and geometry of the lattice quotients of~$\AR$ when it turns out to be semidistributive.

\begin{figure}[b]
	\centerline{
		\begin{tabular}{c@{\quad}c@{\quad}c@{\quad}c}
			weak order lattice &
			boolean lattice & 
			another lattice & 
			not a lattice
			\\[.1cm]
			\includegraphics[scale=.9]{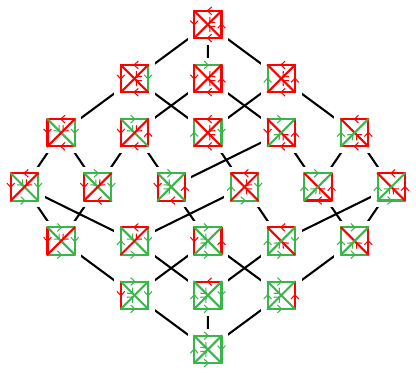} &
			\includegraphics[scale=.9]{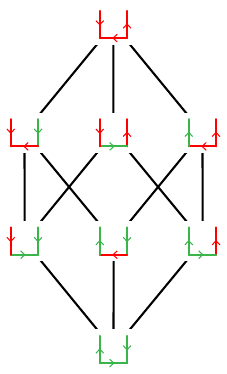} & 
			\includegraphics[scale=.9]{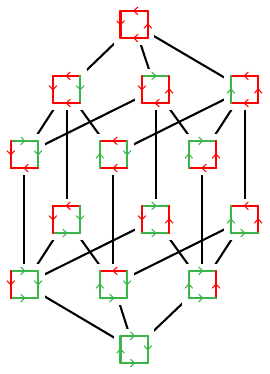} & 
			\includegraphics[scale=.9]{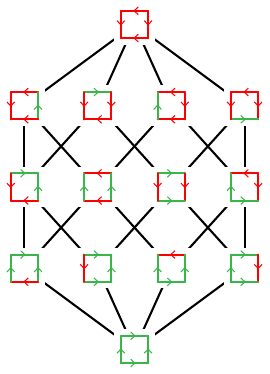}
		\end{tabular}
	}
	\caption{Some acyclic reorientation posets. The first three are lattices while the fourth is not. The first is the weak order on permutations since $D$ is a tournament, the second is boolean since $D$ is a forest. The green arcs agree with the reference orientation, while the red arcs are reversed.}
	\label{fig:acyclicReorientationPosets}
\end{figure}


\subsection*{Acyclic reorientation lattices}

Recall that the \defn{transitive reduction} (resp.~\defn{transitive closure}) of~$D$ is the directed graph obtained by deleting from (resp.~adding to) $D$ all arcs whose endpoints are connected by a directed path in~$D$ of length at least~$2$.
These operations clearly play an important role for acyclic reorientations: for instance, note that an arc in an acyclic reorientation~$E$ of~$D$ is flippable if and only if it belongs to the transitive reduction of~$E$.

\medskip
In this paper, we say that $D$ is \defn{vertebrate} when the transitive reduction of any induced subgraph of~$D$ is a forest.
For instance, any forest and any tournament is vertebrate.
Note that it is important to check all induced subgraphs of~$D$: there are directed acyclic graphs whose transitive reduction is a forest, but containing an induced subgraph whose transitive reduction is not a forest.
\vincent{In fact, the square with a central vertex is an example of a graph~$G$ such that for any orientation~$D$ of~$G$, the acyclic reorientation poset~$\AR$ is not a lattice.}
Our starting observation is the following result illustrated in \cref{fig:acyclicReorientationPosets}.

\begin{theorem}
\label{thm:characterizationLattice}
The acyclic reorientation poset~$\AR$ is a lattice if and only if $D$ is vertebrate.
\end{theorem}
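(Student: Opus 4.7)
To show that $\AR$ being a lattice forces $D$ to be vertebrate, I plan to prove the contrapositive, generalizing the diamond example depicted in \cref{fig:acyclicReorientationPosets}. Assume $D$ is not vertebrate, and choose an induced subgraph $D'$ whose transitive reduction contains an undirected cycle, taking such a cycle $C$ of minimum length. The minimality should force a ``generalized diamond'' structure on $V(C)$: two internally vertex-disjoint directed paths $P_1, P_2$ from a common source $s$ to a common sink $t$, with all arcs of $P_1 \cup P_2$ lying in the transitive reduction of the induced subgraph of $D$ on $V(C)$. I would then take $E_1$ to be the reorientation reversing only the first arc of $P_1$ and $E_2$ the reorientation reversing only the last arc of $P_1$ (with outside arcs oriented appropriately to preserve acyclicity). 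Each $E_i$ is acyclic, but the join would have to reverse both arcs of $P_1$; this creates a directed cycle around the diamond, which can be broken minimally either by additionally reversing the first arc of $P_2$ or by reversing its last arc. These two choices produce distinct minimal upper bounds of $\{E_1, E_2\}$, so the join does not exist.

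\textbf{Reverse direction.} Assuming $D$ is vertebrate, I would show every pair $E_1, E_2 \in \AR$ admits a join. Write $R_i = \mathrm{rev}(E_i)$ and first consider the orientation with reversed set $R_1 \cup R_2$: if this is acyclic it is already the join, so assume it contains a directed cycle $C$. Each $E_i$ is individually acyclic, so both $R_1$ and $R_2$ must intersect the arcs of $C$ nontrivially. The vertex set $V(C)$ spans an induced subgraph $D'_C$ whose transitive reduction is a forest by the vertebrate hypothesis. The central claim is that this forest structure, combined with the acyclicity of each $E_i$, pins down a \emph{unique} arc outside $R_1 \cup R_2$ that is forced to be reversed in any acyclic reorientation dominating both $E_1$ and $E_2$. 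Iterating this forced-arc closure produces the unique smallest acyclic reversed set extending $R_1 \cup R_2$, giving $\mathrm{rev}(E_1 \vee E_2)$. Meets are then handled dually from the self-duality of $\AR$ under global arc reversal.

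\textbf{Main obstacle.} The crux is the key lemma that every ``bad'' cycle $C$ admits a canonically forced arc to reverse. The diamond illustrates that this canonicity fails as soon as the transitive reduction of $V(C)$ is not a forest, so the vertebrate hypothesis must be used essentially. I would try to identify the forced arc as one incident to a distinguished internal source or sink of $C$ singled out by the tree structure of the transitive reduction of $D'_C$ (perhaps the unique ``topmost'' source or ``bottommost'' sink with respect to a compatible topological order). A secondary subtlety is verifying that the closure is confluent: that processing different cycles in different orders leads to the same final reversed set, so that the construction truly produces a unique join rather than merely some minimal upper bound.
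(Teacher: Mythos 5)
Your forward direction has a genuine gap. You take $E_1, E_2$ to be the reorientations reversing only the first and last arcs of $P_1$, and assert that their join must reverse ``both arcs of $P_1$'' and hence create a directed cycle around the diamond. This reasoning is only valid when $P_1$ has exactly two arcs, i.e.\ when the offending cycle is a $4$-cycle. But the minimum cycle in the transitive reduction of an induced subgraph can have any length at least $4$. For instance, take $D$ on six vertices with arcs $1 \to 2 \to 3 \to 4$ and $1 \to 6 \to 5 \to 4$ and no chords: its transitive reduction is the whole $6$-cycle, and no proper induced subgraph has a cycle in its transitive reduction. Here $P_1 = (1,2),(2,3),(3,4)$, your $E_1$ reverses $(1,2)$, and your $E_2$ reverses $(3,4)$. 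The reorientation reversing exactly $\{(1,2),(3,4)\}$ is acyclic (topological order $2,1,6,5,4,3$), is an upper bound of both $E_1$ and $E_2$, and is visibly the minimum such upper bound, so $E_1 \vee E_2$ exists and your two elements witness nothing. The paper instead considers the reorientations $D_f$ reversing a \emph{single} forward arc $f$ for \emph{all} forward arcs $f$ of the cycle, together with the reorientations $\bar D_b$ reversing everything but a single backward arc $b$; any reorientation sandwiched between all of them must reverse every forward arc and no backward arc, hence contains the cycle, and since each $\bar D_b$ is a common upper bound of the $D_f$, the join cannot exist. Note also that the paper first treats a cycle in the transitive reduction of $D$ itself (so each $D_f$ is automatically acyclic) and then passes to induced subgraphs via intervals of the lattice, which cleanly avoids your hand-wave about orienting outside arcs ``appropriately.''

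Your reverse direction takes a genuinely different route from the paper's and remains incomplete. The paper defines biclosed subsets of arcs, proves (using the vertebrate hypothesis once, via a minimal-length pair of paths) that biclosed sets are exactly the reversed sets of acyclic reorientations, and then shows in one stroke that the transitive closure of $R_1 \cup R_2$ intersected with $A$ is biclosed, hence is the reversed set of the join. Your iterative forced-arc closure would, if it worked, converge to the same set, but you yourself identify the two unresolved obstacles: uniqueness of the forced arc per cycle (which is in fact false as stated, since a single bad cycle can have several chords in $D$ that all become forced) and confluence of the iteration. The biclosed formulation sidesteps both, because the entire transitive closure is produced at once and its coclosedness is verified by a single extremal argument with no ordering of cycles to worry about. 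As written, your key lemma is a conjecture, not a proof.
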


We will actually provide two proofs of \cref{thm:characterizationLattice}.
Our first proof in \cref{sec:characterizationLattice} will describe the join and meet operations in the acyclic reorientation lattice of a vertebrate directed acyclic graph.
Our second proof in \cref{subsec:congruenceNormality} will show that the acyclic reorientation lattice of a vertebrate directed acyclic graph can be obtained from the acyclic reorientation lattice of its transitive reduction by a sequence of convex doublings in the sense of~\cite{Day}.
 

\subsection*{Restriction maps}

The natural restriction maps between acyclic reorientation posets provide an important tool in some proofs of this paper.
Consider two directed acyclic graphs~$D \eqdef (V, A)$ and~$D' \eqdef (V, A')$ on the same vertex set~$V$ with~$A \supseteq A'$.
Since~$A \supseteq A'$, any (acyclic) reorientation of~$D$ restricts to an (acyclic) reorientation of~$D'$.
The restriction map~$\restrictionMap : \AR \to \AR[D']$ is surjective and order preserving.
See \cref{fig:restrictionMap} for examples.

\begin{figure}[b]
	\centerline{
		\begin{tabular}{c@{\quad}c@{\quad}c@{\quad}c}
			strongly pathful &
			pathful &
			weakly pathful &
			nothing
			\\[.1cm]
			\includegraphics[scale=.9]{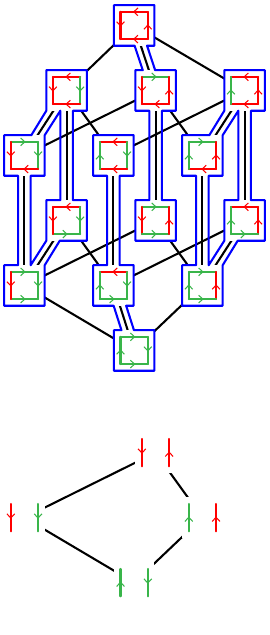} &
			\includegraphics[scale=.9]{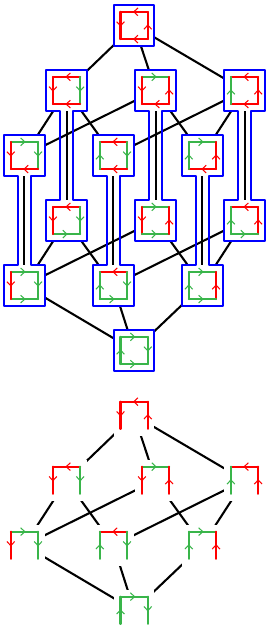} &
			\includegraphics[scale=.9]{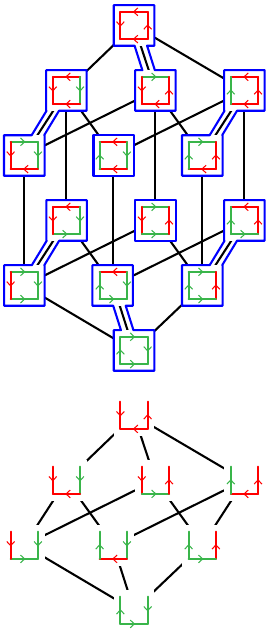} &
			\includegraphics[scale=.9]{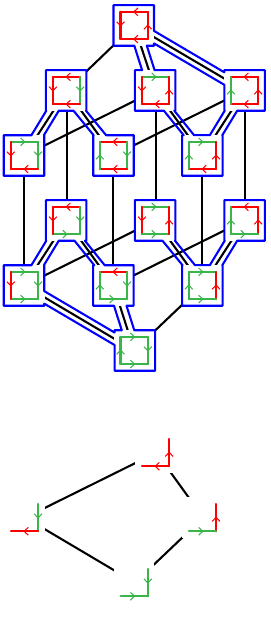}
		\end{tabular}
	}
	\caption{Restriction maps on acyclic reorientations. The fibers are represented as blue bubbles. The first subgraph of~$D$ is strongly pathful, the second is not strongly pathful but pathful, the third is not pathful but weakly pathful, the fourth is not weakly pathful.}
	\label{fig:restrictionMap}
\end{figure}

\medskip
\enlargethispage{1.1cm}
Assuming that both~$D$ and~$D'$ are vertebrate, we characterize some relevant lattice properties of this restriction map~$\restrictionMap$.
We say that~$D'$ is 
\begin{itemize}
\item \defn{weakly pathful} in~$D$ if along any directed path in~$D$ whose endpoints are connected by an arc in~$D'$, at most one arc does not belong to~$D'$,
\item \defn{pathful} in~$D$ if any directed path in~$D$ joining the endpoints of an arc in~$D'$ is \mbox{contained~in~$D'$,}
\item \defn{strongly pathful} in~$D$ if any directed path in~$D$ joining the endpoints of a directed path in~$D'$ is contained in~$D'$.
\end{itemize}
Note that strongly pathful implies pathful, and pathful implies weakly pathful, but that both reverse implications are wrong.
The next statement is proved in \cref{sec:restrictionMaps} and \mbox{illustrated in~\cref{fig:restrictionMap}}.

\begin{theorem}
\label{thm:latticeMap}
For two vertebrate directed acyclic graphs~$D \eqdef (V, A)$ and~$D' \eqdef (V, A')$ with~${A \supseteq A'}$,
\begin{itemize}
\item all fibers of~$\restrictionMap$ are intervals if and only if~$D'$ is weakly pathful in~$D$,
\item $\restrictionMap$ is a lattice quotient map if and only if~$D'$ is pathful in~$D$,
\item $\restrictionMap$ restricts to a lattice isomorphism from a lower (or upper) interval of~$\AR$ to~$\AR[D']$ if and only if~$D'$ is strongly pathful in~$D$.
\end{itemize}
\end{theorem}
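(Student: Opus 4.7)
The three parts of \cref{thm:latticeMap} share a common structure: each ``only if'' direction exhibits an explicit counterexample built from a directed path that violates the corresponding pathfulness condition, while each ``if'' direction constructs a natural section of~$\restrictionMap$ and verifies the required lattice property using the join/meet descriptions in~$\AR$ established in \cref{sec:characterizationLattice}. The central object throughout is the fiber $\restrictionMap^{-1}(E')$ for $E' \in \AR[D']$, consisting of the acyclic extensions of~$E'$ to~$D$ obtained by orienting the arcs of~$A \setminus A'$ compatibly with acyclicity.

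For necessity in part~(1), suppose $v_0 \to v_1 \to \cdots \to v_k$ is a directed path in~$D$ with $(v_0, v_k) \in A'$ and two distinct arcs $\alpha, \beta \in A \setminus A'$. The plan is to pick $E' \in \AR[D']$ that reverses $(v_0, v_k)$ (adjusting other $A'$-reversals as needed to keep $E'$ acyclic), and to observe that the fiber contains both the extension reversing only $\alpha$ along the path and the extension reversing only $\beta$; these two elements are incomparable in~$\AR$, and their meet in~$\AR$ must un-reverse $(v_0, v_k)$ to avoid the cycle $v_0 \to v_1 \to \cdots \to v_k \to v_0$ that would otherwise persist, hence escapes the fiber, so the fiber has no bottom. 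For sufficiency, given $E' \in \AR[D']$ I would define $E_\min(E')$ and $E_\max(E')$ as the acyclic orientations of~$D$ extending~$E'$ by orienting each arc of~$A \setminus A'$ in its $D$-direction (respectively reversed) whenever this is consistent with the transitive closure of~$E'$, and in the forced direction otherwise. Weak pathfulness is precisely what is needed to ensure that this assignment is well-defined and acyclic: it bounds how many $A \setminus A'$-arcs can be simultaneously forced by any single $E'$-path. The fiber of~$E'$ is then the interval $[E_\min(E'), E_\max(E')]$ in~$\AR$, since any acyclic orientation between these two extremes differs from them only on arcs of~$A \setminus A'$ and therefore still restricts to~$E'$.

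Part~(2) reduces, given part~(1), to showing that $\restrictionMap$ commutes with joins; commutation with meets then follows by the self-duality of~$\AR$ under global arc reversal. For necessity, if $D'$ is weakly pathful but not pathful, there is a directed path $v_0 \to \cdots \to v_k$ in~$D$ with $(v_0, v_k) \in A'$ containing a unique arc $\alpha \in A \setminus A'$; since the path has at least two arcs, there is also an arc $\beta \in A'$ on this path. The acyclic reorientations $E_1$ (reversing only~$\alpha$) and $E_2$ (reversing only~$\beta$) both lie in~$\AR$, but their join in~$\AR$ is forced to additionally reverse~$(v_0, v_k)$ in order to break the cycle created by reversing $\alpha, \beta$ together with the direct arc $(v_0, v_k)$, whereas $\restrictionMap(E_1) \vee \restrictionMap(E_2)$ in $\AR[D']$ only reverses~$\beta$, exhibiting the failure of join preservation. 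For sufficiency, assuming $D'$ is pathful, I would verify using the join formula from \cref{sec:characterizationLattice} that the minimum section $E_\min$ from part~(1) is a lattice homomorphism; pathfulness is precisely the hypothesis that rules out the cascade reversal witnessed in the necessity direction.

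For part~(3), the natural candidate lower interval is $I_\downarrow \eqdef \{E \in \AR : E \text{ agrees with } D \text{ on every arc of } A \setminus A'\}$, which is downward closed in~$\AR$ and on which $\restrictionMap$ is injective and order-preserving. The map $\restrictionMap|_{I_\downarrow}$ is surjective onto $\AR[D']$ if and only if every $E' \in \AR[D']$ admits an acyclic extension to~$D$ leaving all $A \setminus A'$-arcs unreversed, and I would show this is equivalent to strong pathfulness: a cycle in such an extension decomposes into alternating $E'$-segments and forward $D$-segments on $A \setminus A'$-arcs, which translates into a $D$-path connecting the endpoints of a $D'$-path but not entirely contained in~$A'$, contradicting strong pathfulness; conversely any witness of non strong pathfulness produces an explicit bad~$E'$. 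The upper interval case is dual under arc reversal. The hard part will be to show that $I_\downarrow$ is not merely an order ideal but actually a sublattice of~$\AR$, so that $\restrictionMap|_{I_\downarrow}$ is a lattice isomorphism rather than only a poset isomorphism; this step again requires the join description of \cref{sec:characterizationLattice} together with strong pathfulness to check that the join in~$\AR$ of two orientations in~$I_\downarrow$ still agrees with~$D$ on~$A \setminus A'$.
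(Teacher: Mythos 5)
The overall plan — split into three parts, show each ``only if'' by a counterexample built from a path/cycle violating the relevant pathfulness condition, and each ``if'' by a concrete description of the fiber's extremal elements using the join/meet formulas — matches the paper's strategy, although the paper factors the argument differently: it first proves \cref{prop:intervalFibers,prop:orderPreservingDownUpProj,prop:posetIsomInterval} about the ``balanced'' conditions for arbitrary (not necessarily vertebrate) digraphs $D, D'$, building on the general characterization \cref{prop:minimalMaximalInFiber} of fibers with extremal elements, and then invokes the classical criterion \cref{prop:latticeQuotientMap} (fibers are intervals \emph{and} the maps $\projDown$, $\projUp$ are order preserving). Your part~1 and part~3 arguments are essentially the paper's, modulo making the constructions of $E_{\min}$ and $E_{\max}$ precise (the paper's version: an arc $(u,v)\in A\ssm A'$ is reversed in $E_{\min}$ iff $E'$ has a directed path from $v$ to $u$). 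Your worry at the end of part~3 about $I_\downarrow$ being ``only a poset isomorphism'' is unnecessary: a bijective poset isomorphism between lattices is automatically a lattice isomorphism, and once you establish that $I_\downarrow$ has a top element $\inverseRestrictionMap(\bar D')$ it is literally the interval $[D,\inverseRestrictionMap(\bar D')]$.

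The genuine gap is in your necessity argument for part~(2). You take a path $v_0\to\cdots\to v_k$ with $(v_0,v_k)\in A'$, a unique $\alpha\in A\ssm A'$ on it, and some other path arc $\beta\in A'$, and claim that the join of $E_1$ (reversing only $\alpha$) and $E_2$ (reversing only $\beta$) is ``forced to reverse $(v_0,v_k)$ to break the cycle created by reversing $\alpha,\beta$.'' But reversing just two arcs of the path together with keeping $(v_0,v_k)$ forward creates no cycle, and by \cref{thm:joinMeet} the join of $E_1,E_2$ reverses only the transitive closure of $\{\alpha,\beta\}$; this contains $(v_0,v_k)$ only in very special cases (e.g.\ when $\alpha$ and $\beta$ are consecutive and their composition is an arc, which need not happen). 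Concretely, on the path $1\to2\to3\to4$ with $(1,4)\in A'$, $\alpha=(1,2)\notin A'$, $\beta=(3,4)\in A'$, your $E_1\vee E_2$ reverses only $\{(1,2),(3,4)\}$ and no failure of join-preservation appears. The paper's \cref{prop:orderPreservingDownUpProj} instead builds a reorientation $E$ that reverses \emph{all} backward arcs along the offending cycle plus the unique arc $b\notin D'$, with all arcs incident to the cycle pointing into it; this makes $b$ reversed in $\projDown[](E)$, and then reversing one more arc $a\in D'$ of the cycle gives $F\ge E$ with $b$ not reversed in $\projDown[](F)$. That more global construction (the whole cycle, not two isolated arcs) is what actually makes the forcing work; your two-arc version would need to be replaced by something of this shape before the ``exhibiting the failure of join preservation'' step is valid. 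Correspondingly, the one-line appeal to ``pathfulness rules out the cascade reversal'' in the sufficiency direction of part~(2) is too thin; the paper's proof of the converse in \cref{prop:orderPreservingDownUpProj} is a genuine argument tracking which $A\ssm A'$-arcs become reversed in $\projDown[](E)$ versus $\projDown[](F)$.
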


Specializing \cref{thm:latticeMap} in the situation when~$D$ is a tournament, we obtain in \cref{exm:latticeQuotientsWeakOrder} a bijection between the directed acyclic graphs~$D'$ whose acyclic reorientation poset~$\AR[D']$ is a lattice quotient of the weak order on~$\f{S}_n$ and the non-nesting partitions of~$[n] \eqdef \{1, \dots, n\}$, which are counted by the Catalan number~$C_n \eqdef \frac{1}{n+1} \binom{2n}{n}$.


\subsection*{Lattice properties}

We assume now that $D$ is vertebrate and discuss some properties of its acyclic reorientation lattice~$\AR$.
We refer to \cref{sec:latticeProperties} for the definitions and characterizations of the classical notions of distributivity, semidistributivity, congruence normality, and congruence uniformity of lattices.
We say that~$D$ is \defn{filled} when for any directed path~$\pi$ in~$D$, if the arc joining the endpoints of~$\pi$ belongs to~$D$, then all arcs joining any two vertices of~$\pi$ also belong~to~$D$.
For instance, any forest and any tournament is filled.
The following statement is proved in \cref{sec:latticeProperties} and illustrated in \cref{fig:distributiveSemidistributive}.

\begin{theorem}
\label{thm:propertiesAcyclicReorientationLattice}
When~$D$ is vertebrate, the acyclic reorientation lattice~$\AR$ is
\begin{itemize}
\item distributive if and only if $D$ is a forest,
\item semidistributive is and only if $D$ is filled,
\item always congruence normal (\aka constructible by convex doubling),
\item congruence uniform (\aka constructible by interval doubling) if and only if~$D$ is filled.
\end{itemize}
\end{theorem}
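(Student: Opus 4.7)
The plan is to prove the four items in an order that exploits the logical dependencies among them, under the standing assumption that $D$ is vertebrate so that $\AR$ is a lattice by \cref{thm:characterizationLattice}. I would first handle the distributive characterization: when $D$ is a forest, the underlying graph has no cycles and every reorientation of $D$ is automatically acyclic, so $\AR$ coincides with the boolean lattice $\RO$ and is therefore distributive. Conversely, if $D$ is vertebrate but not a forest, then the transitive reduction of $D$ is still a forest (by applying vertebrateness to $D$ itself), so $D$ must contain some transitive arc $u \to w$ together with a directed path from $u$ to $w$ passing through at least one intermediate vertex in the transitive reduction. From this arc-plus-path configuration I would produce five explicit acyclic reorientations forming an $N_5$ sublattice of $\AR$, exploiting the fact that flipping the transitive arc $u \to w$ is blocked by the surviving path from $u$ to $w$ in some reorientations but not in others, which creates the pentagon's characteristic asymmetry.

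For the congruence normality claim, which is supposed to hold for every vertebrate $D$, I would invoke the second proof of \cref{thm:characterizationLattice} announced in the introduction: $\AR$ is obtained from the acyclic reorientation lattice of the transitive reduction of $D$ by a sequence of convex doublings in the sense of Day. Since $D$ is vertebrate, the transitive reduction is a forest, so its acyclic reorientation lattice is boolean and in particular congruence normal, and convex doubling is precisely the operation that generates the class of congruence normal lattices. The technical heart of the theorem is the semidistributive $\iff$ filled equivalence. For the ``if'' direction I would use the rope model previewed in the abstract: when $D$ is filled, the join-irreducibles of $\AR$ are encoded by ropes $\rope$, and every acyclic reorientation admits a canonical join representation indexed by a non-crossing diagram of ropes (together with the dual canonical meet representation obtained via reversing all arcs); the existence of canonical join and meet representations of every element is equivalent to semidistributivity. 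For the ``only if'' direction, a failure of the filled condition yields a directed path $\pi$ in $D$ whose endpoints are joined by an arc of $D$ but that misses some chord arc $v_i \to v_j$; from a suitable four-vertex induced subgraph I would construct three acyclic reorientations $x, y, z$ witnessing a join-semidistributivity failure of the form $x \join y = x \join z \neq x \join (y \meet z)$.

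Finally, the congruence uniform characterization follows formally from a classical theorem of Day: a finite lattice is constructible by interval doublings if and only if it is both congruence normal and semidistributive. Combined with the two previous items, this yields that $\AR$ is congruence uniform exactly when it is semidistributive, that is, exactly when $D$ is filled. The main obstacle will be the semidistributivity equivalence, because the ``if'' direction requires setting up the rope combinatorics carefully to identify join-irreducibles and describe canonical join representations, and the ``only if'' direction requires exhibiting an explicit semidistributivity defect inside any non-filled configuration.
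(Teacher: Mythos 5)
Your plan for three of the four items matches the paper's strategy or is a reasonable variant. The congruence normality claim via Day's convex doublings starting from the (boolean) acyclic reorientation lattice of the transitive reduction is exactly the paper's argument (Proposition~\ref{prop:congruenceNormal}), and the deduction of congruence uniformity as ``congruence normal $+$ semidistributive'' is also how the paper concludes. For distributivity, the paper directly exhibits a violation of the distributive identity using reorientations $E$ and $F_1,\dots,F_\ell$ built from the arc-plus-path configuration, rather than naming an $N_5$ sublattice; your $N_5$ approach is plausible and a valid alternative, though you would have to actually produce the five elements (the argument does not reduce to a triangle --- see the next paragraph), so it is not obviously lighter than the paper's computation.

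The genuine problem is in the ``semidistributive $\Rightarrow$ filled'' direction. You write that a failure of the filled condition yields a missing chord and that ``from a suitable four-vertex induced subgraph'' you would construct the semidistributivity-breaking triple. This is false in general. Take $D$ on vertices $v_0,\dots,v_4$ with arcs $(v_0,v_1),(v_1,v_2),(v_2,v_3),(v_3,v_4)$ and $(v_0,v_4)$, and no other arcs. Then $D$ is vertebrate (its transitive reduction is the path, and the same holds for every induced subgraph) and not filled (the path $v_0,\dots,v_4$ has endpoints joined by $(v_0,v_4)$ but, say, $(v_1,v_3)\notin D$). However, \emph{every} four-vertex induced subgraph of $D$ is filled, either vacuously or because the only transitive arc $(v_0,v_4)$ together with a witnessing path never fits inside four of the five vertices. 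So no four-vertex induced subgraph witnesses the failure, and the construction you describe cannot get off the ground. The paper's Proposition~\ref{prop:semidistributive} avoids this by working with the full path: it builds reorientations $X\lessdot Y$ and $E_1,\dots,E_{\ell-1}$ (one per arc of the whole path) and shows that $\set{F}{X\join F=Y}$ has no unique minimum. You need the whole path; a bounded-size induced configuration is not enough.

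A secondary concern is the logical order in the ``filled $\Rightarrow$ semidistributive'' direction. You propose to derive semidistributivity from the rope model (join-irreducibles are ropes, every element has a canonical join representation given by a non-crossing rope diagram). In the paper this dependency runs the other way: Corollary~\ref{coro:canonicalJoinMeetRepresentation} and the rope-diagram bijections in Section~\ref{sec:ropeDiagrams} are \emph{consequences} of Proposition~\ref{prop:semidistributive}, which is proved directly by exhibiting, for each cover $X\lessdot Y$, the unique minimal $E$ with $X\join E=Y$ via the notion of ``forced arcs.'' Your route is not impossible --- one could try to build the rope/diagram bijections from scratch and read off semidistributivity --- but it requires independently establishing that the candidate join representation is genuinely minimal among irredundant ones, which is essentially the ``forced arc'' argument in disguise. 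As written, invoking the rope machinery as a given risks circularity; be explicit about what you reprove.
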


\begin{figure}[h]
	\centerline{
		\begin{tabular}{c@{\qquad}c@{\qquad}c}
			distributive &
			semidistributive &
			not semidistributive
			\\[.1cm]
			\includegraphics[scale=.9]{acyclicReorientationPoset1} &
			\includegraphics[scale=.9]{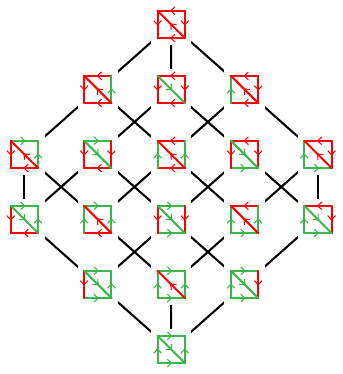} &
			\includegraphics[scale=.9]{acyclicReorientationPoset5}
		\end{tabular}
	}
	\caption{Some acyclic reorientation lattices. The first is distributive, the second is not distributive but semidistributive, the third is not semidistributive. They are all congruence normal, hence the first two are also congruence uniform.}
	\label{fig:distributiveSemidistributive}
\end{figure}

Note that our proof of the congruence normality is based on doubling of order convex sets~\cite{Day}, and thus provides an alternative proof of \cref{thm:characterizationLattice}.

\medskip
The remaining of the paper focusses on the situation when~$D$ is vertebrate and filled, which we abbreviate into \defn{skeletal}.
As for the lattice property, we also provide two proofs of semidistributivity.
Our first proof in \cref{subsec:semidistributivity} will enable us to describe the canonical join and meet representations in the acyclic reorientation lattice of a skeletal directed acyclic graph.
Our second proof in \cref{subsec:congruenceNormality} will show that the acyclic reorientation lattice of a skeletal directed acyclic graph can be obtained from the acyclic reorientation lattice of its transitive reduction by a sequence of interval doublings in the sense of~\cite{Day}.
All the results of the remaining sections exploit the join irreducible elements of the acyclic reorientation lattice~$\AR$ to describe all its elements, its congruences and its quotients when~$D$ is skeletal.
Our approach is based on a convenient combinatorial model for join irreducibles of~$\AR$, extending the arcs of N.~Reading~\cite{Reading-arcDiagrams}, which provides simple combinatorial descriptions of the compatibility relation and the forcing order among join irreducibles, as we discuss next.


\subsection*{Ropes}

Assume that~$D$ is skeletal, so that its acyclic reorientation poset~$\AR$ is a congruence uniform lattice.
Generalizing the arcs of N.~Reading~\cite{Reading-arcDiagrams}, we introduce in \cref{subsec:ropes} some combinatorial gadgets, that we call the \defn{ropes} of~$D$, to encode the join (or meet) irreducible elements of~$\AR$.
We use these ropes to describe
\begin{itemize}
\item the canonical join complex of~$\AR$ (whose faces are the canonical join representations of~$\AR$) in terms of non-crossing rope diagrams of~$D$ in \cref{subsec:ropeDiagrams},
\item the canonical complex of~$\AR$ (whose faces are in bijection with intervals of~$\AR$) in terms of rope bidiagrams of~$D$ in \cref{subsec:bidiagrams},
\item the forcing order among join irreducibles of~$\AR$ (whose lower ideals correspond to lattice quotients of~$\AR$) in terms of subropes in~$D$ in \cref{subsec:subropes}.
\end{itemize}
The subrope order enables us to describe and manipulate all congruences of the acyclic reorientation lattice~$\AR$.
For instance, the minimal and maximal elements of the classes of a congruence~$\equiv$ correspond to non-crossing rope diagrams contained in the subrope ideal associated to~$\equiv$.

\medskip
Using ropes, we also introduce and explore in \cref{subsec:coherentCongruencesPrincipalCongruences} some particularly relevant congruences of~$\AR$: the principal congruences corresponding to principal ideals of the subrope order, and the coherent congruences generalizing the sylvester~\cite{HivertNovelliThibon-algebraBinarySearchTrees}, Cambrian~\cite{Reading-CambrianLattices}, and permutree~\cite{PilaudPons-permutrees} congruences of the weak order on permutations.
For the coherent congruences, we provide analogues of the classical properties of the sylvester congruence: we describe each coherent congruence as the transitive closure of certain allowed arc flips, we describe the minimal and maximal acyclic reorientations in the congruence classes in terms of avoidance of certain patterns, and we discuss the partial acyclic reorientations encoding the elements and the intervals of the corresponding quotient generalizing~\cite{ChatelPilaudPons}.


\subsection*{Quotientopes}

As originally observed by C.~Greene~\cite{Greene} (see also~\cite[Lem.~7.1]{GreeneZaslavsky}), the Hasse diagram of the acyclic reorientation poset~$\AR$ can be interpreted geometrically as 
\begin{itemize}
\item the dual graph of the \defn{graphical fan}~$\Fan$, defined by the \defn{graphical arrangement} of~$D$ containing the hyperplanes~$\bigset{\b{x} \in \R^V}{x_u = x_v}$ for all arcs~$(u,v) \in D$, oriented in the linear direction~$\b{\omega}_D \eqdef \sum_{(u,v) \in A} \b{e}_v - \b{e}_u$, or
\item the graph of the \defn{graphical zonotope}~$\Zono$, defined as the Minkowski sum of all segments~$[\b{e}_u, \b{e}_v]$ for all arcs~$(u,v)$ of~$D$, oriented in the linear direction~$\b{\omega}_D$.
\end{itemize}
Note that the graphical fan and the graphical zonotope are dual to each other, and that their codimension is the number of connected components of~$D$.
For instance, the graphical arrangements and graphical zonotopes corresponding to the acyclic reorientation posets of \cref{fig:acyclicReorientationPosets} are illustrated in \cref{fig:graphicalArrangementsIntro,fig:graphicalZonotopesIntro}.

\begin{figure}[b]
	\centerline{
			\includegraphics[scale=.5]{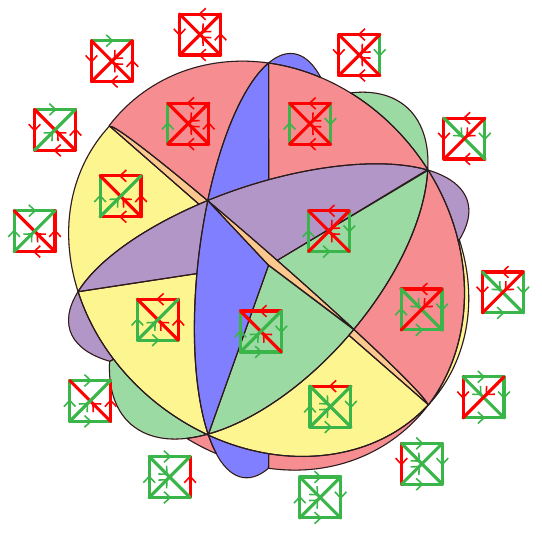}
			\includegraphics[scale=.5]{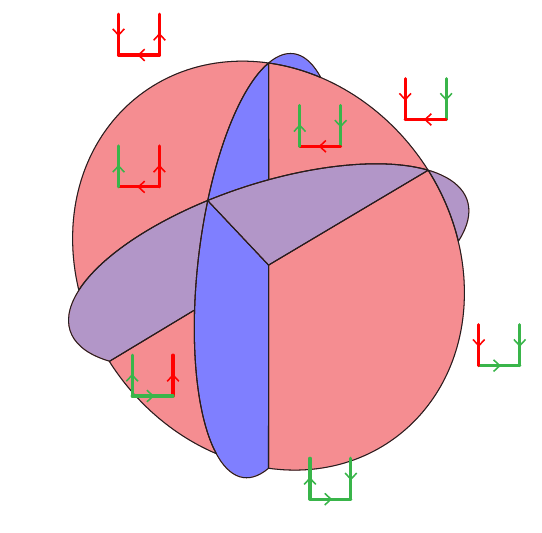}
			\includegraphics[scale=.5]{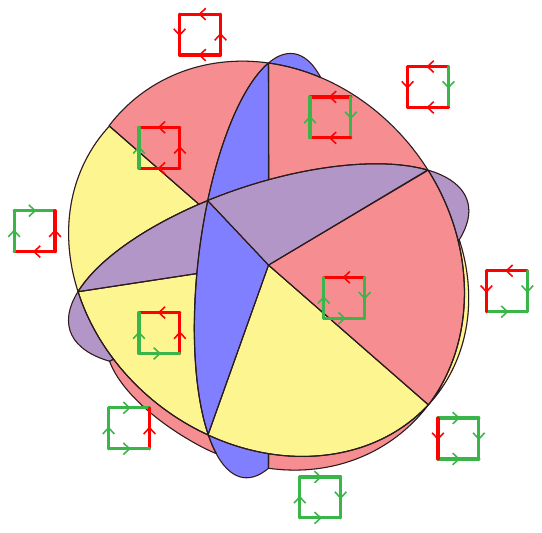}
			\includegraphics[scale=.5]{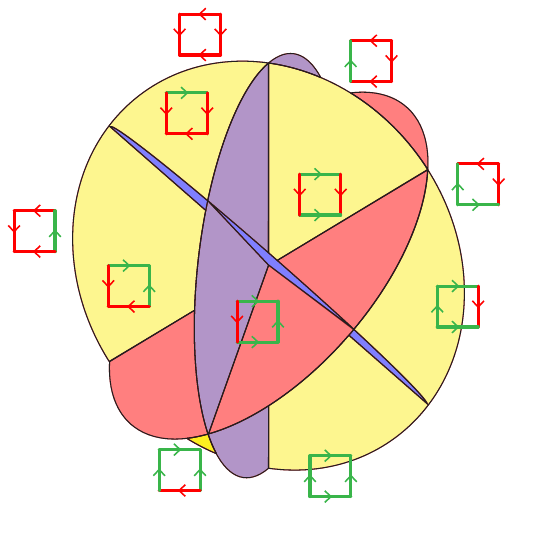}
	}
	\caption{The graphical arrangements corresponding to the acyclic reorientation posets of \cref{fig:acyclicReorientationPosets}. The first is the classical braid arrangement. The regions are labeled by the corresponding acyclic reorientations. The hyperplanes are colored according to the corresponding arc. The perspective is chosen so that the minimal reorientation appears at the bottom of the picture.}
	\label{fig:graphicalArrangementsIntro}
\end{figure}

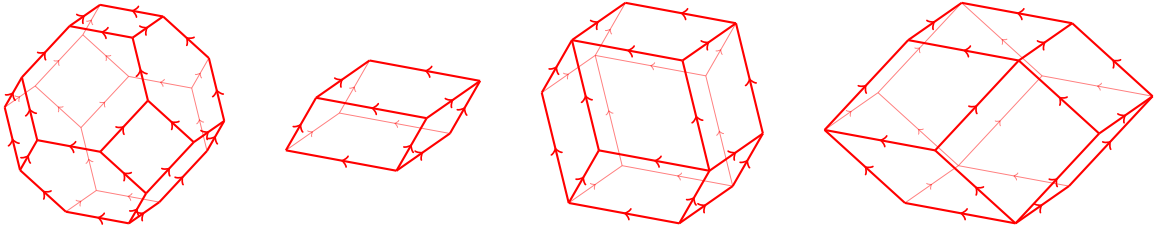
\begin{figure}
	\centerline{

\begin{tikzpicture}%
	[x={(-0.366215cm, -0.789554cm)},
	y={(0.235950cm, -0.590693cm)},
	z={(0.900119cm, -0.166391cm)},
	scale=.570000,
	back/.style={very thin, opacity=0.5},
	edge/.style={color=red, thick, decoration={markings, mark=at position 0.5 with {\arrow{>}}}},
	facet/.style={fill=red,fill opacity=0},
	vertex/.style={}]
%
%

\coordinate (0.00000, 0.00000, 0.00000) at (0.00000, 0.00000, 0.00000);
\coordinate (0.00000, 0.00000, 1.63299) at (0.00000, 0.00000, 1.63299);
\coordinate (0.00000, 1.41421, -0.81650) at (0.00000, 1.41421, -0.81650);
\coordinate (4.00000, 2.82843, 1.63299) at (4.00000, 2.82843, 1.63299);
\coordinate (0.00000, 1.41421, 2.44949) at (0.00000, 1.41421, 2.44949);
\coordinate (0.00000, 2.82843, 0.00000) at (0.00000, 2.82843, 0.00000);
\coordinate (0.00000, 2.82843, 1.63299) at (0.00000, 2.82843, 1.63299);
\coordinate (1.33333, -0.94281, 0.00000) at (1.33333, -0.94281, 0.00000);
\coordinate (1.33333, -0.94281, 1.63299) at (1.33333, -0.94281, 1.63299);
\coordinate (4.00000, 2.82843, 0.00000) at (4.00000, 2.82843, 0.00000);
\coordinate (4.00000, 1.41421, 2.44949) at (4.00000, 1.41421, 2.44949);
\coordinate (4.00000, 1.41421, -0.81650) at (4.00000, 1.41421, -0.81650);
\coordinate (1.33333, 1.88562, -1.63299) at (1.33333, 1.88562, -1.63299);
\coordinate (4.00000, 0.00000, 1.63299) at (4.00000, 0.00000, 1.63299);
\coordinate (4.00000, 0.00000, 0.00000) at (4.00000, 0.00000, 0.00000);
\coordinate (1.33333, 1.88562, 3.26599) at (1.33333, 1.88562, 3.26599);
\coordinate (1.33333, 3.29983, -0.81650) at (1.33333, 3.29983, -0.81650);
\coordinate (2.66667, 3.77124, 1.63299) at (2.66667, 3.77124, 1.63299);
\coordinate (1.33333, 3.29983, 2.44949) at (1.33333, 3.29983, 2.44949);
\coordinate (2.66667, -0.47140, -0.81650) at (2.66667, -0.47140, -0.81650);
\coordinate (2.66667, 3.77124, 0.00000) at (2.66667, 3.77124, 0.00000);
\coordinate (2.66667, -0.47140, 2.44949) at (2.66667, -0.47140, 2.44949);
\coordinate (2.66667, 0.94281, -1.63299) at (2.66667, 0.94281, -1.63299);
\coordinate (2.66667, 0.94281, 3.26599) at (2.66667, 0.94281, 3.26599);
\draw[edge,postaction={decorate},back] (0.00000, 1.41421, -0.81650) -- (0.00000, 0.00000, 0.00000);
\draw[edge,postaction={decorate},back] (0.00000, 2.82843, 0.00000) -- (0.00000, 1.41421, -0.81650);
\draw[edge,postaction={decorate},back] (1.33333, 1.88562, -1.63299) -- (0.00000, 1.41421, -0.81650);
\draw[edge,postaction={decorate},back] (0.00000, 2.82843, 1.63299) -- (0.00000, 1.41421, 2.44949);
\draw[edge,postaction={decorate},back] (0.00000, 2.82843, 1.63299) -- (0.00000, 2.82843, 0.00000);
\draw[edge,postaction={decorate},back] (1.33333, 3.29983, -0.81650) -- (0.00000, 2.82843, 0.00000);
\draw[edge,postaction={decorate},back] (1.33333, 3.29983, 2.44949) -- (0.00000, 2.82843, 1.63299);
\draw[edge,postaction={decorate},back] (4.00000, 2.82843, 0.00000) -- (2.66667, 3.77124, 0.00000);
\draw[edge,postaction={decorate},back] (1.33333, 3.29983, -0.81650) -- (1.33333, 1.88562, -1.63299);
\draw[edge,postaction={decorate},back] (2.66667, 0.94281, -1.63299) -- (1.33333, 1.88562, -1.63299);
\draw[edge,postaction={decorate},back] (2.66667, 3.77124, 0.00000) -- (1.33333, 3.29983, -0.81650);
\draw[edge,postaction={decorate},back] (2.66667, 3.77124, 1.63299) -- (2.66667, 3.77124, 0.00000);
\node[vertex] at (1.33333, 1.88562, -1.63299)     {};
\node[vertex] at (1.33333, 3.29983, -0.81650)     {};
\node[vertex] at (2.66667, 3.77124, 0.00000)     {};
\node[vertex] at (0.00000, 1.41421, -0.81650)     {};
\node[vertex] at (0.00000, 2.82843, 0.00000)     {};
\node[vertex] at (0.00000, 2.82843, 1.63299)     {};
\fill[facet] (2.66667, 0.94281, -1.63299) -- (4.00000, 1.41421, -0.81650) -- (4.00000, 0.00000, 0.00000) -- (2.66667, -0.47140, -0.81650) -- cycle {};
\fill[facet] (1.33333, -0.94281, 1.63299) -- (0.00000, 0.00000, 1.63299) -- (0.00000, 0.00000, 0.00000) -- (1.33333, -0.94281, 0.00000) -- cycle {};
\fill[facet] (2.66667, 0.94281, 3.26599) -- (1.33333, 1.88562, 3.26599) -- (0.00000, 1.41421, 2.44949) -- (0.00000, 0.00000, 1.63299) -- (1.33333, -0.94281, 1.63299) -- (2.66667, -0.47140, 2.44949) -- cycle {};
\fill[facet] (4.00000, 0.00000, 0.00000) -- (4.00000, 0.00000, 1.63299) -- (2.66667, -0.47140, 2.44949) -- (1.33333, -0.94281, 1.63299) -- (1.33333, -0.94281, 0.00000) -- (2.66667, -0.47140, -0.81650) -- cycle {};
\fill[facet] (2.66667, 0.94281, 3.26599) -- (4.00000, 1.41421, 2.44949) -- (4.00000, 0.00000, 1.63299) -- (2.66667, -0.47140, 2.44949) -- cycle {};
\fill[facet] (1.33333, 1.88562, 3.26599) -- (2.66667, 0.94281, 3.26599) -- (4.00000, 1.41421, 2.44949) -- (4.00000, 2.82843, 1.63299) -- (2.66667, 3.77124, 1.63299) -- (1.33333, 3.29983, 2.44949) -- cycle {};
\fill[facet] (4.00000, 0.00000, 0.00000) -- (4.00000, 1.41421, -0.81650) -- (4.00000, 2.82843, 0.00000) -- (4.00000, 2.82843, 1.63299) -- (4.00000, 1.41421, 2.44949) -- (4.00000, 0.00000, 1.63299) -- cycle {};
\draw[edge,postaction={decorate}] (0.00000, 0.00000, 1.63299) -- (0.00000, 0.00000, 0.00000);
\draw[edge,postaction={decorate}] (1.33333, -0.94281, 0.00000) -- (0.00000, 0.00000, 0.00000);
\draw[edge,postaction={decorate}] (0.00000, 1.41421, 2.44949) -- (0.00000, 0.00000, 1.63299);
\draw[edge,postaction={decorate}] (1.33333, -0.94281, 1.63299) -- (0.00000, 0.00000, 1.63299);
\draw[edge,postaction={decorate}] (4.00000, 2.82843, 1.63299) -- (4.00000, 2.82843, 0.00000);
\draw[edge,postaction={decorate}] (4.00000, 2.82843, 1.63299) -- (4.00000, 1.41421, 2.44949);
\draw[edge,postaction={decorate}] (4.00000, 2.82843, 1.63299) -- (2.66667, 3.77124, 1.63299);
\draw[edge,postaction={decorate}] (1.33333, 1.88562, 3.26599) -- (0.00000, 1.41421, 2.44949);
\draw[edge,postaction={decorate}] (1.33333, -0.94281, 1.63299) -- (1.33333, -0.94281, 0.00000);
\draw[edge,postaction={decorate}] (2.66667, -0.47140, -0.81650) -- (1.33333, -0.94281, 0.00000);
\draw[edge,postaction={decorate}] (2.66667, -0.47140, 2.44949) -- (1.33333, -0.94281, 1.63299);
\draw[edge,postaction={decorate}] (4.00000, 2.82843, 0.00000) -- (4.00000, 1.41421, -0.81650);
\draw[edge,postaction={decorate}] (4.00000, 1.41421, 2.44949) -- (4.00000, 0.00000, 1.63299);
\draw[edge,postaction={decorate}] (4.00000, 1.41421, 2.44949) -- (2.66667, 0.94281, 3.26599);
\draw[edge,postaction={decorate}] (4.00000, 1.41421, -0.81650) -- (4.00000, 0.00000, 0.00000);
\draw[edge,postaction={decorate}] (4.00000, 1.41421, -0.81650) -- (2.66667, 0.94281, -1.63299);
\draw[edge,postaction={decorate}] (4.00000, 0.00000, 1.63299) -- (4.00000, 0.00000, 0.00000);
\draw[edge,postaction={decorate}] (4.00000, 0.00000, 1.63299) -- (2.66667, -0.47140, 2.44949);
\draw[edge,postaction={decorate}] (4.00000, 0.00000, 0.00000) -- (2.66667, -0.47140, -0.81650);
\draw[edge,postaction={decorate}] (1.33333, 3.29983, 2.44949) -- (1.33333, 1.88562, 3.26599);
\draw[edge,postaction={decorate}] (2.66667, 0.94281, 3.26599) -- (1.33333, 1.88562, 3.26599);
\draw[edge,postaction={decorate}] (2.66667, 3.77124, 1.63299) -- (1.33333, 3.29983, 2.44949);
\draw[edge,postaction={decorate}] (2.66667, 0.94281, -1.63299) -- (2.66667, -0.47140, -0.81650);
\draw[edge,postaction={decorate}] (2.66667, 0.94281, 3.26599) -- (2.66667, -0.47140, 2.44949);
\node[vertex] at (0.00000, 0.00000, 0.00000)     {};
\node[vertex] at (0.00000, 0.00000, 1.63299)     {};
\node[vertex] at (4.00000, 2.82843, 1.63299)     {};
\node[vertex] at (0.00000, 1.41421, 2.44949)     {};
\node[vertex] at (1.33333, -0.94281, 0.00000)     {};
\node[vertex] at (1.33333, -0.94281, 1.63299)     {};
\node[vertex] at (4.00000, 2.82843, 0.00000)     {};
\node[vertex] at (4.00000, 1.41421, 2.44949)     {};
\node[vertex] at (4.00000, 1.41421, -0.81650)     {};
\node[vertex] at (4.00000, 0.00000, 1.63299)     {};
\node[vertex] at (4.00000, 0.00000, 0.00000)     {};
\node[vertex] at (1.33333, 1.88562, 3.26599)     {};
\node[vertex] at (2.66667, 3.77124, 1.63299)     {};
\node[vertex] at (1.33333, 3.29983, 2.44949)     {};
\node[vertex] at (2.66667, -0.47140, -0.81650)     {};
\node[vertex] at (2.66667, -0.47140, 2.44949)     {};
\node[vertex] at (2.66667, 0.94281, -1.63299)     {};
\node[vertex] at (2.66667, 0.94281, 3.26599)     {};
\end{tikzpicture}
		\quad
		\raisebox{.7cm}{

\begin{tikzpicture}%
	[x={(-0.366215cm, -0.789554cm)},
	y={(0.235950cm, -0.590693cm)},
	z={(0.900119cm, -0.166391cm)},
	scale=1.000000,
	back/.style={very thin, opacity=0.5},
	edge/.style={color=red, thick, decoration={markings, mark=at position 0.5 with {\arrow{>}}}},
	facet/.style={fill=red,fill opacity=0},
	vertex/.style={}]
%
%

\coordinate (0.00000, 0.00000, 0.00000) at (0.00000, 0.00000, 0.00000);
\coordinate (0.00000, 0.00000, 1.63299) at (0.00000, 0.00000, 1.63299);
\coordinate (0.00000, 1.41421, -0.81650) at (0.00000, 1.41421, -0.81650);
\coordinate (0.00000, 1.41421, 0.81650) at (0.00000, 1.41421, 0.81650);
\coordinate (1.33333, -0.94281, 0.00000) at (1.33333, -0.94281, 0.00000);
\coordinate (1.33333, -0.94281, 1.63299) at (1.33333, -0.94281, 1.63299);
\coordinate (1.33333, 0.47140, -0.81650) at (1.33333, 0.47140, -0.81650);
\coordinate (1.33333, 0.47140, 0.81650) at (1.33333, 0.47140, 0.81650);
\draw[edge,postaction={decorate},back] (0.00000, 1.41421, -0.81650) -- (0.00000, 0.00000, 0.00000);
\draw[edge,postaction={decorate},back] (0.00000, 1.41421, 0.81650) -- (0.00000, 1.41421, -0.81650);
\draw[edge,postaction={decorate},back] (1.33333, 0.47140, -0.81650) -- (0.00000, 1.41421, -0.81650);
\node[vertex] at (0.00000, 1.41421, -0.81650)     {};
\fill[facet] (1.33333, -0.94281, 1.63299) -- (0.00000, 0.00000, 1.63299) -- (0.00000, 0.00000, 0.00000) -- (1.33333, -0.94281, 0.00000) -- cycle {};
\fill[facet] (1.33333, 0.47140, 0.81650) -- (0.00000, 1.41421, 0.81650) -- (0.00000, 0.00000, 1.63299) -- (1.33333, -0.94281, 1.63299) -- cycle {};
\fill[facet] (1.33333, 0.47140, 0.81650) -- (1.33333, -0.94281, 1.63299) -- (1.33333, -0.94281, 0.00000) -- (1.33333, 0.47140, -0.81650) -- cycle {};
\draw[edge,postaction={decorate}] (0.00000, 0.00000, 1.63299) -- (0.00000, 0.00000, 0.00000);
\draw[edge,postaction={decorate}] (1.33333, -0.94281, 0.00000) -- (0.00000, 0.00000, 0.00000);
\draw[edge,postaction={decorate}] (0.00000, 1.41421, 0.81650) -- (0.00000, 0.00000, 1.63299);
\draw[edge,postaction={decorate}] (1.33333, -0.94281, 1.63299) -- (0.00000, 0.00000, 1.63299);
\draw[edge,postaction={decorate}] (1.33333, 0.47140, 0.81650) -- (0.00000, 1.41421, 0.81650);
\draw[edge,postaction={decorate}] (1.33333, -0.94281, 1.63299) -- (1.33333, -0.94281, 0.00000);
\draw[edge,postaction={decorate}] (1.33333, 0.47140, -0.81650) -- (1.33333, -0.94281, 0.00000);
\draw[edge,postaction={decorate}] (1.33333, 0.47140, 0.81650) -- (1.33333, -0.94281, 1.63299);
\draw[edge,postaction={decorate}] (1.33333, 0.47140, 0.81650) -- (1.33333, 0.47140, -0.81650);
\node[vertex] at (0.00000, 0.00000, 0.00000)     {};
\node[vertex] at (0.00000, 0.00000, 1.63299)     {};
\node[vertex] at (0.00000, 1.41421, 0.81650)     {};
\node[vertex] at (1.33333, -0.94281, 0.00000)     {};
\node[vertex] at (1.33333, -0.94281, 1.63299)     {};
\node[vertex] at (1.33333, 0.47140, -0.81650)     {};
\node[vertex] at (1.33333, 0.47140, 0.81650)     {};
\end{tikzpicture}}
		\quad

\begin{tikzpicture}%
	[x={(-0.366215cm, -0.789554cm)},
	y={(0.235950cm, -0.590693cm)},
	z={(0.900119cm, -0.166391cm)},
	scale=1.000000,
	back/.style={very thin, opacity=0.5},
	edge/.style={color=red, thick, decoration={markings, mark=at position 0.5 with {\arrow{>}}}},
	facet/.style={fill=red,fill opacity=0},
	vertex/.style={}]
%
%

\coordinate (0.00000, 0.00000, 0.00000) at (0.00000, 0.00000, 0.00000);
\coordinate (0.00000, 0.00000, 1.63299) at (0.00000, 0.00000, 1.63299);
\coordinate (0.00000, 1.41421, -0.81650) at (0.00000, 1.41421, -0.81650);
\coordinate (0.00000, 1.41421, 0.81650) at (0.00000, 1.41421, 0.81650);
\coordinate (1.33333, -0.94281, 0.00000) at (1.33333, -0.94281, 0.00000);
\coordinate (1.33333, -0.94281, 1.63299) at (1.33333, -0.94281, 1.63299);
\coordinate (1.33333, 0.47140, -0.81650) at (1.33333, 0.47140, -0.81650);
\coordinate (2.66667, 0.94281, 1.63299) at (2.66667, 0.94281, 1.63299);
\coordinate (1.33333, 0.47140, 2.44949) at (1.33333, 0.47140, 2.44949);
\coordinate (1.33333, 1.88562, 0.00000) at (1.33333, 1.88562, 0.00000);
\coordinate (1.33333, 1.88562, 1.63299) at (1.33333, 1.88562, 1.63299);
\coordinate (2.66667, -0.47140, 0.81650) at (2.66667, -0.47140, 0.81650);
\coordinate (2.66667, -0.47140, 2.44949) at (2.66667, -0.47140, 2.44949);
\coordinate (2.66667, 0.94281, 0.00000) at (2.66667, 0.94281, 0.00000);
\draw[edge,back,postaction={decorate}] (0.00000, 1.41421, -0.81650) -- (0.00000, 0.00000, 0.00000);
\draw[edge,back,postaction={decorate}] (0.00000, 1.41421, 0.81650) -- (0.00000, 0.00000, 1.63299);
\draw[edge,back,postaction={decorate}] (0.00000, 1.41421, 0.81650) -- (0.00000, 1.41421, -0.81650);
\draw[edge,back,postaction={decorate}] (1.33333, 0.47140, -0.81650) -- (0.00000, 1.41421, -0.81650);
\draw[edge,back,postaction={decorate}] (1.33333, 1.88562, 0.00000) -- (0.00000, 1.41421, -0.81650);
\draw[edge,back,postaction={decorate}] (1.33333, 1.88562, 1.63299) -- (0.00000, 1.41421, 0.81650);
\draw[edge,back,postaction={decorate}] (1.33333, 1.88562, 1.63299) -- (1.33333, 1.88562, 0.00000);
\draw[edge,back,postaction={decorate}] (2.66667, 0.94281, 0.00000) -- (1.33333, 1.88562, 0.00000);
\node[vertex] at (0.00000, 1.41421, -0.81650)     {};
\node[vertex] at (1.33333, 1.88562, 0.00000)     {};
\node[vertex] at (0.00000, 1.41421, 0.81650)     {};
\fill[facet] (2.66667, 0.94281, 0.00000) -- (1.33333, 0.47140, -0.81650) -- (1.33333, -0.94281, 0.00000) -- (2.66667, -0.47140, 0.81650) -- cycle {};
\fill[facet] (1.33333, -0.94281, 1.63299) -- (0.00000, 0.00000, 1.63299) -- (0.00000, 0.00000, 0.00000) -- (1.33333, -0.94281, 0.00000) -- cycle {};
\fill[facet] (2.66667, -0.47140, 2.44949) -- (1.33333, -0.94281, 1.63299) -- (0.00000, 0.00000, 1.63299) -- (1.33333, 0.47140, 2.44949) -- cycle {};
\fill[facet] (2.66667, -0.47140, 2.44949) -- (1.33333, -0.94281, 1.63299) -- (1.33333, -0.94281, 0.00000) -- (2.66667, -0.47140, 0.81650) -- cycle {};
\fill[facet] (1.33333, 0.47140, 2.44949) -- (2.66667, -0.47140, 2.44949) -- (2.66667, 0.94281, 1.63299) -- (1.33333, 1.88562, 1.63299) -- cycle {};
\fill[facet] (2.66667, -0.47140, 0.81650) -- (2.66667, 0.94281, 0.00000) -- (2.66667, 0.94281, 1.63299) -- (2.66667, -0.47140, 2.44949) -- cycle {};
\draw[edge,postaction={decorate}] (0.00000, 0.00000, 1.63299) -- (0.00000, 0.00000, 0.00000);
\draw[edge,postaction={decorate}] (1.33333, -0.94281, 0.00000) -- (0.00000, 0.00000, 0.00000);
\draw[edge,postaction={decorate}] (1.33333, -0.94281, 1.63299) -- (0.00000, 0.00000, 1.63299);
\draw[edge,postaction={decorate}] (1.33333, 0.47140, 2.44949) -- (0.00000, 0.00000, 1.63299);
\draw[edge,postaction={decorate}] (1.33333, -0.94281, 1.63299) -- (1.33333, -0.94281, 0.00000);
\draw[edge,postaction={decorate}] (1.33333, 0.47140, -0.81650) -- (1.33333, -0.94281, 0.00000);
\draw[edge,postaction={decorate}] (2.66667, -0.47140, 0.81650) -- (1.33333, -0.94281, 0.00000);
\draw[edge,postaction={decorate}] (2.66667, -0.47140, 2.44949) -- (1.33333, -0.94281, 1.63299);
\draw[edge,postaction={decorate}] (2.66667, 0.94281, 0.00000) -- (1.33333, 0.47140, -0.81650);
\draw[edge,postaction={decorate}] (2.66667, 0.94281, 1.63299) -- (1.33333, 1.88562, 1.63299);
\draw[edge,postaction={decorate}] (2.66667, 0.94281, 1.63299) -- (2.66667, -0.47140, 2.44949);
\draw[edge,postaction={decorate}] (2.66667, 0.94281, 1.63299) -- (2.66667, 0.94281, 0.00000);
\draw[edge,postaction={decorate}] (1.33333, 1.88562, 1.63299) -- (1.33333, 0.47140, 2.44949);
\draw[edge,postaction={decorate}] (2.66667, -0.47140, 2.44949) -- (1.33333, 0.47140, 2.44949);
\draw[edge,postaction={decorate}] (2.66667, -0.47140, 2.44949) -- (2.66667, -0.47140, 0.81650);
\draw[edge,postaction={decorate}] (2.66667, 0.94281, 0.00000) -- (2.66667, -0.47140, 0.81650);
\node[vertex] at (0.00000, 0.00000, 0.00000)     {};
\node[vertex] at (0.00000, 0.00000, 1.63299)     {};
\node[vertex] at (1.33333, -0.94281, 0.00000)     {};
\node[vertex] at (1.33333, -0.94281, 1.63299)     {};
\node[vertex] at (1.33333, 0.47140, -0.81650)     {};
\node[vertex] at (2.66667, 0.94281, 1.63299)     {};
\node[vertex] at (1.33333, 0.47140, 2.44949)     {};
\node[vertex] at (1.33333, 1.88562, 1.63299)     {};
\node[vertex] at (2.66667, -0.47140, 0.81650)     {};
\node[vertex] at (2.66667, -0.47140, 2.44949)     {};
\node[vertex] at (2.66667, 0.94281, 0.00000)     {};
\end{tikzpicture}
		\quad

\begin{tikzpicture}%
	[x={(-0.366215cm, -0.789554cm)},
	y={(0.235950cm, -0.590693cm)},
	z={(0.900119cm, -0.166391cm)},
	scale=1.000000,
	back/.style={very thin, opacity=0.5},
	edge/.style={color=red, thick, decoration={markings, mark=at position 0.5 with {\arrow{>}}}},
	facet/.style={fill=red,fill opacity=0},
	vertex/.style={}]
%
%

\coordinate (0.00000, 0.00000, 0.00000) at (0.00000, 0.00000, 0.00000);
\coordinate (0.00000, 0.00000, 1.63299) at (0.00000, 0.00000, 1.63299);
\coordinate (0.00000, 1.41421, 0.81650) at (0.00000, 1.41421, 0.81650);
\coordinate (0.00000, 1.41421, 2.44949) at (0.00000, 1.41421, 2.44949);
\coordinate (1.33333, -0.94281, 0.00000) at (1.33333, -0.94281, 0.00000);
\coordinate (1.33333, -0.94281, 1.63299) at (1.33333, -0.94281, 1.63299);
\coordinate (1.33333, 0.47140, -0.81650) at (1.33333, 0.47140, -0.81650);
\coordinate (2.66667, 0.94281, 1.63299) at (2.66667, 0.94281, 1.63299);
\coordinate (1.33333, 0.47140, 2.44949) at (1.33333, 0.47140, 2.44949);
\coordinate (1.33333, 1.88562, 0.00000) at (1.33333, 1.88562, 0.00000);
\coordinate (1.33333, 1.88562, 1.63299) at (1.33333, 1.88562, 1.63299);
\coordinate (2.66667, -0.47140, -0.81650) at (2.66667, -0.47140, -0.81650);
\coordinate (2.66667, -0.47140, 0.81650) at (2.66667, -0.47140, 0.81650);
\coordinate (2.66667, 0.94281, 0.00000) at (2.66667, 0.94281, 0.00000);
\draw[edge,back,postaction={decorate}] (0.00000, 1.41421, 0.81650) -- (0.00000, 0.00000, 0.00000);
\draw[edge,back,postaction={decorate}] (1.33333, 0.47140, -0.81650) -- (0.00000, 0.00000, 0.00000);
\draw[edge,back,postaction={decorate}] (0.00000, 1.41421, 2.44949) -- (0.00000, 1.41421, 0.81650);
\draw[edge,back,postaction={decorate}] (1.33333, 1.88562, 0.00000) -- (0.00000, 1.41421, 0.81650);
\draw[edge,back,postaction={decorate}] (1.33333, 1.88562, 0.00000) -- (1.33333, 0.47140, -0.81650);
\draw[edge,back,postaction={decorate}] (2.66667, -0.47140, -0.81650) -- (1.33333, 0.47140, -0.81650);
\draw[edge,back,postaction={decorate}] (1.33333, 1.88562, 1.63299) -- (1.33333, 1.88562, 0.00000);
\draw[edge,back,postaction={decorate}] (2.66667, 0.94281, 0.00000) -- (1.33333, 1.88562, 0.00000);
\node[vertex] at (1.33333, 0.47140, -0.81650)     {};
\node[vertex] at (1.33333, 1.88562, 0.00000)     {};
\node[vertex] at (0.00000, 1.41421, 0.81650)     {};
\fill[facet] (1.33333, -0.94281, 1.63299) -- (0.00000, 0.00000, 1.63299) -- (0.00000, 0.00000, 0.00000) -- (1.33333, -0.94281, 0.00000) -- cycle {};
\fill[facet] (1.33333, 0.47140, 2.44949) -- (0.00000, 1.41421, 2.44949) -- (0.00000, 0.00000, 1.63299) -- (1.33333, -0.94281, 1.63299) -- cycle {};
\fill[facet] (2.66667, -0.47140, 0.81650) -- (1.33333, -0.94281, 1.63299) -- (1.33333, -0.94281, 0.00000) -- (2.66667, -0.47140, -0.81650) -- cycle {};
\fill[facet] (2.66667, 0.94281, 1.63299) -- (2.66667, -0.47140, 0.81650) -- (1.33333, -0.94281, 1.63299) -- (1.33333, 0.47140, 2.44949) -- cycle {};
\fill[facet] (2.66667, 0.94281, 1.63299) -- (1.33333, 1.88562, 1.63299) -- (0.00000, 1.41421, 2.44949) -- (1.33333, 0.47140, 2.44949) -- cycle {};
\fill[facet] (2.66667, -0.47140, -0.81650) -- (2.66667, 0.94281, 0.00000) -- (2.66667, 0.94281, 1.63299) -- (2.66667, -0.47140, 0.81650) -- cycle {};
\draw[edge,postaction={decorate}] (0.00000, 0.00000, 1.63299) -- (0.00000, 0.00000, 0.00000);
\draw[edge,postaction={decorate}] (1.33333, -0.94281, 0.00000) -- (0.00000, 0.00000, 0.00000);
\draw[edge,postaction={decorate}] (0.00000, 1.41421, 2.44949) -- (0.00000, 0.00000, 1.63299);
\draw[edge,postaction={decorate}] (1.33333, -0.94281, 1.63299) -- (0.00000, 0.00000, 1.63299);
\draw[edge,postaction={decorate}] (1.33333, 0.47140, 2.44949) -- (0.00000, 1.41421, 2.44949);
\draw[edge,postaction={decorate}] (1.33333, 1.88562, 1.63299) -- (0.00000, 1.41421, 2.44949);
\draw[edge,postaction={decorate}] (1.33333, -0.94281, 1.63299) -- (1.33333, -0.94281, 0.00000);
\draw[edge,postaction={decorate}] (2.66667, -0.47140, -0.81650) -- (1.33333, -0.94281, 0.00000);
\draw[edge,postaction={decorate}] (1.33333, 0.47140, 2.44949) -- (1.33333, -0.94281, 1.63299);
\draw[edge,postaction={decorate}] (2.66667, -0.47140, 0.81650) -- (1.33333, -0.94281, 1.63299) ;
\draw[edge,postaction={decorate}] (2.66667, 0.94281, 1.63299) -- (1.33333, 0.47140, 2.44949);
\draw[edge,postaction={decorate}] (2.66667, 0.94281, 1.63299) -- (1.33333, 1.88562, 1.63299);
\draw[edge,postaction={decorate}] (2.66667, 0.94281, 1.63299) -- (2.66667, -0.47140, 0.81650);
\draw[edge,postaction={decorate}] (2.66667, 0.94281, 1.63299) -- (2.66667, 0.94281, 0.00000);
\draw[edge,postaction={decorate}] (2.66667, -0.47140, 0.81650) -- (2.66667, -0.47140, -0.81650);
\draw[edge,postaction={decorate}] (2.66667, 0.94281, 0.00000) -- (2.66667, -0.47140, -0.81650);
\node[vertex] at (0.00000, 0.00000, 0.00000)     {};
\node[vertex] at (0.00000, 0.00000, 1.63299)     {};
\node[vertex] at (0.00000, 1.41421, 2.44949)     {};
\node[vertex] at (1.33333, -0.94281, 0.00000)     {};
\node[vertex] at (1.33333, -0.94281, 1.63299)     {};
\node[vertex] at (2.66667, 0.94281, 1.63299)     {};
\node[vertex] at (1.33333, 0.47140, 2.44949)     {};
\node[vertex] at (1.33333, 1.88562, 1.63299)     {};
\node[vertex] at (2.66667, -0.47140, -0.81650)     {};
\node[vertex] at (2.66667, -0.47140, 0.81650)     {};
\node[vertex] at (2.66667, 0.94281, 0.00000)     {};
\end{tikzpicture}
	}
	\caption{Linear orientations of the graphs of the graphical zonotopes corresponding to the acyclic reorientation posets of \cref{fig:acyclicReorientationPosets}. The first is the classical permutahedron (which has been rescaled to fit the size of the others). The perspective is chosen so that the minimal reorientation appears at the bottom of the picture.}
	\label{fig:graphicalZonotopesIntro}
\end{figure}

\medskip
Assume now that~$D$ is skeletal.
As proved by N.~Reading~\cite{Reading-HopfAlgebras}, any congruence~$\equiv$ of the acyclic reorientation lattice~$\AR$ defines a quotient fan, obtained 
\begin{itemize}
\item either from the graphical fan of~$D$ by glueing regions corresponding to acyclic reorientations of~$D$ that belong to the same $\equiv$-class,
\item or from the shards associated to the join irreducibles of~$\AR$ uncontracted by~$\equiv$.
\end{itemize}
In \cref{subsec:graphicalZonotopeAssociahedraShardPolytopesQuotientopes}, we construct polytopal realizations of all quotient fans, mimicking the approaches of~\cite{PadrolPilaudRitter}.
Some of the resulting quotientopes are illustrated in \cref{fig:graphicalZonotopesAssociahedraIntro}.
The following statement is proved in~\cref{thm:MinkowskiSumAssociahedra,thm:MinkowskiSumShardPolytopes}.

\begin{theorem}
\label{thm:quotientopes}
When~$D$ is skeletal, the quotient fan of any congruence of the acyclic reorientation lattice~$\AR$ is the normal fan of
\begin{itemize}
\item a Minkowski sum of associahedra of~\cite{HohlwegLange}, and
\item a Minkowski sum of shard polytopes of~\cite{PadrolPilaudRitter}.
\end{itemize}
\end{theorem}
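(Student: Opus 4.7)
The plan is to prove parts~(a) and~(b) of Theorem~\ref{thm:quotientopes} separately, in both cases following the quotientope construction philosophy of \cite{PadrolPilaudRitter}, but adapted to the acyclic reorientation setting via the rope combinatorics developed in Sections~\ref{subsec:ropes}--\ref{subsec:subropes}. The common underlying principle is that the normal fan of a Minkowski sum is the common refinement of the summand normal fans, so I will build each summand so that its normal fan coarsens the graphical fan $\Fan$ and contributes exactly the right collection of shards.

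For the shard polytope realization (part~(b)), I would first introduce, for each rope $\rope$ of the skeletal directed acyclic graph $D$, a shard polytope $\shardPolytope \subseteq \R^V$ whose normal fan is refined by $\Fan$ and whose walls (apart from those forced by the ambient graphical arrangement $\Arrang$) are precisely the facets of the shard $\shard_\rope$ associated to the join irreducible corresponding to $\rope$. The explicit construction would be as a convex hull of indicator vectors of certain vertex subsets read off from the data of $\rope$, directly generalizing the construction of \cite{PadrolPilaudRitter} for arcs. Given a congruence $\equiv$ of $\AR$ with associated subrope ideal $\mathcal{I}_\equiv$ (as in Section~\ref{subsec:subropes}), I would form the Minkowski sum $\sum_{\rope \notin \mathcal{I}_\equiv} \shardPolytope[\rope]$ and verify that its normal fan coincides with the quotient fan $\Fan[\equiv]$. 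This reduces to checking that (i)~every wall of $\Fan$ separating two distinct $\equiv$-classes appears as a wall of at least one summand, and (ii)~conversely no spurious walls are introduced. Both checks follow from the rope/shard dictionary and the characterization of congruences via subrope ideals.

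For the associahedral realization (part~(a)), I would first single out a family of \emph{Cambrian-type} coherent congruences of $\AR$ (from Section~\ref{subsec:coherentCongruencesPrincipalCongruences}) whose quotient fans are realized by suitably oriented Hohlweg-Lange associahedra of \cite{HohlwegLange}, generalizing how the sylvester and Cambrian congruences of the weak order yield the Loday and Hohlweg-Lange associahedra inside the permutahedron. For an arbitrary congruence $\equiv$, I would express its subrope ideal as an intersection of the subrope ideals of finitely many such Cambrian-type congruences, so that $\equiv$ is their join in the lattice of congruences. Since Minkowski summation corresponds to common refinement of normal fans, and join of congruences corresponds to common refinement of quotient fans, the Minkowski sum of the associated associahedra realizes $\Fan[\equiv]$.

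The main obstacle will be the construction and analysis of the shard polytopes $\shardPolytope$ indexed by ropes: one must define them so that their vertex sets and walls admit a clean combinatorial description, and in particular prove that the walls of $\shardPolytope$ are essentially only those of the shard $\shard_\rope$. This compatibility check between ropes, shards, and the subrope order is analogous to, but more intricate than, the arc/shard correspondence used in \cite{PadrolPilaudRitter}, because ropes may traverse many arcs of $D$. A secondary obstacle is the associahedral decomposition: one must show that every subrope ideal arises as an intersection of Cambrian-type subrope ideals, which reduces to a combinatorial statement about the forcing order on join irreducibles that should follow from the subrope characterization of Section~\ref{subsec:subropes}.
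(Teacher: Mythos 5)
Your overall strategy matches the paper's: the paper proves the statement in \cref{thm:MinkowskiSumAssociahedra} (Minkowski sum of Cambrian associahedra attached to principal congruences) and \cref{thm:MinkowskiSumShardPolytopes} (Minkowski sum of shard polytopes indexed by ropes), which is exactly the two-pronged plan you sketch. However, the combinatorial direction is inverted in both parts, in a way that would make the proposed constructions realize the wrong fans, so this is a genuine gap rather than a cosmetic slip.

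For the shard polytope realization you propose $\sum_{\rope \notin \bb{I}_\equiv} \shardPolytope$, but the quotient fan~$\Fan[\equiv]$ keeps exactly the shards~$\shard_\rope$ for~$\rope \in \bb{I}_\equiv$, the ropes of the join irreducibles \emph{uncontracted} by~$\equiv$; the correct Minkowski sum is therefore~$\sum_{\rope \in \bb{I}_\equiv} \shardPolytope$, and summing over the complement would reintroduce precisely the walls one wants to erase. For the associahedra, your chain ``intersection of subrope ideals $\leftrightarrow$ join of congruences $\leftrightarrow$ common refinement of quotient fans'' breaks at the last step. Because the bijection between congruences and lower ideals of the subrope order is order-reversing, the join of congruences does correspond to the intersection of ideals, but it is realized geometrically by the common \emph{coarsening} of the quotient fans, not their common refinement. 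A Minkowski sum computes the common \emph{refinement}, which corresponds to the \emph{meet} of congruences and hence to the \emph{union} of the subrope ideals. The paper accordingly writes~$\bb{I}_\equiv = \bb{I}_{\rope_1} \cup \dots \cup \bb{I}_{\rope_p}$ as a union of principal lower ideals generated by the maximal elements~$\rope_1, \dots, \rope_p$ of~$\bb{I}_\equiv$, and sums the principal-congruence associahedra~$\Asso[\rope_i]$. This union decomposition always exists trivially; the intersection decomposition you call for does not, since intersections of principal (or coherent) lower ideals fail to produce all lower ideals of the subrope order, so the step you flag as a secondary obstacle would in fact be insurmountable.
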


\enlargethispage{1cm}
We also conjecture that the quotient fan of any coherent congruence of~$\AR$ can be realized by deleting inequalities in the facet description of the graphical zonotope of~$D$, generalizing the classical constructions of the associahedra and permutreehedra~\cite{ShniderSternberg, Loday, HohlwegLange,PilaudPons-permutrees, AlbertinPilaudRitter}.
In fact, for the Cambrian congruences, the quotientope defined by this inequality description seems to always coincide with the quotientope described as a Minkowski sum of shard polytopes in \cref{thm:quotientopes}.
In this paper, we just give a simple proof of this statement for the sylvester congruence, which is illustrated in \cref{fig:graphicalZonotopesAssociahedraIntro}.
Note that this contruction fails for congruences beyond the coherent congruences, as already discussed in~\cite{AlbertinPilaudRitter} for congruences of the weak order.

\begin{figure}[!h]
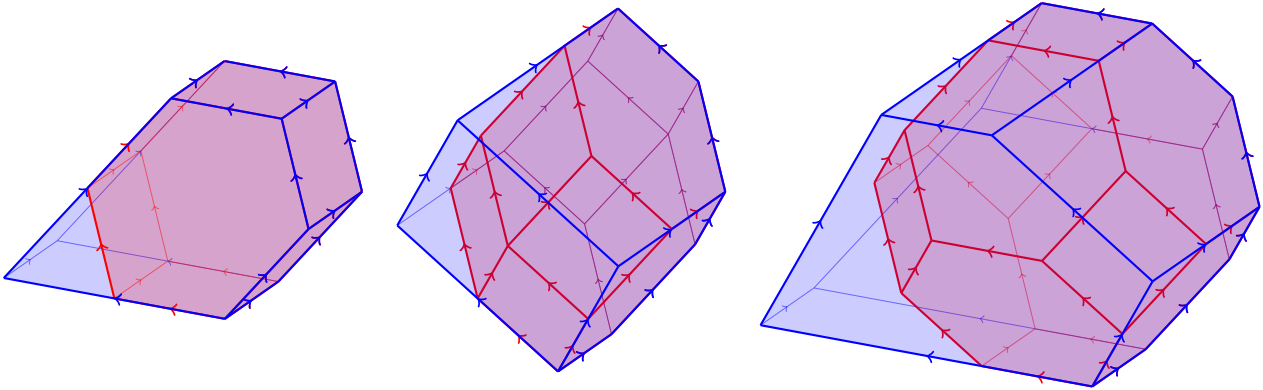

	\centerline{

\begin{tikzpicture}%
	[x={(-0.366215cm, -0.789554cm)},
	y={(0.235950cm, -0.590693cm)},
	z={(0.900119cm, -0.166391cm)},
	scale=1.000000,
	back/.style={very thin, opacity=0.5},
	edgeZono/.style={color=red, thick, decoration={markings, mark=at position 0.5 with {\arrow{>}}}},
	edgeAsso/.style={color=blue, thick, decoration={markings, mark=at position 0.5 with {\arrow{>}}}},
	facetZono/.style={fill=red,fill opacity=.2},
	facetAsso/.style={fill=blue,fill opacity=.2},
	vertex/.style={}]
%
%

\coordinate (0.00000, 0.00000, 0.00000) at (0.00000, 0.00000, 0.00000);
\coordinate (0.00000, 0.00000, 1.63299) at (0.00000, 0.00000, 1.63299);
\coordinate (1.33333, -0.94281, 0.00000) at (1.33333, -0.94281, 0.00000);
\coordinate (1.33333, -0.94281, 1.63299) at (1.33333, -0.94281, 1.63299);
\coordinate (4.00000, 0.00000, 1.63299) at (4.00000, 0.00000, 1.63299);
\coordinate (1.33333, 0.47140, 2.44949) at (1.33333, 0.47140, 2.44949);
\coordinate (4.00000, 0.00000, -1.63299) at (4.00000, 0.00000, -1.63299);
\coordinate (2.66667, -0.47140, 2.44949) at (2.66667, -0.47140, 2.44949);
\coordinate (2.66667, 0.94281, -1.63299) at (2.66667, 0.94281, -1.63299);
\coordinate (2.66667, 0.94281, 1.63299) at (2.66667, 0.94281, 1.63299);
\draw[edgeAsso,postaction={decorate},back] (2.66667, 0.94281, -1.63299) -- (0.00000, 0.00000, 0.00000);
\draw[edgeAsso,postaction={decorate},back] (4.00000, 0.00000, -1.63299) -- (2.66667, 0.94281, -1.63299);
\draw[edgeAsso,postaction={decorate},back] (2.66667, 0.94281, 1.63299) -- (2.66667, 0.94281, -1.63299);
\node[vertex] at (2.66667, 0.94281, -1.63299)     {};
\fill[facetAsso] (1.33333, -0.94281, 1.63299) -- (0.00000, 0.00000, 1.63299) -- (0.00000, 0.00000, 0.00000) -- (1.33333, -0.94281, 0.00000) -- cycle {};
\fill[facetAsso] (2.66667, -0.47140, 2.44949) -- (1.33333, -0.94281, 1.63299) -- (0.00000, 0.00000, 1.63299) -- (1.33333, 0.47140, 2.44949) -- cycle {};
\fill[facetAsso] (1.33333, 0.47140, 2.44949) -- (2.66667, 0.94281, 1.63299) -- (4.00000, 0.00000, 1.63299) -- (2.66667, -0.47140, 2.44949) -- cycle {};
\fill[facetAsso] (4.00000, 0.00000, 1.63299) -- (2.66667, -0.47140, 2.44949) -- (1.33333, -0.94281, 1.63299) -- (1.33333, -0.94281, 0.00000) -- (4.00000, 0.00000, -1.63299) -- cycle {};
\coordinate (0.00000, 0.00000, 0.00000) at (0.00000, 0.00000, 0.00000);
\coordinate (0.00000, 0.00000, 1.63299) at (0.00000, 0.00000, 1.63299);
\coordinate (1.33333, -0.94281, 0.00000) at (1.33333, -0.94281, 0.00000);
\coordinate (1.33333, -0.94281, 1.63299) at (1.33333, -0.94281, 1.63299);
\coordinate (1.33333, 0.47140, -0.81650) at (1.33333, 0.47140, -0.81650);
\coordinate (4.00000, 0.00000, 1.63299) at (4.00000, 0.00000, 1.63299);
\coordinate (1.33333, 0.47140, 2.44949) at (1.33333, 0.47140, 2.44949);
\coordinate (2.66667, -0.47140, -0.81650) at (2.66667, -0.47140, -0.81650);
\coordinate (4.00000, 0.00000, 0.00000) at (4.00000, 0.00000, 0.00000);
\coordinate (2.66667, -0.47140, 2.44949) at (2.66667, -0.47140, 2.44949);
\coordinate (2.66667, 0.94281, 0.00000) at (2.66667, 0.94281, 0.00000);
\coordinate (2.66667, 0.94281, 1.63299) at (2.66667, 0.94281, 1.63299);
\draw[edgeZono,postaction={decorate},back] (1.33333, 0.47140, -0.81650) -- (0.00000, 0.00000, 0.00000);
\draw[edgeZono,postaction={decorate},back] (2.66667, -0.47140, -0.81650) -- (1.33333, 0.47140, -0.81650);
\draw[edgeZono,postaction={decorate},back] (2.66667, 0.94281, 0.00000) -- (1.33333, 0.47140, -0.81650);
\draw[edgeZono,postaction={decorate},back] (4.00000, 0.00000, 0.00000) -- (2.66667, 0.94281, 0.00000);
\draw[edgeZono,postaction={decorate},back] (2.66667, 0.94281, 1.63299) -- (2.66667, 0.94281, 0.00000);
\node[vertex] at (1.33333, 0.47140, -0.81650)     {};
\node[vertex] at (2.66667, 0.94281, 0.00000)     {};
\fill[facetZono] (1.33333, -0.94281, 1.63299) -- (0.00000, 0.00000, 1.63299) -- (0.00000, 0.00000, 0.00000) -- (1.33333, -0.94281, 0.00000) -- cycle {};
\fill[facetZono] (2.66667, -0.47140, 2.44949) -- (1.33333, -0.94281, 1.63299) -- (0.00000, 0.00000, 1.63299) -- (1.33333, 0.47140, 2.44949) -- cycle {};
\fill[facetZono] (1.33333, 0.47140, 2.44949) -- (2.66667, 0.94281, 1.63299) -- (4.00000, 0.00000, 1.63299) -- (2.66667, -0.47140, 2.44949) -- cycle {};
\fill[facetZono] (4.00000, 0.00000, 1.63299) -- (2.66667, -0.47140, 2.44949) -- (1.33333, -0.94281, 1.63299) -- (1.33333, -0.94281, 0.00000) -- (2.66667, -0.47140, -0.81650) -- (4.00000, 0.00000, 0.00000) -- cycle {};
\draw[edgeZono,postaction={decorate}] (0.00000, 0.00000, 1.63299) -- (0.00000, 0.00000, 0.00000);
\draw[edgeZono,postaction={decorate}] (1.33333, -0.94281, 0.00000) -- (0.00000, 0.00000, 0.00000);
\draw[edgeZono,postaction={decorate}] (1.33333, -0.94281, 1.63299) -- (0.00000, 0.00000, 1.63299);
\draw[edgeZono,postaction={decorate}] (1.33333, 0.47140, 2.44949) -- (0.00000, 0.00000, 1.63299);
\draw[edgeZono,postaction={decorate}] (1.33333, -0.94281, 1.63299) -- (1.33333, -0.94281, 0.00000);
\draw[edgeZono,postaction={decorate}] (2.66667, -0.47140, -0.81650) -- (1.33333, -0.94281, 0.00000);
\draw[edgeZono,postaction={decorate}] (2.66667, -0.47140, 2.44949) -- (1.33333, -0.94281, 1.63299);
\draw[edgeZono,postaction={decorate}] (4.00000, 0.00000, 1.63299) -- (4.00000, 0.00000, 0.00000);
\draw[edgeZono,postaction={decorate}] (4.00000, 0.00000, 1.63299) -- (2.66667, -0.47140, 2.44949);
\draw[edgeZono,postaction={decorate}] (4.00000, 0.00000, 1.63299) -- (2.66667, 0.94281, 1.63299);
\draw[edgeZono,postaction={decorate}] (2.66667, -0.47140, 2.44949) -- (1.33333, 0.47140, 2.44949);
\draw[edgeZono,postaction={decorate}] (2.66667, 0.94281, 1.63299) -- (1.33333, 0.47140, 2.44949);
\draw[edgeZono,postaction={decorate}] (4.00000, 0.00000, 0.00000) -- (2.66667, -0.47140, -0.81650);
\node[vertex] at (0.00000, 0.00000, 0.00000)     {};
\node[vertex] at (0.00000, 0.00000, 1.63299)     {};
\node[vertex] at (1.33333, -0.94281, 0.00000)     {};
\node[vertex] at (1.33333, -0.94281, 1.63299)     {};
\node[vertex] at (4.00000, 0.00000, 1.63299)     {};
\node[vertex] at (1.33333, 0.47140, 2.44949)     {};
\node[vertex] at (2.66667, -0.47140, -0.81650)     {};
\node[vertex] at (4.00000, 0.00000, 0.00000)     {};
\node[vertex] at (2.66667, -0.47140, 2.44949)     {};
\node[vertex] at (2.66667, 0.94281, 1.63299)     {};
\draw[edgeAsso,postaction={decorate}] (0.00000, 0.00000, 1.63299) -- (0.00000, 0.00000, 0.00000);
\draw[edgeAsso,postaction={decorate}] (1.33333, -0.94281, 0.00000) -- (0.00000, 0.00000, 0.00000);
\draw[edgeAsso,postaction={decorate}] (1.33333, -0.94281, 1.63299) -- (0.00000, 0.00000, 1.63299);
\draw[edgeAsso,postaction={decorate}] (1.33333, 0.47140, 2.44949) -- (0.00000, 0.00000, 1.63299);
\draw[edgeAsso,postaction={decorate}] (1.33333, -0.94281, 1.63299) -- (1.33333, -0.94281, 0.00000);
\draw[edgeAsso,postaction={decorate}] (4.00000, 0.00000, -1.63299) -- (1.33333, -0.94281, 0.00000);
\draw[edgeAsso,postaction={decorate}] (2.66667, -0.47140, 2.44949) -- (1.33333, -0.94281, 1.63299);
\draw[edgeAsso,postaction={decorate}] (4.00000, 0.00000, 1.63299) -- (4.00000, 0.00000, -1.63299);
\draw[edgeAsso,postaction={decorate}] (4.00000, 0.00000, 1.63299) -- (2.66667, -0.47140, 2.44949);
\draw[edgeAsso,postaction={decorate}] (4.00000, 0.00000, 1.63299) -- (2.66667, 0.94281, 1.63299);
\draw[edgeAsso,postaction={decorate}] (2.66667, -0.47140, 2.44949) -- (1.33333, 0.47140, 2.44949);
\draw[edgeAsso,postaction={decorate}] (2.66667, 0.94281, 1.63299) -- (1.33333, 0.47140, 2.44949);
\node[vertex] at (0.00000, 0.00000, 0.00000)     {};
\node[vertex] at (0.00000, 0.00000, 1.63299)     {};
\node[vertex] at (1.33333, -0.94281, 0.00000)     {};
\node[vertex] at (1.33333, -0.94281, 1.63299)     {};
\node[vertex] at (4.00000, 0.00000, 1.63299)     {};
\node[vertex] at (1.33333, 0.47140, 2.44949)     {};
\node[vertex] at (4.00000, 0.00000, -1.63299)     {};
\node[vertex] at (2.66667, -0.47140, 2.44949)     {};
\node[vertex] at (2.66667, 0.94281, 1.63299)     {};
\end{tikzpicture}
			\raisebox{-.7cm}{\input{zonotopeAssociahedron4Intro}}
			\raisebox{-.9cm}{\input{zonotopeAssociahedron7Intro}}
	}
	\caption{The graphical zonotopes (red) and the associahedra (blue) for the acyclic reorientation lattices of \cref{fig:canonicalJoinComplexes}. All associahedra are obtained by deleting inequalities in the facet descriptions of the corresponding graphical zonotope. The perspective is chosen so that the minimal reorientation appears at the bottom of the picture.}
	\label{fig:graphicalZonotopesAssociahedraIntro}
\end{figure}


\subsection*{Posets of regions}

Finally, we want to discuss the connections of our results to the posets of regions of arbitrary hyperplane arrangements introduced by A.~Bj\"orner, P.~Edelman and G.~Ziegler in~\cite{Edelman, BjornerEdelmanZiegler}.
For a central hyperplane arrangement~$\c{H}$ and a base region~$\polytope{B}$ of~$\c{H}$, the \defn{poset of regions}~$\PR$ is the partial order on all regions of~$\c{H}$ defined by inclusion of the sets of hyperplanes separating each region from~$\polytope{B}$.
It was proved in~\cite{BjornerEdelmanZiegler} that
\begin{itemize}
\item the base region~$\polytope{B}$ is simplicial when the poset of regions~$\PR$ is a lattice,
\item the poset of regions~$\PR$ is a lattice when~$\c{H}$ is simplicial,
\item the poset of regions~$\PR$ is a lattice when~$\c{H}$ is supersolvable and~$\polytope{B}$ is a canonical base region of~$\c{H}$ in the sense of~\cite{BjornerEdelmanZiegler}.
\end{itemize}
Moreover, N.~Reading showed in~\cite{Reading-PosetRegionsChapter} that the poset of regions~$\PR$ is a congruence uniform lattice if and only if~$\c{H}$ is \defn{tight} with respect to~$\polytope{B}$, meaning that for each region~$\polytope{R}$ of~$\c{H}$, every pair of upper (resp.~lower) facets of~$\polytope{R}$ with respect to~$\polytope{B}$ intersects in a codimension~$2$ face.

\medskip
In view of these properties, it is relevant to characterize the directed acyclic graphs~$D$ whose graphical arrangements are simplicial, tight, or supersolvable.
Recall that a \defn{chord} of an undirected cycle~$C$ is an edge joining two non-consecutive vertices of~$C$.
An undirected graph~$G$ is \defn{chordal} (resp.~\defn{chordful}) if for any cycle~$C$ of length at least~$4$ contained in~$G$, at least one chord (resp.~all chords) of~$C$ also belongs to~$G$.
The directed graph~$D$ is chordal (resp.~chordful) if its underlying undirected graph is.
Note that chordful graphs are also known as block graphs in the literature.
Observe that chordful implies skeletal, and skeletal implies chordal, but none of the reverse directions holds.
For instance, any forest and any tournament is chordful, skeletal and chordal.
The first point of the next statement is proved in \cref{prop:simplicialGraphicalFan}, the second follows from~\cref{prop:congruenceUniform}, and the last was proved in~\cite[Coro.~4.10]{Stanley-IntroductionHyperplaneArrangements}.
It is illustrated in \cref{fig:simplicialTightSupersolvable}.

\begin{theorem}
The graphical arrangement of~$D$ is
\begin{itemize}
\item simplicial if and only if $D$ is chordful,
\item tight if and only if~$D$ is skeletal,
\item supersolvable if and only if $D$ is chordal.
\end{itemize}
\end{theorem}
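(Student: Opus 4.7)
The plan is to address the three equivalences separately, since each relies on quite different tools.

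For the simplicial characterization, I will use the fact that a region of the graphical arrangement corresponds to an acyclic orientation $E$ of $D$ whose facets are in bijection with the flippable arcs of $E$, which as observed in the introduction are exactly the arcs of the transitive reduction of $E$. Since the essential rank of the graphical arrangement of $D$ equals $|V|$ minus the number of connected components of $D$, a region is simplicial if and only if the transitive reduction of the corresponding acyclic orientation is a spanning forest of $D$. So the task reduces to showing that $D$ is chordful if and only if every acyclic orientation of $D$ has a spanning forest as transitive reduction. The forward direction is clean: in a chordful $D$, every block of the underlying graph is a clique, the transitive reduction of an acyclic tournament on a clique is a Hamiltonian path, and these Hamiltonian paths glue together at the cut vertices into a spanning forest. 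For the converse, I invoke the classical characterization of chordful graphs as the diamond-free chordal graphs, so that a non-chordful $D$ contains either a chordless cycle of length at least $4$ or an induced diamond $K_4 - e$. In either case, I orient the induced subgraph so that two internally disjoint directed paths share both endpoints, extend the orientation acyclically by placing the remaining vertices at the top of the topological order (so that no shortcut can pass through them), and conclude that both paths survive in the transitive reduction, producing an undirected cycle and hence more facets than the dimension of the region.

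For the tight characterization, I appeal to two results. By \cref{thm:propertiesAcyclicReorientationLattice}, the acyclic reorientation lattice $\AR$ is congruence uniform if and only if $D$ is skeletal, and by N.~Reading's characterization in \cite{Reading-PosetRegionsChapter}, the poset of regions $\PR$ of an arrangement $\c{H}$ with base region $\polytope{B}$ is a congruence uniform lattice if and only if $\c{H}$ is tight with respect to $\polytope{B}$. Taking $\c{H}$ to be the graphical arrangement of $D$ and $\polytope{B}$ the region of the reference orientation~$D$, the poset of regions $\PR$ coincides with $\AR$, which immediately gives the equivalence.

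The supersolvable characterization is the classical theorem of Stanley \cite[Coro.~4.10]{Stanley-IntroductionHyperplaneArrangements}, asserting that the graphical arrangement of $D$ is supersolvable if and only if the underlying graph of $D$ is chordal, via perfect elimination orderings providing the modular coatoms.

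The main obstacle is the converse direction of the simplicial case, and especially the diamond subcase: one must check that the presence of the chord of the diamond does not spoil the construction, and that both of the two ``triangular shortcuts'' survive in the transitive reduction, producing exactly one more facet than the ambient dimension.
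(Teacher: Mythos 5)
Your proof is correct on all three points, and for the second and third it coincides with the paper: the tight characterization is \cref{prop:congruenceUniform} combined with Reading's theorem that tightness is equivalent to the poset of regions being a congruence uniform lattice, and the supersolvable characterization is Stanley's result cited verbatim.

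The interesting divergence is the simplicial characterization, which the paper proves as \cref{prop:simplicialGraphicalFan}. Both you and the paper start from the same reduction (a region is simplicial iff the transitive reduction of the corresponding acyclic reorientation is a spanning forest), but the two directions are argued differently. For chordful $\Rightarrow$ simplicial, the paper reasons by contraposition, observing that an undirected cycle in the transitive reduction of an acyclic reorientation cannot induce a tournament; you give a direct constructive argument via the block decomposition, which is slick but tacitly uses the fact that the transitive reduction of an acyclic orientation splits block by block (true, because a directed path between two vertices of a block never leaves that block). For the converse, you invoke Howorka's characterization of block graphs as diamond-free chordal graphs and handle the chordless-cycle and induced-diamond cases separately; the paper sidesteps the structure theorem and the case split by taking an arbitrary cycle $C$ of length at least four with two non-adjacent vertices $u,v$, ordering the vertices so one arc of $C \setminus \{u,v\}$ precedes $\{u,v\}$ and the other follows, with the remaining vertices last, and checking that the resulting transitive reduction contains two distinct paths between a fixed pair of cycle vertices. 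Both approaches are sound; the paper's avoids Howorka's theorem, yours avoids the slightly delicate analysis of which cycle arcs survive in the paper's construction. Your flagged concern about the diamond case does resolve favorably: with the chord's endpoints as source and sink, only the chord itself drops out of the transitive reduction, the four outer edges form a cycle, and placing the other vertices above the sink blocks any outside shortcut, exactly as you anticipate.
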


\begin{figure}[h]
	\centerline{
		\begin{tabular}{c@{\quad}c@{\quad}c@{\quad}c} 
			simplicial &
			tight but not simplicial &
			supersolvable but not tight &
			not supersolvable
			\\
			\includegraphics[scale=.9]{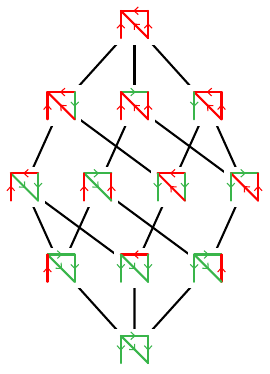} &
			\includegraphics[scale=.9]{acyclicReorientationPoset3} &
			\includegraphics[scale=.9]{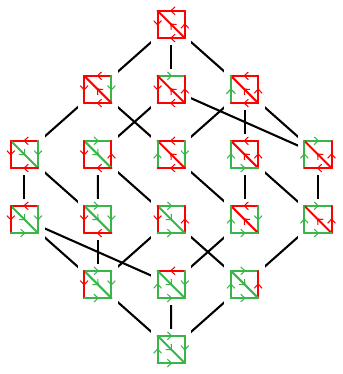} &
			\includegraphics[scale=.9]{acyclicReorientationPoset5}
		\end{tabular}
	}
	\caption{Some acyclic reorientation lattices. The first is simplicial, the second is not simplicial but tight, and the third is not tight but supersolvable, and the last is not supersolvable.}
	\vspace{-.4cm}
	\label{fig:simplicialTightSupersolvable}
\end{figure}

\enlargethispage{.5cm}
Conversely, it is natural to wonder to what extent the results of this paper can be transported to the poset of regions of arbitrary hyperplane arrangements.
In \cref{sec:posetRegions}, we translate the condition of \cref{thm:characterizationLattice} to natural equivalent geometric conditions on the hyperplane arrangement~$\c{H}$ and the base region~$\polytope{B}$.
We show that these conditions are necessary, but not sufficient, for the poset of regions~$\PR$ to be a lattice.


\subsection*{Open problems}

We close this overview by observing that the paper opens many combinatorial and geometric research directions.
We tried to underline some of the particularly puzzling questions in \cref{pb:regularCoverGraph,,pb:partialReorientationsAcyclic,,pb:TamariRegular,,pb:allCambrianRegular,,pb:allCambrianSameGraphs,,pb:descriptionElementPosetsCoherentCongruences,,pb:descriptionPartialReorientations,,pb:HamiltonianQuotients,,pb:allCambrianSameFaceLattices,,pb:deformationConesQuotientopes,,pb:vertexDescriptionAssociahedron,,pb:CambrianAssociahedraRemovahedra,,pb:coherentQuotientopesRemovahedra,,pb:coherentQuotientopesRemovahedra}.

\addtocontents{toc}{ \vspace{.1cm} }
\addtocontents{toc}{\protect\setcounter{tocdepth}{2}}


\section{Characterization of acyclic reorientation lattices}
\label{sec:characterizationLattice}

In this section, we show \cref{thm:characterizationLattice} and provide a characterization of the sets of arcs reversed in the acyclic reorientations of~$D$ and explicit formulas for the join and meet operations in the case where the acyclic reorientation poset~$\AR$ is a lattice.

\medskip
We start with an obvious necessary condition for~$\AR$ to be a lattice.
We give a self-contained proof although it is just a specialization of~\cite[Thm.~3.1]{BjornerEdelmanZiegler}.

\begin{lemma}
\label{lem:transitiveReductionForest}
If $\AR$ is a lattice, then the transitive reduction of~$D$ is a forest.
\end{lemma}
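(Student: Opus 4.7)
The plan is to prove the contrapositive: assuming the transitive reduction of $D$ contains an undirected cycle $C = v_0 v_1 \cdots v_{k-1} v_0$, I would exhibit two acyclic reorientations of $D$ with no unique least upper bound in $\AR$. Orient $C$ cyclically by $v_0 \to v_1 \to \cdots \to v_{k-1} \to v_0$, and let $F$ (resp.~$B$) be the set of arcs of $C$ oriented in $D$ in agreement with (resp.~against) this cyclic direction. A key preliminary is that $|F|, |B| \geq 2$: if $F$ were a singleton $\{v_i \to v_{i+1}\}$, the remaining $k - 1 \geq 2$ arcs would all lie in $B$ and would provide a directed path of length $k - 1 \geq 2$ in $D$ from $v_i$ to $v_{i+1}$ going the long way around $C$, contradicting that $v_i \to v_{i+1}$ lies in the transitive reduction.

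I would then pick two distinct arcs $f_1, f_2 \in F$ and let $X_1, X_2$ be the reorientations of $D$ obtained by reversing $F \setminus \{f_1\}$ and $F \setminus \{f_2\}$ respectively (reducing to the atoms reversing $f_1$ and $f_2$ when $|F| = 2$). The technical heart is the acyclicity lemma: reversing any proper subset $R \subsetneq F$ yields an acyclic reorientation of $D$, whereas reversing all of $F$ does not, since on $C$ this turns every arc backward and creates a directed cycle in the reverse cyclic direction. The acyclicity is argued by contradiction: a directed cycle in the partially reoriented graph must close along a directed $D$-path whose endpoints are consecutive on $C$ and connected by an unreversed forward arc of $F \setminus R$, exhibiting the forbidden shortcut around an arc of the transitive reduction. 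Hence $X_1, X_2 \in \AR$ while their Boolean join (reversing all of $F$) is not acyclic, so every common upper bound in $\AR$ has reverse set strictly containing $F$ and must contain at least one arc of $B$ to break the $C$-cycle.

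Finally I would show that for each $b \in B$ there exists a minimal common upper bound $R_b$ of $\{X_1, X_2\}$ in $\AR$ with $R_b \cap B = \{b\}$. Starting from the reverse set $F \cup \{b\}$, which is acyclic on $C$ alone since $b$ is then the unique forward $C$-arc, one iteratively adjoins arcs of $D \setminus C$ to break any residual cycles on $D$. No further $B$-arc is ever required, since any residual cycle cannot be supported entirely on $C$ and must traverse an arc outside $C$ that can be reversed instead. The reorientations $\{R_b\}_{b \in B}$ then contain pairwise distinct $B$-arcs and are therefore pairwise incomparable minimal upper bounds of $\{X_1, X_2\}$ in $\AR$. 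With $|B| \geq 2$, this rules out a unique join and shows that $\AR$ is not a lattice.

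The main obstacle is the acyclicity bookkeeping: carefully tracking how a hypothetical directed cycle in a partially reoriented $D$ navigates between reversed $C$-arcs and unreversed $D$-paths, in order to extract the forbidden shortcut, and separately, verifying that the iterative construction of each $R_b$ terminates without ever forcing the reversal of a further $B$-arc.
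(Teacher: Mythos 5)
Your argument hinges on an ``acyclicity lemma'' asserting that reversing any proper subset $R \subsetneq F$ yields an acyclic reorientation of $D$; this is false. Take $D$ on vertices $\{v_0,\dots,v_5\}$ with arcs $(v_0,v_1)$, $(v_1,v_2)$, $(v_2,v_3)$, $(v_4,v_3)$, $(v_5,v_4)$, $(v_0,v_5)$ and the chord $(v_1,v_3)$. Then $D$ is acyclic, its transitive reduction is exactly the $6$-cycle $C$ on $v_0,\dots,v_5$ (the chord is shortcut by $v_1 \to v_2 \to v_3$), and $F = \{(v_0,v_1),(v_1,v_2),(v_2,v_3)\}$. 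Reversing $R = F \ssm \{(v_0,v_1)\} = \{(v_1,v_2),(v_2,v_3)\}$ creates the directed cycle $v_1 \to v_3 \to v_2 \to v_1$. The failure mode is that $R$ need not be closed: a chord of $D$ (here $(v_1,v_3)$) can lie in the transitive closure of $R$ but outside $F$, and the resulting directed cycle uses that chord rather than closing along a ``shortcut around a single arc of $C$'' as your sketch assumes. So your base reorientations $X_1$, $X_2$ may simply fail to exist. Your terminal step (completing $F \cup \{b\}$ to an acyclic $R_b$ without ever reversing a further $B$-arc, and checking that the $R_b$ are minimal upper bounds) is likewise asserted rather than proved and faces similar obstacles.

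The paper avoids all of this by reversing only sets for which acyclicity is immediate from a single transitive-reduction arc: $D_f$ reverses just $f \in F$, and $\bar D_b$ reverses everything except $b \in B$. One checks $D_f \le \bar D_b$ for all $f,b$, and that any $E$ with $D_f \le E \le \bar D_b$ for all $f,b$ would have all of $F$ reversed and none of $B$ reversed, turning $C$ into a directed cycle of $E$; hence $\{D_f\}_{f \in F}$ has no join (and dually $\{\bar D_b\}_{b \in B}$ has no meet). This sandwich argument, bounding the putative join from above by all the $\bar D_b$, is both shorter and safer than exhibiting several incomparable minimal upper bounds: it never constructs anything more delicate than a single-flip or all-but-one-flip reorientation.
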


\begin{proof}
Assume that the transitive reduction of~$D$ contains a (undirected) cycle~$C$.
Choose an arbitrary orientation on~$C$, and let~$F$ denote the forward arcs along~$C$ and~$B$ denote the backward arcs along~$C$.
For~$f \in F$, denote by~$D_f$ the acyclic reorientation of~$D$ obtained by reversing~$f$ (it is indeed acyclic since~$f$ belongs to the transitive reduction of~$D$).
For~$b \in B$, denote by~$\bar D_b$ the acyclic reorientation of~$D$ obtained by reversing all arcs but~$b$ (it is indeed acyclic since~$b$ belongs to the transitive reduction of~$D$).
Note that~$D_f \le \bar D_b$ for any~$f \in F$ and~$b \in B$.
Consider now any reorientation~$E$ of~$D$ such that~$D_f \le E \le \bar D_b$ for all~$f \in F$ and~$b \in B$.
Then all arcs in~$F$ are reversed in~$E$ (because $D_f \le E$ for all~$f \in F$) while none of the arcs in~$B$ are reversed in~$E$ (because~$E \le \bar D_b$ for all~$b \in B$).
It follows that $C$ is a directed cycle in~$E$, so that~$\set{D_f}{f \in F}$ has no join (and $\set{\bar D_b}{b \in B}$ has no meet) in~$\AR$.
\end{proof}

\begin{corollary}
\label{coro:vertebrate}
If $\AR$ is a lattice, then $D$ is vertebrate.
\end{corollary}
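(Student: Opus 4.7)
The plan is to prove the contrapositive: if $D$ has an induced subgraph $D[V']$ whose transitive reduction contains an undirected cycle, then $\AR$ is not a lattice. The strategy is to lift the obstruction of \cref{lem:transitiveReductionForest} from $D$ itself to the induced subgraph, by constructing analogues of the elements $D_f$ and $\bar{D}_b$ directly inside $\AR$. Let $V_C \subseteq V'$ be the vertex set of such a cycle $C$. Since $D[V_C]$ is an induced subgraph of $D[V']$, any path in $D[V_C]$ is also a path in $D[V']$, so $C$ lies in the transitive reduction of $D[V_C]$ as well, and I may work with $V_C$ in place of $V'$. Fix a canonical orientation of $C$ and let $F$ and $B$ denote its forward and backward arcs in $D$; both are nonempty because $D$ is acyclic. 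I would also observe that $|F|, |B| \geq 2$: if, say, $|F| = 1$, the unique forward arc would be implied in $D[V_C]$ by the length~$\geq 2$ backward path going the long way around~$C$, contradicting its membership in the transitive reduction.

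For each $f \in F$, the graph $D[V_C]$ with $f$ reversed is acyclic, since a directed cycle in it would produce a length~$\geq 2$ path in $D[V_C]$ from the tail to the head of $f$, forbidden because $f$ lies in the transitive reduction. I would fix a topological sort of this modified graph and extend it to a linear order $\tau_f$ on $V$ by prepending a fixed topological sort $\sigma$ of $D[V \setminus V_C]$, then define $E_f \in \AR$ as the acyclic reorientation of $D$ associated to $\tau_f$. A direct check shows that the reversal set of $E_f$ equals $\{f\} \cup R^*$, where $R^* := \set{(x,y) \in A}{x \in V_C,\, y \in V \setminus V_C}$ is a fixed ``crossing set'' independent of $f$. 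Symmetrically, the transitive reduction being stable under reversal, the graph $\overline{D[V_C]}$ with $b$ reversed is acyclic for each $b \in B$, and I would construct $\bar E_b \in \AR$ analogously, reusing the same $\sigma$ and the same placement of $V \setminus V_C$, so that its reversal set is $(D[V_C] \setminus \{b\}) \cup R^*$.

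The compatible construction yields $E_f \leq \bar E_b$ in $\AR$ for every $f \in F$ and $b \in B$, because $\{f\} \cup R^* \subseteq (D[V_C] \setminus \{b\}) \cup R^*$. Assuming for contradiction that $\AR$ is a lattice, let $J := \bigvee_{f \in F} E_f$. Being above each $E_f$, $J$ reverses every arc of $F$; being the least upper bound and each $\bar E_b$ an upper bound, $J \leq \bar E_b$ for every $b \in B$, forcing, after intersecting over $b$, that $J$ reverses no arc of $B$. Restricting $J$ to $V_C$ then turns $C$ into a directed cycle (all forward arcs reversed, all backward arcs unchanged), contradicting $J \in \AR$. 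The delicate step is the compatible construction of the $E_f$ and the $\bar E_b$ in the second paragraph: using exactly the same linear order on $V \setminus V_C$ and the same relative placement of $V_C$ in every $\tau_f$ and $\tau^b$ is what makes the reversal sets nest correctly on the arcs outside $D[V_C]$, which is what allows the obstruction of \cref{lem:transitiveReductionForest} to close.
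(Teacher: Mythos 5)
Your proof is correct and takes essentially the same route as the paper. The paper observes that, once an acyclic orientation of $D^{V_C}$ is fixed with all arcs incident to $V_C$ pointing inward, the acyclic reorientations of $D$ agreeing with it on $D^{V_C}$ form an interval of $\AR$ isomorphic to $\AR[D_{V_C}]$, invokes the fact that intervals of lattices are lattices, and then applies \cref{lem:transitiveReductionForest} to $D_{V_C}$. You unfold that isomorphism by hand: your shared topological sort $\sigma$ of $D[V \setminus V_C]$ and the fixed crossing set $R^*$ play exactly the role of the paper's fixed reorientation $E$ of $D^{V_C}$, and the elements $E_f$ and $\bar E_b$ you build are precisely the images of the $D_f$ and $\bar D_b$ from \cref{lem:transitiveReductionForest} under that embedding, so the closing contradiction is the same.
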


\begin{proof}
Fix a subset~$U \subseteq V$ and let~$D_U$ denote the directed subgraph of~$D$ induced by~$U$ and~$D^U$ denote the directed acyclic graph obtained from~$D$ by deleting all arcs joining two vertices in~$U$.
Fix an acyclic reorientation~$E$ of~$D^U$ in which all arcs incident to~$U$ are pointing towards~$U$.
Then the set of acyclic reorientations of~$D$ that agree with~$E$ on~$D^U$ is an interval of the acyclic reorientation poset isomorphic to the acyclic reorientation poset of~$D_U$.
Since an interval of a lattice is a lattice, it follows that the transitive reduction of~$D_U$ is a forest by \cref{lem:transitiveReductionForest}.
\end{proof}

We now assume that $D$ is vertebrate and we will show that the acyclic reorientation poset~$\AR$ is a lattice, and describe the join and meet operations.

\medskip
It is classical that a subset~$B$ of~$\smash{\binom{[n]}{2}}$ is the inversion set of a permutation of~$[n]$ if and only if both~$B$ and~$\smash{\binom{[n]}{2}} \ssm B$ are transitive.
This generalizes to the following characterization of the reversed sets of the acyclic reorientations of~$D$.
We say that a subset~$B \subseteq A$ of arcs of~$D$ is 
\begin{itemize}
\item \defn{closed} if all arcs of~$A$ in the transitive closure of~$B$ belong to~$B$, 
\item \defn{coclosed} if its complement~$A \ssm B$ is closed, and 
\item \defn{biclosed} if it is both closed and coclosed.
\end{itemize}

\begin{proposition}
\label{prop:biclosed}
If~$D$ is vertebrate, a subset~$B$ of~$A$ is biclosed if and only if its reorientation~is~acyclic.
\end{proposition}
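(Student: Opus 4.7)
My plan is to prove the two implications separately and to observe that only the harder direction requires the vertebrate hypothesis. For the easy direction (acyclic reorientation implies biclosed), I would proceed without using vertebrateness. Let $E$ denote the reorientation associated to $B \subseteq A$. If $B$ were not closed, some arc $(u,v) \in A \smallsetminus B$ would lie in the transitive closure of $B$; the corresponding directed path in $(V, B)$, once reversed in $E$, becomes a directed path from $v$ to $u$, which combined with the arc $(u,v)$ (unchanged in $E$) produces a directed cycle in $E$. The failure of coclosedness is handled symmetrically, using a directed path in $A \smallsetminus B$ together with the reversal in $E$ of some arc of $B$.

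For the harder direction (biclosed implies acyclic), I would argue by contradiction using a minimal cycle. Suppose $B$ is biclosed yet $E$ contains a directed cycle, and pick one $C : v_1 \to v_2 \to \cdots \to v_k \to v_1$ of minimum length. First I would show that $C$ is chordless in the underlying undirected graph of $E$ (equivalently of $D$): any chord $\{v_i, v_j\}$ of $C$, regardless of its orientation in $E$, yields a strictly shorter cycle in $E$ by combining the chord with one of the two $C$-paths between its endpoints. Consequently the induced subgraph $D_{\{v_1, \ldots, v_k\}}$ has the undirected $k$-cycle $v_1 - v_2 - \cdots - v_k - v_1$ as its underlying graph.

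Next I would invoke vertebrateness: the transitive reduction of $D_{\{v_1, \ldots, v_k\}}$ is a forest, yet its underlying graph contains a cycle, so at least one cycle arc $\alpha$ is transitively implied by a directed path $P$ of length at least $2$ in $D_{\{v_1, \ldots, v_k\}}$ made of the remaining cycle arcs. Because those remaining arcs form a single undirected path in the induced subgraph, the direction of every arc of $P$ is forced. Comparing each arc of $P$ with its orientation in the cycle $C$, I would then conclude that either every arc of $P$ lies in $B$ (when $\alpha$ lies in $A$ and is traversed by $C$ in the same direction) or every arc of $P$ lies in $A \smallsetminus B$ (in the opposite case). In the first situation, closedness of $B$ applied to $P$ and $\alpha$ forces $\alpha \in B$, contradicting the fact that $C$ traverses this edge as it appears in $D$; the second situation is handled symmetrically using coclosedness.

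The delicate point will be the orientation bookkeeping in the last step: one must systematically translate between the fixed direction of the cycle $C$ in $E$ and the (possibly reversed) $D$-orientation of each arc of $P$, so as to conclude that the arcs of $P$ fall uniformly on one side of the biclosed partition and trigger the desired contradiction.
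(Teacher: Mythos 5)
Your proposal is correct and takes essentially the same approach as the paper: both directions match (the easy direction using reversed paths to build a cycle without vertebrateness, and the hard direction passing to an induced/chordless directed cycle, invoking vertebrateness to see that the transitive reduction of the induced cycle is a path, and then reading off a violation of closedness or coclosedness). Your write-up spells out the orientation bookkeeping a bit more explicitly than the paper's, which compresses it into the observation that some arc $c$ of $C$ is reversed while $C\ssm\{c\}$ is not, or vice versa, but the underlying argument is the same.
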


\begin{proof}
Consider the reorientation~$E$ of~$D$ obtained by reversing the arcs of~$B$.

\medskip
If~$B$ is not closed, then $A$ contains an arc in the transitive closure of~$B$ but not in~$B$.
This arc together with (the reverse of) any path in~$B$ joining its endpoints clearly forms a directed cycle in~$E$.
By symmetry, we conclude that if $E$ is acyclic, then~$B$ is biclosed.

\medskip
Conversely, if~$E$ is not acyclic, then it contains a directed cycle~$C$ with vertex set~$U$.
As any chord in a directed cycle defines a smaller directed cycle, we can assume that~$C$ is induced.
As the subgraph of~$D$ induced by~$U$ is a (not necessarily directed) cycle, its transitive reduction can only be a path by assumption on~$D$.
In other words, there exists an arc~$c$ of~$C$ such that either~$c$ is reversed while $C \ssm \{c\}$ is not, or~$C \ssm \{c\}$ is reversed while~$c$ is not.
This ensures that~$B$ is not biclosed, as it is not coclosed in the former case, and not closed in the later case.
\end{proof}

Note that when $D$ is not vertebrate, any set whose reorientation is acyclic is still biclosed, but the converse fails.
For instance, in the last example of \cref{fig:acyclicReorientationPosets}, each of the two directed paths from the source to the sink of~$D$ forms a biclosed set whose reorientation is not acyclic.

\medskip
With \cref{prop:biclosed} at hand, we are now ready to show a refined version of the non-trivial direction of \cref{thm:characterizationLattice}.
For the weak order on permutations, it is well-known that, for any permutations~$\pi_1, \dots, \pi_k$ of~$[n]$, the inversion set of $\pi_1 \join \dots \join \pi_k$ (resp.~of $\pi_1 \meet \dots \meet \pi_k$) is the transitive closure (resp.~the complement of the transitive closure) of the inversion sets (resp.~of the complements of the inversion sets) of~$\pi_1, \dots, \pi_k$.
This generalizes for vertebrate directed acyclic graphs as follows.

\begin{theorem}
\label{thm:joinMeet}
If $D$ is vertebrate, then the acyclic reorientation poset is a lattice, where the join (resp.~meet) of a set of acyclic reorientations~$E_1, \dots, E_k$ of~$D$ is obtained by reversing all arcs of~$A$ that belong (resp.~do not belong) to the transitive closure of the arcs reversed (resp.~not reversed) in at least one of the reorientations~$E_1, \dots, E_k$.
\end{theorem}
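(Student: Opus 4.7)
The plan is to use the biclosed characterization of reversed sets from~\cref{prop:biclosed} and mimic the classical argument for the weak order. Let $B_\ell \subseteq A$ denote the set of arcs reversed in $E_\ell$ (biclosed by~\cref{prop:biclosed}), and define $B \eqdef \overline{\bigcup_\ell B_\ell} \cap A$, where $\overline{\,\cdot\,}$ denotes the transitive closure of a binary relation on $V$. The goal is to prove that $B$ is biclosed; then~\cref{prop:biclosed} yields an acyclic reorientation $E$ reversing exactly the arcs of $B$, which I will identify as the join of $E_1, \dots, E_k$.

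The closedness of $B$ is immediate from the idempotency of transitive closure: any arc of $A$ in the transitive closure of $B \subseteq \overline{\bigcup_\ell B_\ell}$ already lies in $B$. Once $B$ is shown biclosed, the fact that $E$ is the join will be routine: each $E_\ell \le E$ since $B_\ell \subseteq B$, and for any upper bound $E'$ of $E_1,\dots,E_k$ with (biclosed) reversed set $B'$, the inclusion $\bigcup_\ell B_\ell \subseteq B'$ combined with closedness of $B'$ forces $B \subseteq B'$, so $E \le E'$. The analogous statement for the meet will follow by the duality obtained by reversing all arcs of $D$.

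The hard part will be the coclosedness of $B$, i.e., showing that $A \ssm B$ is closed. I will first record a clean acyclicity observation: if $(u,v), (v,w) \in A \ssm B$, then neither arc is reversed in any $E_\ell$, so each $E_\ell$ contains the directed path $u \to v \to w$; by acyclicity of $E_\ell$, any arc $(u,w) \in A$ must be oriented $u \to w$ in every $E_\ell$, hence $(u,w) \notin \bigcup_\ell B_\ell$. Assuming by contradiction that $(u,w) \in B$, I will pick a witness path $u = x_0 \to x_1 \to \cdots \to x_m = w$ of minimum length $m$ with each $(x_{j-1}, x_j) \in \bigcup_\ell B_\ell$; the above observation handles $m=1$ directly. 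For $m \ge 2$, I will argue by induction on $m$. First, $v \notin \{x_1, \dots, x_{m-1}\}$, else $(u,v)$ or $(v,w)$ would itself lie in $B$. Next, the vertebrate assumption applied to the induced subgraph on $\{u, v, w, x_1, \dots, x_{m-1}\}$ will force $v$ to be adjacent in $A$ to some $x_{j^*}$: otherwise the two arcs $(u,v), (v,w)$ (the only arcs of $A$ incident to $v$ in this subgraph) together with a directed $u$-to-$w$ path in $A$ through the $x_j$'s would survive in the transitive reduction (by preservation of reachability and the absence of alternative paths through $v$), producing an undirected cycle and contradicting the forest property. A short case analysis on the direction of the arc between $v$ and $x_{j^*}$, taking $j^*$ minimal when $(v, x_{j^*}) \in A$ and maximal when $(x_{j^*}, v) \in A$, and using closedness of $B$ to check that this new arc belongs to $A \ssm B$, will then either exhibit a fresh $3$-vertex violation with a strictly shorter witness path so that the induction on $m$ closes the case, or force a smaller induced subgraph on which the vertebrate forest property fails directly.
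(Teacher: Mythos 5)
The high-level plan matches the paper's: reduce to showing that the reversed set $B$ of the candidate join is biclosed, then invoke \cref{prop:biclosed}; closedness of $B$ is immediate, so the work is all in coclosedness. However, the coclosedness argument as outlined has a genuine gap. You only treat the case where the witnessing path in $A \ssm B$ has length exactly two, i.e., you assume from the outset that there are $(u,v),(v,w) \in A\ssm B$ with $(u,w) \in A$. But to show $A \ssm B$ is closed you must handle an arbitrary-length path $u = y_0 \to \cdots \to y_p = w$ in $A \ssm B$ with $(u,w) \in A$, and there is in general no reduction to $p = 2$: in a DAG with a fixed arc set, ``length-$2$ closed'' does not imply ``closed'' because the intermediate chords $(y_i, y_j)$ need not lie in $A$. (Concretely, with $V = \{1,2,3,4\}$, $A = \{(1,2),(2,3),(3,4),(1,4)\}$, the set $S = \{(1,2),(2,3),(3,4)\}$ is vacuously length-$2$ closed yet not closed, since $(1,4) \in A$ is in its transitive closure.) You neither prove that the particular $B$ at hand always admits a length-$2$ witness, nor extend your argument to $p > 2$. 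The paper avoids this entirely by taking both paths $\pi \subseteq A \ssm B$ (of length $\ell \ge 2$) and $\pi' \subseteq \bigcup_\ell B_\ell$ (of length $\ell' \ge 2$) and minimizing $\ell + \ell'$ simultaneously; this forces the two paths to be inner-disjoint with no connecting arcs, so that all their arcs survive in the transitive reduction of the induced subgraph, producing the forbidden cycle in one step.

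A secondary concern: even for $p = 2$, the final ``short case analysis'' is under-specified. After you produce an arc $(v,x_{j^*}) \in A\ssm B$ (or $(x_{j^*},v) \in A\ssm B$), it is not clear which smaller instance your induction on $m$ appeals to: the natural candidates $(u,v,x_{j^*})$ or $(x_{j^*},v,w)$ require $(u,x_{j^*}) \in A$ or $(x_{j^*},w) \in A$ respectively, and neither membership is guaranteed, so a ``fresh $3$-vertex violation with a strictly shorter witness path'' does not obviously exist. I'd recommend replacing the single-minimization (on $m$ only) by the paper's simultaneous minimization on the total length of the two paths, which both removes the $p = 2$ restriction and makes the descent argument precise.
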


\begin{proof}
Note that it suffices to prove the statement for the join since the acyclic reorientation poset is self-dual under reversing all arcs.

\medskip
Let~$B$ denote the transitive closure of the arcs reversed in at least one of the reorientations~$E_1, \dots, E_k$.
It is clearly closed, let us show that it is as well coclosed.
Assume by means of contradiction that $A$ contains an arc~$a'$ which is in the transitive closure of~$A \ssm B$ and in~$B$.
By definition, the endpoints~$u$ and~$v$ of~$a'$ are therefore connected by
\begin{itemize}
\item a directed path~$\pi = a_1, \dots, a_\ell$ of arcs in~$A \ssm B$, and
\item a directed path~$\pi' = a'_1, \dots, a'_{\ell'}$ of arcs reversed in at least one of the reorientations~$E_1, \dots, E_k$.
\end{itemize}
Note that we have both~$\ell > 1$ since~$B$ is closed, and~$\ell' > 1$ since all~$E_1, \dots, E_k$ are acyclic.
Moreover, we can assume without loss of generality that~$\ell+\ell'$ is minimal among all pairs of such paths sharing their endpoints.
This minimality assumption implies that
\begin{itemize}
\item these two paths do not share inner vertices, and
\item there is no arc from an inner vertex of one path to an inner vertex of the other path.
\end{itemize}
It follows that all arcs of~$\pi$ and~$\pi'$ belong to the transitive reduction of the restriction of~$D$ to the union of the vertex sets of~$\pi$ and~$\pi'$.
This contradicts our assumption on~$D$.

\medskip
We conclude that~$B$ is biclosed, and it is by definition the smallest biclosed subset of~$A$ containing all arcs reversed in at least one of the reorientations~$E_1, \dots, E_k$.
By \cref{prop:biclosed}, we conclude that the reorientation~$E$ obtained by reversing~$B$ is the join of the reorientations~$E_1, \dots, E_k$.
\end{proof}

\begin{proof}[Proof of \cref{thm:characterizationLattice}]
One direction is given by \cref{coro:vertebrate}, the other by \cref{thm:joinMeet}.
\end{proof}

An alternative proof will follow later from \cref{prop:congruenceNormal}.
The advantage of the proof of this section is that it provides explicit descriptions of the join and meet operations in the acyclic reorientation lattices.

\medskip
Note that assuming \cref{thm:characterizationLattice}, the characterization of \cref{prop:biclosed} and the description of the join and the meet operations of \cref{thm:joinMeet} can be seen as specializations of~\cite[Sec.~5]{BjornerEdelmanZiegler}.
Here, we used them to establish the characterization of \cref{thm:characterizationLattice}.


\section{Restriction maps}
\label{sec:restrictionMaps}

Consider now two directed acyclic graphs~$D \eqdef (V, A)$ and~$D' \eqdef (V, A')$ on the same vertex set with~${A \supseteq A'}$.
At the moment, we do not require that~$D$ and $D'$ be vertebrate.
We consider the restriction map~$\restrictionMap : \AR \to \AR[D']$ from acyclic reorientations of~$D$ to acyclic reorientations of~$D'$, that we simply denote by~$\restrictionMap[]$ throughout this section as there is no ambiguity.
Four different restriction maps are illustrated in \cref{fig:restrictionMap}.
We start by an elementary observation.

\begin{lemma}
\label{lem:restrictionMap}
The restriction map~$\restrictionMap[]$ is surjective and order preserving.
\end{lemma}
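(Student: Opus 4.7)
The plan is to verify three things: that $\restrictionMap[]$ is well-defined (it lands in $\AR[D']$), that it is order preserving, and that it is surjective. The first two are essentially bookkeeping, while surjectivity requires a small construction via topological sorting.

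First I would observe that $\restrictionMap[]$ is well-defined: an acyclic reorientation $E$ of $D$ restricts to an orientation of $D'$ which is again acyclic, since deleting arcs from an acyclic directed graph cannot create a directed cycle. For the order preservation, recall that the acyclic reorientation poset $\AR$ is induced by the reorientation lattice $\RO$, whose order is inclusion of reversed sets. If $E_1 \le E_2$ in $\AR$, then the set $B_1$ of arcs of $D$ reversed in $E_1$ is contained in the set $B_2$ of arcs of $D$ reversed in $E_2$; intersecting with $A'$ gives the reversed sets of $\restrictionMap[](E_1)$ and $\restrictionMap[](E_2)$, and these inclusions are preserved.

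For surjectivity, given an acyclic reorientation $E'$ of $D'$, I would construct a preimage as follows. Since $E'$ is acyclic, it admits a linear extension, that is, a total order $\sigma$ on $V$ such that every arc $(u,v)$ of $E'$ satisfies $\sigma(u) < \sigma(v)$. Now build a reorientation $E$ of $D$ by keeping the orientation of $E'$ on every arc of $A'$, and by orienting each arc in $A \ssm A'$ from its $\sigma$-smaller endpoint to its $\sigma$-larger endpoint. Every arc of $E$ then respects $\sigma$, so $E$ is acyclic, and by construction $\restrictionMap[](E) = E'$.

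Putting these three facts together yields the statement. The only mild subtlety is making sure that the linear extension argument in the surjectivity step really uses only the acyclicity of $E'$ (and not of any orientation of $D$), which is immediate. Note that this lemma makes no use of the vertebrate hypothesis, which is why it is stated for arbitrary directed acyclic graphs $D$ and $D'$; the vertebrate hypothesis only enters the more refined statements of \cref{thm:latticeMap}.
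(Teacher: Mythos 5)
Your proof is correct and takes essentially the same approach as the paper: the order-preservation argument via inclusion of reversed sets is identical, and the surjectivity construction via a linear extension (topological order) of $E'$ is the same idea, with your $E$ coinciding with the paper's since $\sigma$ is a linear extension of $E'$.
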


\begin{proof}
Consider an acyclic reorientation~$E'$ of~$D'$.
Since~$E'$ is acyclic, there exists a total order~$\prec$ on~$V$ such that all arcs of~$E'$ are increasing for~$\prec$.
It defines an acyclic reorientation~$E$ of~$D$ where all arcs are increasing for~$\prec$.
Clearly, we have $\restrictionMap[](E) = E'$.
This proves that $\restrictionMap[]$ is surjective.

\medskip
Observe now that for an acyclic reorientation~$E$ of~$D$, the arcs reversed in~$\restrictionMap[](E)$ are the arcs reversed in~$E$ that belong to~$D'$.
Since the order among acyclic reorientations is defined by the inclusion of sets of reversed arcs, this immediately implies that $\restrictionMap[]$ is order preserving.
\end{proof}

We now consider the fibers of~$\restrictionMap[]$.
It immediately follows from \cref{lem:restrictionMap} that each fiber~$F$ is order convex (\ie $x \le y \le z$ and~$x,z \in F$ implies~$y \in F$), but they might fail to be intervals as illustrated in \cref{fig:restrictionMap}.
We now characterize the acyclic reorientations of~$D'$ whose fibers under the restriction map~$\restrictionMap[]$ admit a minimal or maximal element.
A classical result of A.~Bj\"orner and M.~Wachs~\cite{BjornerWachs} states that the set of linear extensions of a poset~$\prec$ on~$[n]$ admits a minimal (resp.~maximal) element under the weak order on permutations if and only if~$i \succ k$ implies $i \succ j$ or $j \succ k$ (resp.~$i \prec k$ implies $i \prec j$ or $j \prec k$) for any~$1 \le i < j < k \le n$.
This generalizes as follows.

\begin{proposition}
\label{prop:minimalMaximalInFiber}
Consider an acyclic reorientation~$E'$ of~$D'$ and let~$E$ denote the reorientation of~$D$ where an arc~$(u,v)$ is reversed (resp.~not reversed) if there is a directed path in~$E'$ joining~$v$ to~$u$ (resp.~$u$ to~$v$).
Then the following assertions are equivalent:
\begin{enumerate}[(i)]
\item the reorientation~$E$ is acyclic,
\item the fiber of~$E'$ under the restriction map~$\restrictionMap[]$ admits a minimal (resp.~maximal) element (then, this element is~$E$),
\item any directed cycle formed by arcs of~$E'$ and of~$D \ssm D'$ contains at least one arc~$(u,v)$ of~$D \ssm D'$ such that there is a directed path in~$E'$ joining~$v$ to~$u$ (resp~$u$ to~$v$).
\end{enumerate}
\end{proposition}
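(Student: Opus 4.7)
The plan is to prove only the minimum version; the maximum version then follows by applying the minimum statement to the reverse graphs~$\bar D, \bar{D'}$ and the reverse reorientation~$\bar{E'}$, using the order-reversing bijection~$F \mapsto \bar F$ that swaps minima of fibers with maxima and swaps the two options in the definition of~$E$. Before the three implications, I would observe that the reorientation~$E$ defined in the statement always restricts to~$E'$ on~$D'$: for $(u,v) \in A'$, the only~$E'$-path between its endpoints is the arc itself (by acyclicity of~$E'$), so $(u,v)$ is reversed in~$E$ exactly when it is reversed in~$E'$.

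For $(i) \Rightarrow (ii)$, if~$E$ is acyclic it lies in the fiber~$\restrictionMap[]^{-1}(E')$; for any other~$F$ in the fiber and any arc~$(u,v)$ reversed in~$E$, the corresponding $E'$-path from~$v$ to~$u$ survives in~$F$, so~$F$ must reverse~$(u,v)$ to stay acyclic, whence~$E \le F$. For the converse $(ii) \Rightarrow (i)$, if the fiber has a minimum~$E^*$, I would prove that $E^* = E$. The previous argument applied to~$E^*$ shows that~$E^*$ reverses every arc that~$E$ reverses; conversely, for any $(u,v) \in A \ssm A'$ with no~$E'$-path $v \to u$, Szpilrajn's theorem provides a linear extension of the partial order defined by~$E'$ placing~$u$ before~$v$, and orienting every arc of~$D$ according to this linear extension gives an acyclic reorientation~$F$ in the fiber that does not reverse~$(u,v)$; the inequality $E^* \le F$ then forces~$E^*$ not to reverse~$(u,v)$ either. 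Hence $E^* = E$ is acyclic.

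For $(i) \Leftrightarrow (iii)$, I would introduce the auxiliary digraph~$H$ on vertex set~$V$ with arc set $E' \cup (A \ssm A')$, where the arcs of $A \ssm A'$ keep their $D$-orientation. The bridge is that an arc of $A \ssm A'$ appears in~$E$ with its~$D$-orientation precisely when it has no $E'$-back-path. For $\neg(iii) \Rightarrow \neg(i)$: a cycle in~$H$ witnessing the failure of~(iii) has all its arcs present in~$E$, hence is a directed cycle in~$E$. For $\neg(i) \Rightarrow \neg(iii)$: given a directed cycle~$C^*$ in~$E$, I would replace each arc~$(v,u)$ of~$C^*$ that arose by reversing some $(u,v) \in A \ssm A'$ by the corresponding $E'$-path from~$v$ to~$u$; the result is a closed directed walk in~$H$ using only $E'$-arcs and $A \ssm A'$-arcs without $E'$-back-paths. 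Extracting a simple directed cycle from this walk (which every closed directed walk contains) yields a cycle in~$H$ that cannot lie entirely in~$E'$ by acyclicity and whose $A \ssm A'$ arcs therefore all fail the condition of~(iii). The main delicate step is this cycle-extraction, but it reduces to the classical fact about closed directed walks together with the acyclicity of~$E'$.
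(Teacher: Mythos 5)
Your proof is correct, and it takes a genuinely different logical route from the paper. The paper establishes a single cyclic chain of implications $(i)\Rightarrow(ii)\Rightarrow(iii)\Rightarrow(i)$, so that the fiber-minimum never has to be identified explicitly: the step $(ii)\Rightarrow(iii)$ only uses that a minimum $M$ exists, together with \cref{lem:restrictionMap} to produce a competitor $E_a$ in the fiber with some chosen arc $a$ unreversed, to deduce that $a$ is unreversed in $M$. You instead prove two independent equivalences, $(i)\Leftrightarrow(ii)$ and $(i)\Leftrightarrow(iii)$. Your $(ii)\Rightarrow(i)$ is the genuinely new piece: you show that the fiber minimum $E^*$ must coincide with the candidate $E$, by combining the ``every element of the fiber reverses every arc $E$ reverses'' observation with the reverse containment via a Szpilrajn-type linear extension of the $E'$-poset placing $u$ before $v$ (the same linear-extension trick the paper buries in \cref{lem:restrictionMap}). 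Your $(i)\Leftrightarrow(iii)$ is essentially the paper's $(iii)\Rightarrow(i)$ done in both directions, repackaged cleanly in terms of the auxiliary digraph $H = E' \cup (D\ssm D')$: $\neg(iii)$ gives a literal cycle in $E$, while a cycle in $E$ expands (replacing reversed $D\ssm D'$ arcs by $E'$-paths) to a closed walk in $H$, from which one extracts a simple cycle that cannot lie entirely in $E'$. The net effect is that your argument is a touch longer (four implications instead of three, and it pins down the identity of the minimum), but it is more self-contained and the digraph $H$ makes the equivalence $(i)\Leftrightarrow(iii)$ transparent. One wording nit: in your preliminary observation that $E$ restricts to $E'$, the phrase ``the only $E'$-path between its endpoints is the arc itself'' overstates; there may be several $E'$-paths from one endpoint to the other. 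What acyclicity of $E'$ actually gives is that all such paths go in a single direction, which determines the orientation in $E$ and agrees with $E'$.
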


\begin{proof}
Note that it suffices to prove the result for minimal elements since the acyclic reorientation poset is self-dual under reversing all arcs.

\medskip
\noindent
\uline{(i) $\Rightarrow$ (ii).}
Observe that $E$ agrees with~$E'$ on~$D'$ and that all arcs reversed in~$E$ are reversed in any acyclic reorientation in the fiber of~$E'$.
Therefore, if~$E$ is acyclic, it is the minimal element of the fiber of~$E'$ under~$\restrictionMap[]$.

\medskip
\noindent
\uline{(ii) $\Rightarrow$ (iii).}
Suppose that the fiber of~$E'$ under~$\restrictionMap[]$ admits a minimal element~$M$.
Consider a directed cycle~$C$ formed by arcs of~$E'$ and of~$D \ssm D'$.
Let~$a \eqdef (u,v)$ be an arc of~$C$ which belongs to~$D \ssm D'$ so that there is no directed path in~$E'$ joining~$v$ to~$u$.
Consider the reorientation~$E'_a$ of the directed acyclic graph~$D'_a \eqdef (V, A' \cup\{a\})$ that agrees with~$E'$ on~$A'$ and where~$a$ is not reversed.
Since $E'$ is acyclic and there is no directed path in~$E'$ joining~$v$ to~$u$, the reorientation~$E'_a$ of~$D'_a$ is acyclic, so that it can be completed into an acyclic reorientation~$E_a$ of~$D$ by \cref{lem:restrictionMap}.
By definition, we have $\restrictionMap[](E_a) = E'$ and $a$ is not reversed in~$E_a$.
Since~$M$ is the minimal element of the fiber of~$E'$ under~$\restrictionMap[]$, we have~$M \le E_a$, so that the arc~$a$ is not reversed in~$M$.
Since~$M$ is acyclic, $C$ contains at least one arc~$(u,v)$ of~$D \ssm D'$ such that there is a directed path in~$E'$ joining~$v$~to~$u$.

\medskip
\noindent
\uline{(iii) $\Rightarrow$ (i).}
Assume that~$E$ contains a cycle~$C$.
Up to replacing each reversed arc of~$C$ by a directed path in~$E'$ joining its endpoints, we can assume that all arcs of~$C$ belong to~$E'$ or to~$D \ssm D'$.
Each arc~$(u,v)$ of~$C$ in~$D \ssm D'$ is a non-reversed arc of~$E$ so that there is no directed path in~$E'$ joining~$v$ to~$u$.
Therefore, $E'$ does not fulfill (iii).
\end{proof}

Conversely, observe that any interval can be seen as the fiber of a well-chosen restriction map.
For two acyclic reorientations~$E$ and~$F$ of~$D$, we denote by~$E \cap F$ the directed acyclic graph whose arcs are the common arcs of~$E$ and~$F$.

\begin{proposition}
\label{prop:intervalsAreFibers}
Any interval~$[E^\join, E^\meet] \eqdef \set{E \in \AR}{E^\join \le E \le E^\meet}$ of~$\AR$ is the fiber of~${E^\join \cap E^\meet}$ (resp.~of the transitive reduction of~$E^\join \cap E^\meet$) under the restriction~map to the edges of~$D$ that appear in any direcction in~${E^\join \cap E^\meet}$ (resp.~in the transitive reduction of~$E^\join \cap E^\meet$)
\end{proposition}

\begin{proof}
Observe that an arc is reversed in~$E^\join$ (resp.~unreversed in~$E^\meet$) if and only if it is reversed (resp.~unreversed) in all~$E \in [E^\join, E^\meet]$ if and only if it belongs to and is reversed (resp.~unreversed) in~$E^\join \cap E^\meet$.
The result follows for~$E^\join \cap E^\meet$.
It also holds for the transitive reduction of~$E^\join \cap E^\meet$ since the fiber of an acyclic reorientation and the fiber of its transitive reduction always~coincide.
\end{proof}

In the next statements, we say that $D'$ is 
\begin{itemize}
\item \defn{weakly balanced} in~$D$ if for any simple cycle~$C$ in~$D$, if all backward arcs along~$C$ belong to~$D'$, then either all or all but one forward arcs along~$C$ belong to~$D'$,
\item \defn{balanced} in~$D$ if for any simple cycle~$C$ in~$D$, if all backward arcs along~$C$ belong to~$D'$ and~$C$ has at least two forward arcs, then all forward arcs along~$C$ also belong to~$D'$,
\item \defn{strongly balanced} in~$D$ if for any simple cycle~$C$ in~$D$, if all backward arcs along~$C$ belong to~$D'$, then all forwards arcs along~$C$ also belong to~$D'$.
\end{itemize}
Note that strongly balanced implies balanced and balanced implies weakly balanced, but both reverse implications are wrong.

\medskip
We now characterize the subgraphs~$D'$ for which all fibers under the restriction map~$\restrictionMap[]$ are intervals.

\begin{proposition}
\label{prop:intervalFibers}
The fibers of~$\restrictionMap[]$ are all intervals if and only if $D'$ is weakly balanced in~$D$.
\end{proposition}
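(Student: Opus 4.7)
The plan is to combine \cref{prop:minimalMaximalInFiber} with a self-duality argument. Since $\restrictionMap[]$ is order preserving by \cref{lem:restrictionMap}, each of its fibers is order convex, so a fiber is an interval if and only if it admits both a minimum and a maximum. Reversing every arc of $D$ is an antiautomorphism of $\AR$ which sends the fiber of $E'$ bijectively to the fiber of $\bar{E'}$ and exchanges minima with maxima; since the weakly balanced condition is symmetric under arc reversal, this reduces the proposition to the equivalence: every fiber of $\restrictionMap[]$ admits a minimum if and only if $D'$ is weakly balanced in $D$.

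For the direction from weakly balanced to ``every fiber has a minimum'', I verify condition~(iii) of \cref{prop:minimalMaximalInFiber} for minima. Given any acyclic reorientation $E'$ of $D'$ and any directed cycle $C$ in $E' \cup (D \ssm D')$, first pass to a simple directed subcycle (the ``no escape'' property is inherited). Arcs of $C$ in $D \ssm D'$ keep their $D$-orientation and thus appear forward along $C$, so every backward arc of $C$ in $D$ lies in $D'$. The weakly balanced hypothesis then bounds the number of forward arcs of $C$ in $D \ssm D'$ by one: zero is impossible (else $C$ would be a directed cycle inside $E'$, contradicting acyclicity), and in the case of exactly one forward arc $(u,v) \in D \ssm D'$ the remainder $C \setminus \{(u,v)\}$ is a path in $D'$ oriented by $E'$ in the $C$-direction from $v$ back to $u$, providing the required escape.

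For the converse direction, I argue contrapositively: given a simple cycle $C$ in $D$ with all backward arcs in $D'$ and at least two forward arcs $a_1 = (s_1, t_1), \dots, a_\ell = (s_\ell, t_\ell)$ in $D \ssm D'$, I construct an acyclic reorientation $E'$ of $D'$ orienting $C \cap D'$ in the $C$-direction and admitting no $E'$-path from any $t_i$ to the corresponding $s_i$; \cref{prop:minimalMaximalInFiber} then implies that the fiber of $E'$ has no minimum. I build such $E'$ starting from a linear order on $V$ extending the partial order on $V(C)$ given by the $C$-direction on $C \cap D'$, then orient the remaining arcs of $D' \setminus C$ so as to block every potential shortcut $t_i \to s_i$. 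The \emph{main subtle point} is precisely this construction: no linear order on $V$ can place $s_i$ before $t_i$ for every $i$ because of the cyclic constraint on $C$, so at least one ``bad'' index $i$ survives in any linear order, and for this $i$ one must exploit the freedom to orient $D' \setminus C$ (together with the observation that $D$ cannot contain a direct arc from $t_i$ to $s_i$ since $a_i = (s_i, t_i) \in D$ already) to ensure that no actual $E'$-path realizes the remaining potential shortcut.
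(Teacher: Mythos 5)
Your self-duality reduction and your argument that weakly balanced implies every fiber has a minimum are both correct and essentially the paper's: the paper runs that implication contrapositively, extracting the bad simple cycle from a fiber without minimum via \cref{prop:minimalMaximalInFiber}, but the content (backward arcs of the cycle forced into $D'$, zero forward arcs outside $D'$ contradicting acyclicity, exactly one providing the escape) is identical.

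The converse direction, however, contains a genuine gap, and you have located it yourself without closing it: the reorientation $E'$ is never actually constructed. Two concrete problems remain. First, you do not control escape paths that leave $V(C)$: an $E'$-path from $t_i$ to $s_i$ may wander through vertices of $V \ssm V(C)$, so ``the freedom to orient $D' \ssm C$'' would have to kill a large family of potential paths simultaneously while keeping $E'$ acyclic, and you give no argument that this is possible. The paper's construction disposes of this with one decisive choice that your sketch is missing: every arc of $D'$ with exactly one endpoint on $V(C)$ is oriented \emph{toward} $C$ (equivalently, all vertices outside $V(C)$ are placed before all vertices of $V(C)$ in the linear order), so that no directed path of $E'$ starting on $C$ can ever leave $V(C)$; this confines all candidate escapes to $V(C)$ at a stroke. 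Second, even after such a confinement, your remaining tools --- the absence of a direct arc from $t_i$ to $s_i$, plus the choice of linear order extending the $C$-direction on $C \cap D'$ --- only exclude escapes of length one. A chord of $C$ lying in $D'$ that joins an interior vertex of one maximal subpath of $C \cap D'$ to an interior vertex of another is a genuine obstruction: whichever way it is oriented, concatenating it with the (forced) $C$-direction arcs of those two subpaths produces an escape for one of the two neighbouring arcs of $D \ssm D'$, so no amount of ``freedom'' on $D' \ssm C$ saves the chosen cycle. Hence the plan ``take any witnessing cycle, then block the shortcuts'' cannot be completed as stated; the witnessing cycle and the reorientation have to be produced together, and that is exactly the step your proposal leaves open.
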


\begin{proof}
Note that since the acyclic reorientation poset is self-dual under reversing all arcs, all fibers of~$\restrictionMap[]$ are intervals if and only if all fibers of~$\restrictionMap[]$ admit a minimal element.
We thus focus on minimal elements below.

\medskip
Assume that there is a simple cycle~$C$ in~$D$ with all backward arcs in~$D'$, but with two forward arcs~$a$ and~$b$ not in~$D'$.
By \cref{lem:restrictionMap}, there exists an acyclic reorientation~$E'$ of~$D'$ where all backward arcs along~$C$ are reversed, none of the forward arcs along~$C$ are reversed, and all other arcs incident to~$C$ are pointing toward~$C$.
The cycle~$C$ is formed by arcs of~$E'$ and of~$D \ssm D'$ and contains no arc~$(u,v)$ of~$D \ssm D'$ such that there is a directed path in~$E'$ joining~$v$ to~$u$ (because $a$ and $b$ are both in~$D \ssm D'$, and all arcs in~$E'$ incident to~$C$ are pointing toward~$C$).
We conclude by \cref{prop:minimalMaximalInFiber} that the fiber of~$E'$ under~$\restrictionMap[]$ has no minimal element.

\medskip
Conversely, assume that there is an acyclic reorientation~$E'$ of~$D'$ whose fiber under~$\restrictionMap[]$ has no minimal element.
By \cref{prop:minimalMaximalInFiber}, there is a directed cycle~$C$ formed by arcs of~$E'$ and of~$D \ssm D'$ which contains no arc~$(u,v)$ of~$D \ssm D'$ such that there is a directed path in~$E'$ joining~$v$ to~$u$
The backward arcs along~$C$ all belong to~$D'$ (since they do not belong to~$D \ssm D'$), and we claim that~$C$ contains at least two arcs of~$D \ssm D'$.
Indeed,
\begin{itemize}
\item if $C$ contains no arc in~$D \ssm D'$, then $C$ is a directed cycle in~$E'$, contradicting the acyclicity~of~$E'$,
\item if~$C$ contains only one arc~$a \eqdef (u,v)$ in~$D \ssm D'$, then $C \ssm \{a\}$ forms a directed path in~$E'$ joining~$v$ to~$u$, contradicting our assumption on~$C$.
\end{itemize}
We conclude that~$C$ is a simple cycle with all backward arcs in~$D'$ and at least two forward arcs not in~$D'$.
\end{proof}

Assume from now on that~$D'$ is weakly balanced in~$D$.
We denote by~$\projDown[]$ (resp.~$\projUp[]$) the map from~$\AR$ to~$\AR$ sending an acyclic reorientation~$E$ to the minimal (resp.~maximal) acyclic reorientation~$F$ such that~$\restrictionMap[](E) = \restrictionMap[](F)$.

\begin{proposition}
\label{prop:orderPreservingDownUpProj}
The maps~$\projDown[]$ and~$\projUp[]$ are order preserving if and only if $D'$ is balanced in~$D$.
\end{proposition}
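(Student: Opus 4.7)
The plan is to prove both directions of the equivalence, exploiting the self-duality of the acyclic reorientation lattice $\AR$ under the operation of reversing all arcs. This duality swaps $\projDown[]$ with $\projUp[]$ and preserves the balanced condition (reversing all arcs of $D$ and $D'$ simultaneously swaps the forward and backward labels on any simple cycle while preserving membership of arcs in $D'$). Hence it suffices to prove that $\projDown[]$ is order preserving if and only if $D'$ is balanced; the statement for $\projUp[]$ will follow.

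For the forward direction, assume $D'$ is balanced and fix $E \leq F$ in $\AR$. To show $\projDown[](E) \leq \projDown[](F)$, I need to verify that every arc reversed in $\projDown[](E)$ is reversed in $\projDown[](F)$. The arcs of $A'$ are immediate from the order preservation of $\restrictionMap[]$ in \cref{lem:restrictionMap}. For an arc $(u,v) \in A \ssm A'$ reversed in $\projDown[](E)$, \cref{prop:minimalMaximalInFiber} provides a directed path $\pi$ from $v$ to $u$ in $\restrictionMap[](E)$, which I may take to be simple. Then $(u,v)$ concatenated with $\pi$ forms a simple cycle $C$ in $D$. Traversing $C$ so that $(u,v)$ is forward, the backward arcs of $C$ are exactly the arcs of $\pi$ reversed in $E$ (all of which lie in $D'$), while the forward arcs are $(u,v) \notin D'$ together with the arcs of $\pi$ unreversed in $E$ (all in $D'$). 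Since $(u,v) \notin D'$, the balanced hypothesis applied to $C$ forces $C$ to have a single forward arc, so no arc of $\pi$ is unreversed in $E$; since $E \leq F$, all arcs of $\pi$ remain reversed in $F$, making $\pi$ a directed path in $\restrictionMap[](F)$, and hence $(u,v)$ is reversed in $\projDown[](F)$.

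For the backward direction, suppose $D'$ is weakly balanced but not balanced, so some simple cycle $C$ in $D$ has all backward arcs in $D'$, at least two forward arcs, and by weakly balanced exactly one forward arc $a_1 = (v_0, v_1) \notin D'$, with the remaining forward arcs $a_2, \ldots, a_p \in D'$ and $p \geq 2$. Traverse $C$ as $v_0 \to v_1 \to \cdots \to v_m \to v_0$, and let $E$ be the acyclic reorientation of $D$ induced by any linear order on $V$ extending $v_1 < v_2 < \cdots < v_m < v_0$ on $V(C)$. Then $E$ reverses $a_1$ and every backward arc of $C$ but leaves $a_2, \ldots, a_p$ unreversed, so $\restrictionMap[](E)$ contains the directed path $v_1 \to v_2 \to \cdots \to v_m \to v_0$, and $a_1$ is reversed in $\projDown[](E)$ by \cref{prop:minimalMaximalInFiber}. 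Let $F$ be the reorientation of $D$ induced by the linear order obtained from $E$'s order by swapping the positions of $v_i$ and $v_{i+1}$ (where $a_2 = (v_i, v_{i+1})$); one checks $F$ is acyclic and differs from $E$ precisely by the additional reversal of $a_2$, so $F \geq E$. In $\restrictionMap[](F)$, the flip of $a_2$ breaks the directed path along $C$ from $v_1$ to $v_0$; by choosing $C$ minimally in vertex count among failing cycles, any alternative directed path from $v_1$ to $v_0$ in $\restrictionMap[](F)$ would, together with $a_1$, produce a cycle still witnessing the failure of balanced but with strictly fewer vertices, contradicting minimality. Hence $a_1$ is not reversed in $\projDown[](F)$, yielding $\projDown[](E) \not\leq \projDown[](F)$.

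The main technical obstacle lies in the backward direction, specifically in verifying that the minimality of $C$ genuinely precludes alternative paths from $v_1$ to $v_0$ in $\restrictionMap[](F)$: one must check that such an alternative path together with $a_1$ indeed yields a smaller simple cycle failing balanced, which requires careful bookkeeping of forward and backward arcs along the resulting cycle. The forward direction, by contrast, is a clean application of the cycle argument once the simple path $\pi$ and the arc $(u,v)$ are assembled into a cycle.
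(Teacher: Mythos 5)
Your self-duality reduction and your forward direction are correct and coincide with the paper's argument. The gap is in the backward direction. Your key claim is that any directed path $\sigma$ from $v_1$ to $v_0$ in $\restrictionMap[](F)$ would, concatenated with $a_1$, produce a smaller simple cycle witnessing the failure of balancedness. This fails in two ways. First, if every arc of $\sigma$ is reversed, then the cycle $a_1 \cup \sigma$ (traversed with $a_1$ forward) has all of its backward arcs in $D'$ but only \emph{one} forward arc, namely $a_1$; such a cycle violates neither balancedness nor weak balancedness, so minimality of $C$ yields no contradiction. Second, $\sigma$ may leave $V(C)$, so even when the new cycle is a genuine witness it need not have fewer vertices than $C$. (There is also a smaller slip: swapping $v_i$ and $v_{i+1}$ in the linear order flips only $a_2$ just when they are adjacent in that order, which ``any extension'' does not guarantee.)

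The first failure mode actually occurs. Take $D$ with arcs $(v_0,v_1),(v_1,v_2),(v_2,v_3),(v_0,v_3),(v_0,w),(w,v_1)$ and $D' = D \ssm \{(v_0,v_1)\}$. Then $D'$ is weakly balanced but not balanced, the unique failing cycle is the $4$-cycle $C$ on $v_0,v_1,v_2,v_3$ with $a_1 = (v_0,v_1)$, yet if your linear order places $w$ between $v_1$ and $v_0$, both $(v_0,w)$ and $(w,v_1)$ get reversed, $\restrictionMap[](F)$ contains the directed path $v_1 \to w \to v_0$, and by \cref{prop:minimalMaximalInFiber} the arc $a_1$ stays reversed in $\projDown[](F)$ — so your pair $(E,F)$ does not witness the failure of order preservation (and the triangle $v_0,v_1,w$ you would extract has a single forward arc, hence is not a ``failing cycle''). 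The paper's proof avoids exactly this: it builds $E$ via \cref{lem:restrictionMap} so that, besides the prescribed orientation of $C$, every arc of $D$ with exactly one endpoint on $C$ points \emph{toward} $C$. This confines any directed path in $\restrictionMap[](F)$ between the endpoints of the bad arc to $V(C)$, where the flip of a single forward arc of $C \cap D'$ genuinely destroys it. You need this (or some equivalent control of the arcs leaving $C$) to close the argument.
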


\begin{proof}
Note that it suffices to prove the statement for~$\projDown[]$ since the acyclic reorientation poset is self-dual under reversing all arcs.

\medskip
Assume that there is a simple cycle~$C$ in~$D$ with all backward arcs in~$D'$ and at least two forward arcs~$a$ in~$D'$ and~$b$ not in~$D'$.
Note that since~$D'$ is weakly balanced in~$D$, all forward arcs along~$C$ except~$b$ belong to~$D'$.
By \cref{lem:restrictionMap}, there exists an acyclic reorientation~$E$ of~$D$ where all backward arcs along~$C$ are reversed, none of the forward arcs along~$C$ are reversed except~$b$, and all other arcs incident to~$C$ are pointing toward~$C$.
Let $F$ be the acyclic reorientation of~$D$ obtained by reversing~$a$ (it is indeed acyclic as $C$ is not a directed cycle in~$F$, and all arcs incident to~$C$ are pointing toward~$C$).
By \cref{prop:minimalMaximalInFiber}, $b$ is reversed in~$\projDown[](E)$ but not in~$\projDown[](F)$.
We conclude that~$\projDown[]$ is not order preserving, since~$E \le F$ by construction, while $\projDown[](E) \not\le \projDown[](F)$ because of~$b$.

\medskip
Conversely, assume that $D'$ is balanced in~$D$ and consider two acyclic reorientations~$E$ and~$F$ of~$D$ such that~$E \le F$.
Denoting~$E' \eqdef \restrictionMap[](E)$ and~$F' \eqdef \restrictionMap[](F)$, we have~$E' \le F'$ since $\restrictionMap[]$ is order preserving by \cref{lem:restrictionMap}.
Consider an arc~$(u,v)$ reversed in~$\projDown[](E)$.
If $(u,v)$ belongs to~$D'$, then it is reversed in~$\restrictionMap[](\projDown[](E)) = E'$, therefore as $E' \le F'$, it is reversed in $F' = \restrictionMap[](\projDown[](F))$ and thus in~$\projDown[](F)$.
If $(u,v)$ does not belong to~$D'$, then there is a directed path~$\pi$ joining~$v$ to~$u$ in~$E'$ by \cref{prop:minimalMaximalInFiber}.
Moreover, since $D'$ is balanced in~$D$ and~$(u,v) \notin D'$, all arcs along~$\pi$ are reversed in~$E'$.
Since~$E' \le F'$, all arcs along~$\pi$ are also reversed in~$F'$, so that~$(u,v)$ is also reversed in~$F$ by \cref{prop:minimalMaximalInFiber}.
We conclude that all arcs reversed in~$\projDown[](E)$ are also reversed in~$\projDown[](F)$, so that~$\projDown[](E) \le \projDown[](F)$.
We conclude that~$\projDown[]$ is order preserving.
\end{proof}

We now characterize the subgraphs~$D'$ for which~$\AR[D']$ can be seen as a lower (or upper) interval of~$\AR$, \ie of the form~$[D,E]$ (resp~$[E,\bar D]$) for some~$E \in \AR$.
This will be useful when studying the congruences of congruence uniform acyclic reorientation lattices in \cref{sec:congruences}.

\begin{proposition}
\label{prop:posetIsomInterval}
The map~$\restrictionMap[]$ restricts to a poset isomorphism from a lower (or upper) interval of~$\AR$ to~$\AR[D']$ if and only if~$D'$ is strongly balanced in~$D$.
\end{proposition}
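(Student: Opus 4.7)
By the self-duality of $\AR$ under reversing all arcs, it suffices to treat the lower-interval case. The plan is to study the explicit candidate section $s : \AR[D'] \to \RO$ that sends $E' \in \AR[D']$ to the unique reorientation of $D$ whose arcs in $D'$ are oriented as in $E'$ and whose arcs in $D \ssm D'$ remain in their $D$-direction. The proposition will reduce to showing that $s$ lands in $\AR$ and parametrizes a lower interval of $\AR$ if and only if $D'$ is strongly balanced in $D$.

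For ($\Leftarrow$), assume $D'$ is strongly balanced. I would first check that $s(E') \in \AR$: were $C$ a directed cycle of $s(E')$, then in the cyclic orientation of $C$ matching its direction in $s(E')$, every backward arc would be reversed in $s(E')$, hence lie in $D'$ by construction of $s$; strong balance would then force every forward arc into $D'$ as well, exhibiting $C$ as a directed cycle of $E'$, a contradiction. The map $s$ is order preserving and satisfies $\restrictionMap[] \circ s = \mathrm{id}$ tautologically. Setting $E_* \eqdef s(\bar{D'})$, any $F \in \AR$ with $F \le E_*$ has reversed set contained in that of $E_*$, which is precisely $A'$, so $F$ reverses no arc of $D \ssm D'$ and $F = s(\restrictionMap[](F))$. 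Hence the image of $s$ equals $[D, E_*]$, and $\restrictionMap[]|_{[D, E_*]}$ is a poset isomorphism with inverse $s$.

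For ($\Rightarrow$), suppose $\restrictionMap[]$ restricts to a poset isomorphism $[D, E_*] \to \AR[D']$, with inverse denoted $s$. The crucial step is to identify $s$ with the canonical candidate above. Covers in the subposet $[D, E_*]$ are inherited from $\AR$ and thus flip a single arc of $D$, while covers in $\AR[D']$ flip a single arc of $D'$. As $s$ preserves covers, every cover $E' \lessdot E''$ with distinguishing arc $(u,v) \in D'$ lifts to a cover $s(E') \lessdot s(E'')$ whose single flipped arc must project to $(u,v)$, hence equals $(u,v)$. Thus $s(E')$ and $s(E'')$ carry the same $D \ssm D'$-reversed arcs along each cover. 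Since the Hasse diagram of $\AR[D']$ is connected (it is bounded by $D'$ and $\bar{D'}$), all values of $s$ agree on their $D \ssm D'$-reversed arcs, and because $s(D') = D$ has none, $s$ coincides with the canonical section.

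It remains to deduce strong balance by contrapositive. Given a simple cycle $C$ in $D$ with all backward arcs in $D'$ and a forward arc not in $D'$, the arcs of $C \cap D'$ taken in cycle direction form a disjoint union of directed paths (since $C \cap (D \ssm D') \ne \emptyset$), inducing a consistent partial order on $V$; extending to a linear order and orienting every arc of $D'$ accordingly yields an acyclic $E' \in \AR[D']$ in which the $C \cap D'$-arcs point along the cycle. Then $s(E')$ orients every arc of $C$ in the cycle direction—the $D'$-arcs by choice of $E'$, the $D \ssm D'$ forward arcs by definition of $s$—so $C$ is a directed cycle of $s(E')$, contradicting $s(E') \in \AR$. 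The main obstacle is the cover-preservation step that pins $s$ down to its canonical form; once that identification is made, exhibiting a pathological $E'$ is immediate.
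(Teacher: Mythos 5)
Your proof is correct and follows essentially the same route as the paper's: the backward direction is the same construction of the canonical section and the same strong-balance argument that it lands in $\AR$, and the forward direction produces the same pathological $E'$ from a cycle violating strong balance. The only real variation is that you establish ``no arc of $D \ssm D'$ is reversed in the interval'' by cover preservation and connectedness of the Hasse diagram (pinning the inverse map down to the canonical section), where the paper reaches the same conclusion more directly via a saturated chain from $D$ to an offending~$E$.
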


\begin{proof}
Assume first that~$D'$ is strongly balanced in~$D$.
Let~$\inverseRestrictionMap[] : \AR[D'] \to \AR[D]$ denote the map sending an acyclic reorientation~$E'$ of~$D'$ to the acyclic reorientation of~$D$ whose reversed arcs are precisely the reversed arcs of~$D'$ (it is indeed acyclic, otherwise it would contain a simple cycle whose backward arcs all belong to~$D'$ and whose forward arcs cannot all belong to~$D'$ by acyclicity of~$E'$, contradicting the assumption on~$D'$).
It is clear that~$\restrictionMap[]$ and~$\inverseRestrictionMap[]$ are inverse poset isomorphisms from the lower interval~$[D, \inverseRestrictionMap[](\bar D')]$ of~$\AR$ to~$\AR[D']$.

\medskip
Conversely, assume that~$\restrictionMap[]$ restricts to a poset isomorphism from some lower interval~$I$ of~$\AR$ to $\AR[D']$.
Assume that some arc~$a$ of~$D \ssm D'$ is reversed in an acyclic reorientation~$E$ of~$I$.
Consider a saturated chain~$D = F_0 \le \dots \le F_p = E$ in~$\AR$. 
There is~$i \in [p]$ such that the arc~$a$ is flipped from~$F_{i-1}$ to~$F_i$.
Since~$a \notin D'$, we obtain~$\restrictionMap[](F_{i-1}) = \restrictionMap[](F_i)$ while~$F_{i-1}$ and~$F_i$ both belong to~$I$, contradicting our assumption on~$\restrictionMap[]$.
We conclude that no arc of~$D \ssm D'$ can be reversed in an acyclic reorientation of~$E$.
Assume now that there is a cycle~$C$ in~$D$ such that all backward arcs, but not all forward arcs, along~$C$ belong to~$D'$.
Let~$E'$ be an acyclic reorientation of~$D'$ that agrees with~$C$ on~$D'$ (it indeed exists by \cref{lem:restrictionMap} since~$C$ is not completely in~$D'$).
Then any acyclic reorientation~$E$ of~$D$ in the fiber of~$E'$ under~$\restrictionMap[]$ must have at least one arc of~$C \ssm D'$ reversed.
Therefore, the fiber of~$E'$ cannot meet~$I$, a contradiction.
\end{proof}

We are finally ready to prove \cref{thm:latticeMap}, as a specialization of \cref{prop:intervalFibers,prop:orderPreservingDownUpProj,prop:posetIsomInterval} in the case when~$D$ and~$D'$ are vertebrate.
Recall that a map~$\restrictionMap[] : L \to L'$ between two lattices~${(L, \le, \meet, \join)}$ and~${(L', \le', \meet', \join')}$ is a \defn{lattice map} if it respects the join and meet operations, that is ${\restrictionMap[](x \meet y) = \restrictionMap[](x) \meet' \restrictionMap[](y)}$ and~${\restrictionMap[](x \join y) = \restrictionMap[](x) \join' \restrictionMap[](y)}$ for all~$x, y \in L$.
When it is surjective, it is \defn{lattice quotient map}, and $L'$ is a \defn{lattice quotient} of~$L$.
The following characterization of lattice maps is classical.

\begin{proposition}
\label{prop:latticeQuotientMap}
A map~$\restrictionMap[] : L \to L'$ is a lattice map if and only if
\begin{itemize}
\item the fibers of~$\restrictionMap[]$ are intervals of~$L$, and
\item the map~$\projDown$ (resp.~$\projUp$) that send an element~$x$ of~$L$ to the minimal (resp.~maximal) element~$y$ with~$\restrictionMap[](x) = \restrictionMap[](y)$ is order preserving.
\end{itemize}
\end{proposition}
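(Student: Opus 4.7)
The plan is to argue both directions, invoking the classical correspondence between lattice maps and lattice congruences (cf.~\cite{Day}). Throughout I treat $\phi$ as order preserving, which is automatic in the forward direction and, in the paper's applications, follows from \cref{lem:restrictionMap}.

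\textbf{Forward direction.} Suppose $\phi$ is a lattice map. Then $\phi$ is order preserving, since $x \le y$ is equivalent to $x = x \meet y$, which gives $\phi(x) = \phi(x) \meet' \phi(y) \le' \phi(y)$. Each fiber $\phi^{-1}(c)$ is closed under $\meet$ and $\join$ (because $\phi$ preserves them), hence in the finite lattice $L$ has a minimum $m_c$ and a maximum $M_c$; order preservation then forces $\phi(z) = c$ for every $z \in [m_c, M_c]$, so the fiber equals this interval. For order preservation of $\projDown$, take $x \le y$ and set $a \eqdef \projDown(x)$ and $b \eqdef \projDown(y)$; then $\phi(a \meet b) = \phi(x) \meet' \phi(y) = \phi(x)$, so $a \meet b$ lies in the fiber of $\phi(x)$ and minimality of $a$ gives $a \le a \meet b \le b$. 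The argument for $\projUp$ is dual.

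\textbf{Reverse direction.} Conditions (i) and (ii) are precisely the classical characterization of the kernel relation $x \equiv_\phi y \iff \phi(x) = \phi(y)$ being a lattice congruence on $L$, exhibiting $L/\!\equiv_\phi$ as a lattice whose operations are inherited from $L$ via $[x] \meet [y] \eqdef [x \meet y]$ and $[x] \join [y] \eqdef [x \join y]$. Combined with the order preservation of $\phi$, the induced bijection $L/\!\equiv_\phi \to \phi(L)$ is an order isomorphism, so the lattice structure on $\phi(L)$ inherited through this bijection coincides with the restriction of $\meet'$ and $\join'$ from $L'$. This yields the identities $\phi(x \meet y) = \phi(x) \meet' \phi(y)$ and $\phi(x \join y) = \phi(x) \join' \phi(y)$, so $\phi$ is a lattice map.

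The subtle point lies in the reverse direction, namely verifying that the operations on the quotient $L/\!\equiv_\phi$ agree with $\meet', \join'$ on $\phi(L) \subseteq L'$. The key identity enabling this is $\projDown(x \meet y) = \projDown(\projDown(x) \meet \projDown(y))$, which combines the order preservation and idempotence of $\projDown$ with the fiber-interval property; this certifies that meet is well-defined on $L/\!\equiv_\phi$, and dually $\projUp$ plays the same role for join. Once these descend, order preservation of $\phi$ forces the agreement with $\meet', \join'$ and completes the argument.
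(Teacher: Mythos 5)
The paper gives no proof of \cref{prop:latticeQuotientMap}, deferring to it being classical, so there is nothing to compare your argument against directly. Your forward direction is sound. The reverse direction, however, has a genuine gap. From conditions~(i) and~(ii) you correctly conclude that the fiber relation~$\equiv_\phi$ is a lattice congruence on~$L$, but you then claim that ``the induced bijection~$L/\!\equiv_\phi \to \phi(L)$ is an order isomorphism.'' Order preservation of~$\phi$ only makes this bijection order preserving \emph{in one direction}, and a bijective order-preserving map between lattices need not be an order isomorphism, nor a fortiori a lattice isomorphism. For a concrete failure, let~$L$ be the diamond~$M_3$ with bottom~$\hat 0$, top~$\hat 1$, and pairwise incomparable atoms~$a,b,c$, let~$L'$ be the five-element chain~$0 < 1 < 2 < 3 < 4$, and let~$\phi$ send~$\hat 0, a, b, c, \hat 1$ to~$0, 1, 2, 3, 4$. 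Then~$\phi$ is surjective and order preserving, every fiber is a singleton (hence an interval), and~$\projDown$,~$\projUp$ are the identity (hence order preserving), yet~$\phi(a \join b) = \phi(\hat 1) = 4 \ne 2 = \phi(a) \join' \phi(b)$. So conditions~(i) and~(ii) together with order preservation and surjectivity do \emph{not} imply that~$\phi$ is a lattice map.

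What one actually needs to complete the argument is the stronger hypothesis that~$\projDown$ (equivalently~$\projUp$) factors through~$\phi$ as an order-preserving map, \ie that~$\phi(x) \le' \phi(y)$ (not merely~$x \le y$) forces~$\projDown(x) \le \projDown(y)$; this is precisely what makes the inverse of the induced bijection~$L/\!\equiv_\phi \to \phi(L)$ order preserving, and only then is it a lattice isomorphism. Your proposed key identity~$\projDown(x \meet y) = \projDown(\projDown(x) \meet \projDown(y))$ certifies that the quotient operations are well defined, but it never compares them with~$\meet'$,~$\join'$ on~$L'$, which is exactly where the example above breaks. For what it is worth, the paper's argument in \cref{prop:orderPreservingDownUpProj} in the balanced case does establish this stronger factoring-through property (the proof only uses~$\restrictionMap[](E) \le' \restrictionMap[](F)$, never~$E \le F$), so the applications in the paper are unaffected even though the literal statement of \cref{prop:latticeQuotientMap} is, strictly speaking, too weak in its second bullet.
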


\begin{proof}[Proof of \cref{thm:latticeMap}]
It follows from \cref{prop:intervalFibers,prop:orderPreservingDownUpProj,prop:latticeQuotientMap,prop:posetIsomInterval}, and the immediate observation that the (weakly / strongly) balanced condition is equivalent to the (weakly / strongly) pathful condition when~$D$ is vertebrate.
\end{proof}

\begin{example}
Assume that~$D'$ is a forest.
As already mentioned in the introduction, the acyclic reorientation poset~$\AR[D']$ is then a boolean lattice.
The restriction map~$\restrictionMap[]$ is a lattice map if and only if~$D$ is vertebrate and~$D'$ is a subgraph of the transitive reduction of~$D$.
Therefore, for any vertebrate directed acyclic graph~$D$, any subgraph~$D'$ of the transitive reduction of~$D$ defines a boolean lattice quotient~$\AR[D']$ of~$\AR$.
\end{example}

\begin{example}
\label{exm:latticeQuotientsWeakOrder}
Assume that~$D$ is a tournament, and label the vertices of~$D$ by~$[n]$ so that~${(i,j) \in D}$~for all~$1 \le i < j \le n$.
As already mentioned in the introduction, the acyclic reorientation poset~$\AR$ is then isomorphic to the classical weak order on permutations.
The restriction map $\restrictionMap[]$ is a lattice map (in other words, $\AR[D']$ is a lattice quotient of the weak order) if and only if $(i,\ell) \in D'$ implies $(j,k) \in D'$ for any~$1 \le i \le j < k \le \ell \le n$.
In other words, $D'$ is a lower ideal for the nesting order defined by $(i,\ell) < (j,k)$ for~$1 \le i \le j < k \le \ell \le n$.
Representing this ideal by its generators, we obtain a bijection between acyclic reorientation posets that are lattice quotients of the weak order on~$\f{S}_n$ and non-nested partitions of~$[n]$, which are counted by the Catalan number~$C_n \eqdef \frac{1}{n+1} \binom{2n}{n}$.

\medskip
\enlargethispage{.5cm}
Note that the same graphs already appeared in the work of E.~Bernard and T.~McConville~\cite{BarnardMcConville} concerning lattice maps in the context of graph associahedra.
In particular, when~$D'$ is a lower ideal of the nesting order, there is a triangle of lattice morphisms from the weak order, through the tubing order on~$D'$ \cite{CarrDevadoss, BarnardMcConville}, to the acyclic reorientation lattice of~$D'$.
\end{example}


\section{Properties of acyclic reorientation lattices}
\label{sec:latticeProperties}

In this section, we assume that~$D$ is vertebrate and we study classical lattice properties of the acyclic reorientation lattice~$\AR$, illustrated in \cref{fig:distributiveSemidistributive}.
We refer to~\cite{GratzerWehrung-LatticeTheoryI, GratzerWehrung-LatticeTheoryII} for a detailed reference on these lattice properties and just briefly recall the needed definitions and characterizations of these properties.


\subsection{Join and meet irreducibles}
\label{subsec:joinMeetIrreducibles}

Recall first that an element~$x$ of a lattice~$L$ is \defn{join} (resp.~\defn{meet}) \defn{irreducible} if it covers (resp.~is covered by) a unique element of~$L$ denoted~$x_\star$ (resp.~$x^\star$).
For instance, the join (resp.~meet) irreducibles of the boolean lattice are the singletons (resp.~complements of singletons), and the join (resp.~meet) irreducibles in the weak order on permutations are the permutations with a single descent (resp.~ascent).
These examples generalize as follows.

\begin{proposition}
\label{prop:joinIrreducibles}
The following assertions are equivalent for an acyclic reorientation~$E$ of~$D$:
\begin{enumerate}[(i)]
\item $E$ is join (resp.~meet) irreducible in~$\AR$,
\item the transitive reduction of~$E$ contains a single reversed (resp.~not reversed) arc,
\item there is an arc~$a$ of~$D$ such that~$E$ is a minimal (resp.~maximal) element of the fiber of the reverse of~$a$ (resp.~of~$a$) under the restriction map~$\restrictionMap[a]$ from~$D$ to~$\{a\}$.
\end{enumerate}
\end{proposition}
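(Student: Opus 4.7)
The plan is to prove the statement for join irreducibles; the statement for meet irreducibles follows automatically from the self-duality of $\AR$ under reversing all arcs. I would rely on one key fact already recalled in the introduction: an arc of an acyclic reorientation $E$ is flippable while preserving acyclicity if and only if it belongs to the transitive reduction of $E$. Consequently, the elements of $\AR$ covered by $E$ are in bijection with the reversed arcs of $E$ lying in the transitive reduction of $E$ (un-reversing any such arc produces a cover below $E$, and conversely every cover below $E$ is obtained this way by the cover-relation description in the introduction).

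From this bijection, the equivalence (i) $\Leftrightarrow$ (ii) is immediate: $E$ is join irreducible, i.e.\ covers a unique element, if and only if the transitive reduction of $E$ contains a unique reversed arc.

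For (ii) $\Rightarrow$ (iii), I would argue as follows. Suppose the transitive reduction of $E$ contains a unique reversed arc $a$. Then $a$ is reversed in $E$, so $E$ belongs to the fiber $F_a \eqdef \restrictionMap[a]^{-1}(\bar a)$ of acyclic reorientations in which $a$ is reversed. Any $E' < E$ in $\AR$ is obtained from $E$ by a sequence of downward covers, each of which un-reverses some arc in the transitive reduction of the current reorientation. In particular, the very first such step un-reverses an arc in the transitive reduction of $E$, which must be $a$; so $E' \notin F_a$. Hence $E$ is minimal in $F_a$.

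For (iii) $\Rightarrow$ (ii), suppose $E$ is a minimal element of $F_a$ for some arc $a$. Then $a$ is reversed in $E$. If $E$ had any reversed arc $b \neq a$ in its transitive reduction, then un-reversing $b$ would produce an acyclic reorientation $E' < E$ (by the flippability criterion) with $a$ still reversed, i.e.\ $E' \in F_a$, contradicting minimality of $E$. Hence $a$ is the unique reversed arc in the transitive reduction of $E$.

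The only step requiring some care is the first one, the reduction from covers of $E$ to reversed arcs in its transitive reduction; but this has already been stated in the introduction, so no real obstacle is expected.
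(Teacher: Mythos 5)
Your proof is correct and follows essentially the same route as the paper's: both hinge on the fact (recalled in the introduction) that the flippable arcs of $E$ are exactly those in its transitive reduction, which gives (i)$\Leftrightarrow$(ii) immediately, and then translates minimality in the fiber of $\restrictionMap[a]$ into uniqueness of the reversed arc in the transitive reduction. The only cosmetic difference is that you prove (ii)$\Rightarrow$(iii) and (iii)$\Rightarrow$(ii) separately (using a saturated chain for the first), whereas the paper handles both directions in one sentence; you also leave slightly implicit that $a$ itself lies in the transitive reduction of $E$ (which follows since $E \ne D$, so some reversed arc of $E$ lies in its transitive reduction, and your argument rules out any $b \ne a$), but this is a minor omission, not a gap.
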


\begin{proof}
Note that it suffices to prove the statement for join irreducibles since the acyclic reorientation poset is self-dual under reversing all arcs.

\medskip
\noindent
\uline{(i) $\Leftrightarrow$ (ii).}
We already mentioned in the introduction that an arc is flippable in~$E$ if and only if it belongs to the transitive reduction of~$E$.
Therefore, $E$ is join irreducible if and only if exactly one such arc is reversed.

\medskip
\noindent
\uline{(ii) $\Leftrightarrow$ (iii).}
Flipping any arc~$b$ distinct from~$a$ in the transitive reduction of~$E$ yields an acyclic reorientation~$F$ of~$D$ in the same fiber under~$\restrictionMap[a]$, with $E \ge F$ if and only if~$b$ is reversed in~$E$.
Therefore, $a$ is the only reversed arc in the transitive reduction of~$E$ if and only if~$E$ is a minimal element of the fiber of the reverse of~$a$ under~$\restrictionMap[a]$.
\end{proof}

\begin{corollary}
\label{coro:numberJoinMeetIrreducibles}
The number of join (resp.~meet) irreducible elements of~$\AR$ is at least~$|A|$.
\end{corollary}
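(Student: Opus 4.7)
The plan is to construct, for each arc $a \in A$, a distinct join irreducible $E_a$ of~$\AR$; this immediately yields $|\{\text{join irreducibles of } \AR\}| \ge |A|$, and the corresponding bound for meet irreducibles then follows by the self-duality of~$\AR$ under reversing all arcs.

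For the construction, I would exploit the equivalence (i)~$\Leftrightarrow$~(iii) of~\cref{prop:joinIrreducibles}. Fix $a \in A$ and consider the restriction map $\restrictionMap[a]$ from~$\AR$ to~$\AR[{\{a\}}]$. By~\cref{lem:restrictionMap}, this map is surjective, so the fiber of the reverse of~$a$ is nonempty; as a finite subposet of~$\AR$ it admits at least one minimal element, which I denote~$E_a$. By~\cref{prop:joinIrreducibles}(iii), $E_a$ is a join irreducible of~$\AR$.

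The second step is to verify that the assignment $a \mapsto E_a$ is injective. For this I would appeal to characterization (ii) of~\cref{prop:joinIrreducibles}: the transitive reduction of~$E_a$ contains a unique reversed arc~$b$. The claim is that $b = a$, for otherwise flipping~$b$ in~$E_a$ would yield an acyclic reorientation strictly below~$E_a$ in~$\AR$ in which~$a$ is still reversed, hence still lying in the fiber of the reverse of~$a$, contradicting the minimality of~$E_a$. Consequently distinct arcs produce distinct join irreducibles.

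I do not expect any real obstacle here: all the work has been done in~\cref{prop:joinIrreducibles} and~\cref{lem:restrictionMap}, and the corollary reduces to an accounting argument combining (ii), (iii), and the surjectivity of the restriction map onto the one-arc graph.
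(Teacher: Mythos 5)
Your proof is correct and matches the paper's (implicit) argument: the paper states the corollary without proof because it follows directly from \cref{prop:joinIrreducibles} via exactly the accounting you perform — each arc $a$ yields, through \cref{lem:restrictionMap} and characterization~(iii), a join irreducible whose unique reversed arc in its transitive reduction is $a$ by characterization~(ii) and minimality, giving injectivity. Your reconstruction is the intended one, and it also makes explicit why, as the paper notes afterward, the argument works for any directed acyclic graph (no vertebrate hypothesis is needed since neither \cref{lem:restrictionMap} nor \cref{prop:joinIrreducibles} uses it).
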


Note that \cref{prop:joinIrreducibles,coro:numberJoinMeetIrreducibles} hold for any directed acyclic graph~$D$.
We will state a much more precise count of join and meet irreducibles of~$\AR$ when~$D$ is skeletal using ropes in \cref{subsec:ropes}.


\subsection{Distributivity}
\label{subsec:distributivity}

A finite lattice~$(L, \le, \meet, \join)$ is \defn{distributive} if $x \join (y \meet z) = (x \join y) \meet (x \join z)$ (or equivalently $x \meet (y \join z) = (x \meet y) \join (x \meet z)$) for any~$x, y, z \in L$.
The fundamental theorem for distributive lattices affirms that $L$ is distributive if and only if it is isomorphic to the lattice of lower ideals of its join irreducible poset (or equivalently of upper ideals of its meet irreducible poset).
The following statement says that an acyclic reorientation lattice is distributive if and only if it is a boolean lattice.

\begin{proposition}
\label{prop:distributive}
The acyclic reorientation poset~$\AR$ is a distributive lattice if and only if~$D$ is a forest.
\end{proposition}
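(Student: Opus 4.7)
The plan is to prove the two directions separately; the easy one follows from the introduction.

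For ($\Leftarrow$), if $D$ is a forest then the introduction already notes that $\AR$ is the boolean lattice on~$A$, which is distributive.

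For ($\Rightarrow$), I argue by contrapositive using the standard characterization that a finite lattice is distributive if and only if every join-irreducible $j$ is \emph{join-prime}, meaning $j \le x \join y$ implies $j \le x$ or $j \le y$. Assuming $\AR$ is a lattice (otherwise distributivity fails trivially), $D$ is vertebrate by \cref{coro:vertebrate}. Since the transitive reduction of~$D$ is then a forest while~$D$ itself is not, there exists $(u,v)\in A$ and a directed path $u = w_0 \to w_1 \to \cdots \to w_k = v$ in~$D$ of length $k \ge 2$; choose such a pair with $k$ minimum. Minimality of~$k$ forces the induced subgraph~$D_U$ on $U \defeq \{w_0,\dots,w_k\}$ to contain only the $k$ path arcs $a_i \defeq (w_{i-1},w_i)$ and the chord $d \defeq (u,v)$: any other arc $(w_i,w_j)$ with $i<j$ and $(i,j)\neq(0,k)$ would give a pair in~$A$ joined by a path of length $j-i<k$ (or, if oriented backwards, a directed cycle in~$D$). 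The construction in the proof of \cref{coro:vertebrate} exhibits $\AR[D_U]$ as an interval of~$\AR$, and distributivity passes to intervals, so it suffices to show $\AR[D_U]$ is not distributive.

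Inside $\AR[D_U]$ I consider three reorientations: $j_1$ obtained by flipping~$\{a_1\}$, $j_2$ obtained by flipping~$\{a_2, d\}$, and $j$ obtained by flipping~$\{a_1, d\}$. Using \cref{prop:joinIrreducibles} I verify each is join-irreducible by inspecting its transitive reduction: $j_1$ is already transitively reduced with $a_1$ as its unique reversal, whereas in $j$ and $j_2$ the reversed short arc ($a_1$ or $a_2$, respectively) becomes redundant via the long way around the cycle $w_0 \to w_1 \to \cdots \to w_k \to w_0$, leaving only the reversed $d$ in the reduction. Using \cref{thm:joinMeet}, $j_1 \join j_2$ flips the transitive closure of $\{a_1, a_2, d\}$ intersected with the arcs of~$D_U$; the only new arc this closure produces is $(w_0, w_2)$, which equals $d$ when $k=2$ and is otherwise not an arc of~$D_U$, so $j_1\join j_2$ flips exactly $\{a_1,a_2,d\}$. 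Then $\{a_1,d\} \subseteq \{a_1,a_2,d\}$ yields $j \le j_1 \join j_2$, while $d \notin \{a_1\}$ and $a_1 \notin \{a_2, d\}$ yield $j \not\le j_1$ and $j \not\le j_2$, contradicting join-primality of~$j$.

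The main subtlety is the minimality argument, which carves out $D_U$ as a directed Hamilton path on~$U$ plus the shortcut chord~$d$. This is precisely the shape that makes the flipped short arc transitively redundant via the long way around the cycle, letting the same three-element construction witness non-distributivity uniformly for every $k \ge 2$.
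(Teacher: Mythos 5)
Your proof is correct, but takes a genuinely different route from the paper's. The paper's argument stays inside the full lattice~$\AR$ and directly exhibits a violation of the distributive law: with a path~$a_1,\dots,a_\ell$ in the transitive reduction and a chord~$a$, it sets~$E$ to reverse everything but~$a_\ell$ and~$F_i$ to reverse only~$a_i$, then checks via \cref{thm:joinMeet} that~$a$ is reversed in~$E \meet \bigJoin_i F_i$ but not in~$\bigJoin_i (E \meet F_i)$. You instead invoke the Birkhoff-style characterization of distributivity via join-primality of join-irreducibles, localize to a minimal ``path plus shortcut chord'' subgraph~$D_U$ using the interval embedding from \cref{coro:vertebrate}, and exhibit a concrete triple~$j, j_1, j_2$ of join-irreducibles in~$\AR[D_U]$ with~$j \le j_1 \join j_2$ but~$j \not\le j_1$ and~$j \not\le j_2$. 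What your approach buys is a cleaner combinatorial setting: the minimality argument pins down~$D_U$ to a single directed cycle plus one chord, making the identification of the relevant join-irreducibles and the join computation fully explicit. What the paper's approach buys is brevity and the absence of any appeal to an auxiliary lattice-theoretic characterization. Both rely on \cref{thm:joinMeet} to control the join.

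One small inaccuracy worth flagging: the claim that ``$j_1$ is already transitively reduced'' fails when $k = 2$, since then $a_2 = (w_1, w_2)$ becomes transitive in $j_1$ via the reversed~$a_1$ and the chord~$d$ (the path $w_1 \to w_0 \to w_2$). What \cref{prop:joinIrreducibles} actually requires, and what does hold for all $k \ge 2$, is only that the transitive reduction of~$j_1$ contain a single reversed arc, here~$\overline{a_1} = (w_1, w_0)$. So the conclusion that~$j_1$ is join-irreducible survives; only the side remark about it being transitively reduced does not.
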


\begin{proof}
If~$D$ is a forest, all reorientations of~$D$ are acyclic, so that~$\AR$ is a boolean lattice.

\medskip
Conversely, assume that~$D$ is not a forest.
Since~$D$ is vertebrate, its transitive reduction~$R$ is a forest, so that there exists a directed path~$a_1, \dots, a_\ell$ (with $\ell > 2$) in~$R$ and an arc~$a$ in~$D$ with the same endpoints.
Let~$E$ denote the acyclic reorientation of~$D$ obtained by reversing all arcs except~$a_\ell$.
For~$i \in [\ell]$, denote by~$F_i$ the acyclic reorientation of~$D$ obtained by reversing only the arc~$a_i$.
Observe that
\begin{itemize}
\item all the arcs~$a_i$ are reversed in~$\bigJoin_{i \in [\ell]} F_i$, so that all the arcs in their transitive closure are reversed in~$\bigJoin_{i \in [\ell]} F_i$. By \cref{thm:joinMeet}, we obtain that~$a$ is reversed in~$E \meet \bigJoin_{i \in [\ell]} F_i$.
\item $E \meet F_i = F_i$ for~$i \in [\ell-1]$ while and~$E \meet F_\ell = D$, so that~$\bigJoin_{i \in [\ell]} (E \meet F_i) = \bigJoin_{i \in [\ell-1]} F_i$. By \cref{thm:joinMeet}, we obtain that~$a$ is not reversed in~$\bigJoin_{i \in [\ell]} (E \meet F_i)$.
\end{itemize}
Therefore, $E \meet \bigJoin_{i \in [\ell]} F_i \ne \bigJoin_{i \in [\ell]} (E \meet F_i)$ which shows that $\AR$ is not distributive.
\end{proof}


\subsection{Semidistributivity and canonical representations}
\label{subsec:semidistributivity}

A finite lattice~$(L, \le, \meet, \join)$ is \defn{join semidistributive} if $x \join y = x \join z$ implies $x \join (y \meet z) = x \join y$ for any~$x, y, z \in L$.
Equivalently, $L$ is join semidistributive if for any cover relation~$x \lessdot y$ in~$L$, the set~$K_\join(x,y) \eqdef \set{z \in L}{x \join z = y}$ has a unique minimal element~$k_\join(x,y)$.
Note that~$k_\join(x,y)$ is join irreducible.
In particular we define~$\kappa_\join(m) \eqdef k_\join(m, m^\star)$ for a meet irreducible~$m$ of~$L$.
The \defn{meet semidistributivity} and the maps~$K_\meet$, $k_\meet$ and~$\kappa_\meet$ are defined dually.
A lattice~$L$ is \defn{semidistributive} if it is both join and meet semidistributive.
In this case, the maps~$\kappa_\join$ and~$\kappa_\meet$ are inverse bijections between the meet irreducible and the join irreducible elements of~$L$.

\medskip
Our next statement characterizes semidistributivity for acyclic reorientation lattices.
Recall that~$D$ is \defn{filled} when the following equivalent conditions are fulfilled:
\begin{itemize}
\item for any directed path~$\pi$ in~$D$, if the arc joining the endpoints of~$\pi$ belongs to~$D$, then all arcs joining any two vertices of~$\pi$ also belong~to~$D$,
\item the transitive support of any arc~$a$ of~$D$ induces a tournament in~$D$,
\item for any arc~$(u,v)$ in~$D$ and any vertex~$w$ in the transitive support of~$(u,v)$ minus~$\{u,v\}$, both arcs~$(u,w)$ and~$(w,v)$ also belong to~$D$,
\end{itemize}
where the \defn{transitive support} of an arc~$a$ of~$D$ is the set of vertices of~$D$ that appear along a directed path in~$D$ joining the endpoints of~$a$ (or equivalently along the directed path in the transitive reduction of~$D$ joining the endpoints of~$a$).
From now on, we abbreviate vertebrate and filled by \defn{skeletal}.
Note that chordful (meaning that any cycle induces a clique) implies skeletal, and that skeletal implies chordal (meaning that there is no induced cycle of length at least~$4$), but that both reverse implications are wrong.
In particular, any forest and any tournament is skeletal.
In fact, it is not difficult to check that the skeletal directed acyclic graphs are precisely the directed forests on which some directed paths are replaced by tournaments.
Some examples of skeletal directed acyclic graphs and their acyclic reorientation lattices are illustrated in \cref{fig:canonicalJoinComplexes}.

\begin{proposition}
\label{prop:semidistributive}
The acyclic reorientation poset~$\AR$ is a semidistributive lattice if and only if~$D$ is skeletal.
\end{proposition}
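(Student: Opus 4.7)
The plan is to treat the two implications separately, leaning on the biclosedness characterization of \cref{prop:biclosed} and the explicit join formula of \cref{thm:joinMeet}, which both require the vertebrate hypothesis.

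\textbf{Sufficiency (skeletal $\Rightarrow$ semidistributive).} By self-duality of $\AR$ under reversing all arcs it suffices to prove join semidistributivity. Fix a cover relation $x \lessdot y$, obtained by flipping a single arc $a = (u,v)$ which lies in the transitive reduction of $x$. I plan to exhibit the canonical joinand $k_\join(x,y)$ explicitly. Since $D$ is filled, the transitive support $T_a$ of $a$ induces a tournament of $D$; combined with the acyclicity of $x$ and the fact that $a$ is in the transitive reduction of $x$, each intermediate vertex $w \in T_a \ssm \{u,v\}$ sits in $x$ either \emph{above} $a$ (both $(w,u)$ and $(w,v)$ are arcs of $x$) or \emph{below} $a$ (both $(u,w)$ and $(v,w)$ are arcs of $x$); no mixed case occurs, for it would put $w$ strictly between $u$ and $v$ in $x$ and contradict that $a$ is in the transitive reduction of $x$. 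Define $k_\join(x,y)$ to be the reorientation obtained by reversing $a$, every arc $(u,w)$ with $w$ above, and every arc $(w,v)$ with $w$ below. I will check that the set of reversed arcs is biclosed (using the tournament structure of $T_a$ and the bipartition into above/below), hence $k_\join(x,y) \in \AR$ by \cref{prop:biclosed}, that $x \join k_\join(x,y) = y$ by \cref{thm:joinMeet}, and that any $z \in K_\join(x, y)$ must contain all these reversed arcs because biclosedness of the reversed set of $z$ together with $a$ being reversed in $z$ forces, through the chord $(u,w)$ or $(w,v)$, each above/below arc to be reversed.

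\textbf{Necessity (not filled $\Rightarrow$ not semidistributive).} Suppose $D$ is vertebrate but not filled. I choose a minimal witness: a shortest directed path $\pi = v_0, v_1, \ldots, v_\ell$ with $(v_0, v_\ell) \in D$ and at least one chord $(v_i, v_j)$ missing from~$D$. Minimality will force $\ell \geq 3$ and will limit the missing chords to adjacent to $v_0$ or $v_\ell$ along $\pi$, say $(v_0, v_{\ell-1}) \notin D$. I will construct three acyclic reorientations $x, y, z$ witnessing the failure of join semidistributivity as follows. Take $y$ obtained from $D$ by reversing only the arc $(v_0, v_\ell)$ together with the portion of $\pi$ forced by biclosedness, and take $z$ obtained by reversing only $(v_{\ell-1}, v_\ell)$ together with its forced companions; take $x$ suitably chosen so that $x$ is incomparable to both but $x \join y = x \join z$ equals the reorientation that reverses the chord and the entire path. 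The absence of the chord $(v_0, v_{\ell-1})$ prevents the transitive closure in \cref{thm:joinMeet} from propagating through $y \meet z$, so $x \join (y \meet z)$ omits some required reversal and thus differs from $x \join y$.

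\textbf{Main obstacle.} The delicate step is the necessity direction: identifying the right witness $x, y, z$ so that the minimal violation of filledness translates into a concrete failure of the transitive closure used in \cref{thm:joinMeet}. The vertebrate hypothesis is indispensable here, since it is what makes the join formula available on both sides of the candidate identity; the construction must simultaneously reverse enough arcs to trigger the chord and few enough arcs to prevent $y \meet z$ from already containing the obstructing path. Once this combinatorial template is set up, verifying the join identity is a direct application of \cref{thm:joinMeet} and \cref{prop:biclosed}.
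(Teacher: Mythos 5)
Your sufficiency direction contains a concrete gap: the formula you propose for $k_\join(x,y)$ omits the arcs of $D$ between two intermediate vertices, and the resulting reorientation fails to be acyclic. You reverse $a = (u,v)$, the arcs $(u,w)$ with $w$ above, and the arcs $(w,v)$ with $w$ below. But if $w$ is below, $w'$ is above, and $(w,w') \in D$, then $(w,w')$ is automatically reversed in $x$ (since $w' \to u \to w$ in $x$, so $w' \to w$ by acyclicity), yet your $k_\join$ does not reverse it, and then $u \to w \to w' \to v \to u$ is a directed cycle in your $k_\join$. To see this concretely, take $D$ the tournament on $\{u, w, w', v\}$ in this order and let $x$ be the acyclic reorientation with arcs $w' \to u$, $w' \to v$, $w' \to w$, $u \to w$, $v \to w$, $u \to v$: then $(u,v)$ lies in the transitive reduction of $x$, $w$ is below, $w'$ is above, and your $k_\join$ contains the cycle $u \to w \to w' \to v \to u$. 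The correct canonical joinand must also reverse every arc $(w,w')$ of $D$ with $w$ below and $w'$ above. The paper achieves this by calling an arc \emph{forced} when it is the only arc reversed in $y$ along some directed path of $D$ joining the endpoints of $a$, and reversing all forced arcs; your above/below bipartition is the right structural observation (it is precisely the rope decomposition), but your explicit formula captures only the forced arcs incident to $u$ or $v$.

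Your necessity direction begins the same way as the paper (a shortest directed path with $(v_0,v_\ell)\in D$ and a missing chord incident to an endpoint), but the heart of the argument is unspecified: $x$ is never chosen, the reversed set of $y$ is not pinned down, and reversing $(v_{\ell-1},v_\ell)$ alone already yields an acyclic reorientation, so there are no ``forced companions'' for $z$. The paper is explicit: $X$ reverses all arcs except those of the form $(v_k,v_\ell)$ belonging to $D$, $Y$ additionally reverses $(v_0,v_\ell)$, and the failure of join semidistributivity is exhibited by producing reorientations $E_i$ (agreeing with $Y$ except on $(v_{i-1},v_i)$) that all lie in $K_\join(X,Y)$ while their meet does not, so this set has no minimum. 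Your plan to instead produce an explicit violation $x \join y = x \join z \ne x \join (y \meet z)$ is a legitimate alternative formulation, but as written it is a statement of intent rather than a proof.
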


\begin{proof}
Since we focus in this paper on self-dual lattices, the notions of join semidistributivity, meet semidistributivity and semidistributivity coincide.
We focus here on join semidistributivity.

\medskip
Assume that there is a directed path with vertices~$v_0, \dots, v_\ell$ in the transitive reduction of~$D$ such that~$(v_0,v_\ell) \in D$ but there is~$0 \le i < j \le \ell$ such that~$(v_i, v_j) \notin D$.
Restricting the path, we can assume that~$(v_0,v_\ell) \in D$ while~$(v_0, v_{\ell-1}) \notin D$ or~$(v_1, v_\ell) \notin D$, say the later for instance.
Let~$X$ denote the acyclic reorientation of~$D$ obtained by reversing all arcs except the arcs~$(v_k, v_\ell)$ that belong to~$D$ (in particular the arc~$(v_0, v_\ell)$), and let~$Y$ denote the reorientation of~$D$ obtained from~$X$ by reversing~$(v_0, v_\ell)$.
For~$i \in~[\ell-1]$, let~$E_i$ denote the reorientation of~$D$ that agrees with~$Y$ except on the arc~$a_i = (v_{i-1}, v_i)$.

We claim that $E_i$ is acyclic.
Assume by means of contradiction that~$E_i$ contains a directed cycle~$C$.
Since~$a_i$ is in the transitive reduction of~$D$, it cannot be the only arc of~$D$ in~$C$.
Therefore, one of the arcs~$(v_k, v_\ell)$ is also in~$C$, so that~$(v_0, v_\ell)$ is also reversed in~$C$.
Since~$k \ne 1$ as we assumed that~$(v_1, v_\ell) \notin D$, the arc~$a_i$ does not suffice to close~$C$.

Consider now~$E \eqdef \bigMeet_{i \in [\ell-1]} E_i$.
Since the arc~$a_i$ and~$a_\ell$ are not reversed in~$E_i$, we obtain that none of the arcs~$a_i$ are reversed in~$E$, so that~$(v_0, v_\ell)$ is not reversed in~$E$, hence~$X \le E$.
We conclude that the set~$\set{F \in \AR}{X \join F = Y}$ contains all~$E_i$ but not~$E$, so that it has no minimal element.

\medskip
Assume now that~$D$ is filled and consider a cover relation~$X \lessdot Y$ in~$\AR$.
Let~$a$ denote the arc reversed from~$X$ to~$Y$.
We say that an arc of~$D$ is \defn{forced} if it is the only arc reversed in~$Y$ along a directed path in~$D$ joining the endpoints of~$a$.
In other words, an arc of~$D$ is forced if its endpoints are connected by a directed path in~$Y$ where~$a$ is the only reversed arc.
Note that by definition, the arc~$a$ is forced while the arcs not reversed in~$Y$ are not forced.

Our assumption that~$D$ is filled implies that any directed path in~$D$ joining the endpoints of~$a$ contains at least one forced arc.
Indeed, let~$v_0, \dots, v_\ell$ denote the vertices along such a path.
Since $a = (v_0, v_\ell)$ and $D$ is filled, all arcs~$(v_i, v_j)$ with~$0 \le i < j \le \ell$ belong to~$D$.
Let~$k \in [\ell]$ be minimal such that the arc $(v_0, v_k)$ is reversed in~$Y$.
Then neither $(v_0, v_{k-1})$ (by minimality of~$k$) nor $(v_k, v_\ell)$ (since~$X$ is acyclic and contains $(v_k, v_0)$ and $(v_0, v_\ell)$) are reversed in~$Y$.
This shows that the arc~$(v_{k-1}, v_k)$ is forced.

Let~$E$ be the reorientation of~$D$ obtained by reversing all forced arcs.
We claim that
\begin{itemize}
\item $E$ is acyclic. Otherwise, it contains a directed cycle~$C$. The arcs in~$E$ are either in~$D$ or forced. Replacing each forced arc in~$C$ by a directed path in~$Y$ where~$a$ is the only reversed arc, and taking eventually a subcycle if the result is not simple, we can assume that~$C$ is formed by the arc~$a$ together with a directed path of arcs in~$D$ joining the endpoints of~$a$. By definition of~$E$, none of the arcs along this path is forced, contradicting our earlier observation.
\item $X \join E = Y$. Indeed, since the arcs not reversed in~$Y$ are not forced, they are also not reversed in~$E$ so that~$E \le Y$. Moreover, since~$a$ is forced, it is reversed in~$E$, so that~$E \not\le X$. Therefore, $X \join E = Y$.
\item $E$ is smaller than any~$F \in \AR$ such that~$X \join F = Y$. Indeed, if~$X \join F = Y$, then all arcs not reversed in~$Y$ are not reversed in~$F$ (because~$F \le Y$), so that~$a$ is reversed in~$F$ (because~$F \not\le X$), so that the forced arcs are reversed in~$F$ (because $F$ is acyclic). Therefore, all arcs reversed in~$E$ are reversed in~$F$, so that~$E \le F$.
\end{itemize}
This shows that~$E$ is the unique minimal element of the set~$\set{F \in \AR}{X \join F = Y}$, which proves that~$\AR$ is join semidistributive, thus semidistributive (by self-duality).
\end{proof}

\begin{figure}
	\centerline{
		\begin{tabular}{c@{\quad}c@{\quad}c}
			\includegraphics[scale=.9]{acyclicReorientationPoset2} &
			\includegraphics[scale=.9]{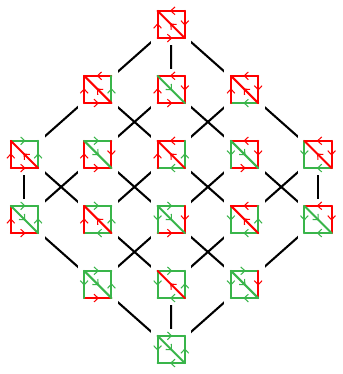} &
			\includegraphics[scale=.9]{acyclicReorientationPoset7}
			\\[.2cm]
			\includegraphics[scale=.9]{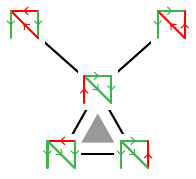} &
			\includegraphics[scale=.9]{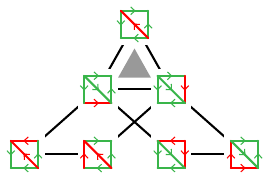} &
			\includegraphics[scale=.9]{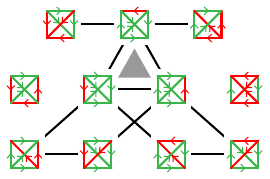}
		\end{tabular}
	}
	\caption{Three semidistributive acyclic reorientation lattices (top) and their canonical join complexes (bottom). The rightmost lattice is isomorphic to the weak order on permutations.}
	\label{fig:canonicalJoinComplexes}
\end{figure}

Semidistributivity enables us to consider canonical representations.
A \defn{join representation} of~${x \in L}$ is a subset~$J \subseteq L$ such that~$x = \bigJoin J$.
Such a representation is \defn{irredundant} if~$x \ne \bigJoin J'$ for any strict subset~$J' \subsetneq J$.
The irredundant join representations of an element~$x \in L$ are ordered by containement of the lower ideals of their elements, \ie~$J \le J'$ if and only if for any~$y \in J$ there exists~$y' \in J'$ such that~$y \le y'$ in~$L$.
The \defn{canonical join representation} of~$x$ is the minimal irredundant join representation of~$x$ for this order when it exists.
Its elements are the \defn{canonical joinands} of~$x$.
The \defn{canonical meet representations} and the \defn{canonical meetands} are defined dually.

\pagebreak
\medskip
A classical result affirms that a finite lattice~$L$ is join (resp.~meet) semidistributive if and only if any element of~$L$ admits a canonical join (resp.~meet) representation~\cite[Thm.~2.24]{FreeseNation}.
Moreover, in a join (resp.~meet) semidistributive lattice, the canonical join (resp.~meet) representation of~$y \in L$ is given by
\[
y = \bigJoin_{x \lessdot y} k_\join(x, y)
\qquad \text{(resp. } 
y = \bigMeet_{y \lessdot z} k_\meet(y, z)
\text{)}
\]
where~$k_\join(x,y)$ is the minimal element of~$K_\join(x,y) \eqdef \set{z \in L}{x \join z = y}$ (resp.~$k_\meet(y,z)$ is the maximal element of~$K_\meet(y,z) \eqdef \set{x \in L}{x \meet y = z}$).

\medskip
Combining this description with \cref{prop:joinIrreducibles,prop:semidistributive}, we obtain the join (resp.~meet) canonical representations in the acyclic reorientation lattice, generalizing the description of~\cite{Reading-arcDiagrams} for the weak order.
An alternative description is presented later in \cref{prop:canonicalJoinMeetComplex} in terms of ropes.

\begin{corollary}
\label{coro:canonicalJoinMeetRepresentation}
Assume that~$D$ is skeletal.
The canonical join (resp.~meet) representation of an acyclic reorientation~$E$ of~$D$ is given by~$E = \bigJoin_a E_a$ (resp.~$E = \bigMeet_a E_a$) where
\begin{itemize}
\item $a$ runs over all arcs of~$D$ reversed (resp.~not reversed) in the transitive reduction of~$E$,
\item $E_a$ is the acyclic reorientation of~$D$ where an arc is reversed (resp.~not reversed) if and only if it is the only arc reversed (resp.~not reversed) in~$E$ along a directed path in~$D$ joining the endpoints of~$a$.
\end{itemize}
\end{corollary}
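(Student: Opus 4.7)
The plan is to recognize that this corollary is essentially a bookkeeping consequence, assembling three ingredients that are either already classical or already established earlier in the paper: (i) the general formula for canonical join representations in a semidistributive lattice, (ii) the identification of cover relations in $\AR$ with arcs of the transitive reduction, and (iii) the explicit computation of $k_\join$ carried out inside the proof of Proposition~\ref{prop:semidistributive}. Since $D$ is skeletal, Proposition~\ref{prop:semidistributive} gives that $\AR$ is semidistributive, so every acyclic reorientation $E$ admits a canonical join representation, given by the standard formula
\[
E \;=\; \bigJoin_{X \lessdot E} k_\join(X, E).
\]

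The first step would be to parametrize the cover relations $X \lessdot E$. As noted in the introduction, an arc is flippable in an acyclic reorientation if and only if it belongs to its transitive reduction, and flipping a reversed arc $a$ strictly decreases the reorientation in $\AR$. Hence the covers $X \lessdot E$ are in bijection with the arcs $a$ reversed in the transitive reduction of $E$, each such $a$ corresponding to the cover $X_a \lessdot E$ where $X_a$ is obtained from $E$ by restoring $a$ to its original direction.

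The second step is to observe that $k_\join(X_a, E)$ is exactly the reorientation $E_a$ of the statement. This has already been shown, essentially verbatim, in the second half of the proof of Proposition~\ref{prop:semidistributive}: defining an arc of $D$ to be \emph{forced} (with respect to $a$ and $E$) if it is the only arc reversed in $E$ along some directed path in $D$ joining the endpoints of $a$, the proof of Proposition~\ref{prop:semidistributive} establishes that reversing exactly the forced arcs produces an acyclic reorientation that lies in $K_\join(X_a, E)$ and is minimum there. This reorientation is precisely $E_a$. Substituting into the displayed canonical join formula yields $E = \bigJoin_a E_a$ as claimed. The canonical meet statement then follows immediately by the self-duality of $\AR$ under global arc reversal, which swaps joins and meets and exchanges ``reversed'' with ``not reversed''.

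There is no genuine obstacle here: the entire content of the corollary sits in the minimum-element argument already performed in Proposition~\ref{prop:semidistributive}, and the only work left is to recast that explicit construction of $k_\join$ in the language of canonical join representations and to check the bijection between covers and transitive-reduction arcs. If anything, the mildly subtle point to verify is that the $E_a$'s are indeed join irreducible, which is automatic since each $k_\join(X,Y)$ in a semidistributive lattice is join irreducible.
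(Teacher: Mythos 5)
Your proposal is correct and follows precisely the route the paper intends: the corollary is stated there without an explicit proof, citing only the classical formula $E = \bigJoin_{X \lessdot E} k_\join(X,E)$ combined with \cref{prop:joinIrreducibles,prop:semidistributive}. You fill in exactly the bookkeeping the paper leaves implicit---covers of $E$ correspond to the reversed arcs in the transitive reduction of $E$, the minimum-element (forced-arc) construction in the proof of \cref{prop:semidistributive} identifies $k_\join(X_a,E)$ with $E_a$, and self-duality handles the meet side.
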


The \defn{canonical join complex} of a join semidistributive lattice~$L$ is the simplicial complex on join irreducible elements of~$L$ whose faces are the canonical join representations of the elements of~$L$.
For instance, \cref{fig:canonicalJoinComplexes} shows the canonical join complexes for some acyclic reorientation lattices.
It was proved in~\cite{Barnard} that this complex is flag (\ie its minimal non-faces are edges, or equivalently it is the clique complex of its graph) if and only if~$L$ is semidistributive.
The \defn{canonical meet complex} is defined dually.
The \defn{canonical complex} of a semidistributive lattice~$L$ is the simplicial complex on join irreducible elements and meet irreducible elements of~$L$ whose faces are of the form~$J \cup M$ where~$J$ is a canonical join representation, $M$ is a canonical meet representation, and~$\bigJoin J \le \bigMeet M$.
It was proved in~\cite{AlbertinPilaud} that this complex is again flag.
Note that the canonical join (resp.~meet) complex encodes the elements of~$L$, while the canonical complex encodes the intervals of~$L$.
We will describe the canonical join (resp.~meet) complex of~$\AR$ in \cref{coro:canonicalJoinMeetComplex} using non-crossing rope diagrams generalizing~\cite{Reading-arcDiagrams}, and the canonical complex of~$\AR$ in \cref{coro:canonicalComplex} using rope bidiagrams generalizing~\cite{AlbertinPilaud}.


\subsection{Congruence normality and uniformity}
\label{subsec:congruenceNormality}

Recall that a \defn{congruence} of a finite lattice~$(L, \le, \meet, \join)$ is an equivalence relation on~$L$ that respects meets and joins, that is $x \equiv x'$ and $y \equiv y'$ implies $x \join y \equiv x' \join y'$ and $x \meet y \equiv x' \meet y'$.
The \defn{lattice quotient}~$L/{\equiv}$ is the lattice structure on the congruence classes of~$\equiv$, where for any two congruence classes~$X$ and~$Y$, the order is given by~$X \le Y$ if and only if $x \le y$ for some representatives~$x \in X$ and~$y \in Y$, and the join~$X \join Y$ (resp.~meet~$X \meet Y$) is the congruence class of~$x \join y$ (resp.~$x \meet y$) for any representatives~$x \in X$ and~$y \in Y$.
In other words, the projection map sending an element of~$L$ to its congruence class is a lattice map.
Moreover, the lattice quotient~$L/{\equiv}$ is isomorphic to the subposet of~$L$ induced by the minimal elements in their congruence classes.

\medskip
\enlargethispage{.1cm}
The set~$\con(L)$ of all congruences of~$L$, ordered by refinement, forms itself a distributive lattice where the meet is the intersection of relations and the join is the transitive closure of union of relations.
For any $x, y \in L$, there is a unique minimal congruence~$\con(x,y)$ in which~$x \equiv y$.
For a join irreducible element~$j$ of~$L$ (covering a single element~$j_\star$), the congruence~$\con(j_\star,j)$ is join irreducible in the congruence lattice~$\con(L)$.
Similarly, for any meet irreducible element~$m$ of~$L$, the congruence~$\con(m, m^\star)$ is meet irreducible in~$\con(L)$.
The lattice~$L$ is called
\begin{itemize}
\item \defn{congruence normal} if $\con(j_\star, j) \ne \con(m, m^\star)$ for any join irreducible $j$ and meet irreducible $m$ such that~$j \le m$,
\item \defn{congruence uniform} if the map~$j \mapsto \con(j_\star,j)$ (resp.~$m \mapsto \con(m, m^\star)$) is a bijection between the join (resp.~meet) irreducible elements of~$L$ and that of~$\con(L)$.
\end{itemize}
A lattice is congruence uniform if and only if it is congruence normal and semidistributive.

\medskip
In the sequel, we will use an alternative characterization of congruence normality and congruence uniformity in terms of convex and interval doublings in the sense of~\cite{Day}.
Given a poset~$P$ and a subset~$X$ of~$P$, the \defn{doubling} of~$X$ in~$P$ is the poset~$P[X]$ on~$(P \ssm X) \sqcup (X \times \{0,1\})$ defined by:
\begin{itemize}
\item $a \le b$ in~$P[X]$ if~$a, b \notin X$ and~$a \le b$ in~$P$,
\item $(a,i) \le b$ in~$P[X]$ if~$a \in X$, $b \notin X$, $i \in \{0,1\}$, and $a \le b$ in~$P$,
\item $a \le (b,j)$ in~$P[X]$ if~$a \notin X$, $b \in X$, $j \in \{0,1\}$, and $a \le b$ in~$P$,
\item $(a,i) \le (b,j)$ in~$P[X]$ if~$a,b \in X$, $i,j \in \{0,1\}$, and $a \le b$ in~$P$ and~$i \le j$.
\end{itemize}
This construction is illustrated in \cref{fig:doublingLattices}.
It was observed that if~$L$ is a lattice and $C \subseteq L$ is order convex (\ie $x \le y \le z$ and~$x,z \in C$ implies~$y \in C$), then~$L[C]$ is again a lattice.
A lattice is congruence normal (resp.~uniform) if and only if it can be obtained from a distributive lattice by a sequence of doublings of order convex sets (resp.~of intervals).

\begin{figure}
	\centerline{\includegraphics[scale=.9]{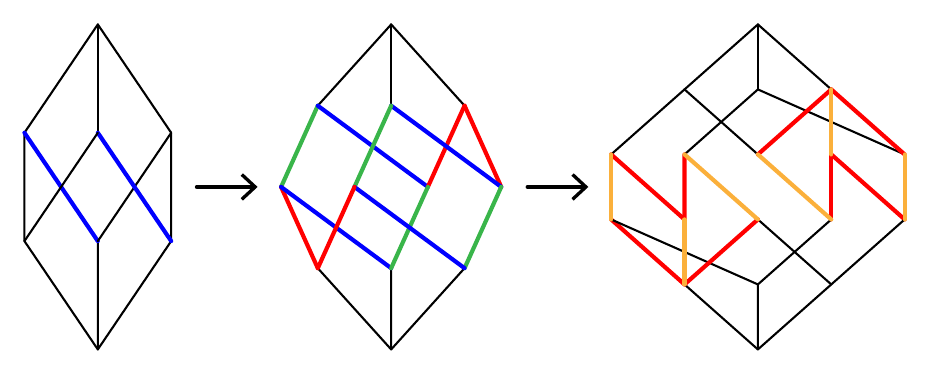}}
	\caption{A sequence of order convex doublings in a lattice. The two blue segments in the first lattice are doubled into four blue segments connected by four green segments in the second lattice, and the four red segments in the second lattice are doubled into height red segments connected by six orange segments in the third lattice. The first step could be decomposed into two interval doublings, the second cannot.}
	\label{fig:doublingLattices}
\end{figure}

\begin{proposition}
\label{prop:congruenceNormal}
The acyclic reorientation poset~$\AR$ is a congruence normal lattice for any vertebrate directed acyclic graph~$D$.
\end{proposition}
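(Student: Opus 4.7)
The plan is to exhibit $\AR$ as the result of a sequence of convex doublings applied to the distributive lattice $\AR[R]$, where $R$ denotes the transitive reduction of $D$. Since $D$ is vertebrate, $R$ is a forest, so every reorientation of $R$ is acyclic and $\AR[R]$ is the boolean lattice on the arc set of $R$, which is in particular distributive. By Day's characterization, any lattice obtained from a distributive lattice by a sequence of doublings of order convex subsets is congruence normal, so this strategy will suffice.

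To build $\AR$ from $\AR[R]$, I would enumerate the transitive arcs $D \ssm R$ as $a_1, \dots, a_m$ in an order such that each intermediate directed acyclic graph $D_i \eqdef R \cup \{a_1, \dots, a_i\}$ remains vertebrate; sorting the arcs by increasing length of the directed path joining their endpoints in the forest $R$ is the natural candidate, though the choice of ordering will need verification in view of the fact that vertebrate-ness is not automatically preserved by arbitrary arc removals. Writing $a_i = (u_i, v_i)$, the restriction map $\AR[D_i] \to \AR[D_{i-1}]$ has fibers of size one or two, the fibers of size two being indexed by the set $X_i$ of acyclic reorientations of $D_{i-1}$ in which $u_i$ and $v_i$ are incomparable. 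The two main claims to establish are that $X_i$ is order convex in $\AR[D_{i-1}]$, and that $\AR[D_i]$ is isomorphic to the Day doubling $\AR[D_{i-1}][X_i]$, where the ``$a_i$-not-reversed'' and ``$a_i$-reversed'' preimages of each $E' \in X_i$ correspond respectively to the low and high copies.

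The key ingredient for both claims is that since $a_i$ is transitive in $D$, there is a directed path $\rho_i$ from $u_i$ to $v_i$ inside $R \subseteq D_{i-1}$ in reference direction, and every $E' \in X_i$ must reverse at least one arc of $\rho_i$. The main obstacle will be the convexity: assuming a chain $E'_1 \le E'_2 \le E'_3$ with $E'_1, E'_3 \in X_i$ and $E'_2 \notin X_i$, an alternative $u_i$-$v_i$ path in $E'_2$ needs to be tracked through its reversed and non-reversed arcs, using the vertebrate-ness of $D_i$ to exclude induced ``diamond'' configurations in the transitive reduction of induced subgraphs, and shown to propagate into a $u_i$-$v_i$ path in either $E'_1$ or $E'_3$, yielding a contradiction. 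For the lattice isomorphism with the Day doubling, intra-fiber comparisons correspond to a single arc flip and match Day's formulas immediately, while inter-fiber comparisons match because the presence of the reference path $\rho_i$ forces the orientation of $a_i$ in the fibers over $\AR[D_{i-1}] \ssm X_i$ in a way consistent with the Day doubling. Once these two steps are established, iterating over $i$ realizes $\AR$ as a sequence of convex doublings from $\AR[R]$, proving congruence normality by Day and incidentally giving an alternative proof of \cref{thm:characterizationLattice}.
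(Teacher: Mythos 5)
Your high-level strategy is the same as the paper's: realize $\AR$ as a sequence of Day doublings from the distributive lattice $\AR[R]$ of the transitive reduction, adding transitive arcs one at a time along a linear extension of the ``transitive support containment'' order, with the to-be-doubled set at step $i$ being the acyclic reorientations of $D_{i-1}$ in which $u_i$ and $v_i$ are incomparable. The decomposition, the identification of the doubling set, and the role of the reference path $\rho_i \subseteq R$ are all correct and match the paper.

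The gap is precisely where you flag it: the order-convexity of the doubling set is the crux, and your sketch (``an alternative $u_i$-$v_i$ path ... needs to be tracked ... using vertebrate-ness of $D_i$ to exclude induced diamond configurations ... shown to propagate'') is a plan, not an argument, and the plan as stated has two weaknesses. First, it invokes vertebrate-ness of the intermediate graphs $D_i$, which is a non-trivial claim you would have to establish separately (the paper deliberately only uses vertebrate-ness of $D$, applied to an induced cycle in a reorientation of $D_i$, which sidesteps this). Second, and more importantly, the argument is where the $\prec$-compatible ordering of the arcs actually earns its keep, and your sketch does not use it. The paper proves convexity by splitting the doubling set as $X_i \cap Y_i$ with $X_i$ (``completable with $a_i$ forward'') a lower ideal and $Y_i$ (``completable with $a_i$ reversed'') an upper ideal. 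To show, say, that $Y_i$ is an upper ideal, one considers the induced directed cycle $C$ in the obstructing reorientation; the vertebrate property of $D$ shows $C$ is a directed path plus one transitive closing arc $c$, which gives two cases depending on whether $c$ is the reversed arc or the unreversed one. The first case propagates downward exactly as needed, while the second case would force $a_i \prec c$ for some $c \in D_{i-1}$, which is ruled out precisely because the arcs were added in $\prec$-compatible order. Without invoking the ordering at this point, the case analysis does not close, and the convexity claim is not established. So the route is right, but you need to supply the lower/upper ideal decomposition and the precedence-order contradiction to make the proof go through.
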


\begin{proof}
Order the arcs of~$D$ by~$a \prec b$ if there is a directed path in~$D$ containing~$a$ joining the endpoints of~$b$.
The minimal elements of~$\prec$ are the arcs of the transitive reduction~$R$ of~$D$.
Choose an arbitrary order~$a_1, \dots, a_\ell$ on the arcs of~$D \ssm R$ so that~$a_i \prec a_j$ for~$i < j$.
Let~${R = D_0, D_1, \dots, D_\ell = D}$ be the directed subgraphs of~$D$ obtained by adding the arcs~$a_1, \dots, a_\ell$ one by one.

Let~$i \in [\ell]$.
Let~$X_i$ (resp.~$Y_i$) denote the set of acyclic reorientations of~$D_{i-1}$ which can be completed into an acyclic reorientation of~$D_i$ by adding~$a_i$ (resp.~the reverse of~$a_i$), and denote~${Z_i = X_i \cap Y_i}$.
Clearly, the acyclic reorientation poset~$\AR[D_i]$ is isomorphic to the doubling of~$Z_i$ in~$\AR[D_{i-1}]$.
Moreover, we claim that~$Z_i$ is order convex in~$D_{i-1}$.
This immediately follows from the fact that~$X_i$ (resp.~$Y_i$) is a lower (resp.~upper) ideal of~$\AR[D_{i-1}]$.

To prove this fact, it suffices by symmetry to show that~$Y_i$ is an upper ideal.
Consider an acyclic reorientation~$E$ of~$D_{i-1}$ and the reorientation~$F$ of~$D_i$ that agrees with~$E$ on~$D_{i-1}$ and where~$a_i$ is reversed.
If~$E$ does not belong to~$Y_i$, then $F$ contains a cycle~$C$.
We can assume that~$C$ is induced (as any chord in a directed cycle defines a smaller directed cycle) and we know that~$C$ contains~$a_i$ (because $E$ is acyclic).
Since~$D$ is vertebrate, there exists an arc~$c$ of~$C$ such that either~$c$ is reversed in~$F$ while $C \ssm \{c\}$ is not, or~$C \ssm \{c\}$ is reversed in~$F$ while~$c$ is not.
In the former case, we have~$c = a_i$ (because $a_i$ is reversed and belongs to~$C$) and the arcs of $C \ssm \{c\}$ are not reversed in~$E$.
It follows that for any~$E' \le E$, the arcs of $C \ssm \{c\}$ are not reversed in~$E'$ so that~$E'$ cannot belong to~$Y_i$.
In the later case, we would have~$a_i \prec c$ contradicting our assumption that~$a_h \prec a_i$ for~$h < i$.
We conclude that~$E \notin Y_i$ and $E' \le E$ implies $E' \notin Y_i$, so that~$Y_i$ is an upper ideal of~$\AR[D_{i-1}]$.

To sum up, we obtained a sequence of lattices~${\AR[R] = \AR[D_0], \AR[D_1], \dots, \AR[D_\ell] = \AR}$, where each~$\AR[D_i]$ is isomorphic to the doubling of the order convex set~$Z_i$ in~$\AR[D_{i-1}]$.
Since~$R$ is a forest, $\AR[R]$ is distributive, so that~$\AR$ is congruence normal.
Such a sequence is illustrated in \cref{fig:doublingAcyclicReorientationLattices}.
\begin{figure}
	\centerline{\includegraphics[scale=.9]{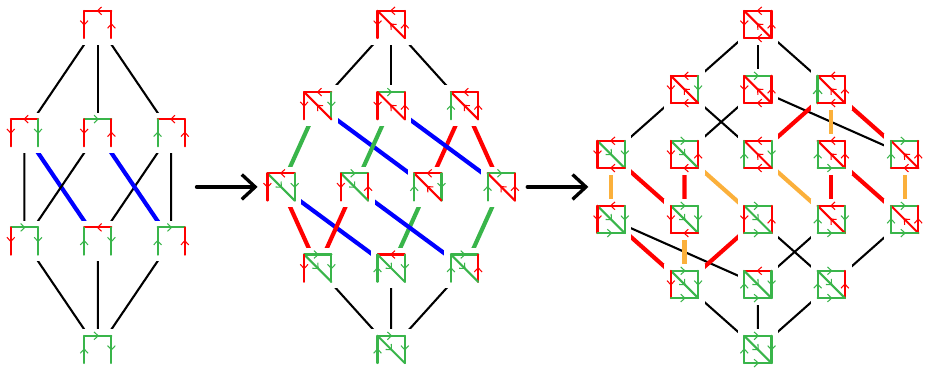}}
	\caption{Doubling convex sets in acyclic reorientation posets. See \cref{fig:doublingLattices} for the explanation of the colors of the cover relations.}
	\label{fig:doublingAcyclicReorientationLattices}
\end{figure}
\end{proof}

Another approach to prove \cref{prop:congruenceNormal} is to
\begin{itemize}
\item order the arcs of~$D$ by inclusion of their transitive supports,
\item label each cover relation~$E \lessdot E'$ in~$\AR$ by the arc of~$D$ flipped from~$E$ to~$E'$.
\end{itemize}
We invite the reader to check that this defines a CN-labeling of~$\AR$ in the sense of N.~Reading~\cite[Thm.~4]{Reading-posetRegions}, and thus implies that~$\AR$ is congruence normal.
The advantage of describing an explicit sequence of order convex doubling is that our proof of \cref{prop:congruenceNormal} actually provides an alternative proof of \cref{thm:characterizationLattice}.

\medskip
We now switch to congruence uniformity.
The following statement is a direct consequence of \cref{prop:semidistributive,prop:congruenceNormal}.
However, we sketch an independent proof based on interval doublings, which thus provides an alternative proof of \cref{prop:semidistributive}.

\begin{proposition}
\label{prop:congruenceUniform}
The acyclic reorientation poset~$\AR$ is a congruence uniform lattice if and only if~$D$ is skeletal.
\end{proposition}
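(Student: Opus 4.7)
The plan is to deduce the statement from the two preceding propositions via the classical fact that a finite lattice is congruence uniform if and only if it is both congruence normal and semidistributive. If $D$ is skeletal then it is vertebrate, so $\AR$ is congruence normal by \cref{prop:congruenceNormal}, and it is filled, so $\AR$ is semidistributive by \cref{prop:semidistributive}; hence $\AR$ is congruence uniform. Conversely, any congruence uniform lattice is semidistributive, so by \cref{prop:semidistributive} the graph~$D$ must be skeletal.

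For the independent proof via interval doublings advertised in the paragraph above, I would reuse the setup from the proof of \cref{prop:congruenceNormal}: choose an ordering $a_1, \dots, a_\ell$ of the arcs of~$D \ssm R$ with $a_i \prec a_j$ whenever $i < j$, so that each $\AR[D_i]$ is the doubling of the order convex set $Z_i \subseteq \AR[D_{i-1}]$ formed by the acyclic reorientations of~$D_{i-1}$ that extend to acyclic reorientations of~$D_i$ with either orientation of~$a_i$. Writing $a_i = (u,v)$, a reorientation~$E$ of~$D_{i-1}$ belongs to~$Z_i$ precisely when $u$ and $v$ are not joined by a directed path in~$E$ in either direction. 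The task is to upgrade each~$Z_i$ from an order convex subset of $\AR[D_{i-1}]$ to an interval of $\AR[D_{i-1}]$ when~$D$ is filled.

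The filled hypothesis, combined with the chosen ordering, ensures that the transitive support~$T$ of~$a_i$ in~$D$ induces a tournament entirely contained in~$D_{i-1} \cup \{a_i\}$. Using this tournament structure, I would exhibit explicit minimum and maximum elements of~$Z_i$ by prescribing on~$T$ the orientation that splits the internal vertices of~$T$ into a packet reachable from~$u$ and a packet reaching~$v$ (forbidding both $u \to v$ and $v \to u$ paths), and then completing outside~$T$ by the minimum (resp.\ maximum) choice of~$\AR[D_{i-1}]$. Acyclicity and extremality inside~$Z_i$ are then checked directly from the tournament on~$T$ and from the monotonicity of the completion. Iterating these interval doublings starting from the distributive lattice $\AR[R]$ yields both congruence uniformity and, as a byproduct, an alternative proof of \cref{prop:semidistributive}.

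The main obstacle will be the explicit construction of these extremal reorientations and the verification that they are the unique minimum and maximum of~$Z_i$: the filled hypothesis is exactly what precludes the chord configurations that defeat semidistributivity in the proof of \cref{prop:semidistributive}, and a careful bookkeeping within the tournament on the transitive support of~$a_i$ is required to show that every other element of~$Z_i$ sits between these two.
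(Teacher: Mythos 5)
Your first paragraph is correct and matches the paper's own observation that the result is a direct consequence of \cref{prop:semidistributive,prop:congruenceNormal} together with the fact that congruence uniform is equivalent to congruence normal plus semidistributive. That part is sound.

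The independent argument via interval doublings, however, rests on a claim that is false: it is \emph{not} true that the filled hypothesis upgrades each~$Z_i$ from an order convex set to an interval of~$\AR[D_{i-1}]$. Already when~$D$ is the tournament on~$\{1,2,3\}$ (hence skeletal), with~$R = \{(1,2),(2,3)\}$ and~$a_1 = (1,3)$, the set~$Z_1$ consists of the two reorientations of~$R$ with exactly one reversed arc, namely~$\{(1,2)\}$ and~$\{(2,3)\}$; these are incomparable atoms of the $2$-element boolean lattice~$\AR[R]$, so~$Z_1$ has no minimum and no maximum and is certainly not an interval. Doubling this antichain of size~$2$ in the square correctly produces the hexagon (the weak order on~$\f{S}_3$), so the convex doubling from \cref{prop:congruenceNormal} is a single step that cannot be performed as a single interval doubling. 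Your proposed ``explicit minimum and maximum of~$Z_i$'' therefore cannot exist, and no amount of bookkeeping within the tournament~$K_i$ will produce them.

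The paper circumvents this by \emph{partitioning}~$Z_i$ into several intervals rather than treating it as one: since~$D$ is skeletal, the transitive support of~$a_i$ induces a tournament~$K_i$ whose arcs (other than~$a_i$) all lie in~$D_{i-1}$, membership in~$Z_i$ depends only on the restriction to~$K_i \ssm \{a_i\}$, and~$K_i \ssm \{a_i\}$ is strongly pathful in~$D_{i-1}$, so by \cref{thm:latticeMap} the fibers of the corresponding restriction map are intervals. This yields~$Z_i = I_i^1 \sqcup \dots \sqcup I_i^k$, and doubling the~$I_i^p$ one at a time (rather than~$Z_i$ all at once) gives the required sequence of interval doublings. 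Your intuition about exploiting the tournament~$K_i$ is the right ingredient, but it has to be used to split~$Z_i$, not to produce extrema of~$Z_i$.
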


\begin{proof}
Consider the sequence of directed acyclic graphs~$D_i$ constructed in the proof of \cref{prop:congruenceNormal}.
We claim that, while the convex sets~$Z_i$ are not always intervals, they can be partitioned into intervals~$Z_i = I_i^1 \sqcup \dots \sqcup I_i^k$. 
Doubling separately these intervals~$I_i^p$ thus shows that~$\AR$ is congruence uniform.

To see the claim, consider the arc~$a_i$ added at a given step~$i \in [\ell]$.
Since~$D$ is skeletal, the transitive reduction of~$a_i$ induces a tournament~$K_i$ in~$D$.
Since we are adding the arcs of~$D \ssm R$ in an order compatible with~$\prec$, all arcs of~$K_i$ belong to~$D_i$.
Partition the acyclic reorientations of~$Z_i$ according to their restriction to~$K_i \ssm \{a_i\}$.
Since $K_i \ssm \{a_i\}$ is clearly weakly pathful in~$D_i$ (it is actually strongly pathful), this partitions~$Z_i$ into intervals of~$\AR[D_{i-1}]$ by \cref{thm:latticeMap}.
\end{proof}


\pagebreak
\section{Rope diagrams}
\label{sec:ropeDiagrams}

Throughout this section, we assume that~$D$ is skeletal, so that the acyclic reorientation poset~$\AR$ is a congruence uniform lattice by \cref{prop:congruenceUniform}.
We introduce ropes and non-crossing rope diagrams, generalizing the work of N.~Reading in~\cite{Reading-arcDiagrams} on arcs and non-crossing arc diagrams.
In our setting, we prefer the word ``rope'' rather than ``arc'' to avoid the possible confusions with the arcs of the directed graphs.


\subsection{Ropes and irreducibles}
\label{subsec:ropes}

A \defn{rope} of~$D$ is a quadruple~$(u, v, \down, \up)$ where~$(u,v)$ is an arc of~$D$ and~$\down \sqcup \up$ is a partition of the transitive support of~$(u,v)$ minus~$\{u,v\}$ (or equivalently since~$D$ is filled, the vertices~$w$ so that both~$(u,w)$ and~$(w,v)$ belong to~$D$).

\begin{lemma}
\label{lem:numberRopes}
Assume that~$D$ is skeletal. Then the ropes of~$D$ are 
\begin{enumerate}[(i)]
\item counted by~$\sum_{a \in A} 2^{ts(a)-2}$ where~$ts(a)$ denotes the size of the transitive support of~$a$ in~$D$,
\item in bijection with the cliques of~$D$ with at least $2$ vertices.
\end{enumerate}
\end{lemma}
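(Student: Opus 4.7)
The plan is to prove~(i) by direct counting and~(ii) by exhibiting an explicit bijection. Both parts rely on the equivalent formulation of the filled hypothesis stating that the transitive support~$ts(a)$ of any arc~$a = (u,v) \in A$ induces a tournament in~$D$, so that any subset of~$ts(a)$ containing~$\{u,v\}$ is automatically a clique of~$D$.

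For~(i), I fix an arc~$a = (u,v) \in A$ and observe that a rope supported on~$a$ is completely determined by its down part~$\down$, which is an arbitrary subset of~$ts(a) \setminus \{u,v\}$, with $\up$ then forced to be its complement. Since $|ts(a) \setminus \{u,v\}| = ts(a) - 2$, there are exactly~$2^{ts(a)-2}$ ropes based on~$a$, and summing over all arcs of~$D$ yields the announced formula.

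For~(ii), I propose the map $(u, v, \down, \up) \mapsto K \eqdef \{u,v\} \cup \down$. Because $K$ is a subset of~$ts(u,v)$ containing~$\{u,v\}$, the filled hypothesis ensures that~$K$ is a clique of~$D$ with at least~$2$ vertices. For the inverse, given a clique~$K$ of~$D$ with at least two vertices, the tournament induced on~$K$ has a unique source~$u$ and sink~$v$ since~$D$ is acyclic; the arc~$(u,v)$ lies in~$D$, and every $w \in K \setminus \{u,v\}$ satisfies~$(u,w), (w,v) \in D$, placing~$w$ in~$ts(u,v) \setminus \{u,v\}$. Hence the quadruple $\big(u, v, K \setminus \{u,v\}, ts(u,v) \setminus K\big)$ is a well-defined rope of~$D$.

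Checking that these two maps are mutually inverse is then routine: starting from a clique~$K$, one recovers $\{u,v\} \cup (K \setminus \{u,v\}) = K$; starting from a rope $(u, v, \down, \up)$, the source-sink pair of the clique~$\{u,v\} \cup \down$ is again~$(u,v)$, because $u$ has an arc to every element of~$\down$ and~$v$ receives an arc from every such element, and then~$\down$ is recovered as~$K \setminus \{u,v\}$ and~$\up$ as its complement inside~$ts(u,v) \setminus \{u,v\}$. I do not anticipate any substantial obstacle; the only delicate point is to invoke the filled hypothesis exactly where needed, both to see that the image of a rope is a clique and to see that $K \setminus \{u,v\}$ sits inside~$ts(u,v) \setminus \{u,v\}$. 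As a consistency check, grouping cliques by their source-sink pair in the bijection of~(ii) recovers the sum $\sum_{a \in A} 2^{ts(a)-2}$ of~(i).
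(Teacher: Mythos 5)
Your proof is correct and follows essentially the same approach as the paper: direct counting for~(i), and for~(ii) the source--sink clique bijection using the filled hypothesis. The only (purely cosmetic) difference is that you send a rope to the clique $\{u,v\}\cup\down$, whereas the paper uses $\{u,v\}\cup\up$; by the symmetry of the partition $\down\sqcup\up$ of $ts(u,v)\setminus\{u,v\}$, both choices give valid inverse bijections, and your verification of the inverse is fine.
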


\begin{proof}
First, (i) is immediate since a rope of~$D$ is given by an arc~$(u,v)$ of~$D$ together with a subset~$\up$ of the transitive support of~$(u,v)$ in~$D$ minus~$\{u,v\}$.
For (ii), note that 
\begin{itemize}
\item each rope~$(u, v, \down, \up)$ defines a clique induced by~$\up \cup \{u,v\}$,
\item each clique~$K$ defines a rope~$(u,v, \down, \up)$ where~$u$ and~$v$ are the source and target of~$K$, and~$\up$ (resp.~$\down$) are the vertices of the transitive support of~$(u,v)$ which belong (resp.~do not belong) to~$K$.
\qedhere
\end{itemize}
\end{proof}

For an acyclic reorientation~$E$ of~$D$ and an arc~$(u,v) \in D$, we set
\begin{align*}
\down_{u,v}^E & \eqdef \set{w \in V}{(u,w) \in D \ssm E \text{ and } (w,v) \in D \cap E} \\
\text{and} \qquad
\up_{u,v}^E & \eqdef \set{w \in V}{(u,w) \in D \cap E \text{ and } (w,v) \in D \ssm E},
\end{align*}
and we define
\[
\rope_{u,v}^E \eqdef (u, v, \down_{u,v}^E, \up_{u,v}^E).
\]
We need the following two elementary properties of the sets~$\down_{u,v}^E$ and~$\up_{u,v}^E$.

\begin{lemma}
\label{lem:DeltaNabla}
The sets~$\down_{u,v}^E$ and~$\up_{u,v}^E$ fulfill the following properties:
\begin{enumerate}[(i)]
\item for any distinct vertices~$w \in \down_{u,v}^E \cup \{u,v\}$ and~$w' \in \up_{u,v}^E \cup \{u,v\}$, there is an arc~$(w,w')$ in~$E$, except if~$(w,w') = (u,v) \notin E$ or if~$(w,w') = (v,u) \notin E$,
\item if~$(u,v)$ or~$(v,u)$ appears in the transitive reduction of~$E$, then $\rope_{u,v}^E$ is a rope of~$D$.
\end{enumerate}
\end{lemma}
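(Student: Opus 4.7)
The plan is to dispose of (i) by case analysis on the membership of $w$ and $w'$, and of (ii) by the observation that the complementary possibilities at each $w$ in the transitive support are ruled out either by acyclicity of $E$ or by $(u,v)$ or $(v,u)$ being in the transitive reduction of~$E$.

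For (i), I split the proof according to which of the sets $\{u\}, \{v\}, \down_{u,v}^E, \up_{u,v}^E$ contain $w$ and $w'$. When $\{w,w'\} = \{u,v\}$, one of the two orientations of the edge~$(u,v)$ is in~$E$ (this is the required arc) and the other is exactly the stated exception. When $w = u$ and $w' \in \up_{u,v}^E$, the definition of $\up_{u,v}^E$ gives $(u,w') \in E$ directly; the three symmetric cases ($w \in \down_{u,v}^E, w' = v$, and $w = v, w' \in \up_{u,v}^E$, and $w \in \down_{u,v}^E, w' = u$) are immediate from the definitions of $\down_{u,v}^E$ and $\up_{u,v}^E$, possibly reading the reversed arc. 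The main case is $w \in \down_{u,v}^E$ and $w' \in \up_{u,v}^E$. Here I first invoke that~$D$ is filled, so that the transitive support of~$(u,v)$ induces a tournament in~$D$; hence $D$ contains an arc between $w$ and $w'$. The direction in~$E$ is then forced by acyclicity: from the definitions I read off $(w,u), (w,v), (u,w'), (v,w') \in E$, so a reversed arc $(w',w) \in E$ would close a directed cycle $w \to v \to w' \to w$ (or $w \to u \to w' \to w$). Hence $(w,w') \in E$.

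For (ii), the sets $\down_{u,v}^E$ and $\up_{u,v}^E$ are disjoint by definition (the first requires $(u,w) \in D \ssm E$, the second requires $(u,w) \in D \cap E$). Since $D$ is skeletal, the transitive support of~$(u,v)$ minus $\{u,v\}$ is precisely the set of vertices $w$ with both $(u,w)$ and $(w,v)$ in~$D$, and both $\down_{u,v}^E$ and $\up_{u,v}^E$ are subsets of this set. It remains to show that every such $w$ belongs to one of them, i.e.~that exactly one of $(u,w)$ and $(w,v)$ is reversed in~$E$. If both were in~$E$, then $u \to w \to v$ would be a directed path in~$E$, which forces $(u,v)$ to not lie in the transitive reduction of~$E$ and creates a directed cycle if $(v,u) \in E$; either way contradicts the hypothesis that $(u,v)$ or $(v,u)$ belongs to the transitive reduction of~$E$. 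The case where both $(u,w)$ and $(w,v)$ are reversed in~$E$ is symmetric, using the path $v \to w \to u$ in~$E$.

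The only non-routine step is the mixed case $w \in \down_{u,v}^E$, $w' \in \up_{u,v}^E$ in~(i), which is exactly the place where both hypotheses on~$D$ (filled, to produce an edge between $w$ and $w'$ in~$D$) and on~$E$ (acyclic, to orient it) are needed simultaneously; the rest of the argument is bookkeeping from the definitions.
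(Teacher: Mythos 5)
Your proof is correct and uses essentially the same ideas as the paper's: for (i), the only non-immediate case is $w \in \down_{u,v}^E$, $w' \in \up_{u,v}^E$, where filledness of $D$ supplies the edge $\{w,w'\}$ and acyclicity of $E$ forces its orientation via the path $w \to u \to w'$ (or $w \to v \to w'$); for (ii), the argument that a vertex $w$ in the transitive support cannot have both or neither of $(u,w),(w,v)$ reversed, using the transitive-reduction hypothesis and acyclicity, matches the paper (which phrases it more compactly by fixing $(u,v)$ in the transitive reduction WLOG). The paper states the general observation ``arcs from $\down$ into $\{u,v\}$ and from $\{u,v\}$ into $\up$'' once and deduces all non-exceptional cases of (i) at a stroke; your explicit case split is equivalent.
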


\begin{proof}
For the first point, observe that $E$ contains arcs from any vertex of~$\down_{u,v}^E$ to both~$u$ and~$v$, and from both~$u$ and~$v$ to any vertex of~$\up_{u,v}^E$.
Therefore, except when~$(w,w') = (u,v) \notin E$ or if~$(w,w') = (v,u) \notin E$, there is a directed path in~$E$ joining~$w$ to~$w'$.
Since~$D$ is filled, and both~$w$ and~$w'$ belong to the transitive support of~$(u,v)$, it follows that~$(w,w')$ is an arc of~$E$.

\medskip
For the second point, assume for instance that~$(u,v)$ appears in the transitive reduction of~$E$ and consider a vertex~$w$ such that both~$(u,w)$ and~$(w,v)$ belong to~$D$.
Since~$(u,v)$ belongs to the transitive reduction of~$E$, either~$(u,w)$ or~$(w,v)$ is reversed in~$E$.
Since~$E$ is acyclic, either~$(u,w)$ or~$(w,v)$ is not reversed in~$E$.
Therefore, $w$ belongs either to~$\up_{u,v}^E$ or to~$\down_{u,v}^E$.
In other words, $\down_{u,v}^E \sqcup \up_{u,v}^E$ is a partition of these vertices and $\rope_\join(E)$ is indeed a rope of~$D$.
\end{proof}

We now connect the ropes of~$D$ with the join and meet irreducibles of~$\AR$:
\begin{itemize}
\item for a join (resp.~meet) irreducible~$I$ of~$\AR$, let~$\rope_\join(I)$ (resp.~$\rope_\meet(I)$) be the rope~$\rope_{u,v}^I$ where~$(u,v)$ is the only arc reversed (resp.~not reversed) in the transitive reduction of~$I$,
\item for a rope~$\rope \eqdef (u, v, \down, \up)$ on~$D$, let~$I_\join(\rope)$ (resp.~$I_\meet(\rope)$) be the reorientation of~$D$ where an arc~$(w,w')$ of~$D$ is reversed (resp.~not reversed) if and only if~$w \in \up \cup \{u\}$ and~$w' \in \down \cup \{v\}$ (resp.~$w \in \down \cup \{u\}$ and~$w' \in \up \cup \{v\}$).
\end{itemize}
For an illustration of these maps, compare \cref{fig:canonicalJoinComplexes}\,(bottom) with \cref{fig:canonicalJoinComplexesRopes}.

\begin{proposition}
\label{prop:bijectionsRopes}
Assume that~$D$ is skeletal.
The two maps~$\rope_\join$ and~$I_\join$ (resp.~$\rope_\meet$ and~$I_\meet$) are inverse bijections between the join (resp.~meet) irreducibles of~$\AR$ and the ropes of~$D$.
\end{proposition}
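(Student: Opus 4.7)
My plan is to prove four facts: (a) $\rope_\join$ sends join irreducibles of~$\AR$ to ropes of~$D$, (b) $I_\join$ sends ropes of~$D$ to join irreducibles of~$\AR$, (c) $\rope_\join \circ I_\join = \mathrm{id}$, and (d) $I_\join \circ \rope_\join = \mathrm{id}$. The meet case is symmetric via the self-duality of~$\AR$ under reversing all arcs, after swapping the roles of~$\down$ and~$\up$. Claim~(a) is immediate from Lemma~\ref{lem:DeltaNabla}\,(ii) combined with Proposition~\ref{prop:joinIrreducibles}, since any join irreducible has a unique reversed arc in its transitive reduction.

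The main work is in~(b). Fix a rope $\rope = (u, v, \down, \up)$, and let $K \eqdef \{u, v\} \cup \down \cup \up$. Since $D$ is filled, $K$ is the transitive support of~$(u,v)$ and induces a tournament in~$D$. By construction, $I_\join(\rope)$ restricted to~$K$ is the tournament with linear order $\down \prec v \prec u \prec \up$ (with $\down$ and $\up$ retaining their $D$-orders), while outside $K$ it agrees with~$D$. I first establish acyclicity of~$I_\join(\rope)$: any hypothetical directed cycle must both exit and re-enter~$K$ (since the linear order on $K$ is acyclic and $D$ is acyclic on the complement of~$K$), thereby producing a directed path $y = z_0 \to z_1 \to \cdots \to z_k = y'$ in~$D$ with $y, y' \in K$ distinct and $z_1, \dots, z_{k-1} \notin K$. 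Prepending $(u, y) \in D$ when $y \ne u$ and appending $(y', v) \in D$ when $y' \ne v$ yields a directed $u$-to-$v$ path in~$D$ whose endpoints are joined by $(u,v) \in D$. The filled hypothesis then forces $(u, z_i), (z_i, v) \in D$ for each~$i$, so every $z_i$ lies in the transitive support of~$(u,v)$, contradicting $z_i \notin K$.

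Next I would analyze the transitive reduction of~$I_\join(\rope)$. Every reversed arc other than~$(u,v)$ admits a length-$\ge 2$ detour through~$v$ and/or~$u$ inside $K$, via the arcs $w' \to v$, $v \to u$, and $u \to w$ of~$I_\join(\rope)$ for $w' \in \down$ and $w \in \up$, so none of these reversed arcs lies in the transitive reduction. The arc $(v, u)$ itself cannot be short-circuited: inside $K$ it is a cover relation of the linear order, and any detour outside~$K$ would produce a directed path from $v$ to $u$ in~$D$, which combined with $(u, v) \in D$ violates acyclicity of~$D$. Hence exactly one reversed arc appears in the transitive reduction of~$I_\join(\rope)$, and Proposition~\ref{prop:joinIrreducibles} yields join irreducibility.

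Claims~(c) and~(d) then reduce to routine checks. For~(c), setting $I \eqdef I_\join(\rope)$ and comparing the definitions of $\down_{u,v}^I$ and~$\up_{u,v}^I$ with $\down$ and $\up$ gives equality directly. For~(d), let $I$ be a join irreducible with unique transitive-reduction reversal~$(u,v)$, and set $\rope \eqdef \rope_\join(I) = (u, v, \down_{u,v}^I, \up_{u,v}^I)$. Lemma~\ref{lem:DeltaNabla}\,(i), together with the definitions of $\down_{u,v}^I$ and~$\up_{u,v}^I$, implies that every arc reversed by~$I_\join(\rope)$ is already reversed in~$I$, so $I_\join(\rope) \le I$. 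Both reorientations lie in the fiber of~$\restrictionMap[(u,v)]$ containing the reverse of~$(u,v)$, and $I$ is minimal in that fiber by Proposition~\ref{prop:joinIrreducibles}\,(iii), forcing $I_\join(\rope) = I$. I expect the acyclicity step in~(b) to be the main obstacle, as it is the only point where the filled hypothesis on $D$ plays an essential role, through its control of intermediate vertices of directed $u$-to-$v$ paths.
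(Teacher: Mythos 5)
Your overall structure (four claims, self-duality for the meet side) matches the paper, and claims (a), (c), (d) are sound. The main difference is in (b): where you establish acyclicity of~$I_\join(\rope)$ by contradiction (a cycle must excurse outside~$K$, producing a $D$-path whose intermediate vertices the filled hypothesis then forces into~$K$), the paper instead directly exhibits a linear extension~$\prec'$ of~$I_\join(\rope)$: start from a topological sort~$\prec$ of~$D$ in which the transitive support of~$(u,v)$ is exactly $\set{w}{u \prec w \prec v}$, and permute the block $\{u,v\} \sqcup \down \sqcup \up$ into $\down, v, u, \up$. Both arguments work, and both hinge on the same filled-hypothesis fact that no directed $u$-to-$v$ path escapes~$K$; the paper's version is shorter, while yours also carries over to the transitive-reduction analysis. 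You also flesh out claims (c) and (d), which the paper dismisses as ``immediate to check''; your argument for (d) via Lemma~\ref{lem:DeltaNabla}\,(i) together with the minimality in Proposition~\ref{prop:joinIrreducibles}\,(iii) is a clean way to close that loop.

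One imprecision to fix in (b): the sentence ``prepending $(u,y)$ when $y \ne u$ and appending $(y',v)$ when $y' \ne v$ yields a directed $u$-to-$v$ path in~$D$'' breaks down when $y = v$ or $y' = u$ (the resulting walk revisits a vertex, so it is not a path and the filled hypothesis, which concerns directed paths, does not apply as stated). Those cases must be handled separately, and they are easy: if $y = v$ with $y' \ne v$, then $(y',v) \in D$ by filledness closes a $D$-cycle $v \to z_1 \to \cdots \to y' \to v$; if $y' = u$ with $y \ne u$, then $(u,y) \in D$ closes a $D$-cycle; if $y = v$ and $y' = u$, the excursion is itself a $D$-path from~$v$ to~$u$ contradicting $(u,v) \in D$. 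You should also note explicitly that $y \ne y'$ (else the excursion is already a $D$-cycle). The same caveat applies, slightly more acutely, in your transitive-reduction step: the statement ``any detour outside~$K$ would produce a directed path from $v$ to $u$ in~$D$'' is not literally true (the excursion's endpoints in~$K$ need not be $v$ and $u$), and the correct conclusion again requires the same small case analysis as in the acyclicity step. None of this is a fundamental gap, but the proof as written overstates what the prepend/append construction delivers.
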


\begin{proof}
As~$\AR$ is self-dual under reversing all arcs, we focus on join irreducibles.

\medskip
It follows from \cref{lem:DeltaNabla} that $\rope_\join(J)$ is indeed a rope of~$D$ for any join irreducible~$J$ of~$\AR$.

\medskip
Conversely, consider a rope~$\rope \eqdef (u, v, \down, \up)$ of~$D$.
We claim that the reorientation~$I_\join(\rope)$ of~$D$ is acyclic.
Indeed, since~$D$ is filled, $\down \sqcup \up$ covers all vertices that appear along a directed path in~$D$ joining~$u$ to~$v$.
Hence, there exists a total order~$\prec$ on~$V$ so that all arcs of~$D$ are increasing for~$\prec$ and $\down \sqcup \up = \set{w \in V}{u \prec w \prec v}$.
Let~$\prec'$ denote the total order on~$V$ obtained from~$\prec$ by reordering~$\{u,v\} \sqcup \down \sqcup \up$ such that~$\down$ appears first (in~$\prec$ order), then~$v$ and~$u$, and~$\up$ appears last (in~$\prec$ order).
Then~all arcs of~$I_\join(\rope)$ are clearly increasing for~$\prec'$, so that~$I_\join(\rope)$ is indeed acyclic.
Moreover, $(u,v)$ is by definition the only arc of~$D$ reversed in~$I_\join(\rope)$ which belongs to the transitive reduction of~$I_\join(\rope)$.
By \cref{prop:joinIrreducibles}, we conclude that~$I_\join(\rope)$ is join irreducible in~$\AR$.

\medskip
Finally, it is immediate to check that~$I_\join(\rope_\join(J)) = J$ for any join irreductible~$J$ of~$\AR$ and that~$\rope_\join(I_\join(\rope)) = \rope$ for any rope~$\rope$ of~$D$, so that~$\rope_\join$ and~$I_\join$ are inverse bijections between join irreducibles of~$\AR$ and ropes~of~$D$.
\end{proof}

Note that combining \cref{lem:numberRopes}\,(i) and \cref{prop:bijectionsRopes}, we obtain a precise count of the join (resp.~meet) irreducibles of~$\AR$ when~$D$ is skeletal, refining the lower bound of \cref{coro:numberJoinMeetIrreducibles}.

\medskip
We finally observe that the bijections~$\rope_\join$ and~$\rope_\meet$ provide a simple description of the Kreweras maps~$\kappa_\join$ and~$\kappa_\meet$ defined in \cref{subsec:semidistributivity}. Namely, it is easy to check that~$\rope_\join(\kappa_\join(M)) = \rope_\meet(M)$ for any meet irreducible~$M$ of~$\AR$, and $\rope_\meet(\kappa_\meet(J)) = \rope_\join(J)$ for any join irreducible~$J$ of~$\AR$.


\subsection{Rope diagrams and canonical representations}
\label{subsec:ropeDiagrams}

Two ropes~$(u, v, \down, \up)$ and $(u', v', \down', \up')$ are \defn{crossing} if there are distinct vertices~$w \ne w'$ such that~$w \in (\down \cup \{u,v\}) \cap (\up' \cup \{u',v'\})$ and ${w' \in (\up \cup \{u,v\}) \cap (\down' \cup \{u',v'\})}$.
A \defn{non-crossing rope diagram} is a collection of pairwise non-crossing ropes of~$D$.
The \defn{non-crossing rope complex} of~$D$ is the simplicial complex of non-crossing rope diagrams of~$D$.

\medskip
We now connect the non-crossing rope diagrams of~$D$ with the elements of~$\AR$:
\begin{itemize}
\item for an acyclic reorientation~$E$ of~$D$, let~$\diagram_\join(E)$ (resp.~$\diagram_\meet(E)$) be the set of ropes~$\rope_{u,v}^E$ for all arcs~$(u,v)$ reversed (resp.~not reversed) in the transitive reduction of~$E$,
\item for a non-crossing rope diagram~$\diagram$ of~$D$, define
\[
E_\join(\diagram) \eqdef \bigJoin_{\rope \in \diagram} I_\join(\rope)
\qquad\text{(resp. }
E_\meet(\diagram) \eqdef \bigMeet_{\rope \in \diagram} I_\meet(\rope)
\text{).}
\]
\end{itemize}

\begin{proposition}
\label{prop:bijectionsNonCrossingRopeDiagrams}
Assume that~$D$ is skeletal.
The two maps~$\diagram_\join$ and~$E_\join$ (resp.~$\diagram_\meet$ and~$E_\meet$) are inverse bijections between acyclic reorientations of~$D$ and non-crossing rope diagrams of~$D$.
\end{proposition}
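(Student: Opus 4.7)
My plan is to reduce to the join case by the self-duality of~$\AR$ under reversing all arcs, and then identify non-crossing rope diagrams of~$D$ with the faces of the canonical join complex of~$\AR$ under the vertex bijection $\rope \mapsto I_\join(\rope)$ from \cref{prop:bijectionsRopes}. Since elements of~$\AR$ correspond bijectively to their canonical join representations, this will deliver the bijection with acyclic reorientations.

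I would first check that $\diagram_\join(E)$ is a non-crossing rope diagram for every acyclic reorientation~$E$. That each~$\rope_{u,v}^E$ is a rope of~$D$ when $(u,v)$ lies in the transitive reduction of~$E$ is exactly \cref{lem:DeltaNabla}\,(ii). If two such ropes $\rope_{u,v}^E$ and $\rope_{u',v'}^E$ were crossing, witnessed by vertices $w\ne w'$ placed in opposite halves, then \cref{lem:DeltaNabla}\,(i) applied to each rope would produce arcs of~$E$ between the six (possibly coinciding) vertices $u, v, u', v', w, w'$ that can be concatenated into a directed cycle in~$E$, contradicting acyclicity; the exceptional clauses of \cref{lem:DeltaNabla}\,(i) are handled by a short case analysis on which of these vertices collide. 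The identity $E_\join(\diagram_\join(E)) = E$ then follows directly from \cref{coro:canonicalJoinMeetRepresentation}: comparing definitions, the acyclic reorientation~$E_a$ attached there to an arc $a = (u,v)$ of the transitive reduction of~$E$ is exactly $I_\join(\rope_{u,v}^E)$, so the displayed canonical join representation of~$E$ is precisely $\bigJoin_{\rope \in \diagram_\join(E)} I_\join(\rope)$.

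The main obstacle is the converse identity $\diagram_\join(E_\join(\diagram)) = \diagram$ for an arbitrary non-crossing rope diagram~$\diagram$, since it requires controlling the joint effect of several~$I_\join(\rope)$ on the transitive reduction of their join. Here I would exploit that $\AR$ is semidistributive by \cref{prop:semidistributive}, so that by the theorem of~\cite{Barnard} its canonical join complex is flag. Since non-crossing-ness is by definition a pairwise condition, both the non-crossing rope complex and the canonical join complex are flag, and it suffices to match their edges, namely to show that two ropes $\rope$ and $\rope'$ are non-crossing if and only if $\{I_\join(\rope), I_\join(\rope')\}$ is a canonical join representation of $I_\join(\rope) \join I_\join(\rope')$. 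For the forward direction, given two non-crossing ropes, I would compute $I_\join(\rope) \join I_\join(\rope')$ via the transitive-closure formula of \cref{thm:joinMeet}, verify that the two distinguished arcs of $\rope$ and $\rope'$ are exactly the reversed arcs in its transitive reduction, and invoke \cref{coro:canonicalJoinMeetRepresentation}. For the reverse direction, if $\rope$ and $\rope'$ crossed then by the first step $\diagram_\join(I_\join(\rope) \join I_\join(\rope'))$ would be a non-crossing rope diagram containing neither~$\rope$ nor~$\rope'$, so $\{I_\join(\rope), I_\join(\rope')\}$ could not be the canonical join representation of their join. Combining the two identities completes the bijection.
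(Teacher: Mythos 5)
Your proposal is sound and takes a genuinely different route from the paper on the harder half of the bijection, namely $\diagram_\join(E_\join(\diagram)) = \diagram$. The paper proves this directly for an arbitrary non-crossing rope diagram~$\diagram$ by a three-step analysis of~$E_\join(\diagram)$ (exactly one of~$(u,w)$ and~$(w,v)$ is reversed for each inner vertex~$w$ of~$\rope \in \diagram$, whence $\rope_{u,v}^{E_\join(\diagram)} = \rope$ and~$(u,v)$ survives in the transitive reduction), using the transitive-closure join formula of~\cref{thm:joinMeet} and the filled hypothesis. You instead exploit that both complexes are flag — the canonical join complex by~\cite{Barnard} since~$\AR$ is semidistributive by \cref{prop:semidistributive}, the non-crossing rope complex trivially since crossing is a pairwise condition — to reduce the whole problem to pairs of ropes. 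That is an elegant structural shortcut which interfaces cleanly with the lattice-theoretic machinery already set up, whereas the paper stays purely combinatorial. If you flesh out the sketch, two points need explicit attention. First, for the edge-matching to give a simplicial isomorphism you must also rule out that two distinct non-crossing ropes $\rho, \rho'$ yield \emph{comparable} join irreducibles (a comparable pair is a non-face of the canonical join complex): this does hold, because $I_\join(\rho) \le I_\join(\rho')$ forces~$(u,v)$ reversed in~$I_\join(\rho')$, hence $u \in \up' \cup \{u'\}$ and $v \in \down' \cup \{v'\}$, which makes~$\rho$ and~$\rho'$ crossing with witnesses~$w = u$, $w' = v$. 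Second, your ``verify'' in the forward direction hides the computational core: to invoke \cref{coro:canonicalJoinMeetRepresentation} you need to show not only that~$(u,v)$ and~$(u',v')$ are the reversed arcs of the transitive reduction of~$E \eqdef I_\join(\rho) \join I_\join(\rho')$ but also that~$\rope_{u,v}^E = \rho$ and~$\rope_{u',v'}^E = \rho'$, which is exactly the paper's claims (i)--(ii) restricted to two ropes and still requires \cref{thm:joinMeet}, the filled hypothesis, and non-crossing. So the flagness reduction lightens the bookkeeping over general diagrams but does not avoid the essential computation. (A small overstatement in your reverse direction: ``containing neither~$\rho$ nor~$\rho'$'' is more than you can claim cheaply and more than you need; what suffices is $\diagram_\join(E) \ne \{\rho,\rho'\}$, which is immediate because the former is non-crossing and the latter is not.)
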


\begin{proof}
As~$\AR$ is self-dual under reversing all arcs, we focus on~$\diagram_\join$ and~$E_\join$.

\medskip
We first prove that~$\diagram_\join(E)$ is indeed non-crossing.
Assume by means of contradiction that~$\diagram_\join(E)$ contains two distinct ropes~$\rope \eqdef (u, v, \down, \up)$ and~$\rope' \eqdef (u', v', \down', \up')$ of~$D$ such that there are two distinct vertices~$w \ne w'$ with~$w \in (\down \cup \{u,v\}) \cap (\up' \cup \{u',v'\})$ and~$w' \in (\up \cup \{u,v\}) \cap (\down' \cup \{u',v'\})$.
Since $E$ cannot contain simultaneously the arcs~$(w,w')$ and~$(w',w)$, we can assume for instance that~$(w,w')$ is not in~$E$.
Since~$w \in (\down \cup \{u,v\})$ and~$w' \in (\up \cup \{u,v\})$, \cref{lem:DeltaNabla} implies that~$w = u$ and~$w' = v$.
We distinguish four cases:
\begin{itemize}
\item If~$w = u = v'$ and~$w' = v = u'$, then $E$ cannot contain both arcs~$(v,u)$ and~$(v',u')$,
\item If~$w = u = v'$ and $w' = v \ne u'$, then~$w' \in \down'$ so that $D$ contains the arc~$(w',v') = (v,u)$,
\item If~$w = u \ne v'$ and~$w' = v = u'$, then~$w \in \up'$ so that~$D$ contains the arc~$(u',w) = (v,u)$,
\item If~$w \ne v'$ and $w' \ne u'$, then $w \in \up' \cup \{u'\}$ and $w' \in \down' \cup \{v'\}$, so that $E$ contains a directed path joining~$w' = v$ to~$w = u$ and passing through the arc~$(v',u')$, hence~$(v,u)$ is not in the transitive reduction of~$E$.
\end{itemize}
All four cases contradict our assumption that both~$\rope$ and~$\rope'$ are in~$\diagram_\join(E)$.
We conclude that~$\diagram_\join(E)$ is a non-crossing rope diagram.

\medskip
Conversely, observe that~$E_\join(\delta)$ is well-defined since each~$I_\join(\rope)$ is an acyclic reorientation of~$D$ by \cref{prop:bijectionsRopes}.

\medskip
We now prove that~$E_\join(\diagram_\join(E)) = E$ for any acyclic reorientation~$E$ of~$D$.
From \cref{coro:canonicalJoinMeetRepresentation} and the definition of~$\diagram_\join$ and~$E_\join$, we have:
\[
E = \bigJoin_{(u,v)} E_{(u,v)}
\qquad\text{and}\qquad
E_\join(\diagram_\join(E)) = \bigJoin_{\rope \in \diagram_\join(E)} I_\join(\rope) = \bigJoin_{(u,v)} I_\join(\rope_{u,v}^E)
\]
where~$(u,v)$ runs over all arcs of~$D$ reversed in the transitive reduction of~$E$.
We thus just need to prove that~$I_\join(\rope_{u,v}^E) = E_{(u,v)}$ for any arc~$(u,v)$ of~$D$ reversed in the transitive reduction of~$E$.
We will show that any arc~$(w,w')$ in~$D$ is reversed in~$I_\join(\rope_{u,v}^E)$ if and only if it is reversed in~$E_{(u,v)}$.

If~$(w,w')$ is reversed in~$E_{(u,v)}$, then by definition it is the only arc reversed in~$E$ along a directed path in~$D$ joining~$u$ to~$v$.
Since~$E$ is filled, either~$u = w$ or both $(u,w)$ and~$(w,v)$ are arcs of~$D$.
Moreover, by acyclicity of~$E$, $(u,w)$ is not reversed in~$E$, so that~$w \in \up_{u,v}^E$.
Similarly, either $w' = v$ or $w' \in \down_{u,v}^E$.
It follows by definition that~$(w,w')$ is reversed in~$I_\join(\rope_{u,v}^E)$

Conversely, assume that~$(w,w')$ is reversed in~$I_\join(\rope_{u,v}^E)$.
Then by definition~$w \in \up_{u,v}^E \cup \{u\}$ and~$w' \in \down_{u,v}^E \cup \{v\}$.
Consider the directed path~$\pi$ in~$D$ formed by the arcs~$(u,w)$, $(w,w')$, and~$(w',v)$ (of course, ignore the first arc if~$u = w$ and the last arc if~$w' = v$).
Since~$(w,w')$ is the only arc reversed in~$E$ along~$\pi$, we obtain by definition that~$(w,w')$ is reversed in~$E_{(u,v)}$.

\medskip
Finally, we prove that~$\diagram_\join(E_\join(\diagram)) = \diagram$ for any non-crossing rope diagram~$\diagram$.
By \cref{thm:joinMeet}, an arc is reversed in~$E_\join(\diagram)$ if and only if it belongs to the transitive closure of the arcs reversed in at least one of the reorientations~$I_\join(\rope)$ for~$\rope \in \diagram$.
It immediately follows that if an arc~$(u,v)$ of~$D$ is reversed in the transitive reduction of~$E_\join(\diagram)$, then $\diagram$ contains a rope of the form~$(u,v, \down, \up)$.
Conversely, fix a rope~$\rope \eqdef (u,v, \down, \up) \in \diagram$.
We prove below the following claims:
\begin{enumerate}[(i)]
\item for any~$w \in \down \sqcup \up$, exactly one of the two arcs~$(u,w)$ and~$(w,v)$ is reversed in~$E_\join(\diagram)$,
\item $\rope_{u,v}^{E_\join(\diagram)} = \rope$,
\item the arc~$(u,v)$ is reversed in the transitive reduction of~$E_\join(\diagram)$.
\end{enumerate}
We deduce from~(ii) and~(iii) that~$\diagram_\join(E_\join(\diagram)) = \diagram$.

For (i), suppose by symmetry that~$w \in \down$.
Since~${E_\join(\diagram) \ge I_\join(\rope)}$, the arc~$(u,w)$ is reversed in~$E_\join(\diagram)$.
Assume by means of contradiction that~$(w,v)$ is also reversed in~$E_\join(\diagram)$.
By \cref{thm:joinMeet}, there exists a directed path~$w = w_0, w_1, \dots, w_\ell = v$ such that, for each~$i \in [\ell]$, the arc~$(w_{i-1}, w_i)$ is reversed in~$I_\join(\rope')$ for at least one~$\rope' \in \diagram$.
Since~$D$ is filled, $(u,w_{\ell-1})$ is also an arc of~$D$.
Since~$E_\join(\diagram)$ is acyclic and all arcs~$(u,w)$ and~$(w_{i-1}, w_i)$ are reversed in~$E_\join(\diagram)$, the arc~$(u,w_{\ell-1})$ is also reversed in~$E_\join(\diagram)$.
We thus obtain that $w_{\ell-1}$ belongs to~$\up \cup \down$ and both~$(u,w_{\ell-1})$ and~$(w_{\ell-1},v)$ are reversed in~$E_\join(\diagram)$.
Replacing~$w$ by~$w_{\ell-1}$, we can therefore assume that~$(w,v)$ is reversed in~$I_\join(\rope')$ for at least one~$\rope' \in \diagram$.
By definition of~$I_\join(\rope')$, we have~$w \in \up' \cup \{u'\}$ and~$v \in \down' \cup \{v'\}$.
Since~$w \in \down$ this implies that the ropes~$\rope$ and~$\rope'$ are crossing, contradicting our assumption on~$\diagram$.

For (ii), consider~$w \in \down$. 
Since~$E_\join(\diagram) \ge I_\join(\rope)$, the arcs~$(u,v)$ and~$(u,w)$ is reversed in~$E_\join(\diagram)$, so that the arc~$(w,v)$ is not reversed in~$E_\join(\diagram)$ by~(i).
Hence~$\down \subseteq \down_{u,v}^{E_\join(\diagram)}$.
Similarly, $\up \subseteq \up_{u,v}^{E_\join(\diagram)}$.
We conclude that~$\rope_{u,v}^{E_\join(\diagram)} = \rope$.

For (iii), we already know that the arc~$(u,v)$ is reversed in~$E_\join(\diagram)$.
If it is not in the transitive reduction of~$E_\join(\diagram)$, there is a directed path~$u = w_0, \dots, w_\ell = v$ in~$D$ (with~$\ell > 1$) completely reversed in~$E_\join(\diagram)$.
Since~$D$ is filled, $(w_1,v)$ is also an arc in~$D$, and it is reversed in~$E_\join(\diagram)$ by acyclicity.
This contradicts~(i).

\medskip
To conclude, we have shown that~$E_\join(\diagram_\join(E)) = E$ for any acyclic reorientation~$E$ of~$D$ and that~$\diagram_\join(E_\join(\diagram)) = \diagram$ for any non-crossing rope diagram~$\diagram$, so that~$\diagram_\join$ and~$E_\join$ are inverse bijections between acyclic reorientations of~$D$ and non-crossing rope diagrams of~$D$.
\end{proof}

\enlargethispage{.5cm}
As a consequence of \cref{coro:canonicalJoinMeetRepresentation,prop:bijectionsNonCrossingRopeDiagrams}, we obtain the canonical join and meet representations in~$\AR$ in terms of ropes of~$D$, and the canonical join (resp.~meet) complex in terms of non-crossing rope diagrams of~$D$.
For an illustration of the following two statements, compare \cref{fig:canonicalJoinComplexes}\,(bottom) with \cref{fig:canonicalJoinComplexesRopes}.

\begin{figure}
	\centerline{
			\includegraphics[scale=.9]{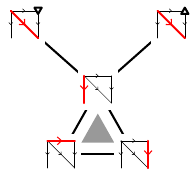} \qquad
			\includegraphics[scale=.9]{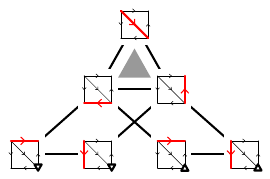} \qquad
			\includegraphics[scale=.9]{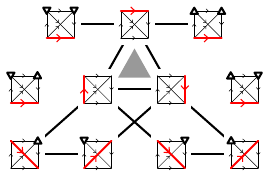}
	}
	\caption{The canonical join complexes of \cref{fig:canonicalJoinComplexes} as non-crossing rope complexes. To represent a rope~$(u, v, \down, \up)$, we highlight the arc~$(u,v)$ in red, and we mark the vertices of~$\down$ and~$\up$ by down and up triangles respectively. The rightmost complex is the non-crossing arc complex of~\cite{Reading-arcDiagrams}.}
	\label{fig:canonicalJoinComplexesRopes}
\end{figure}

\begin{corollary}
\label{prop:canonicalJoinMeetComplex}
Assume that~$D$ is skeletal.
The canonical join (resp.~meet) representation of any acyclic reorientation~$E$ of~$D$ is
\[
E = \bigJoin_{\rope \in \diagram_\join(E)} I_\join(\rope)
\qquad \text{(resp. } 
E = \bigMeet_{\rope \in \diagram_\meet(E)} I_\meet(\rope)
\text{).}
\]
\end{corollary}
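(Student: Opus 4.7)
The plan is to deduce the statement directly from the canonical join/meet representation given in \cref{coro:canonicalJoinMeetRepresentation} by identifying, term by term, the join irreducibles appearing there with the ropes of $\diagram_\join(E)$ under the bijection of \cref{prop:bijectionsRopes}. By self-duality of $\AR$ under arc reversal, it suffices to treat the join case.

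First, I would recall that \cref{coro:canonicalJoinMeetRepresentation} gives the canonical join representation of $E$ as $E = \bigJoin_a E_a$, where $a$ ranges over the arcs reversed in the transitive reduction of~$E$, and $E_a$ is the acyclic reorientation whose reversed arcs are exactly those $(w,w')$ that are the unique reversed arc (in~$E$) along some directed path in~$D$ joining the endpoints of~$a$. On the other hand, the set $\diagram_\join(E)$ is defined to be exactly $\set{\rope_{u,v}^E}{(u,v) \text{ reversed in the transitive reduction of } E}$, so to match the two formulas I only need to prove the pointwise identity $E_{(u,v)} = I_\join(\rope_{u,v}^E)$ for each such arc~$(u,v)$.

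Next, I would verify this identity by comparing reversed arcs on both sides, using that~$D$ is skeletal. Fix $a=(u,v)$ in the transitive reduction of~$E$. If $(w,w')$ is reversed in~$E_{(u,v)}$, then filling forces either $w=u$ or $(u,w),(w,v) \in D$, and acyclicity of~$E$ then places $w \in \up_{u,v}^E \cup \{u\}$; symmetrically $w' \in \down_{u,v}^E \cup \{v\}$, so $(w,w')$ is reversed in~$I_\join(\rope_{u,v}^E)$. Conversely, if $(w,w')$ is reversed in $I_\join(\rope_{u,v}^E)$, then $w \in \up_{u,v}^E \cup \{u\}$ and $w' \in \down_{u,v}^E \cup \{v\}$, so the directed path $u \to w \to w' \to v$ (ignoring the first or last arc if $w=u$ or $w'=v$) has $(w,w')$ as its unique reversed arc, giving $(w,w')$ reversed in $E_{(u,v)}$. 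This is the same step that appears inside the proof of \cref{prop:bijectionsNonCrossingRopeDiagrams}, so I would simply cite it rather than redo it.

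Combining the two observations yields
\[
E = \bigJoin_{(u,v)} E_{(u,v)} = \bigJoin_{\rope \in \diagram_\join(E)} I_\join(\rope),
\]
which is the desired canonical join representation; the canonical meet representation follows by duality. There is no real obstacle here: the content of the corollary is entirely repackaged from \cref{coro:canonicalJoinMeetRepresentation,prop:bijectionsRopes,prop:bijectionsNonCrossingRopeDiagrams}, and the only verification needed is the pointwise identification $E_a = I_\join(\rope_{u,v}^E)$, which is a short consequence of filledness and acyclicity.
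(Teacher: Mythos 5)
Your proof is correct and takes essentially the same route as the paper, which states this corollary as a direct consequence of \cref{coro:canonicalJoinMeetRepresentation} and \cref{prop:bijectionsNonCrossingRopeDiagrams}. You correctly isolate the pointwise identity $E_{(u,v)} = I_\join(\rope_{u,v}^E)$ as the only thing to check, and that identity is indeed established (with the same filledness-plus-acyclicity argument you sketch) inside the proof of \cref{prop:bijectionsNonCrossingRopeDiagrams}.
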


\begin{corollary}
\label{coro:canonicalJoinMeetComplex}
Assume that~$D$ is skeletal.
The canonical join (resp.~meet) complex of~$\AR$ is isomorphic to the non-crossing rope complex of~$D$.
\end{corollary}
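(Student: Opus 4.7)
The plan is to assemble the isomorphism directly from the three ingredients built up in the subsection: the bijection between ropes and join irreducibles (\cref{prop:bijectionsRopes}), the bijection between non-crossing rope diagrams and acyclic reorientations (\cref{prop:bijectionsNonCrossingRopeDiagrams}), and the explicit description of the canonical join representation (\cref{prop:canonicalJoinMeetComplex}). By the self-duality of $\AR$ under reversing all arcs, which swaps the roles of $\rope_\join, I_\join, \diagram_\join, E_\join$ with $\rope_\meet, I_\meet, \diagram_\meet, E_\meet$, it suffices to treat the canonical join complex.

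First I would declare the candidate simplicial map. \cref{prop:bijectionsRopes} gives a vertex bijection $I_\join$ from the ropes of $D$ to the join irreducibles of $\AR$, with inverse $\rope_\join$. I extend this to a map on simplices by sending a rope diagram $\diagram$ to the set $\set{I_\join(\rope)}{\rope \in \diagram}$, and I want to show that $\diagram$ is a face of the non-crossing rope complex if and only if $\set{I_\join(\rope)}{\rope \in \diagram}$ is a face of the canonical join complex.

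For the forward direction, let $\diagram$ be a non-crossing rope diagram. By \cref{prop:bijectionsNonCrossingRopeDiagrams}, there is an acyclic reorientation $E = E_\join(\diagram)$ with $\diagram_\join(E) = \diagram$. Then \cref{prop:canonicalJoinMeetComplex} expresses the canonical join representation of $E$ exactly as $E = \bigJoin_{\rope \in \diagram} I_\join(\rope)$, so $\set{I_\join(\rope)}{\rope \in \diagram}$ is a face of the canonical join complex. Conversely, any face of the canonical join complex is the set of atoms of the canonical join representation of some $E \in \AR$, which by \cref{prop:canonicalJoinMeetComplex} equals $\set{I_\join(\rope)}{\rope \in \diagram_\join(E)}$; applying $\rope_\join$ recovers the non-crossing rope diagram $\diagram_\join(E)$, which is a face of the non-crossing rope complex by \cref{prop:bijectionsNonCrossingRopeDiagrams}.

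Since this correspondence is a bijection on simplices induced by a bijection on vertices and it preserves inclusions (any subset of a non-crossing rope diagram is itself a non-crossing rope diagram, and any subset of a canonical join representation is the canonical join representation of the corresponding partial join, both being downward closed conditions), the two simplicial complexes are isomorphic. There is no real obstacle here: the content has been front-loaded into \cref{prop:bijectionsRopes,prop:bijectionsNonCrossingRopeDiagrams,prop:canonicalJoinMeetComplex}, and the corollary is a direct packaging of these statements.
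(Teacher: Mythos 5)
Your proposal is correct and takes essentially the same approach as the paper, which states this corollary without a separate proof as an immediate consequence of \cref{coro:canonicalJoinMeetRepresentation} (equivalently \cref{prop:canonicalJoinMeetComplex}), \cref{prop:bijectionsRopes}, and \cref{prop:bijectionsNonCrossingRopeDiagrams}; you have simply unpacked the vertex bijection and the two face inclusions explicitly.
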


Note that \cref{prop:bijectionsRopes,prop:bijectionsNonCrossingRopeDiagrams,prop:canonicalJoinMeetComplex,coro:canonicalJoinMeetComplex} fail when~$D$ is not filled.
For instance, in the rightmost acyclic reorientation lattice of \cref{fig:distributiveSemidistributive}, $D$ has $4$ ropes but $6$ join irreducibles, and $16$ non-crossing rope diagrams but $14$ elements.

\medskip
Note that if~$D \eqdef (V,A)$ and~$D' \eqdef (V,A')$ are such that~$A \supseteq A'$ and~$D'$ is pathful in~$D$, then the ropes of~$D'$ are precisely the ropes of~$D$ supported by the arcs of~$D'$, and the non-crossing rope complex of~$D'$ is the subcomplex of the non-crossing rope complex of~$D$ induced by the ropes of~$D'$. This is a special case of lattice quotient of~$\AR$ studied in \cref{sec:congruences}.


\subsection{Rope bidiagrams and intervals}
\label{subsec:bidiagrams}

We finally briefly describe in terms of ropes the canonical complex of~$\AR$, and thus its intervals.
We start with a criterion for a join irreducible acyclic reorientation of~$D$ to be smaller than a meet irreducible acyclic reorientation of~$D$.

\begin{lemma}
\label{lem:bidiagram}
Assume that~$D$ is skeletal.
The following statements are equivalent for any two ropes~$\rope^\join \eqdef (u^\join, v^\join, \down^\join, \up^\join)$ and $\rope^\meet \eqdef (u^\meet, v^\meet, \down^\meet, \up^\meet)$ of~$D$:
\begin{itemize}
\item the join-irreducible~$I_\join(\rope^\join)$ is lower than the meet irreducible~$I_\meet(\rope^\meet)$ in~$\AR$,
\item there is no~$w \in (\up^\join \cup \{u^\join\}) \cap (\down^\meet \cup \{u^\meet\})$ and~$w' \in (\down^\join \cup \{v^\join\}) \cap (\up^\meet \cup \{v^\meet\})$ such that~$(w,w')$ is an arc of~$D$.
\end{itemize}
\end{lemma}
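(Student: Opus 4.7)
The plan is to prove this equivalence by direct unfolding of the definitions, with no need to invoke any deeper structural property of $\AR$. The key first observation is that, since the order on~$\AR$ is defined by inclusion of reversed-arc sets, the failure of $I_\join(\rope^\join) \le I_\meet(\rope^\meet)$ is equivalent to the existence of an arc $(w,w')$ of~$D$ which is reversed in $I_\join(\rope^\join)$ but not reversed in $I_\meet(\rope^\meet)$. Both orientations in question are genuine acyclic reorientations of~$D$ by \cref{prop:bijectionsRopes}, so this comparison makes sense.

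Next, I would simply substitute the definitions of $I_\join$ and $I_\meet$ from \cref{subsec:ropes}: an arc $(w,w') \in D$ is reversed in $I_\join(\rope^\join)$ exactly when $w \in \up^\join \cup \{u^\join\}$ and $w' \in \down^\join \cup \{v^\join\}$, while an arc $(w,w') \in D$ is \emph{not} reversed in $I_\meet(\rope^\meet)$ exactly when $w \in \down^\meet \cup \{u^\meet\}$ and $w' \in \up^\meet \cup \{v^\meet\}$. Conjoining these two membership conditions, the existence of a witnessing arc becomes precisely the existence of a pair $(w,w')$ as in the negation of the second assertion, namely $w \in (\up^\join \cup \{u^\join\}) \cap (\down^\meet \cup \{u^\meet\})$, $w' \in (\down^\join \cup \{v^\join\}) \cap (\up^\meet \cup \{v^\meet\})$, and $(w,w') \in D$.

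There is essentially no obstacle: the entire argument is a bookkeeping translation between two equivalent formulations of the same existential statement about an arc of~$D$. The only mild subtlety to check while writing this out is that the two definitions being substituted are phrased on arcs of~$D$ (not on arbitrary ordered pairs of vertices), which matches the ``$(w,w')$ is an arc of~$D$'' clause in the second assertion, so no extra hypothesis (such as filledness or existence of a path) is needed beyond what is packaged in \cref{prop:bijectionsRopes}.
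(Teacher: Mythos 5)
Your proof is correct and follows essentially the same route as the paper: unfold the definitions of $I_\join$, $I_\meet$, and the order on~$\AR$ by inclusion of reversed-arc sets, and observe that the two assertions are negations of the same existential statement about an arc of~$D$. The only small difference is that you explicitly invoke \cref{prop:bijectionsRopes} to note that both objects are genuine acyclic reorientations, which the paper leaves implicit; this is a harmless and reasonable extra sentence.
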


\begin{proof}
By definition, an arc~$(w,w')$ of~$D$ is
\begin{itemize}
\item reversed in~$I_\join(\rope^\join)$ if and only if~$w \in \up^\join \cup \{u^\join\}$ and~$w' \in \down^\join \cup \{v^\join\}$, and
\item unreversed in~$I_\meet(\rope^\meet)$ if and only if~$w \in \down^\meet \cup \{u^\meet\}$ and~$w' \in \up^\meet \cup \{v^\meet\}$.
\end{itemize}
The result immediately follows since $I_\join(\rope^\join)$ is smaller than $I_\meet(\rope^\meet)$ if and only if there is no arc of~$D$ reversed in~$I_\join(\rope^\join)$ and unreversed in~$I_\meet(\rope^\meet)$.
\end{proof}

We write $\rope_\join \vdash \rope_\meet$ if the properties of \cref{lem:bidiagram} are fulfilled.
A \defn{rope bidiagram} of~$D$ is a pair~$(\diagram^\join, \diagram^\meet)$ of non-crossing rope diagrams of~$D$ such that~$\rope^\join \vdash \rope^\meet$ for any~$\rope^\join \in \diagram^\join$ and~$\rope^\meet \in \diagram^\meet$.
The \defn{rope bidiagram complex} of~$D$ the simplicial complex whose ground set contains two copies~$\rope^\join$ and~$\rope^\meet$ of each rope~$\rope$ of~$D$, and whose faces are the sets~$\set{\rope^\join}{\rope \in \diagram^\join} \sqcup \set{\rope^\meet}{\rope \in \diagram^\meet}$ for any rope bidiagram~$(\diagram^\join, \diagram^\meet)$ of~$D$.

\begin{proposition}
\label{prop:bidiagram}
Assume that~$D$ is skeletal.
The two maps~$[E^\join, E^\meet] \mapsto \big( \diagram_\join(E^\join), \diagram_\meet(E^\meet) \big)$ and ${(\diagram^\join, \diagram^\meet) \mapsto \big[ E_\join(\diagram^\join), E_\meet(\diagram^\meet) \big]}$ are inverse bijections between the intervals of~$\AR$ and the rope bidiagrams of~$D$.
\end{proposition}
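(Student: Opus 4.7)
The plan is to reduce everything to \cref{prop:bijectionsNonCrossingRopeDiagrams} (the bijection between acyclic reorientations and non-crossing rope diagrams) applied on each endpoint, together with \cref{lem:bidiagram} (the characterization of the relation $\rope^\join \vdash \rope^\meet$). The only non-trivial content is checking that the two maps land in the correct target sets; once this is verified, the mutual-inverse property is automatic because it holds coordinate by coordinate on the endpoints of an interval (resp.~on the two components of a rope bidiagram).

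First I would verify that the forward map sends intervals to rope bidiagrams. Fix an interval $[E^\join, E^\meet]$ of $\AR$. By \cref{prop:bijectionsNonCrossingRopeDiagrams}, both $\diagram_\join(E^\join)$ and $\diagram_\meet(E^\meet)$ are non-crossing rope diagrams of~$D$. Pick any $\rope^\join \in \diagram_\join(E^\join)$ and $\rope^\meet \in \diagram_\meet(E^\meet)$. The canonical join representation of \cref{prop:canonicalJoinMeetComplex} shows that $I_\join(\rope^\join) \le E^\join$, and dually that $E^\meet \le I_\meet(\rope^\meet)$. Since $E^\join \le E^\meet$, we obtain $I_\join(\rope^\join) \le I_\meet(\rope^\meet)$, so by \cref{lem:bidiagram} we conclude $\rope^\join \vdash \rope^\meet$.

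Second I would verify that the backward map sends rope bidiagrams to intervals. Given a rope bidiagram $(\diagram^\join, \diagram^\meet)$, the hypothesis $\rope^\join \vdash \rope^\meet$ combined with \cref{lem:bidiagram} gives $I_\join(\rope^\join) \le I_\meet(\rope^\meet)$ for every $\rope^\join \in \diagram^\join$ and $\rope^\meet \in \diagram^\meet$. Fixing $\rope^\meet$ and taking the join on the left yields $E_\join(\diagram^\join) \le I_\meet(\rope^\meet)$, and then taking the meet on the right yields $E_\join(\diagram^\join) \le E_\meet(\diagram^\meet)$. Thus the pair indeed defines a genuine interval of~$\AR$.

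Finally, the two compositions are the identity. Starting from an interval $[E^\join, E^\meet]$, the round-trip returns $\bigl[E_\join(\diagram_\join(E^\join)), E_\meet(\diagram_\meet(E^\meet))\bigr] = [E^\join, E^\meet]$ by \cref{prop:bijectionsNonCrossingRopeDiagrams}. The opposite composition is treated identically on each component. I do not anticipate any real obstacle, since all the substantive work has been done in \cref{prop:bijectionsNonCrossingRopeDiagrams,lem:bidiagram}; the present statement is a short assembly of these ingredients, in direct analogy with the canonical complex construction of \cite{AlbertinPilaud} for general semidistributive lattices.
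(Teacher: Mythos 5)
Your proof is correct and takes essentially the same route as the paper's: it combines \cref{prop:bijectionsNonCrossingRopeDiagrams}, \cref{prop:canonicalJoinMeetComplex} and \cref{lem:bidiagram} with the fact that $E \le F$ if and only if every canonical joinand of $E$ is below every canonical meetand of $F$. The paper states that last fact in one line and cites \cite{AlbertinPilaud}, while you prove both directions explicitly (via $I_\join(\rope^\join) \le E^\join \le E^\meet \le I_\meet(\rope^\meet)$ and by taking joins then meets), but this is the same argument, not a different one.
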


\begin{proof}
The result immediately follows from \cref{prop:bijectionsNonCrossingRopeDiagrams,prop:canonicalJoinMeetComplex}, \cref{lem:bidiagram} and the fact that~$E \le F$ if and only if any canonical joinand of~$E$ is smaller that any canonical meetand~of~$F$.
Note that this is precisely the argument of~\cite{AlbertinPilaud} to affirm that the canonical complex is flag.
\end{proof}

\begin{corollary}
\label{coro:canonicalComplex}
Assume that~$D$ is skeletal.
The canonical complex of~$\AR$ is isomorphic to the rope bidiagram complex of~$D$.
\end{corollary}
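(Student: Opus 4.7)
The plan is to show that the vertex-level bijections of \cref{prop:bijectionsRopes} extend to a simplicial isomorphism, using \cref{coro:canonicalJoinMeetComplex} on the rope-diagram side and \cref{lem:bidiagram} to translate the condition ``$\bigJoin J \le \bigMeet M$'' that cuts out the faces of the canonical complex. Concretely, I would send each formal symbol $\rope^\join$ to the join irreducible $I_\join(\rope)$ and each $\rope^\meet$ to the meet irreducible $I_\meet(\rope)$; \cref{prop:bijectionsRopes} guarantees that this is a bijection on vertex sets.

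To check that faces map to faces, start with a face $\{\rope^\join : \rope \in \diagram^\join\} \sqcup \{\rope^\meet : \rope \in \diagram^\meet\}$ of the rope bidiagram complex of~$D$. Since $\diagram^\join$ and $\diagram^\meet$ are non-crossing rope diagrams, \cref{coro:canonicalJoinMeetComplex} ensures that $\{I_\join(\rope) : \rope \in \diagram^\join\}$ is a canonical join representation and $\{I_\meet(\rope) : \rope \in \diagram^\meet\}$ is a canonical meet representation in~$\AR$. The remaining condition $\bigJoin_{\rope^\join \in \diagram^\join} I_\join(\rope^\join) \le \bigMeet_{\rope^\meet \in \diagram^\meet} I_\meet(\rope^\meet)$ for being a face of the canonical complex reduces, by the universal properties of joins and meets, to the pairwise inequalities $I_\join(\rope^\join) \le I_\meet(\rope^\meet)$, which hold by \cref{lem:bidiagram} since $\rope^\join \vdash \rope^\meet$ for every such pair by definition of a rope bidiagram.

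For the reverse direction, take a face $J \cup M$ of the canonical complex, so that $J$ is a canonical join representation, $M$ is a canonical meet representation, and $\bigJoin J \le \bigMeet M$. By \cref{coro:canonicalJoinMeetComplex}, the preimages $\diagram^\join \eqdef \{\rope_\join(j) : j \in J\}$ and $\diagram^\meet \eqdef \{\rope_\meet(m) : m \in M\}$ are non-crossing rope diagrams. For any $j \in J$ and $m \in M$, the chain $j \le \bigJoin J \le \bigMeet M \le m$ together with \cref{lem:bidiagram} yields $\rope_\join(j) \vdash \rope_\meet(m)$, so $(\diagram^\join, \diagram^\meet)$ is a rope bidiagram. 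I do not anticipate any real obstacle: all ingredients are already in place, and the only mildly subtle point is the pairwise reduction of $\bigJoin J \le \bigMeet M$, which is a standard lattice-theoretic observation. Alternatively, one could deduce the statement directly from \cref{prop:bidiagram}, since faces of both complexes are naturally indexed by intervals of~$\AR$ via $(J,M) \mapsto [\bigJoin J, \bigMeet M]$ and $(\diagram^\join, \diagram^\meet) \mapsto [E_\join(\diagram^\join), E_\meet(\diagram^\meet)]$.
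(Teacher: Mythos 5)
Your proposal is correct and follows essentially the same route as the paper: the paper derives \cref{coro:canonicalComplex} as an immediate consequence of \cref{prop:bidiagram}, whose own proof cites exactly the ingredients you use—\cref{coro:canonicalJoinMeetComplex}, \cref{lem:bidiagram}, and the reduction of $\bigJoin J \le \bigMeet M$ to the pairwise inequalities between joinands and meetands. You even note the alternative derivation via \cref{prop:bidiagram}, which is precisely what the paper does.
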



\section{Quotients of acyclic reorientation lattices}
\label{sec:congruences}

Throughout this section, we assume that~$D$ is skeletal, so that the acyclic reorientation poset~$\AR$ is a congruence uniform lattice by \cref{prop:congruenceUniform}.
We use the ropes introduced in \cref{sec:ropeDiagrams} to study the congruences of the acyclic reorientation lattice~$\AR$, generalizing N.~Reading's work on congruences of the weak order~\cite{Reading-arcDiagrams}.


\subsection{Restrictions and extensions of congruences}
\label{subsec:restrictionsExtensionsCongruences}

To describe the congruences of~$\AR$, it will be useful to restrict (resp.~extend) congruences of~$\AR$ to subgraphs (resp.~supergraphs) of~$D$.
Consider thus two directed acyclic graphs~$D \eqdef (V, A)$ and~$D' \eqdef (V, A')$ on the same vertex set~$V$ with~${A \supseteq A'}$, and assume that both~$D$ and~$D'$ are skeletal so that the acyclic reorientation posets~$\AR$ and~$\AR[D']$ are congruence uniform lattices by \cref{prop:congruenceUniform}.
To extend congruences of~$\AR[D']$ to congruences of~$\AR$, we need that~$D'$ be pathful in~$D$, so that~$\restrictionMap$ is a lattice map by~\cref{thm:latticeMap}.

\begin{proposition}
\label{prop:extensionCongruence}
If~$D'$ is pathful in~$D$, then any congruence~$\equiv'$ on~$\AR[D']$ extends to a congruence~$\equiv$ on~$\AR$ defined by~$E \equiv F$ if and only if~$\restrictionMap(E) \equiv' \restrictionMap(F)$.
\end{proposition}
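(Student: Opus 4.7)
The plan is to observe that this is essentially a general fact about pullbacks of congruences under lattice morphisms, and the only non-trivial input is Theorem~\ref{thm:latticeMap}. Since $D'$ is pathful in~$D$, that theorem guarantees that $\restrictionMap : \AR \to \AR[D']$ is a lattice quotient map, hence in particular a surjective lattice morphism: $\restrictionMap(E \meet F) = \restrictionMap(E) \meet \restrictionMap(F)$ and $\restrictionMap(E \join F) = \restrictionMap(E) \join \restrictionMap(F)$ for all $E, F \in \AR$.

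With this in hand, the verification that $\equiv$ is a congruence on $\AR$ is essentially automatic. First, $\equiv$ is an equivalence relation as the pullback of the equivalence relation $\equiv'$ along the map~$\restrictionMap$. Second, assume $E \equiv E'$ and $F \equiv F'$, so that $\restrictionMap(E) \equiv' \restrictionMap(E')$ and $\restrictionMap(F) \equiv' \restrictionMap(F')$ by definition. Since $\equiv'$ is a congruence on $\AR[D']$, we obtain $\restrictionMap(E) \join \restrictionMap(F) \equiv' \restrictionMap(E') \join \restrictionMap(F')$ and analogously for meets. Applying that $\restrictionMap$ is a lattice morphism, these become $\restrictionMap(E \join F) \equiv' \restrictionMap(E' \join F')$ and $\restrictionMap(E \meet F) \equiv' \restrictionMap(E' \meet F')$, which by the definition of $\equiv$ means $E \join F \equiv E' \join F'$ and $E \meet F \equiv E' \meet F'$. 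Thus $\equiv$ respects both join and meet, and is therefore a congruence of~$\AR$.

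Finally, I would briefly note that $\equiv$ indeed \emph{extends} $\equiv'$: the fibers of $\restrictionMap$ being intervals and $\restrictionMap$ being surjective, the quotient $\AR/{\equiv}$ is naturally isomorphic to $\AR[D']/{\equiv'}$ via the lattice map induced by~$\restrictionMap$. There is no real obstacle here; the entire content of the proposition is packaged into the lattice map property of $\restrictionMap$ provided by Theorem~\ref{thm:latticeMap}, and the rest is a line-by-line check that pullback of a congruence under a lattice morphism is a congruence.
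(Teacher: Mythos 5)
Your proof is correct and uses the same key input as the paper, namely that \cref{thm:latticeMap} makes $\restrictionMap$ a lattice morphism when $D'$ is pathful in~$D$. The only difference is stylistic: the paper invokes the characterization that congruences are exactly the fibers of lattice maps and concludes by composing $\restrictionMap$ with a lattice map whose fibers are the $\equiv'$-classes, whereas you verify directly that the pullback relation respects joins and meets, which is the same content spelled out element-by-element.
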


\begin{proof}
Recall that~$\equiv$ is a congruence of a lattice~$L$ if and only if the classes of~$\equiv$ are the fibers of a lattice map~$L \to M$.
The result immediately since the composition~${\lambda_D \eqdef \lambda_{D'} \circ \restrictionMap}$ of any such lattice map~${\lambda_{D'} : \AR[D'] \to M}$ with the lattice map~$\restrictionMap : \AR \to \AR[D']$ is a lattice map~${\lambda_D : \AR \to M}$.
\end{proof}

Conversely, to restrict congruences of~$\AR$ to congruences of~$\AR[D']$, we need that~$D'$ be strongly pathful in~$D$, so that~$\restrictionMap$ restricts to a lattice isomorphism from a lower interval of~$\AR$ to~$\AR[D']$ by~\cref{thm:latticeMap}, whose inverse we denote by~$\inverseRestrictionMap$ (in other words, $\inverseRestrictionMap(E')$ is the acyclic reorientation of~$D$ whose reversed arcs are exactly the reversed arcs of~$E'$).

\begin{proposition}
\label{prop:restrictionCongruence}
If~$D'$ is strongly pathful in~$D$, then any congruence~$\equiv$ on~$\AR$ restricts to a congruence~$\equiv'$ on~$\AR[D']$ defined by~$E' \equiv' F'$ if and only if~$\inverseRestrictionMap(E') \equiv \inverseRestrictionMap(F')$.
\end{proposition}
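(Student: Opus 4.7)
The plan is to exploit the fact that, by \cref{thm:latticeMap}, when $D'$ is strongly pathful in $D$ the map $\inverseRestrictionMap : \AR[D'] \to \AR$ is a lattice isomorphism onto the lower interval $[D, \inverseRestrictionMap(\bar{D'})]$ of $\AR$. Since this interval has $D$ as its bottom element, it is closed under the meet and join of $\AR$, so $\inverseRestrictionMap$ preserves meets and joins computed in $\AR$. The congruence $\equiv'$ on $\AR[D']$ is then simply the pullback of $\equiv$ along the injective lattice map~$\inverseRestrictionMap$, and pullbacks of congruences along lattice maps are automatically congruences.

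Concretely, I would first observe that $\equiv'$ is an equivalence relation on $\AR[D']$ since it is defined as the preimage of the equivalence relation~$\equiv$ under the (injective) map $\inverseRestrictionMap$. Next, to verify that $\equiv'$ respects joins and meets, I would take $E', F', G', H' \in \AR[D']$ with $E' \equiv' F'$ and $G' \equiv' H'$, unfold the definition to get $\inverseRestrictionMap(E') \equiv \inverseRestrictionMap(F')$ and $\inverseRestrictionMap(G') \equiv \inverseRestrictionMap(H')$, and apply the congruence property of $\equiv$ in $\AR$ to deduce
\[
\inverseRestrictionMap(E') \join \inverseRestrictionMap(G') \equiv \inverseRestrictionMap(F') \join \inverseRestrictionMap(H')
\quad \text{and} \quad
\inverseRestrictionMap(E') \meet \inverseRestrictionMap(G') \equiv \inverseRestrictionMap(F') \meet \inverseRestrictionMap(H').
\]
Using that $\inverseRestrictionMap$ preserves joins and meets, the left-hand sides equal $\inverseRestrictionMap(E' \join G')$ and $\inverseRestrictionMap(E' \meet G')$, and similarly for the right-hand sides, which translates back via the definition of $\equiv'$ into $E' \join G' \equiv' F' \join H'$ and $E' \meet G' \equiv' F' \meet H'$.

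There is no real obstacle beyond justifying that $\inverseRestrictionMap$ preserves meets and joins, which is exactly the content of the third bullet of \cref{thm:latticeMap}: it identifies $\AR[D']$ with a lower interval of $\AR$ as lattices. One small point worth flagging in the write-up is that joins and meets in the lower interval $[D, \inverseRestrictionMap(\bar{D'})]$ coincide with those of~$\AR$, which holds because the interval starts at the bottom element~$D$ of~$\AR$, so it is a sublattice. Once this is in place, the proof is a direct one-line application of the definition of a congruence.
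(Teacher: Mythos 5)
Your proposal is correct and matches the paper's approach: both rely on the third bullet of Theorem~\ref{thm:latticeMap} to identify $\AR[D']$ with the lower interval $[D, \inverseRestrictionMap(\bar D')]$ of $\AR$, and then transport the restriction of $\equiv$ to that interval back to $\AR[D']$ via the isomorphism. The paper states this in one line, appealing implicitly to the standard fact that a lattice congruence restricts to a congruence on any interval (viewed as a sublattice), while you unpack that fact into an explicit check; the only cosmetic over-justification is your remark that the interval is a sublattice ``because it starts at the bottom element'' --- in fact every interval of a lattice is a sublattice, regardless of whether it contains the global minimum.
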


\begin{proof}
The congruence~$\equiv$ of~$\AR$ restricts to a congruence of the interval~$[D, \inverseRestrictionMap(\bar D')]$ of~$\AR$, which is isomorphic to~$\AR[D']$ since~$D'$ is strongly pathful in~$D$.
\end{proof}


\subsection{Subropes}
\label{subsec:subropes}

Recall from \cref{subsec:congruenceNormality} that the set~$\con(L)$ of congruences of a lattice~$L$, ordered by refinement, is a distributive lattice.
When~$L$ is congruence uniform, the map sending a join irreducible~$j$ of~$L$ to the join irreducible congruence~$\con(j_\star, j)$ of~$\con(L)$ is a bijection (where~$\con(x,y)$ denotes the minimal congruence such that~$x \equiv y$).
In other words, $\con(L)$ is isomorphic to the set of lower ideals of the \defn{forcing order} on join irreducibles of~$L$, defined by~$j \prec j'$ if $\con(j'_\star, j')$ refines $\con(j_\star, j)$.
Moreover, for a congruence~$\equiv$ of~$L$ corresponding to a lower ideal~$\bb{I}$ of the forcing order,
\begin{itemize}
\item an element of~$L$ is minimal in its $\equiv$-class if and only if all the join irreducibles in its canonical join representation belong to~$\bb{I}$,
\item the canonical joinands of a congruence class~$X$ in~$L/{\equiv}$ are the classes of the canonical joinands of the minimal element in~$X$.
\end{itemize}
Dual statements hold using meets instead of joins.
In view of these statements, understanding the congruences and quotients of a congruence uniform lattice amounts to understanding the forcing order on the join irreducibles of~$L$ and its lower ideals.

\medskip
For acyclic reorientation lattices, the forcing order is not difficult to describe in terms of the ropes of \cref{sec:ropeDiagrams}.
A rope~$\rope \eqdef (u,v, \down, \up)$ is a \defn{subrope} of a rope~$\rope' \eqdef (u', v', \down', \up')$ if and only if~${\{u,v\} \subseteq \{u',v'\} \cup \down' \cup \up'}$ and~$\down \subseteq \down'$ while~$\up \subseteq \up'$.
The \defn{subrope order} is the order on ropes of~$D$ defined by~$\rope \prec \rope'$ if $\rope$ is a subrope of~$\rope'$.
Examples of subrope orders are illustrated in \cref{fig:subropeOrder}.

\begin{figure}
	\centerline{
			\includegraphics[scale=.9]{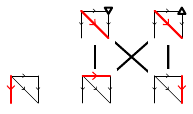} \qquad
			\includegraphics[scale=.9]{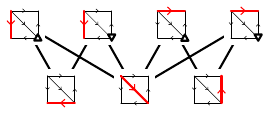} \qquad
			\raisebox{-.5cm}{\includegraphics[scale=.9]{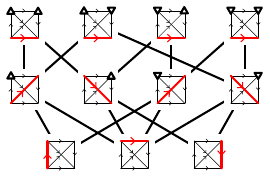}}
	}
	\caption{The subrope orders on the directed acyclic graphs of \cref{fig:canonicalJoinComplexes}. To represent a rope~$(u, v, \down, \up)$, we highlight the arc~$(u,v)$ in red, and we mark the vertices of~$\down$ and~$\up$ by down and up triangles respectively. The rightmost order is the subarc order of~\cite{Reading-arcDiagrams}.}
	\label{fig:subropeOrder}
\end{figure}

\begin{proposition}
\label{prop:subropeOrder}
Assume that~$D$ is skeletal.
For any two join irreducibles~$J$ and~$J'$ of the acyclic reorientation lattice~$\AR$, $J$ forces $J'$ if and only if $\rope_\join(J)$ is a subrope of~$\rope_\join(J')$.
\end{proposition}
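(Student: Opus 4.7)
The plan is to establish both implications by explicitly analyzing how congruences of $\AR$ propagate along cover relations, in the spirit of N.~Reading's treatment of the weak order in~\cite{Reading-arcDiagrams}. Throughout, I rely on the fact that, since $\AR$ is congruence uniform (Proposition~\ref{prop:congruenceUniform}), the map $J \mapsto \con(J_\star, J)$ is a poset isomorphism from the forcing order on join irreducibles of $\AR$ to the poset of join irreducibles of its congruence lattice, and that direct forcing steps are generated by ``perspective squares'' of cover relations.

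For the direction ``subrope implies forcing,'' I would decompose the subrope order into two types of elementary covers: (a) removing a single vertex $w$ from $\down'$ or $\up'$ while keeping the arc $(u',v')$ fixed; and (b) shrinking the arc $(u',v')$ to a subarc $(u,v)$ with $u,v \in \{u',v'\} \cup \down' \cup \up'$, with $\down$ and $\up$ inherited as the restrictions of $\down'$ and $\up'$ to the new transitive support of $(u,v)$. For each elementary cover $\rope \prec \rope'$, I would exhibit an explicit perspective square in $\AR$ consisting of four acyclic reorientations whose two opposite cover relations have ropes $\rope$ and $\rope'$ respectively (to be identified via Proposition~\ref{prop:bijectionsRopes} and Corollary~\ref{coro:canonicalJoinMeetRepresentation}). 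Such a square directly witnesses that contracting the cover at $I_\join(\rope')$ forces contracting the cover at $I_\join(\rope)$. The filled hypothesis on~$D$ ensures that all auxiliary arcs needed to build these squares actually belong to~$D$.

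For the direction ``forcing implies subrope,'' I would describe the smallest congruence $\con(J'_\star, J')$ combinatorially. By Corollary~\ref{coro:canonicalJoinMeetComplex}, each cover relation of $\AR$ is canonically labeled by a rope, so any congruence is determined by the set of ropes labeling its contracted covers. I would prove by induction on the ``propagation distance'' from the cover $J'_\star \lessdot J'$ that every cover relation contracted in $\con(J'_\star, J')$ carries a rope which is a subrope of $\rope_\join(J')$. The induction step analyzes perspective squares: given a square with ropes $\rope_1, \rope_2$ on opposite edges where $\rope_1$ is already a subrope of $\rope_\join(J')$, I would check, case by case depending on how the two flipped arcs interact through directed paths in~$D$, that $\rope_2$ is also a subrope of $\rope_\join(J')$. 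The filled hypothesis is used to guarantee that every pair of interacting arcs lies inside a common induced tournament, which is precisely the structure the subrope order records.

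The main obstacle is the case analysis in the ``forcing implies subrope'' direction: perspective squares in~$\AR$ can involve intricate interactions between two flipped arcs via transitive paths, and each configuration must be verified separately. Without the filled hypothesis on~$D$, one can arrange two flippable arcs whose resulting ropes are incomparable in the subrope order, so that the proposition fails; with $D$ filled, every relevant configuration embeds into a tournament and collapses to one of the elementary subrope covers identified above, making the induction go through.
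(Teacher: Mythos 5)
Your plan takes a genuinely different route from the paper: you propose a self-contained analysis of how forcing propagates through the lattice, while the paper reduces everything to Reading's known result for the weak order. The paper's reduction works by restricting to the subgraph~$D'$ induced by the transitive support of the larger rope~$\rope_\join(J')$: since $D$ is filled, $D'$ is a tournament, and since $D'$ is strongly pathful in~$D$, the restriction map~$\restrictionMap$ transports congruences back and forth (via \cref{prop:extensionCongruence,prop:restrictionCongruence}), so forcing in~$\AR$ between~$J$ and~$J'$ is equivalent to forcing in a weak order, which is Reading's subarc criterion. The case where an endpoint of~$\rope_\join(J)$ lies outside~$D'$ is handled separately: there $\restrictionMap$ itself gives a congruence that contracts~$J$ but not~$J'$, so~$J$ does not force~$J'$. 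This is short and leans on prior work; your plan is more laborious but more self-contained.

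However, the mechanism you describe for the ``subrope implies forcing'' direction has a real gap. In a semidistributive lattice, perspective prime quotients always carry the \emph{same} canonical join label (and hence the same rope): if $a/b$ and $c/d$ are perspective with $a \join d = c$ and $a \meet d = b$, then any $z$ minimal with $z \join b = a$ also satisfies $z \join d = c$, so $k_\join(b,a) = k_\join(d,c)$. Consequently there is no ``perspective square'' in~$\AR$ whose opposite covers carry two \emph{distinct} ropes $\rope \ne \rope'$, so this cannot witness forcing along a strict subrope cover. The relevant structure is not a square but a rank-$2$ interval (a polygon, typically a hexagon coming from an induced tournament on three vertices): there, contracting an \emph{extreme} cover forces the collapse of the opposite chain and thereby forces the \emph{middle} covers, and it is these middle covers that carry the bigger ropes. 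You also have the forcing direction reversed: for~$\rope$ a subrope of~$\rope'$, contracting~$I_\join(\rope)$ forces contracting~$I_\join(\rope')$, not the other way around (this is what makes~$\bb{I}_\equiv$, the set of \emph{uncontracted} ropes, a \emph{lower} ideal). Your second direction (``forcing implies subrope'' by induction on propagation through polygons) is the right idea in outline, but the case analysis you defer is exactly where the work lies, and it is the part the paper avoids entirely by shipping the problem to the tournament case.
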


\begin{proof}
Here, we could specialize to the acyclic reorientation lattice~$\AR$ the general results on the forcing order among shards for an arbitrary tight hyperplane arrangement~\cite{Reading-latticeCongruences, Reading-PosetRegionsChapter}.
We would in particular recover N.~Reading's description in terms of subarcs of the forcing order among join irreducibles in the weak order on permutations~\cite{Reading-latticeCongruences, Reading-arcDiagrams}.
Let us instead assume this description, and observe that it essentially implies our statement for the acyclic reorientation lattice~$\AR$.
Indeed, consider two join irreducibles~$J$ and~$J'$ of~$\AR$ and let~$\rope \eqdef \rope_\join(J)$ and~$\rope' \eqdef \rope_\join(J')$.
Let~$V'$ denote the transitive support of~$\rope'$ and~$D'$ denote the subgraph of~$D$ induced by~$V'$.

\medskip
Assume first that both endpoints of~$\rope$ belong to~$V'$.
Since~$D$ is filled, $D'$ is a tournament, so that~$\AR[D']$ is isomorphic to the weak order on permutations of~$V'$.
The restriction map~$\restrictionMap$ sends the join irreducibles~$J$ and~$J'$ of~$\AR$ to join irreducibles of~$\AR[D']$.
Since $D'$ is strongly pathful in~$D$, we can transport any lattice congruence~$\equiv$ of~$\AR$ to a lattice congruence~$\equiv'$ of~$\AR[D']$ and \viceversa{} by \cref{prop:extensionCongruence,prop:restrictionCongruence}, preserving the refinement order.
Moreover, observe that~$J$ is contracted in~$\equiv$ if and only if~$\restrictionMap(J)$ is contracted in~$\equiv'$.
We therefore obtain that $J$ forces $J'$ if and only if~$\restrictionMap(J)$ forces $\restrictionMap(J')$.
By N.~Reading's work~\cite{Reading-arcDiagrams}, the latter is equivalent to~$\rope_\join(J)$ being a subrope of~$\rope_\join(J')$ (it is called subarc in~\cite{Reading-arcDiagrams}, we use the term subropes here to avoid confusion with the arcs of directed graphs).

\medskip
Assume now that at least one endpoint of~$\rope$ does not belong to~$V'$.
Observe that the arc~$a$ reversed from~$J_\star$ to~$J$ does not belong to~$D'$, while the arc~$a'$ reversed from~$J'_\star$ to~$J'$ belongs to~$D'$.
By \cref{thm:latticeMap}, the restriction~$\restrictionMap$ is a lattice map, so that its fibers define a lattice congruence~$\equiv$ of~$\AR$.
We have~$J_\star \equiv J$ (since~$a$ does not belong to~$D'$) but~$J_\star' \not\equiv J'$ (since~$a'$ belongs to~$D'$).
It follows that~$J$ does not force~$J'$.
\end{proof}

\begin{corollary}
\label{coro:ropeIdeals}
Assume that~$D$ is skeletal.
The congruence lattice of~$\AR$ is isomorphic to the lattice of lower ideals of the subrope order for~$D$.
\end{corollary}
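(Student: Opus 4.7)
The plan is to derive this corollary by assembling results that are already in place, as the real work has been done in \cref{prop:subropeOrder}. First I would invoke the general theory of congruence uniform lattices: for any finite congruence uniform lattice $L$, the map sending a join irreducible $j$ to the join irreducible congruence $\con(j_\star, j)$ is a bijection onto the join irreducibles of the congruence lattice $\con(L)$, and since $\con(L)$ is a finite distributive lattice, it is isomorphic to the lattice of lower ideals of its poset of join irreducibles, equivalently to the lattice of lower ideals of the forcing order on join irreducibles of $L$.

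Next I would apply this framework to $L = \AR$. Since $D$ is skeletal, \cref{prop:congruenceUniform} guarantees that $\AR$ is congruence uniform, so the above abstract isomorphism applies and gives a bijection between congruences of $\AR$ and lower ideals of the forcing order on the join irreducibles of $\AR$.

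Finally I would translate this through the combinatorics of ropes. By \cref{prop:bijectionsRopes}, the maps $\rope_\join$ and $I_\join$ are inverse bijections between join irreducibles of $\AR$ and ropes of $D$, and by \cref{prop:subropeOrder}, under this bijection the forcing order on join irreducibles corresponds exactly to the subrope order on ropes. Composing these two identifications yields the desired isomorphism between $\con(\AR)$ and the lattice of lower ideals of the subrope order.

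I do not expect any real obstacle here: the corollary is essentially a formal consequence of \cref{prop:congruenceUniform,prop:bijectionsRopes,prop:subropeOrder} together with the standard correspondence between congruences of a congruence uniform lattice and lower ideals of the forcing order on its join irreducibles. The only thing worth being careful about is verifying that this correspondence is a lattice isomorphism (not merely a bijection of sets), which follows from the distributivity of $\con(L)$ and the fact that refinement of congruences translates to inclusion of the associated ideals of uncontracted join irreducibles.
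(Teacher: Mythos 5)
Your proposal is correct and is exactly the paper's (implicit) argument: the paper sets up the general fact that for a congruence uniform lattice the congruence lattice is the lattice of lower ideals of the forcing order on join irreducibles, invokes \cref{prop:congruenceUniform} for congruence uniformity of~$\AR$, and then transfers the forcing order to the subrope order via \cref{prop:bijectionsRopes} and \cref{prop:subropeOrder}. No gaps.
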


Observe that if the transitive reduction of~$D$ is a path, then all lower ideals of the subrope order for~$D$ are lower ideals for the subrope order on that path.
In other words, all lattice quotients of~$D$ are lattice quotients of the weak order on permutations.
However, we obtain more lattice quotients as soon as the transitive reduction of~$D$ is not a path.

\medskip
Note that the extension operation of \cref{prop:extensionCongruence} consists in considering a lower ideal of arcs of~$D'$ as a lower ideal of arcs of~$D$, while the restriction operation of \cref{prop:restrictionCongruence} consists in conserving only the ropes supported by the arcs of~$D'$.
These operations are well-defined when~$D'$ is pathful in~$D$ since the ropes of~$D'$ then coincide with the ropes of~$D$ supported by arcs of~$D'$, and the subrope order is the same when regarding these ropes in~$D$ or in~$D'$.

\medskip
\enlargethispage{.4cm}
We denote by~$\bb{I}_\equiv$ the lower ideal of the subrope order corresponding to a congruence~$\equiv$ of~$\AR$, and conversely by~$\equiv_\bb{I}$ the congruence of~$\AR$ corresponding to a lower ideal~$\bb{I}$ of the subrope order.
In other words, $\bb{I}_\equiv$ is the set of ropes~$\rope_\join(J)$ for the join irreducibles~$J$ of~$\AR$ uncontracted by~$\equiv$, and~$\equiv_\bb{I}$ contracts the join irreducibles~$I_\join(\rope)$ for~$\rope$ not in~$\bb{I}$.

\begin{corollary}
\label{coro:minimalElementsIdeal}
Assume that~$D$ is skeletal.
For any congruence~$\equiv$ of~$\AR$,
\begin{itemize}
\item an acyclic reorientation~$E$ of~$D$ is minimal in its $\equiv$-class if and only if~$\diagram_\join(E) \subseteq~\bb{I}_\equiv$,
\item $\AR/{\equiv}$ is isomorphic to the subposet of~$\AR$ induced by $\set{E \in \AR}{\diagram_\join(E) \subseteq~\bb{I}_\equiv}$.
\end{itemize}
A symmetric statement holds for maximal elements and~$\diagram_\meet$.
\end{corollary}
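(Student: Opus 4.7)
The plan is to combine three ingredients already assembled in the paper: the general characterization of minimal elements in a congruence class of a congruence uniform lattice (recalled at the start of \cref{subsec:subropes}), the explicit description of the canonical join representation in~$\AR$ (\cref{prop:canonicalJoinMeetComplex}), and the identification of~$\bb{I}_\equiv$ with the set of ropes associated to uncontracted join irreducibles (\cref{prop:subropeOrder} and \cref{coro:ropeIdeals}). Each of these is already available, so the argument is essentially a matter of putting them side by side.

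For the first item, I would argue as follows. Since~$D$ is skeletal, the lattice~$\AR$ is congruence uniform by \cref{prop:congruenceUniform}, so for any congruence~$\equiv$ an element~$E \in \AR$ is minimal in its $\equiv$-class if and only if every join irreducible in its canonical join representation is uncontracted by~$\equiv$. By \cref{prop:canonicalJoinMeetComplex}, the canonical join representation of~$E$ is~$E = \bigJoin_{\rope \in \diagram_\join(E)} I_\join(\rope)$, so that the canonical joinands of~$E$ are precisely the~$I_\join(\rope)$ for~$\rope \in \diagram_\join(E)$. By the definition of~$\bb{I}_\equiv$ recalled just before the statement (and justified by \cref{prop:subropeOrder} and \cref{coro:ropeIdeals}), a join irreducible~$I_\join(\rope)$ is uncontracted by~$\equiv$ if and only if~$\rope \in \bb{I}_\equiv$. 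Combining these equivalences gives exactly the criterion~$\diagram_\join(E) \subseteq \bb{I}_\equiv$.

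For the second item, I would invoke the standard fact that for any congruence~$\equiv$ of a finite lattice~$L$, the quotient~$L/{\equiv}$ is isomorphic to the subposet of~$L$ induced by the minimal representatives of its congruence classes. Specializing to~$L = \AR$ and using the characterization just proved, this subposet is exactly $\set{E \in \AR}{\diagram_\join(E) \subseteq \bb{I}_\equiv}$, which yields the claimed isomorphism. The dual statement involving $\diagram_\meet$ and maximal representatives follows by the same argument, using the self-duality of~$\AR$ under reversing all arcs, or equivalently by repeating the steps with meets in place of joins.

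No real obstacle is expected here: the result is a direct packaging of already-established correspondences (join irreducibles $\leftrightarrow$ ropes, canonical join complex $\leftrightarrow$ non-crossing rope diagrams, congruences $\leftrightarrow$ lower ideals of the subrope order). The only point to be a little careful about is to state clearly, at the right place, that ``uncontracted join irreducible'' translates, via~$\rope_\join$, to ``rope belonging to~$\bb{I}_\equiv$'', so that the chain of equivalences is unambiguous.
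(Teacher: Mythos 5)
Your proposal is correct and matches the paper's intended argument: the paper gives no explicit proof of this corollary, treating it as an immediate consequence of the general facts on congruence uniform lattices recalled at the top of the subsection, the canonical join representation of \cref{prop:canonicalJoinMeetComplex}, and the definition of~$\bb{I}_\equiv$ just above the statement. Your assembly of these ingredients (including the isomorphism of the quotient with the subposet of minimal representatives, and the self-duality argument for the $\diagram_\meet$ version) is precisely the chain of equivalences the paper leaves implicit.
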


We can finally use bidiagrams to describe the intervals of the quotient lattice.

\begin{corollary}
\label{coro:intervalsQuotient}
Assume that~$D$ is skeletal.
For any congruence~$\equiv$ of~$\AR$, the intervals of~$\AR/{\equiv}$ are in bijection with the rope bidiagrams of~$D$ whose ropes are all~in~$\bb{I}_\equiv$.
\end{corollary}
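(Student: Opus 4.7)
The plan is to synthesize \cref{prop:bidiagram} (intervals of $\AR$ correspond bijectively to rope bidiagrams of $D$) with \cref{coro:minimalElementsIdeal} (characterizing minimal elements of $\equiv$-classes via $\diagram_\join$, and dually maximal elements via $\diagram_\meet$).

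First, I would note the following general fact about lattice quotients: the intervals of $\AR/{\equiv}$ are in bijection with the pairs $(E^\join, E^\meet)$ of acyclic reorientations of $D$ such that $E^\join \le E^\meet$ in $\AR$, with $E^\join$ minimal in its $\equiv$-class and $E^\meet$ maximal in its $\equiv$-class. Indeed, each interval $[X,Y]$ in $\AR/{\equiv}$ determines and is determined by $E^\join \eqdef \min X$ and $E^\meet \eqdef \max Y$, since the classes $X$ and $Y$ are recovered respectively as the $\equiv$-class of $E^\join$ and of $E^\meet$. The inequality $X \le Y$ in $\AR/{\equiv}$ is equivalent to $E^\join \le E^\meet$ in $\AR$: one direction is immediate as $E^\join \in X$ and $E^\meet \in Y$, while the other follows from the chain $E^\join = \min X \le \min Y \le \max Y = E^\meet$.

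Next, by \cref{prop:bidiagram}, pairs $(E^\join, E^\meet)$ with $E^\join \le E^\meet$ correspond bijectively to rope bidiagrams via $(E^\join, E^\meet) \mapsto \bigl(\diagram_\join(E^\join), \diagram_\meet(E^\meet)\bigr)$. By \cref{coro:minimalElementsIdeal} and its dual, the additional requirements that $E^\join$ be minimal and $E^\meet$ be maximal in their respective $\equiv$-classes translate exactly to $\diagram_\join(E^\join) \subseteq \bb{I}_\equiv$ and $\diagram_\meet(E^\meet) \subseteq \bb{I}_\equiv$, which together say that every rope appearing in either component of the bidiagram lies in $\bb{I}_\equiv$.

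Combining these observations yields the claimed bijection. There is no real obstacle here: the only point worth checking carefully is the general correspondence between intervals of a lattice quotient and pairs (min of bottom class, max of top class), since both the bidiagram bijection of \cref{prop:bidiagram} and the ideal description of minimal/maximal class representatives of \cref{coro:minimalElementsIdeal} plug in immediately afterwards.
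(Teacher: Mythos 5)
Your proposal is correct and fills in the (omitted) proof exactly as intended: it reduces intervals of the quotient to pairs (minimum of bottom class, maximum of top class), then applies \cref{prop:bidiagram} and \cref{coro:minimalElementsIdeal}. The one step you flag as needing care—that $X \le Y$ in $\AR/{\equiv}$ implies $\min X \le \min Y$—is indeed the right point to justify, and it follows from $\projDown$ being order preserving (\cref{prop:latticeQuotientMap}), so the argument is complete.
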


In connection to the simpliciality of the quotient fans (or equivalently to the simplicity of the quotientopes) defined below, it would be interesting to understand which quotients of~$\AR$ have a regular cover graph (meaning all vertices have the same degree).
For instance, when~$D$ is a tournament, H.~Hoang and T.~M\"utze proved in~\cite{HoangMutze} that the cover graph of~$\AR/{\equiv}$ is regular if and only if the generators (as an upper ideal of the subrope order) of the complement of~$\bb{I}_{\equiv}$ are all of the form~$(u, v, \down, \varnothing)$ or~$(u, v, \varnothing, \up)$.
We hope that the rope interpretation of the congruences of~$\AR$ will help to extend this result for arbitrary skeletal directed acyclic graphs

\begin{problem}
\label{pb:regularCoverGraph}
Characterize the skeletal directed acyclic graphs~$D$ and the congruences~$\equiv$ of~$\AR$ for which the cover graph of~$\AR/{\equiv}$ is regular.
\end{problem}


\subsection{Partial reorientations}
\label{subsec:partialReorientationsCongruence}

We have seen above that the non-crossing rope diagrams (resp.~the rope bidiagrams) are particularly suited to encode the elements (resp.~the intervals) of the lattice quotients of~$\AR$.
Here, we define alternative combinatorial models, based on the observation of \cref{prop:intervalsAreFibers} that any interval of~$\AR$ can be seen as the fiber of a partial acyclic reorientation of~$D$ under the corresponding restriction map.
Namely, for an interval~$I$ of~$\AR/{\equiv}$, define
\begin{itemize}
\item $P_I$ to be the set of arcs which belong to all acyclic reorientations in all classes of~$I$, and
\item $R_I$ to be the transitive reduction of~$P_I$.
\end{itemize}
For a single class~$X \in \AR/{\equiv}$, we write~$P_X$ and~$R_X$ instead of~$P_{\{X\}}$ and~$R_{\{X\}}$.
Define 
\begin{alignat*}{3}
\c{IP}_\equiv & \eqdef \set{P_I}{I \text{ interval of } \AR/{\equiv}}
& \qquad\text{and}\qquad &&
\c{IR}_\equiv & \eqdef \set{R_I}{I \text{ interval of } \AR/{\equiv}},
\\
\c{P}_\equiv & \eqdef \set{P_X}{X \text{ class of } {\equiv}}
& \qquad\text{and}\qquad &&
\c{R}_\equiv & \eqdef \set{R_X}{X \text{ class of } {\equiv}}.
\end{alignat*}
For instance, for the trivial congruence~$=$ (where the congruence classes are all singletons), 
\begin{itemize}
\item a partial acyclic reorientation~$P$ of~$D$ is in~$\c{IP}_{=}$ if and only if $(u,w) \in P$ or~$(w,v) \in P$ for any arc~$(u,v) \in P$ and any~$w$ in between~$u$ and~$v$ in the transitive reduction of~$D$,
\item the elements of~$\c{P}_{=}$ and~$\c{R}_{=}$ are the acyclic reorientations of~$D$ and their \mbox{transitive reductions.}
\end{itemize}
The criterion for~$\c{IP}_{=}$ is a specialization of \cref{prop:minimalMaximalInFiber} in the situation where~$D$ is skeletal.
It generalizes the classical criterion of~\cite{BjornerWachs} for the integer posets corresponding to intervals of the weak order, see also \cite{ChatelPilaudPons}.
We will see further relevant examples of these families~$\c{P}_\equiv$, $\c{R}_\equiv$, $\c{IP}_\equiv$, and~$\c{IR}_\equiv$ of partial acyclic reorientations of~$D$ in \cref{subsec:coherentCongruencesPrincipalCongruences}.

\medskip
For a partial acyclic reorientation~$P$ of~$D$, define~$P^\join \eqdef P \ssm D$ and~$P^\meet \eqdef P \cap D$.
Order the set of partial acyclic reorientations of~$D$ by~$P \le Q$ if and only if~$P^\join \supseteq Q^\join$ and~$P^\meet \subseteq Q^\meet$.
The following generalizes the motivating observation of~\cite{ChatelPilaudPons}.

\begin{proposition}
\label{prop:partialReorientationsCongruence}
Assume that~$D$ is skeletal.
For any congruence~$\equiv$ of~$\AR$,
\begin{itemize}
\item the quotient~$\AR/{\equiv}$ is isomorphic to~$(\c{P}_\equiv, \le)$,
\item the lattice of intervals of~$\AR/{\equiv}$ is isomorphic to~$(\c{IP}_\equiv, \le)$.
\end{itemize}
\end{proposition}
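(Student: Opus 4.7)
The plan is to combine two simple facts: Proposition~3.5, which encodes any interval of $\AR$ by the partial acyclic reorientation obtained as the intersection of its two endpoints; and the standard observation that in a finite lattice the classes of any congruence are intervals. Both isomorphisms then reduce to this encoding, applied once to classes of $\equiv$ and once to intervals of $\AR/{\equiv}$.

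For the first isomorphism, I would first argue that any $\equiv$-class $X$ is an interval $[X^\join, X^\meet]$ of $\AR$, with $X^\join = \bigMeet X$ and $X^\meet = \bigJoin X$ computed in $\AR$, because $\equiv$ respects $\meet$ and $\join$. By definition, $P_X$ consists of the arcs appearing in every acyclic reorientation of $X$, which is precisely $X^\join \cap X^\meet$. Proposition~3.5, applied to $[X^\join, X^\meet]$, then says that $X$ coincides with the fiber of $P_X$ under the corresponding restriction map, so $P_X$ uniquely determines~$X$; hence $X \mapsto P_X$ is a bijection from $\AR/{\equiv}$ to $\c{P}_\equiv$. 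For the order, I would use the standard description of $\AR/{\equiv}$ as the subposet of $\AR$ induced by minimal (resp.~maximal) class representatives, which gives that $X \le Y$ in $\AR/{\equiv}$ is equivalent to $X^\join \le Y^\join$ (and also to $X^\meet \le Y^\meet$) in $\AR$. Translating these comparisons, which are defined by containment of reversed arc-sets, through the decomposition $P = P^\join \sqcup P^\meet$ gives precisely the inclusions defining the order on $\c{P}_\equiv$. Since $\AR/{\equiv}$ is already known to be a lattice, this order-isomorphism upgrades automatically to a lattice isomorphism.

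The second isomorphism is proved in exactly the same way. Any interval $I = [X, Y]$ of $\AR/{\equiv}$ (with $X \le Y$ in the quotient) is the union of the $\equiv$-classes $Z$ such that $X \le Z \le Y$, which in $\AR$ is exactly the interval $[X^\join, Y^\meet]$; consequently $P_I = X^\join \cap Y^\meet$, and Proposition~3.5 again shows that $P_I$ characterizes $I$, so $I \mapsto P_I$ is a bijection from intervals of $\AR/{\equiv}$ to $\c{IP}_\equiv$. The order preservation follows from the componentwise comparison of intervals of a lattice ($I_1 \le I_2$ iff minima and maxima compare separately), which translates through the arc-set encoding into the two inclusions required for $P_{I_1} \le P_{I_2}$.

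The only real obstacle is bookkeeping the sign conventions: one has to match carefully the roles of $P^\join$ and $P^\meet$ (arcs reversed relative to~$D$ and arcs along~$D$) with the two endpoints $X^\join$ (a minimum in~$\AR$) and $X^\meet$ (a maximum in~$\AR$) of the class, so that the inclusions $P^\join \supseteq Q^\join$ and $P^\meet \subseteq Q^\meet$ in the definition of the order on partial acyclic reorientations correspond correctly to the order on $\AR$. Beyond this, no further work is required.
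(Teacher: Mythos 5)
Your proof is correct and follows essentially the same route as the paper: identify $P_I$ with the pair of endpoints of the interval $I$ (via $P_I^\join$ coming from the minimum and $P_I^\meet$ from the maximum) and translate the componentwise comparison of intervals into the two inclusions defining $\le$ on partial acyclic reorientations. You take a slightly longer detour by invoking \cref{prop:intervalsAreFibers} to establish injectivity before proving order-preservation, whereas the paper directly observes $P_I = E^\join \sqcup F^\meet$ and reads off the order-equivalence $I \le I' \iff P_I \le P_{I'}$ (which gives bijectivity for free); also the paper handles intervals first and recovers the statement on classes by specializing to singletons, but these are cosmetic differences.
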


\begin{proof}
Observe that
\begin{itemize}
\item for two acyclic reorientations~$E$ and~$F$ of~$D$, we have~$E \le F \! \iff \! E^\join \subseteq F^\join \! \iff \! E^\meet \supseteq F^\meet$,
\item for an interval~$I \eqdef [E, F]$ of~$\AR$, we have~$P_I = E^\join \sqcup F^\meet$.
\end{itemize}
Therefore, for two intervals~$I \eqdef [E,F]$ and~$I' \eqdef [E',F']$, we have
\[
I \le I' \iff E \le E' \text{ and } F \le F' \iff E^\join \subseteq E'^\join \text{ and } F^\meet \supseteq F'^\meet \iff P_I \le P_{I'}.
\]
This shows the second point of the statement.
The first point follows by specializing to singleton intervals.
\end{proof}

These partial acyclic reorientations provide a different perspective on the elements and the intervals of~$\AR$.
For instance, the degree of an $\equiv$-class~$X$ in the Hasse diagram of~$\AR/{\equiv}$ is the number of arcs of~$R_X$.
\cref{pb:regularCoverGraph} can thus be reformulated as follows.

\begin{problem}
\label{pb:partialReorientationsAcyclic}
Characterize the skeletal directed acyclic graphs~$D$ and the congruences~$\equiv$ of~$\AR$ for which all partial acyclic reorientations of~$\c{R}_\equiv$ are forests.
\end{problem}


\subsection{Coherent congruences and principal congruences}
\label{subsec:coherentCongruencesPrincipalCongruences}

The prototypical lattice congruence of the weak order on~$\f{S}_n$ is the sylvester congruence~\cite{HivertNovelliThibon-algebraBinarySearchTrees}, whose quotient is the Tamari lattice~\cite{Tamari,HuangTamari}.
The sylvester congruence can be defined equivalently as
\begin{enumerate}[(i)]
\item the congruence where each class is the set of linear extensions of a binary tree (labeled in inorder and oriented toward its root),
\item the transitive closure of the rewriting rule~$UacVbW \equiv UcaVbW$ for letters~$1 \le a < b < c \le n$.
\end{enumerate}
It follows from~(i) that the sylvester class posets are the standard binary search trees, and from~(ii) that a permutation is minimal (resp.~maximal) in its sylvester class if and only if it avoids the pattern~$312$ (resp.~$132$).
The sylvester congruence was extended in~\cite{Reading-CambrianLattices} to Cambrian congruences and in~\cite{PilaudPons-permutrees} to permutree congruences.
We next define analogues of these congruences for the acyclic reorientation lattices.

\para{Coherent congruences}
Fix a pair~$(\decorationDown, \decorationUp)$ of arbitrary subsets of~$V$.
We denote by~$\bb{I}_{(\decorationDown, \decorationUp)}$ the set of ropes~$(u,v, \down, \up)$ of~$D$ such that~$\down \subseteq \decorationDown$ and~$\up \subseteq \decorationUp$.
Note that the intersection of~$\decorationDown$ or~$\decorationUp$ with the set~$L$ of leaves of the transitive reduction of~$D$ is irrelevant for the definition of~$\bb{I}_{(\decorationDown, \decorationUp)}$.
Observe also that the set~$\bb{I}_{(\decorationDown, \decorationUp)}$ is clearly a lower ideal of the subrope order whose
\begin{itemize}
\item generators are the ropes~$(u, v, \down, \up)$ with~${u, v \in L \cup \big( V \ssm (\decorationDown \cup \decorationUp) \big)}$, and~$\down \subseteq \decorationDown$ while~${\up \subseteq \decorationUp}$,
\item cogenerators are the ropes~$(u, v, \{w\}, \varnothing)$ for~$w \notin \decorationDown$ and~$(u,v , \varnothing, \{w\})$ for~$w \notin \decorationUp$.
\end{itemize}
We denote by~$\equiv_{(\decorationDown, \decorationUp)}$ the corresponding congruence of~$\AR$.
We say that~$\equiv_{(\decorationDown, \decorationUp)}$ is a \defn{coherent congruence}.
For instance,
\begin{itemize}
\item $\bb{I}_{(V,V)}$ contains all ropes on~$D$, hence~$\equiv_{(V,V)}$ has one class for each acyclic reorientation~of~$D$,
\item $\bb{I}_{(\varnothing, \varnothing)}$ contains only the ropes~$(u,v,\varnothing,\varnothing)$ for~$(u,v)$ in the transitive reduction of~$D$, hence~$\equiv_{(\varnothing,\varnothing)}$ has one class for each acyclic reorientation of the transitive reduction~of~$D$.
\end{itemize}
More interestingly, we define
\begin{itemize}
\item the \defn{sylvester congruence} of~$\AR$ as the coherent congruence~$\equiv_{(V,\varnothing)}$, and the \defn{Tamari lattice} of~$D$ as the quotient~$\AR/{\equiv_{(V,\varnothing)}}$, generalizing~\cite{HivertNovelliThibon-algebraBinarySearchTrees, Tamari},
\item the \defn{Cambrian congruences} of~$\AR$ as the coherent congruences~$\equiv_{(\decorationDown, \decorationUp)}$ such that~${\decorationDown \sqcup \decorationUp = V}$, and the \defn{Cambrian lattices} of~$D$ as the corresponding quotients of~$\AR$, generalizing~\cite{Reading-CambrianLattices}.
\end{itemize}
For instance, \cref{fig:sylvesterCongruences} illustrates the partitions of~$\AR$ into sylvester classes and the Tamari lattices for the acyclic reorientation lattices of \cref{fig:canonicalJoinComplexes}.

\begin{figure}[p]
	\centerline{
		\begin{tabular}{c@{\qquad}c@{\qquad}c}
			\includegraphics[scale=.9]{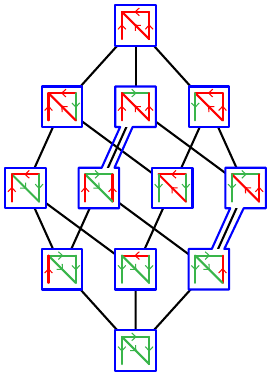} &
			\includegraphics[scale=.9]{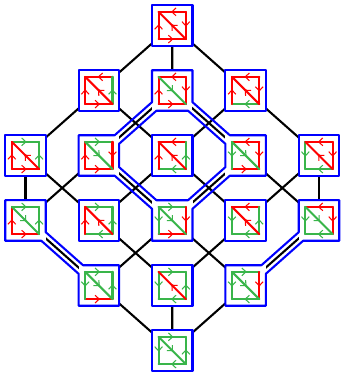} &
			\includegraphics[scale=.9]{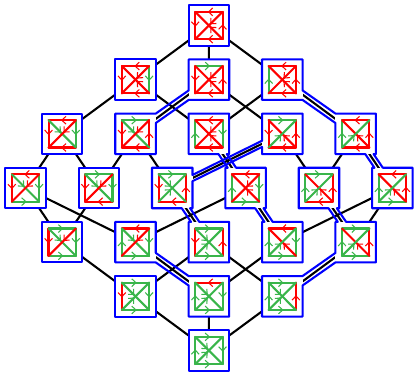}
			\\[.2cm]
			\includegraphics[scale=.9]{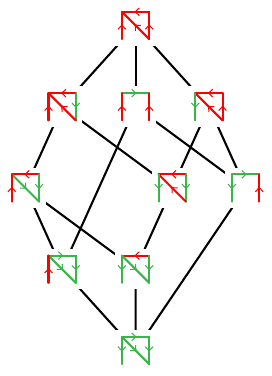} &
			\includegraphics[scale=.9]{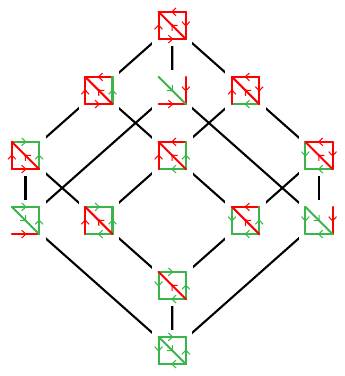} &
			\includegraphics[scale=.9]{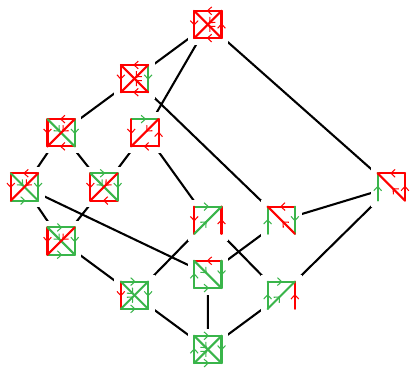}
			\\[.2cm]
			\includegraphics[scale=.9]{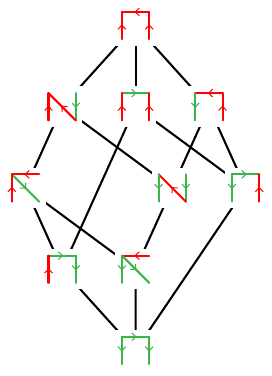} &
			\includegraphics[scale=.9]{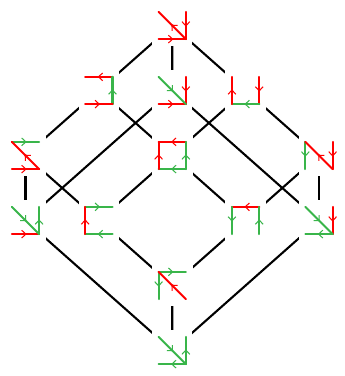} &
			\includegraphics[scale=.9]{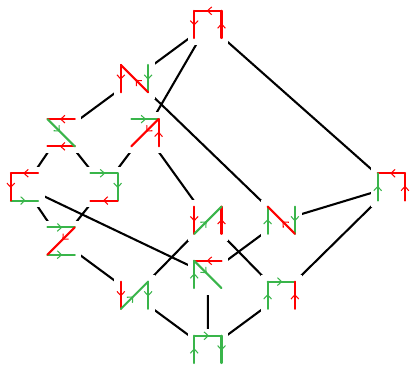}
		\end{tabular}
	}
	\caption{The sylvester congruences~$\equiv_{(V,\varnothing)}$ and the Tamari lattices~$\AR/{\equiv_{(V,\varnothing)}}$ for the acyclic reorientation lattices of \cref{fig:canonicalJoinComplexes}. The first line shows the sylvester classes as blue bubbles. The second and third lines show the Tamari lattices, where each sylvester class~$X$ is represented by~$P_X$ on the second line and~$R_X$ on the third line. The rightmost congruence is the classical sylvester congruence on the weak order, its quotient is the classical Tamari lattice, and the partial reorientations~$R_X$ are standard binary search trees (to see it, just redraw~$R_X$ with green arcs pointing northeast and red arcs pointing northwest).}
	\label{fig:sylvesterCongruences}
\end{figure}

\para{Three problems on Cambrian congruences}
Before studying coherent congruences in general, let us already observe that the Tamari and Cambrian lattices do not always behave as in the classical situation of the weak order.
This is illustrated in particular by the following three problems, verified by computer experiments on all skeletal directed acyclic graphs up to $6$ vertices.
The first two problems are specific cases of \cref{pb:regularCoverGraph,pb:partialReorientationsAcyclic}.

\begin{problem}
\label{pb:TamariRegular}
Prove the equivalence of the following assertions for a skeletal directed acyclic~graph~$D$:
\begin{enumerate}[(i)]
\item $D$ has no induced subgraph isomorphic to\,\smash{\raisebox{-.25cm}{\includegraphics[scale=.9]{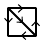}}}\,or\,\smash{\raisebox{-.25cm}{\includegraphics[scale=.9]{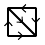}}},
\item the cover graph of the Tamari lattice of~$D$ is regular,
\item the partial acyclic reorientations~$R_X$ for the sylvester classes~$X$ are all forests.
\end{enumerate}
\end{problem}

\begin{problem}
\label{pb:allCambrianRegular}
Prove the equivalence of the following assertions for a skeletal directed acyclic~graph~$D$:
\begin{enumerate}[(i)]
\item $D$ has no induced subgraph isomorphic to\,\smash{\raisebox{-.25cm}{\includegraphics[scale=.9]{digraph2}}}\,or\,\smash{\raisebox{-.25cm}{\includegraphics[scale=.9]{digraph3}}}\,or\,\smash{\raisebox{-.25cm}{\includegraphics[scale=.9]{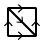}}},
\item the cover graphs of all Cambrian lattices of~$D$ are regular,
\item the partial acyclic reorientations~$R_X$ for the Cambrian classes~$X$ are all forests.
\end{enumerate}
\end{problem}

\begin{problem}
\label{pb:allCambrianSameGraphs}
Prove the equivalence of the following assertions for a skeletal directed acyclic~graph~$D$:
\begin{enumerate}[(i)]
\item $D$ has no induced subgraph isomorphic to\,\smash{\raisebox{-.25cm}{\includegraphics[scale=.9]{digraph1}}},
\item all Cambrian lattices of~$D$ have the same number of elements,
\item the cover graphs of the Cambrian lattices of~$D$ are all isomorphic (as undirected graphs).
\end{enumerate}
\end{problem}

\para{Combinatorial properties of coherent congruences}
We now provide analogues for the coherent congruences of the classical properties of the sylvester~\cite{HivertNovelliThibon-algebraBinarySearchTrees}, Cambrian~\cite{Reading-CambrianLattices} and permutree~\cite{PilaudPons-permutrees} congruences recalled above.
We start by the following analogue of the rewriting rule of the sylvester congruence.

\begin{proposition}
Assume that~$D$ is skeletal.
For any~$\decorationDown, \decorationUp \subseteq V$ and any acyclic reorientation~$E$~of~$D$, the $\equiv_{(\decorationDown, \decorationUp)}$-class of~$E$ is preserved by flipping any arc~$(u,v)$ of the transitive reduction~$E$ such that~$\down_{u,v}^E \not\subseteq \decorationDown$ or~$\up_{u,v}^E \not\subseteq \decorationUp$.
Moreover, the congruence~$\equiv_{(\decorationDown, \decorationUp)}$ is the transitive closure of these~flips.
\end{proposition}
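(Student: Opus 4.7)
The plan is to reduce the statement to the dictionary between cover relations of~$\AR$ and ropes in~$\bb{I}_{(\decorationDown, \decorationUp)}$ provided by \cref{prop:bijectionsRopes,coro:ropeIdeals}. Recall that in a congruence uniform lattice, a cover~$X \lessdot Y$ is contracted by a congruence~$\equiv$ if and only if the join irreducible~$k_\join(X,Y)$ is contracted, which happens precisely when $\rope_\join(k_\join(X,Y)) \notin \bb{I}_\equiv$. So the first step is to identify, for each arc flip in~$\AR$, the corresponding rope.

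First I would pin down this correspondence as follows. Consider a cover~$E \lessdot F$ of~$\AR$ obtained by flipping an arc~$a \eqdef (u,v)$ of the transitive reduction (so~$(u,v)$ is reversed in~$F$ but not in~$E$). Comparing the construction of the minimal element of~$\set{G \in \AR}{E \join G = F}$ in the proof of~\cref{prop:semidistributive} with the canonical joinands~$E_a$ of~\cref{coro:canonicalJoinMeetRepresentation}, we obtain ${k_\join(E,F) = E_{(u,v)} = I_\join(\rope_{u,v}^F)}$. Since the only arc whose reversal differs between~$E$ and~$F$ is~$(u,v)$ itself, and since the defining conditions of~$\down_{u,v}^\cdot$ and~$\up_{u,v}^\cdot$ involve only the arcs~$(u,w)$ and~$(w,v)$ with~${w \ne u,v}$, we have $\down_{u,v}^E = \down_{u,v}^F$ and $\up_{u,v}^E = \up_{u,v}^F$. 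Combining these observations, the cover~$E \lessdot F$ is contracted by~$\equiv_{(\decorationDown, \decorationUp)}$ precisely when the rope~$(u, v, \down_{u,v}^E, \up_{u,v}^E)$ does not lie in~$\bb{I}_{(\decorationDown, \decorationUp)}$, that is, when $\down_{u,v}^E \not\subseteq \decorationDown$ or $\up_{u,v}^E \not\subseteq \decorationUp$. This is exactly the first assertion of the statement.

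For the moreover part, let~$\sim$ denote the transitive closure of the flips allowed by the statement. The previous paragraph shows that~$\sim$ refines~$\equiv_{(\decorationDown, \decorationUp)}$. For the converse, by \cref{prop:congruenceUniform} and \cref{coro:minimalElementsIdeal} (together with its meet analogue), every $\equiv_{(\decorationDown, \decorationUp)}$-class is an interval~$[E^\join, E^\meet]$ of~$\AR$. In a finite lattice, any two elements of an interval are connected by a zigzag of cover relations staying inside that interval (for instance, go up to~$E^\meet$ via successive upward flips, then down to the destination). Each such cover relation has both endpoints in the same~$\equiv_{(\decorationDown, \decorationUp)}$-class, hence is contracted, hence is one of the permitted flips by the first part of the statement. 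Therefore $E \equiv_{(\decorationDown, \decorationUp)} F$ implies $E \sim F$, which concludes the proof.

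The main technical point is really only bookkeeping in the first paragraph: matching the canonical joinand of a cover~$E \lessdot F$ with the rope~$(u, v, \down_{u,v}^E, \up_{u,v}^E)$ and confirming that flipping the single arc~$(u,v)$ leaves the sets~$\down_{u,v}^\cdot$ and~$\up_{u,v}^\cdot$ unchanged. Once this dictionary is in place, the first assertion is a direct translation of ``not in~$\bb{I}_{(\decorationDown, \decorationUp)}$'' and the second assertion follows from the general fact that congruence classes in a congruence uniform lattice are intervals, combined with the rope description of \cref{coro:ropeIdeals}.
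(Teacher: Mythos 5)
Your proof is correct and spells out precisely the details that the paper's one-line proof (``It follows from the definition of the join irreducibles preserved by~$\equiv_{(\decorationDown, \decorationUp)}$'') leaves implicit: the translation of the congruence into the rope ideal~$\bb{I}_{(\decorationDown, \decorationUp)}$, the identification of~$k_\join(E,F)$ with~$I_\join(\rope_{u,v}^F)$, and the general fact that a congruence of a congruence-uniform lattice is the transitive closure of its contracted covers. One tiny notational slip: the canonical joinand attached to the flipped arc should be written~$F_{(u,v)}$ rather than~$E_{(u,v)}$ in the notation of \cref{coro:canonicalJoinMeetRepresentation} (since~$(u,v)$ is reversed in~$F$, not~$E$), but this is immaterial because, as you observe, $\rope_{u,v}^E = \rope_{u,v}^F$.
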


\begin{proof}
It follows from the definition of the join irreducibles preserved by~$\equiv_{(\decorationDown, \decorationUp)}$.
\end{proof}

Next, we give an analogue for the coherent congruences of the pattern avoidance property of the minimal and maximal permutations in sylvester congruence classes.

\begin{proposition}
\label{prop:decorationCongruence}
Assume that~$D$ is skeletal.
For any~$\decorationDown, \decorationUp \subseteq V$ and any acyclic reorientation~$E$ of~$D$, the following statements are equivalent:
\begin{enumerate}[(i)]
\item $E$ minimal (resp.~maximal) in its~$\equiv_{(\decorationDown, \decorationUp)}$-congruence class,
\item $\down_{u,v}^E \subseteq \decorationDown$ and $\up_{u,v}^E \subseteq \decorationUp$ for any arc~$(u,v)$ of~$D$ reversed (resp.~unreversed) in the transitive reduction of~$E$,
\item $\down_{u,v}^E \subseteq \decorationDown$ and $\up_{u,v}^E \subseteq \decorationUp$ for any arc~$(u,v)$ of~$D$ reversed (resp.~unreversed) in~$E$.
\end{enumerate}
\end{proposition}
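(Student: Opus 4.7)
My plan is to prove the equivalence in the minimal case; the maximal case follows by the self-duality of $\AR$ under reversing all arcs.

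The equivalence (i)~$\iff$~(ii) is a direct specialization of \cref{coro:minimalElementsIdeal} to the coherent congruence $\equiv_{(\decorationDown, \decorationUp)}$. Indeed, by construction the ideal $\bb{I}_{(\decorationDown, \decorationUp)}$ consists of the ropes $(u,v,\down,\up)$ with $\down \subseteq \decorationDown$ and $\up \subseteq \decorationUp$, and by definition $\diagram_\join(E)$ consists of the ropes $\rope_{u,v}^E$ for arcs $(u,v)$ reversed in the transitive reduction of $E$; the condition $\diagram_\join(E) \subseteq \bb{I}_{(\decorationDown, \decorationUp)}$ from \cref{coro:minimalElementsIdeal} thus unfolds exactly to (ii). The implication (iii)~$\Rightarrow$~(ii) is trivial, since an arc reversed in the transitive reduction of $E$ is in particular reversed in $E$.

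The heart of the proof is the implication (ii)~$\Rightarrow$~(iii). Fix an arc $(u, v) \in D$ reversed in $E$ and a vertex $w \in \down_{u, v}^E$; I will produce a canonical joinand $\rope \in \diagram_\join(E)$ whose $\down$ set contains $w$, so that (ii) applied to $\rope$ yields $w \in \decorationDown$. The analogous argument, obtained by swapping $\down$ and $\up$ throughout, handles $w \in \up_{u, v}^E$ and forces $w \in \decorationUp$. I proceed by induction on the cardinality of the open interval $\set{z \in V}{v <_E z <_E u}$ of vertices strictly between $v$ and $u$ in the reachability order of $E$. The base case (empty interval) is immediate: the arc $(v, u)$ then belongs to the transitive reduction of $E$, and $\rope_{u, v}^E$ is itself a canonical joinand with $w$ in its $\down$ set. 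For the inductive step, I pick $y$ to be a cover of $v$ in the reachability order of $E$ with $y <_E u$, so that $(v, y) \in E$ lies in the transitive reduction of $E$ and $v, y$ are automatically $D$-adjacent. I then replace $(u, v)$ by $(u, y)$ when $w \prec_D y$ and by $(y, v)$ when $y \prec_D w$; in each case the reachability interval strictly shrinks, so the induction closes.

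The main obstacle is to verify that the replacement pair is again an arc of $D$ reversed in $E$ with $w$ in its $\down$ set, particularly when $y$ falls outside the transitive support of $(u, v)$. The filled hypothesis on $D$ is crucial here: by applying it to directed paths in $D$ assembled from the available arcs $(u, v), (u, w), (w, v)$ together with the $D$-edge between $v$ and $y$ (and, when needed, along further chain edges from a saturated chain between $v$ and $u$ in the transitive reduction of $E$), one forces the missing adjacencies such as $(u, y), (w, y) \in D$ in the first case, and $(y, v), (y, w) \in D$ in the second. A short case analysis on the direction of each relevant $D$-edge then closes the inductive step, and the induction terminates at a canonical joinand rope $\rope$ containing $w$ in its $\down$ set, as required.
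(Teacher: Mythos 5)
Your handling of (i)$\iff$(ii) via \cref{coro:minimalElementsIdeal} and of (iii)$\Rightarrow$(ii) matches the paper and is fine. The gap is in (ii)$\Rightarrow$(iii), which you rightly identify as the heart of the proof but do not actually carry out. Your inductive step rests on the claim that the replacement pair ($(u,y)$ or $(y,v)$) is again an arc of $D$ reversed in $E$ with $w$ in its $\down$ set, and you dispose of this by asserting that the filled hypothesis ``forces the missing adjacencies.'' That is not a legitimate use of filledness: the filled condition only produces arcs along a directed path of $D$ whose endpoints are \emph{already} joined by an arc of $D$, so before invoking it you must exhibit such a closing arc --- which is precisely the adjacency you are trying to establish. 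Concretely, three things are left unproven: (a) that $w$ and $y$ are $D$-adjacent at all (your dichotomy ``$w \prec_D y$ or $y \prec_D w$'' silently assumes comparability); (b) that the $D$-arc between $v$ and $y$ is oriented $(y,v)$ rather than $(v,y)$ --- in the latter configuration neither $(y,v)$ nor, a priori, $(u,y)$ belongs to $D$, and excluding it requires an argument combining the acyclicity of $E$ with the arcs $(w,v)$, $(w,u)$, $(v,u)$ of $E$; (c) that $(u,y)\in D$ in your first case. None of these is a ``short case analysis on the direction of each relevant $D$-edge''; they are the actual mathematical content of the implication, and as written the proof does not close.

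For comparison, the paper's proof avoids the induction entirely. For $w\in\up_{u,v}^E$ it passes in a single step to the \emph{last} arc of the path from $v$ to $u$ in the transitive reduction of $E$: letting $u'$ be the penultimate vertex, a three-cycle argument (the arcs $(u,w)$, $(w,u')$, $(u',u)$ cannot all lie in the acyclic $E$) shows $(w,u')$ is reversed, hence $w\in\up_{u,u'}^E$ where $(u,u')$ is reversed in the transitive reduction of $E$, so (ii) applies directly; the $\down$ case is symmetric and uses exactly your vertex $y$. If you wish to keep your inductive scheme you must supply the adjacency and orientation arguments above; otherwise I recommend switching to this one-step cycle argument.
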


\begin{proof}
We focus on minimal elements, the proof for maximal elements is symmetric.
By \cref{coro:minimalElementsIdeal}, $E$ is minimal in its $\equiv_{(\decorationDown, \decorationUp)}$-class if and only if~$\diagram_\join(E) \subseteq \bb{I}_{(\decorationDown, \decorationUp)}$.
Since~$\diagram_\join(E)$ is formed by the ropes~$(u, v, \down_{u,v}^E, \up_{u,v}^E)$ for~$(u,v)$ reversed in the transitive reduction of~$E$, we obtain (i)$\iff$(ii).

\medskip
Assume now that~(ii) holds, consider an arc~$(u,v)$ reversed in~$E$, and let~$w \in \up_{u,v}^E$.
If~$(u,v)$ is in the transitive reduction of~$E$, then~$w \in \decorationUp$ by~(ii).
Otherwise, let~$u' \in V$ be the last vertex before~$u$ in the directed path from~$v$ to~$u$ in the transitive reduction of~$E$.
Since~$(u,w)$ is an arc of~$E$, we obtain that~$u' \ne w$ and that~$(w,u')$ is reversed in~$E$ (as otherwise the arcs~$(u,w)$, $(w,u')$ and~$(u',u)$ would form a directed cycle in~$E$).
Therefore, $w \in \up_{u,u'}^E \subseteq \decorationUp$.
We conclude that~$\up_{u,v}^E \subseteq \decorationUp$ and by symmetry that~$\down_{u,v}^E \subseteq \decorationDown$, so that (ii)$\iff$(iii).
\end{proof}

We now focus on the partial acyclic reorientations arising from coherent congruences.
Let us abbreviate~$\c{P}_{\equiv_{(\decorationDown, \decorationUp)}}$ into $\c{P}_{(\decorationDown, \decorationUp)}$ and similarly for~$\c{R}_{(\decorationDown, \decorationUp)}$, $\c{IP}_{(\decorationDown, \decorationUp)}$, and~$\c{IR}_{(\decorationDown, \decorationUp)}$.
The next statement characterizes the partial acyclic reorientations in~$\c{IP}_{(\decorationDown, \decorationUp)}$, generalizing~\cite[Sect.~2.3.2]{ChatelPilaudPons} for the permutree interval posets.
Recall from \cref{subsec:partialReorientationsCongruence} that a partial reorientation~$P$ of~$D$ belongs to~$\c{IP}_=$ (\ie corresponds to an interval of~$\AR$) if and only if $(u,w) \in P$ or~$(w,v) \in P$ for any arc~$(u,v) \in P$ and any~$w$ in between~$u$ and~$v$ in the transitive reduction of~$D$.

\begin{proposition}
\label{prop:intervalPosetsCoherentCongruences}
Assume that~$D$ is skeletal.
For any~$\decorationDown, \decorationUp \subseteq V$, the following assertions are equivalent for a partial acyclic reorientation~$P$ of~$D$:
\begin{enumerate}[(i)]
\item $P$ belongs to~$\c{IP}_{(\decorationDown, \decorationUp)}$,
\item for any arc~$(u,v) \in P$ and any~$w$ in between~$u$ and~$v$ in the transitive reduction of~$D$, we have $(u,w) \in P$ or~$(w,v) \in P$, and moreover $(u,w) \in P$ if~$w \notin \decorationDown$, and $(w,v) \in P$~if~$w \notin \decorationUp$.
\end{enumerate}
\end{proposition}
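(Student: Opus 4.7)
The plan is to reduce the statement, via \cref{prop:partialReorientationsCongruence} and \cref{prop:decorationCongruence}, to a condition on the unique interval $[E^\join, E^\meet]$ of~$\AR$ satisfying $P = E^\join \cap E^\meet$, namely that $E^\join$ be the minimum and~$E^\meet$ the maximum of their $\equiv_{(\decorationDown, \decorationUp)}$-classes. Indeed, \cref{prop:decorationCongruence}(iii) recasts this extremality as the rope inclusions $\down_{x,y}^{E^\join} \subseteq \decorationDown$ and $\up_{x,y}^{E^\join} \subseteq \decorationUp$ for every arc $(x,y) \in D$ reversed in~$E^\join$, and dually for the arcs unreversed in~$E^\meet$. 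The splitting part of condition~(ii) will correspond exactly to the criterion for $P \in \c{IP}_{=}$ recalled in \cref{subsec:partialReorientationsCongruence}, while the $\decorationDown$/$\decorationUp$ refinements will encode the rope extremality conditions.

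For~(i)$\Rightarrow$(ii), write $P = E^\join \cap E^\meet$ with $E^\join$ and $E^\meet$ extremal in their classes, fix $(u,v) \in P$, and take~$w$ between~$u$ and~$v$ in the transitive reduction of~$D$. In Case~A where $(u,v) \in D$, one has $(u,w), (w,v) \in D$ since~$D$ is filled, and the acyclicity of~$E^\meet$ (which contains $(u,v)$) forbids simultaneously reversing both intermediate arcs in~$E^\meet$, which gives the splitting condition $(u,w) \in P$ or $(w,v) \in P$. For the refinement, if $(u,w) \notin P$ then $(u,w)$ is reversed in~$E^\meet$, so acyclicity forces $(w,v) \in E^\meet$ and hence $w \in \down_{u,v}^{E^\meet}$; by \cref{prop:decorationCongruence}(iii) for~$E^\meet$ maximum, this gives $w \in \decorationDown$. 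The~$\decorationUp$ statement is symmetric, and Case~B (where $(v,u) \in D$) is treated analogously using the minimum~$E^\join$ in place of~$E^\meet$ maximum.

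For~(ii)$\Rightarrow$(i), the splitting part of~(ii) is exactly the criterion for $P \in \c{IP}_{=}$, so $P = E^\join \cap E^\meet$ for the interval $[E^\join, E^\meet]$ of~$\AR$ in which $E^\join$ (resp.~$E^\meet$) reverses each arc $(x,y) \in D$ with $(y,x) \in P$ (resp.~each $(x,y) \in D$ with $(x,y) \notin P$). To verify via \cref{prop:decorationCongruence}(iii) that~$E^\join$ is minimum of its $\equiv_{(\decorationDown, \decorationUp)}$-class, fix an arc $(x,y) \in D$ reversed in~$E^\join$, so $(y,x) \in P$, and fix $z \in \down_{x,y}^{E^\join}$: then by construction $(z,x) \in P$ while $(y,z) \notin P$. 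Applying~(ii) to $(y,x) \in P$ with intermediate vertex~$z$, the contrapositive of the implication $z \notin \decorationDown \Rightarrow (y,z) \in P$ forces $z \in \decorationDown$, which shows $\down_{x,y}^{E^\join} \subseteq \decorationDown$. The inclusion $\up_{x,y}^{E^\join} \subseteq \decorationUp$ is symmetric, and the extremality of~$E^\meet$ is handled dually.

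The main obstacle throughout is the careful bookkeeping of orientations: distinguishing the two cases where an arc $(u,v) \in P$ lies in~$D$ or reverses a~$D$-arc, tracking reversals in~$E^\join$ separately from those in~$E^\meet$, and matching the asymmetry of condition~(ii) between~$\decorationDown$ and~$\decorationUp$ with the correct extremal endpoint of the interval.
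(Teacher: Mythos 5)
Your proof is correct and follows essentially the same approach as the paper's: both reduce to applying \cref{prop:decorationCongruence}(iii) to the extremal endpoints $E^\join$ and $E^\meet$ of the unique $\AR$-interval with $P = E^\join \cap E^\meet$, and translate the membership $w \in \down_{u,v}$ or $w \in \up_{u,v}$ into the $\decorationDown$/$\decorationUp$ refinements of condition~(ii). The only cosmetic difference is that the paper obtains the splitting part of~(ii) directly from the criterion for $\c{IP}_{=}$ recalled in \cref{subsec:partialReorientationsCongruence}, whereas you re-derive it from the acyclicity of $E^\meet$ — which is fine, but tacitly uses that $(u,w) \in P$ iff $(u,w) \in E^\meet$ for $(u,w) \in D$, a consequence of $E^\join \le E^\meet$ worth stating explicitly.
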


\begin{proof}
Consider first the partial reorientation~$P_I$ of~$D$ corresponding to an interval~$I$ of~$\AR/{\equiv}$, and consider~$(u,v) \in P_I$ and~$w$ in between~$u$ and~$v$ in the transitive reduction of~$D$.
Since any interval of~$\AR/{\equiv}$ comes from an interval of~$\AR$, we have~$P \in \c{IP}_=$, so that~$(u,w) \in P_I$ or~$(w,v) \in P_I$.
Assume for instance that~$(u,v) \in D$ and that~$(u,w) \notin P_I$, and consider the maximal acyclic reorientation~$E$ of~$D$ that agrees with~$P_I$.
We have~$(w,u) \in E$ (since~$(u,w) \notin P_I$) and~$(w,v) \in E$ (since~$(w,v) \in P_I$).
Hence~$w \in \down_{u,v}^E \subseteq \decorationDown$ by \cref{prop:decorationCongruence}, since $E$ is maximal in its~$\equiv_{(\decorationDown, \decorationUp)}$-class and~$(u,v)$ is unreversed in~$E$.
The proof is symmetric when~$(v,u) \in D$ or when~$(w,v) \notin P$.
We conclude that (i) implies (ii)

\medskip
Conversely, consider a partial reorientation~$P$ of~$D$ satisfying (ii).
It follows in particular that~$P \in \c{IP}_=$.
Let~$E$ be the maximal acyclic reorientation of~$D$ that agrees with~$P$.
Consider an arc $(u,v)$ of~$D$ unreversed in~$E$ and $w$ in between~$u$ and~$v$ in the transitive reduction of~$D$.
If~$w \notin \decorationDown$, then~$(u,w) \in P$ so that~$(u,w) \in E$ and~$w \notin \down_{u,v}^E$.
Hence, $ \down_{u,v}^E \subseteq \decorationDown$ and by symmetry $\up_{u,v}^E \subseteq \decorationUp$.
We conclude that~$E$ is maximal in its $\equiv_{(\decorationDown, \decorationUp)}$-class by \cref{prop:decorationCongruence}.
Similarly, the minimal acyclic reorientation of~$D$ that agrees with~$P$ is minimal in its $\equiv_{(\decorationDown, \decorationUp)}$-class.
We conclude that~$P$ defines indeed an interval of~$\AR/{\equiv}$.
\end{proof}

In contrast, we are still missing a criterion similar to~\cite[Sect.~2.3.3]{ChatelPilaudPons} to distinguish the partial reorientations of~$\c{P}_{(\decorationDown, \decorationUp)}$ among that of~$\c{IP}_{(\decorationDown, \decorationUp)}$.

\begin{problem}
\label{pb:descriptionElementPosetsCoherentCongruences}
Describe the partial acyclic reorientations of~$\c{P}_{(\decorationDown, \decorationUp)}$ for any~$\decorationDown, \decorationUp \subseteq V$.
\end{problem}

Similarly, we are still missing an analogue of binary trees (or of permutrees) to characterize the partial acyclic reorientations of~$\c{R}_{(\decorationDown, \decorationUp)}$.
Since the cover graph of the Tamari lattice of~$D$ is not always regular as illustrated in \cref{fig:sylvesterCongruences}\,(middle) and discussed in \cref{pb:TamariRegular}, the partial acyclic reorientations of~$\c{R}_{(\decorationDown, \decorationUp)}$ are not always forests in contrast to the binary trees (or the permutrees) in the classical case.
In view of \cref{prop:restrictionCongruence}, we can however observe that for any subset~$U$ of~$V$ forming a path in the transitive reduction of~$D$ and any congruence class~$X$ of~$\equiv_{(\decorationDown, \decorationUp)}$, the subgraph of~$R_X$ induced by~$U$ is contained in a permutree for the restriction of the decoration~$(\decorationDown, \decorationUp)$ to~$U$.
We leave the precise characterization as an open problem for further research.

\begin{problem}
\label{pb:descriptionPartialReorientations}
Describe the partial acyclic reorientations of~$\c{R}_{(\decorationDown, \decorationUp)}$ for any~$\decorationDown, \decorationUp \subseteq V$.
\end{problem}

\para{Principal congruences}
We finally introduce another family of lattice congruences of~$\AR$, generalizing the sylvester and Cambrian congruences of the classical weak order, that will play an important role in the sequel of this paper.
For a rope~$\rope \eqdef (u, v, \down, \up)$ of~$D$, we denote by~$\bb{I}_\rope$ the principal lower ideal of the subrope order generated by~$\rope$, and by~$\equiv_\rope$ the corresponding lattice congruence of~$\AR$.
We say that~$\equiv_\rope$ is a \defn{principal congruence}.

Denote by~$D'$ the subgraph of~$D$ induced by the transitive support of~$(u,v)$ in~$D$.
Applying the restriction and extension operations of~\cref{prop:restrictionCongruence,prop:extensionCongruence}, the principal congruence~$\equiv_\rope$ can be seen as a Cambrian congruence~$\equiv_\rope'$ on~$D'$.
We therefore completely control the combinatorics of~$\equiv_\rope$.
For instance, the analogue of \cref{pb:descriptionPartialReorientations} for principal congruences has a simple answer: the partial acyclic reorientations of~$\c{R}_\rope \eqdef \c{R}_{\equiv_\rope}$ are precisely the Cambrian trees considered in~\cite{LangePilaud, ChatelPilaud, PilaudPons-permutrees}, for the signature given by the partition~$\down \sqcup \up$ along the directed path joining~$u$ to~$v$ in the transitive reduction of~$D$.


\subsection{Hamiltonian quotients}
\label{subsec:HamiltonianQuotients}

We conclude this section by a brief discussion of an open problem concerning Hamiltonian cycles in quotients of the acyclic reorientation lattice~$\AR$.

\medskip
A classical result, independently discovered in~\cite{Trotter, Johnson, Steinhaus}, states that the graph of the permutahedron admits a Hamiltonian cycle.
In contrast, not all acyclic reorientation graphs admit a Hamiltonian cycle.
Indeed, recall that the parity of the number of reversed arcs defines a proper bipartition of the acyclic reorientation graph.
Hence, a necessary condition for the acyclic reorientation graph to admit a Hamiltonian cycle (and even a Hamiltonian path) is that the number of even acyclic reorientations equals the number of odd acyclic reorientations.
For instance, the acyclic reorientation graph of a $4$-cycle illustrated in \cref{fig:acyclicReorientationPosets}\,(right) has no Hamiltonian path since it has $8$ even acyclic reorientations and $6$ odd acyclic reorientations.
It is conjectured that this condition is also sufficient, but the question still remains open in general to the best of our knowledge.
Importantly for our discussion, it was proved in~\cite{SavageSquireWest} that the acyclic reorientation graph of a chordal graph admits a Hamiltonian cycle, which can be explicitly constructed in a similar way as the classical Gray code for permutations of~\cite{Trotter, Johnson, Steinhaus}.

\medskip
\enlargethispage{.2cm}
Another classical result, proved in~\cite{Lucas, HurtadoNoy}, states that the graph of the associahedron admits a Hamiltonian cycle.
It was proved recently in~\cite{HoangMutze} that the graph of any lattice quotient of the weak order actually admits a Hamiltonian path (the question of the existence of a Hamiltonian cycle remains open in general).
The approach of~\cite{HoangMutze} being largely based on non-crossing arc diagrams, it motivates the following question, which has been positively answered by computer experiments on all lattice congruences of the acyclic reorientation lattices of all skeletal directed acyclic graphs up to $5$ vertices.

\begin{problem}
\label{pb:HamiltonianQuotients}
Assuming that~$D$ is skeletal (thus chordal), do all graphs of lattice quotients of~$\AR$ admit a Hamiltonian cycle?
\end{problem}


\pagebreak

\section{Quotient fans and quotientopes}
\label{sec:quotientFansQuotientopes}

We now switch to the geometric side of this paper.
As originally observed by C.~Greene~\cite{Greene} (see also~\cite[Lem.~7.1]{GreeneZaslavsky}), the acyclic reorientation poset~$\AR$ can be interpreted geometrically on the graphical fan of~$D$ or on the graphical zonotope of~$D$.
When~$D$ is skeletal, we consider the quotient fans of the congruences of~$\AR$ (obtained by glueing regions of the graphical arrangement according to congruence classes) and show that they are normal fans of quotientopes (obtained either as Minkowski sums of associahedra of~\cite{HohlwegLange}, or as Minkowski sums of shard polytopes~\cite{PadrolPilaudRitter} of ropes).


\subsection{Graphical fan, shards, and quotient fans}
\label{subsec:graphicalFanShardsQuotientFans}

Recall that a (polyhedral) \defn{cone} is a subset of~$\R^n$ defined equivalently as the positive span of finitely many vectors, or as the intersection of finitely many linear halfspaces.
Its \defn{faces} are its intersections with its supporting linear hyperplanes, and its \defn{rays} (resp.~\defn{facets}) are its dimension~$1$ (resp.~codimension~$1$) faces.
A (polyhedral) \defn{fan}~$\Fan[]$ is a collection of cones which are closed under faces (if~$\polytope{C} \in \Fan[]$ and~$\polytope{F}$ is a face of~$\polytope{C}$, then~$\polytope{F} \in \Fan[]$) and intersect properly (if~$\polytope{C}, \polytope{C}' \in \Fan[]$, then~$\polytope{C} \cap \polytope{C}'$ is a face of both~$\polytope{C}$ and~$\polytope{C}'$).
The \defn{chambers} (resp.~\defn{walls}, resp.~\defn{rays}) of~$\Fan[]$ are its codimension~$0$ (resp.~codimension~$1$, resp.~dimension~$1$) cones.
The fan~$\Fan[]$ is \defn{complete} if the union of its cones covers~$\R^V$, \defn{essential} if the origin is a cone of~$\Fan[]$, and~\defn{simplicial} if the rays of each cone of~$\Fan[]$ are linearly independent.

\para{Graphical fan}
Here, we work in the vector space~$\R^V$ indexed by the vertex set~$V$ of~$D$.
We denote the standard basis by~$(\b{e}_v)_{v \in V}$, and the characteristic vector of a subset~$U \subseteq V$ by~$\one_U \eqdef \sum_{u \in U} \b{e}_u$.
The \defn{graphical arrangement}~$\c{H}_D$ of~$D \eqdef (V,A)$ is the arrangement containing the hyperplanes $\bb{H}_{uv} \eqdef \set{\b{x} \in \R^V}{x_u = x_v}$ for all arcs~$(u,v) \in A$.
It defines the \defn{graphical fan}~$\Fan$ of~$D$, whose chambers are the closures of the connected components of~$\smash{\R^V \ssm \bigcup_{(u,v) \in A} \bb{H}_{uv}}$.
Note that~$\Fan$ is complete but not essential since all its cones contain the linear subspace~$\K$ generated by the characteristic vectors of the connected components of~$D$.
The intersection~$\Fan \cap \K^\perp$ of~$\Fan$ with the orthogonal complement~$\K^\perp$ of~$\K$ is an essential fan with the same combinatorics as~$\Fan$.
The cones of~$\Fan$ are in bijection with \defn{ordered partitions} of~$D$, \ie pairs~$(\mu, \omega)$~where
\begin{itemize}
\item $\mu$ is a partition of~$V$ where each part induces a connected subgraph of~$D$,
\item $\omega$ is an acyclic reorientation on the quotient graph~$D/\mu$.
\end{itemize}
More precisely, the cone of~$\Fan$ corresponding to the ordered partition~$(\mu, \omega)$ of~$D$ is defined by the inequalities~${x_u \le x_v}$ if there is a directed path in~$\omega$ from the part of~$\mu$ containing~$u$ to the part of~$\mu$ containing~$v$ (in particular, we have the equalities~$x_u = x_v$ if~$u$ and~$v$ belong to the same part of~$\mu$).
In particular, 
\begin{itemize}
\item each acyclic reorientation~$E$ of~$D$ corresponds to a chamber~$\polytope{C}_E$ of~$\Fan$ defined by the inequalities~$x_u \le x_v$ for all arcs~$(u,v)$ of~$E$ (or just that of the transitive reduction of~$E$),
\item each \defn{biconnected subset}~$U$ of~$D$ (\ie non-empty connected subset~$U \subset V$ whose complement~$\bar U$ in its connected component of~$D$ is also non-empty and connected) corresponds to a ray of~$\Fan \cap \K^\perp$ directed by the vector~$\b{r}_U \eqdef |U| \one_{\bar U} - |\bar U| \one_U$.
\end{itemize}
Note that the ray~$\b{r}_U$ belongs to the chamber~$\polytope{C}_E$ if and only if there is no arc oriented from~$\bar U$ to~$U$ in~$E$.
Moreover, the Hasse diagram of the acyclic reorientation poset~$\AR$ is isomorphic to the dual graph of the graphical fan~$\Fan$, oriented in the direction~${\b{\omega}_D \eqdef \sum_{(u,v) \in A} \b{e}_v - \b{e}_u}$.
Note that the graphical arrangement~$\c{H}_D$ and the graphical fan~$\Fan$ only depend on the underlying undirected graph of the directed graph~$D$, but that~$D$ determines the direction~$\b{\omega}_D$.

\medskip
For instance, when~$D$ is the increasing tournament on~$[n]$, the graphical fan~$\Fan$ is the braid fan, defined by the braid arrangement, with all hyperplanes~$\bb{H}_{ij}$ for~$1 \le i < j \le n$.
Its cones correspond to ordered partitions of~$[n]$, its regions to permutations of~$[n]$, its rays to proper subsets of~$[n]$, and its dual graph is isomorphic to the Hasse diagram of the weak order on~$\f{S}_n$.
Some examples of graphical fans are represented in \cref{fig:graphicalArrangementsIntro,fig:graphicalArrangements}.

\begin{figure}
	\centerline{
		\begin{tabular}{ccc}
			\includegraphics[scale=.5]{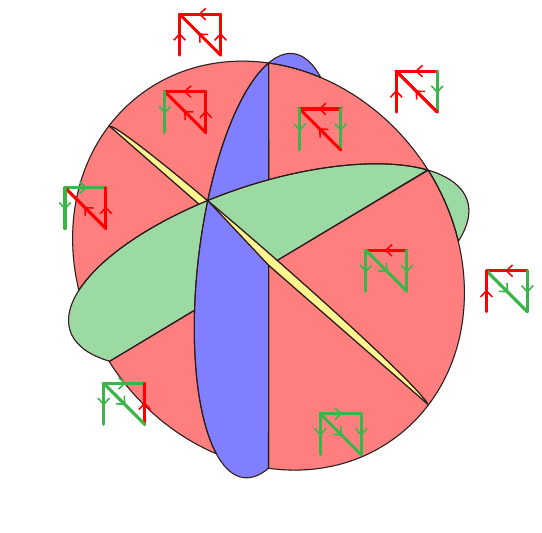} &
			\includegraphics[scale=.5]{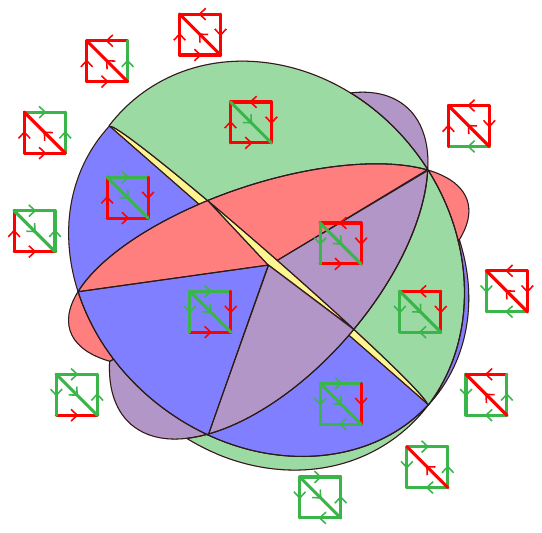} &
			\includegraphics[scale=.5]{graphicalArrangementAcyclicReorientations7}
			\\
			\includegraphics[scale=.75]{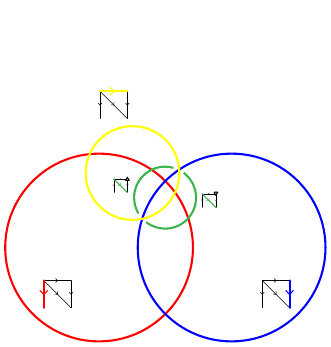} \qquad &
			\includegraphics[scale=.75]{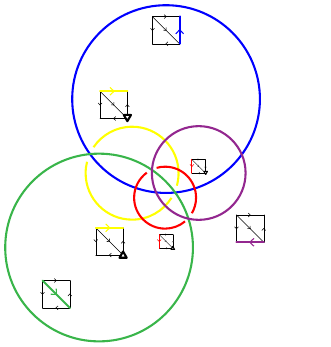} \qquad &
			\includegraphics[scale=.75]{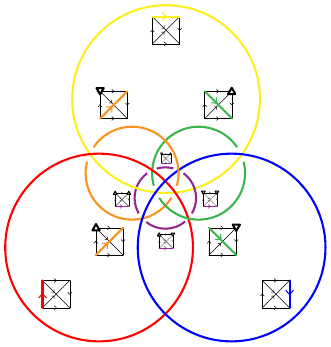}
		\end{tabular}
	}
	\caption{The graphical arrangements for the directed acyclic graphs of \cref{fig:canonicalJoinComplexes}. Their dual graphs oriented appropriately are isomorphic to the Hasse diagrams of the corresponding acyclic reorientation lattices. On top, the regions are labeled by the corresponding acyclic reorientations and the hyperplanes are colored according to the corresponding arc. On bottom, the arrangements are intersected with the unit circle and projected stereographically from the chamber corresponding to the reversed reorientation~$\bar D$, and the hyperplanes are decomposed into shards labeled by the corresponding ropes. The rightmost arrangement is the braid arrangement. The middle fan is not simplicial while the other two are.}
	\vspace{-.2cm}
	\label{fig:graphicalArrangements}
\end{figure}

\medskip
\enlargethispage{.3cm}
Observe that the graphical fan~$\Fan$ is not always simplicial.
Recall that we say that~$D$ is chordful if its underlying undirected graph~$G$ is, meaning that any cycle induces a clique.
The following statement is illustrated in \cref{fig:graphicalArrangementsIntro,fig:graphicalArrangements,fig:simplicialTightSupersolvable}.
It is explicitely stated in~\cite[Rem.~6.2]{Kim} and~\cite[Prop.~5.2]{PostnikovReinerWilliams}, but the proof is omitted.

\begin{proposition}
\label{prop:simplicialGraphicalFan}
The graphical fan~$\Fan$ is simplicial if and only if~$D$ is chordful.
\end{proposition}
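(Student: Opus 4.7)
The plan is to reduce the simpliciality of~$\Fan$ to a combinatorial statement about the transitive reduction of any acyclic reorientation of~$D$, and then verify both implications.

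First, I would observe that~$\Fan$ is simplicial iff every chamber~$\polytope{C}_E$ is simplicial. Modulo the lineality~$\K$, the chamber~$\polytope{C}_E$ has dimension~$|V| - c(D)$ (with~$c(D)$ the number of connected components of~$D$), and its facet-defining inequalities~$x_u \leq x_v$ come precisely from the arcs~$(u,v)$ of the transitive reduction~$R_E$ of~$E$. The corresponding facet normals~$\b{e}_v - \b{e}_u$ are linearly independent modulo~$\K$ iff the undirected graph underlying~$R_E$ is a forest. Hence~$\polytope{C}_E$ is simplicial iff~$R_E$ is a spanning forest of~$D$, equivalently iff~$E$ contains no ``theta'': no pair of internally vertex-disjoint directed paths~$P_1, P_2$ from some~$u$ to some~$v$, both of length at least~$2$ (a length~$1$ alternative would be short-circuited by any longer route and so would not sit in~$R_E$).

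For the direction chordful implies simplicial, I would argue by contradiction: suppose some acyclic reorientation~$E$ contains a theta~$P_1, P_2$ from~$u$ to~$v$. Then the undirected cycle~$P_1 \cup P_2$ has length~$\geq 4$, so chordfulness forces its vertex set to induce a clique~$K$ in the underlying graph of~$D$. Now~$E|_K$ is a transitive tournament, whose transitive reduction is a Hamiltonian path of~$K$ carrying only~$|K|-1$ arcs. But every arc of~$P_1 \cup P_2$ lies in~$R_E$ with both endpoints in~$K$, and hence also in the transitive reduction of~$E|_K$, yielding~$|V(P_1 \cup P_2)| = |K|$ arcs in a Hamiltonian path of~$K$ -- a contradiction. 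So~$R_E$ is a forest for every~$E$ and every chamber is simplicial.

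For the direction not chordful implies not simplicial, I would exhibit an explicit acyclic reorientation whose chamber has too many facets. Since~$D$ is not chordful, its underlying graph~$G$ contains a cycle of length~$\geq 4$ whose vertex set does not induce a clique; picking such a cycle~$C$ of minimum length~$k \geq 4$, a short case analysis on the chords of~$C$ (each chord would split~$C$ into two strictly shorter cycles, at least one of length~$\geq 4$, which by minimality would have to have a clique vertex set, severely restricting the chord structure) pinpoints two vertices~$u, v \in V(C)$ and two internally disjoint paths~$P_1, P_2$ from~$u$ to~$v$ along~$C$, each of length~$\geq 2$, such that the subgraph of~$G$ induced by~$V(C)$ is precisely~$P_1 \cup P_2$ together with (possibly) the single edge~$uv$. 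The two prototypes are~$C$ itself induced, with~$u, v$ opposite on~$C$, and~$G = K_4$ minus an edge, with~$u, v$ the endpoints of the remaining chord of the resulting $4$-cycle. I then build~$E$ by taking a linear order on~$V$ that places~$u$ first, then the internal vertices of~$P_1$ in path order, then those of~$P_2$ in path order, then~$v$, and finally all vertices of~$V \ssm V(C)$ in any order, and by orienting every arc of~$D$ from the smaller to the larger endpoint. Any alternative directed path from the tail to the head of an arc of~$P_1$ or~$P_2$ must stay inside~$V(C)$ (edges incident to vertices outside~$V(C)$ all leave and never return under this order), but the edges inside~$V(C)$ form~$P_1 \cup P_2$ plus possibly~$uv$, which offers no internal shortcut apart from the arc~$u \to v$ itself -- and this arc is in turn implied by~$P_1$, so it is absent from~$R_E$. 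Both~$P_1$ and~$P_2$ thus survive in~$R_E$, producing a theta, so~$R_E$ is not a forest and~$\polytope{C}_E$ is not simplicial.

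The main obstacle is the combinatorial lemma in the last paragraph producing the almost-induced theta~$u, v, P_1, P_2$ in~$G$; once this structural fact is nailed down, the construction of~$E$ and the verification that the two paths survive in~$R_E$ are direct.
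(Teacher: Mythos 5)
Your reduction---the fan is simplicial iff the transitive reduction of every acyclic reorientation is a forest---is the same as the paper's, and your ``chordful $\Rightarrow$ simplicial'' argument matches the paper's: an undirected cycle $C$ in the transitive reduction of some $E$ spans a clique by chordfulness, on which $E$ restricts to a transitive tournament whose transitive reduction is a Hamiltonian path carrying only $|V(C)|-1$ arcs, contradicting the $|V(C)|$ arcs of $C$ all surviving. The converse direction is where you genuinely diverge: the paper picks an \emph{arbitrary} bad cycle with a non-adjacent pair $\{u,v\}$, orders $X \prec \{u,v\} \prec Y \prec \text{rest}$ where $X, Y$ are the two sides of $C \ssm \{u,v\}$, and argues about paths from $\min X$ to $\max Y$ in the transitive reduction, whereas you choose a \emph{minimum-length} bad cycle, establish a structural lemma on the induced subgraph of $V(C)$, and use the order $u$, then the interior of $P_1$, then the interior of $P_2$, then $v$, then the rest. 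Your version is longer but mechanical: the structural lemma forbids any shortcut inside $V(C)$ other than $u\to v$ itself, and the order forbids any shortcut through $V\ssm V(C)$, so $P_1\cup P_2$ visibly survives in the transitive reduction. The paper's version is shorter, but the step asserting two distinct $x$-to-$y$ paths in the transitive reduction through $u$ and through $v$ is stated tersely and sensitive to the free choices of order inside $X$ and $Y$; your approach sidesteps this entirely.

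Two small slips, both harmless. The equivalence ``transitive reduction a forest iff $E$ has no theta'' is false: the square oriented $1\to 2\leftarrow 3\to 4\leftarrow 1$ is its own transitive reduction, not a forest, yet contains no pair of length-$\geq 2$ internally disjoint directed paths with common endpoints (a cycle in a transitive reduction may have several local sources and sinks). This does not hurt you: your chordful argument never uses the directedness of $P_1, P_2$, only that their union is an undirected cycle in the transitive reduction, so simply start from such a cycle. Also, the parenthetical ``each chord would split $C$ into two strictly shorter cycles, at least one of length $\geq 4$'' fails for $|C|=4$, where both halves are triangles. The structural lemma still holds---for $|C|\geq 5$ a chord would make both halves span cliques, whence $4$-cycles through the chord endpoints would force every cross-edge and $V(C)$ would be a clique; for $|C|=4$ at most one diagonal is present---but when $|C|=4$ has a diagonal you must take $u,v$ to be the endpoints of that diagonal (not an arbitrary opposite pair) to obtain the picture $G[V(C)] = P_1 \cup P_2 \cup \{uv\}$.
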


\begin{proof}
Observe first that a region of~$\Fan$ is simplicial if and only if the transitive reduction of the corresponding acyclic reorientation of~$D$ is a forest.

Assume that~$\Fan$ is not simplicial, so that there exists an acyclic reorientation~$E$ of~$D$ whose transitive reduction is not a forest.
Therefore, the transitive reduction of~$E$ contains an (undirected) cycle~$C$.
This cycle cannot induce a tournament of~$D$, otherwise it would induce a tournament of~$E$ and one of the arcs of~$C$ would not be in the transitive reduction of~$E$.

Conversely, assume that~$D$ is not chordful, and let~$C$ be an (undirected) cycle of~$D$ such that two vertices~$u$ and~$v$ of~$C$ are not adjacent in~$D$.
Let~$X$ and~$Y$ denote the two two connected components of~$C \ssm \{u,v\}$.
Consider any linear ordering~$\prec$ of~$V$ such that the vertices of~$X$ arrive first, then~$u$ and~$v$, then the vertices of~$Y$, and then all vertices of~$V$ not in~$C$.
Let~$x$ be the minimum element of~$X$ for~$\prec$ and~$y$ be the maximum element of~$Y$ for~$\prec$.
Let~$E$ be the acyclic reorientation of~$D$ where all arcs are increasing for~$\prec$.
Then the transitive closure of~$E$ contains a path from~$x$ to~$y$ passing through~$u$, and a path from~$x$ to~$y$ passing through~$v$, and these two paths cannot coincide since there is no arc in~$D$ connecting~$u$ and~$v$.
Thus, the transitive closure of~$E$ is not a forest, so that~$\Fan$ is not simplicial.
\end{proof}

\vspace{-.2cm}
\para{Shards and quotient fan}
\enlargethispage{.4cm}
Assume now that~$D$ is skeletal as in \cref{sec:ropeDiagrams,sec:congruences}, so that the acyclic reorientation poset~$\AR$ is a congruence uniform lattice by \cref{prop:congruenceUniform}.
The ropes of~$D$ provide a natural way to decompose the hyperplanes of~$\Arrang$ into pieces.
Namely, the \defn{shard}~$\shard_\rope$ associated to a rope~$\rope \eqdef (u,v, \down, \up)$ of~$D$ is
\[
\shard_\rope \eqdef \set{\b{x} \in \R^V}{x_{w} \le x_u = x_v \le x_{w'} \text{ for any } w \in \down \text{ and } w' \in \up}.
\]
Some examples of shards are represented on the bottom of \cref{fig:graphicalArrangements}.

\begin{figure}
	\centerline{
		\begin{tabular}{ccc}
			\includegraphics[scale=.5]{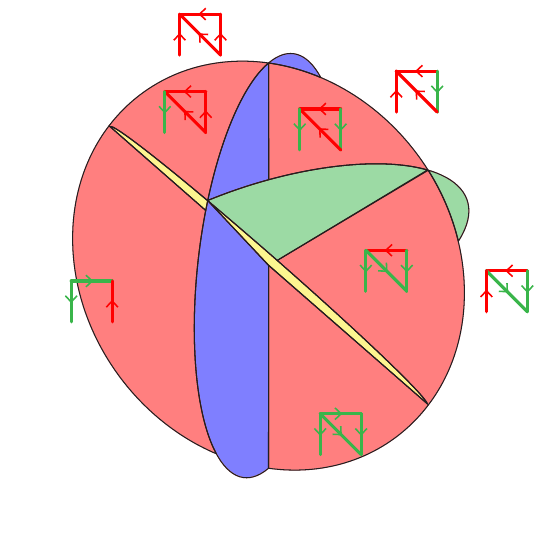} &
			\includegraphics[scale=.5]{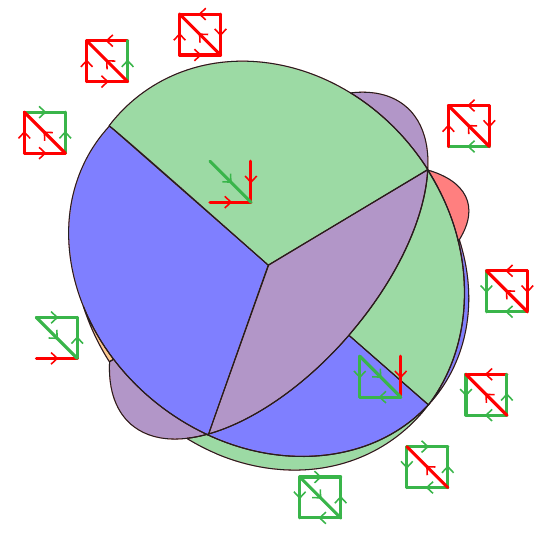} &
			\includegraphics[scale=.5]{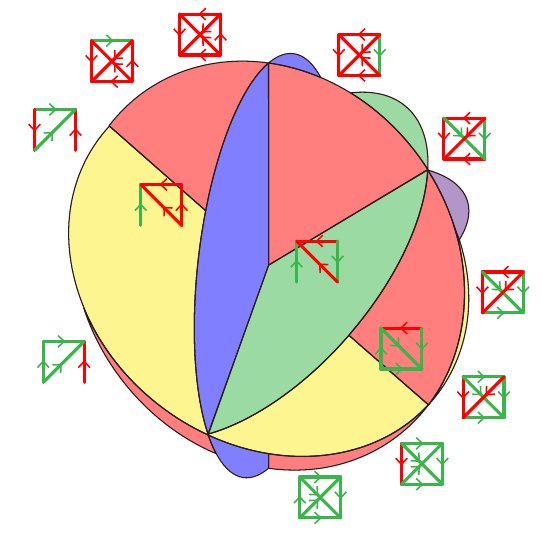}
			\\
			\includegraphics[scale=.75]{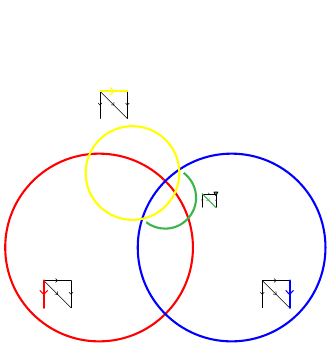} &
			\includegraphics[scale=.75]{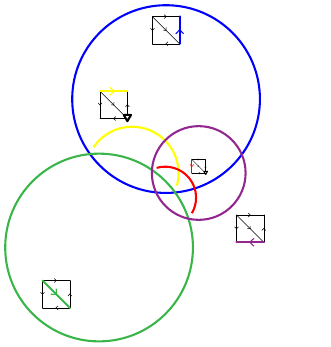} &
			\includegraphics[scale=.75]{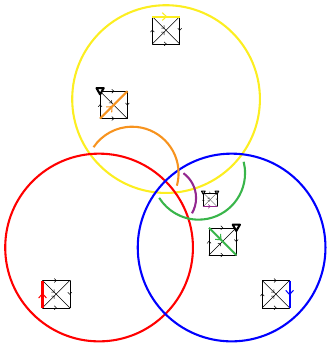}
		\end{tabular}
	}
	\caption{The sylvester fans for the directed acyclic graphs of \cref{fig:canonicalJoinComplexes}. Their dual graphs oriented appropriately are isomorphic to the Hasse diagrams of the corresponding Tamari lattices represented in \cref{fig:sylvesterCongruences}. On top, the chambers are labeled by the corresponding partial acyclic reorientations and the shards are colored according to the corresponding arc. On bottom, the fans are intersected with the unit circle and projected stereographically from the chamber corresponding to the reversed reorientation~$\bar D$, and the shards labeled by the corresponding ropes. The rightmost fan is the classical sylvester fan. The middle fan is not simplicial while the other two are.}
	\vspace{-.2cm}
	\label{fig:quotientFans}
\end{figure}

\medskip
For a congruence~$\equiv$ of~$\AR$, the \defn{quotient fan}~$\Fan[\equiv]$ is the fan defined equivalently as follows:
\begin{itemize}
\item the chambers of~$\Fan[\equiv]$ are obtained by glueing the chambers of the graphical arrangement of~$D$ corresponding to acyclic reorientations in the same congruence class of~$\equiv$,
\item the union of the walls of~$\Fan[\equiv]$ is the union of the shards~$\shard_\rope$ for~$\rope$ in the rope ideal~$\bb{I}_\equiv$.
\end{itemize}
The fact that these two descriptions coincide and indeed define a fan was proved by N.~Reading~\cite{Reading-latticeCongruences,Reading-HopfAlgebras} in the context of congruences of the lattice of regions of a hyperplane arrangement tight with respect to its base region (see \cref{sec:posetRegions} for definitions and details).
Note~that
\begin{itemize}
\item each $\equiv$-class~$X$ corresponds to a chamber of the quotient fan~$\Fan[\equiv]$ defined by the inequalities~$x_u \le x_v$ for all arcs~$(u,v)$ of the partial acyclic reorientation~$P_X$ (or just that of~$R_X$),
\item a biconnected subset~$U$ of~$D$ corresponds to a ray of the quotient fan~$\Fan[\equiv]$ directed by the vector~$\b{r}_U \eqdef |U| \one_{\bar U} - |\bar U| \one_U$ if and only if the subrope ideal~$\bb{I}_\equiv$ contains all ropes of the form~$(u, v, \varnothing, \up)$ with~$u, v \in U$ and~$\up \cap U = \varnothing$, and of the form~$(u, v, \down, \varnothing)$ with~$u, v \notin U$ and~$\down \subseteq U$. In particular, for a coherent congruence~$\equiv_{(\decorationDown, \decorationUp)}$, the ray~$\b{r}_U$ of~$\Fan$ is a ray of~$\Fan[(\decorationDown, \decorationUp)]$ if and only if $u,v \in U$ implies $w \notin \decorationDown \ssm U$ and $u,v \notin U$ implies $w \notin \decorationUp \cap U$, for any~$u, v, w \in V$ such that~$w$ appears along a directed path in~$D$ joining~$u$ to~$v$. (These two observations can be shown mimicking the approach of~\cite[Sect.~3.1]{AlbertinPilaudRitter}.) 
\end{itemize}
Moreover, the Hasse diagram of the quotient~$\AR/{\equiv}$ is isomorphic to the dual graph of the quotient fan~$\Fan[\equiv]$, oriented in the direction~$\b{\omega}_D \eqdef \sum_{(u,v) \in A} \b{e}_v - \b{e}_u$.
Similarly to \cref{prop:simplicialGraphicalFan}, it would be interesting to characterize which of these quotient fans are simplicial, which is a reformulation of \cref{pb:regularCoverGraph} for arbitrary congruences and \cref{pb:TamariRegular,pb:allCambrianRegular} for Cambrian~congruences.

\medskip
For instance, when $D$ is the increasing tournament on~$[n]$ and $\equiv$ is the sylvester congruence~\cite{HivertNovelliThibon-algebraBinarySearchTrees}, the quotient fan~$\Fan[\equiv]$ is the sylvester fan.
Its cones correspond to Schr\"oder trees on~$[n]$, its chambers to binary trees on~$[n]$, its rays to intervals of~$[n]$, and its dual graph is isomorphic to the Hasse diagram of the Tamari lattice on binary trees on~$[n]$.
Similar combinatorial descriptions in terms of Cambrian trees and permutrees hold for the quotient fans of the Cambrian congruences~\cite{Reading-latticeCongruences, Reading-CambrianLattices} and of the permutree congruences~\cite{PilaudPons-permutrees} of a tournament.

\medskip
The \defn{sylvester fan} of~$D$ is the quotient fan~$\Fan[(V,\varnothing)]$ of the sylvester congruence~$\equiv_{(V,\varnothing)}$.
Note that the rays of~$\Fan[(V,\varnothing)]$ correspond to biconnected subsets of~$D$ which are connected in the transitive reduction of~$D$.
Some examples of sylvester fans are represented in \cref{fig:quotientFans}.
As suggested in \cref{pb:TamariRegular} and illustrated in \cref{fig:quotientFans}\,(middle), the sylvester fan of~$D$ is not always a simplicial.
The \defn{Cambrian fans} of~$D$ are the quotient fans of the Cambrian congruences~$\equiv_{(\decorationDown, \decorationUp)}$ with~$\decorationDown \sqcup \decorationUp = V$.

\medskip
Note that the quotient fans behave properly with respect to the restriction and contraction operations of \cref{subsec:restrictionsExtensionsCongruences}.
Namely,
\begin{itemize}
\item if~$\equiv$ extends~$\equiv'$ as in \cref{prop:extensionCongruence}, then~$\Fan[\equiv]$ is a product of~$\Fan[\equiv]$ with a linear subspace,
\item if~$\equiv$ restricts to~$\equiv'$ as in \cref{prop:restrictionCongruence}, then~$\Fan[\equiv']$ is a section of~$\Fan[\equiv]$ by a linear subspace.
\end{itemize}


\subsection{Graphical zonotope, associahedra, shard polytopes, and quotientopes}
\label{subsec:graphicalZonotopeAssociahedraShardPolytopesQuotientopes}

A \defn{polytope} is a subset of~$\R^n$ defined equivalently as the convex hull of finitely many points or as a bounded intersection of finitely many closed affine halfspaces.
Its \defn{faces} are its intersections with its supporting affine hyperplanes, and its \defn{vertices} (resp.~\defn{edges}, resp.~\defn{facets}) are its dimension~$0$ (resp.~dimension~$1$, codimension~$1$) faces.
The \defn{normal cone} of a face~$\polytope{F}$ of a polytope~$\polytope{P}$ is the cone of vectors~$\b{v} \in \R^n$ such that~$\polytope{F}$ is the face of~$\polytope{P}$ maximizing the functional~$\b{x} \mapsto \dotprod{\b{v}}{\b{x}}$.
When~$\polytope{P}$ is full-dimensional, the normal cone of~$\polytope{F}$ is generated by the outer normal vectors of the facets of~$\polytope{P}$ containing~$\polytope{F}$.
The \defn{normal fan} of~$\polytope{P}$ is the fan formed by the normal cones of all faces of~$\polytope{P}$.

\medskip
The \defn{Minkowski sum} of two polytopes~$\polytope{P}, \polytope{Q} \subset \R^n$ is the polytope~$\polytope{P} + \polytope{Q} \eqdef \set{\b{p}+\b{q}}{\b{p} \in \polytope{P}, \, \b{q} \in \polytope{Q}}$.
For any~$\b{r} \in \R^n$, the face maximizing the direction~$\b{r}$ on~$\polytope{P} + \polytope{Q}$ is the Minkowski sum of the faces maximizing the direction~$\b{r}$ on~$\polytope{P}$ and~$\polytope{Q}$.
Therefore, 
\begin{itemize}
\item the normal fan of~$\polytope{P} + \polytope{Q}$ is the common refinement of the normal fans of~$\polytope{P}$ and~$\polytope{Q}$,
\item the vertex of~$\polytope{P} + \polytope{Q}$ maximizing a generic~$\b{r}$ is the sum of vertices of~$\polytope{P}$ and~$\polytope{Q}$ maximizing~$\b{r}$,
\item the facet of~$\polytope{P} + \polytope{Q}$ maximizing a ray~$\b{r}$ is defined by~$\smash{\dotprod{\b{r}}{\b{x}} = \max\limits_{\b{p} \in \polytope{P}} \dotprod{\b{r}}{\b{p}} + \max\limits_{\b{q} \in \polytope{Q}} \dotprod{\b{r}}{\b{q}}}$.
\end{itemize}

\para{Graphical zonotope}
Consider the \defn{graphical zonotope}~$\Zono$, defined as the Minkowski sum of the segments~$[\b{e}_u,\b{e}_v]$ for all~${(u,v) \in A}$.
Note that~$\Zono$ is not full-dimensional as it is orthogonal to the linear subspace~$\K$ generated by the characteristic vectors of the connected components of~$D$.
Since the normal fan of a Minkowski sum is the common refinement of the normal fans of the summands, the graphical fan~$\Fan$ is clearly the normal fan of the graphical zonotope~$\Zono$.
Hence, the faces of~$\Zono$ are in bijection with ordered partitions of~$D$.
In particular,
\begin{itemize}
\item each acyclic reorientation~$E$ of~$D$ corresponds to a vertex~$\sum_{(u,v) \in E} \b{e}_v$ of~$\Zono$,
\item each biconnected subset~$U$ of~$D$ corresponds to a facet with inequality~$\dotprod{\one_U}{\b{x}} \ge \iota_U$, where~$\iota_U \eqdef |\set{a \in A}{|a \cap U| = 2}|$ counts the arcs of~$D$ with both endpoints in~$U$.
\end{itemize}
Moreover, the Hasse diagram of the acyclic reorientation poset~$\AR$ is isomorphic to the graph of~$\Zono$, oriented in the direction~${\b{\omega}_D \eqdef \sum_{(u,v) \in A} \b{e}_v - \b{e}_u}$.
Note that the graphical zonotope~$\Zono$ only depends on the underlying undirected graph of the directed graph~$D$, but that~$D$ determines the direction~$\b{\omega}_D$.
Finally, note that by \cref{prop:simplicialGraphicalFan}, the graphical zonotope~$\Zono$ is simple if and only if~$D$ is chordful.

\medskip
\enlargethispage{.3cm}
For instance, when~$D$ is a tournament on~$[n]$, 
the graphical zonotope~$\Zono$ coincides up to a translation of the vector~$\one$ with the classical permutahedron, defined equivalently as
\begin{itemize}
\item the convex hull of the points~$\sum_{i \in [n]} \sigma_i \, \b{e}_i$ for all permutations~$\sigma$ of~$[n]$,
\item the intersection of the hyperplane~$\bigset{\b{x} \in \R^n}{\dotprod{\one}{\b{x}} = \binom{n+1}{2}}$ with the halfspaces ${\bigset{\b{x} \in \R^n}{\dotprod{\one_U}{\b{x}} \ge \binom{|U|+1}{2}}}$ for all proper subsets~$\varnothing \ne U \subsetneq [n]$,
\item the Minkowski sum of the vector~$\one$ and the segments~$[\b{e}_i, \b{e}_j]$ for all~$1 \le i < j \le n$.
\end{itemize}
Some examples of graphical zonotopes are illustrated in \cref{fig:graphicalZonotopesIntro,fig:graphicalZonotopes}.

\pagebreak

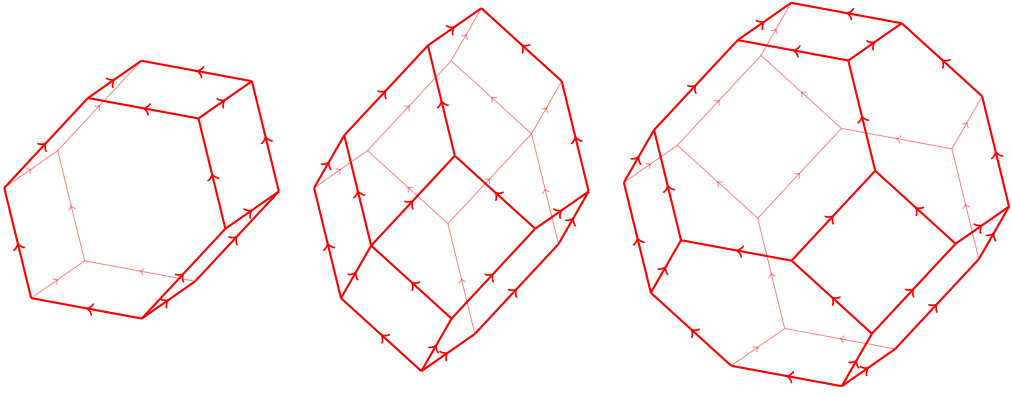
\begin{figure}
	\centerline{

\begin{tikzpicture}%
	[x={(-0.366215cm, -0.789554cm)},
	y={(0.235950cm, -0.590693cm)},
	z={(0.900119cm, -0.166391cm)},
	scale=1.000000,
	back/.style={very thin, opacity=0.5},
	edge/.style={color=red, thick, decoration={markings, mark=at position 0.5 with {\arrow{>}}}},
	facet/.style={fill=red,fill opacity=0},
	vertex/.style={}]
%
%

\coordinate (0.00000, 0.00000, 0.00000) at (0.00000, 0.00000, 0.00000);
\coordinate (0.00000, 0.00000, 1.63299) at (0.00000, 0.00000, 1.63299);
\coordinate (1.33333, -0.94281, 0.00000) at (1.33333, -0.94281, 0.00000);
\coordinate (1.33333, -0.94281, 1.63299) at (1.33333, -0.94281, 1.63299);
\coordinate (1.33333, 0.47140, -0.81650) at (1.33333, 0.47140, -0.81650);
\coordinate (4.00000, 0.00000, 1.63299) at (4.00000, 0.00000, 1.63299);
\coordinate (1.33333, 0.47140, 2.44949) at (1.33333, 0.47140, 2.44949);
\coordinate (2.66667, -0.47140, -0.81650) at (2.66667, -0.47140, -0.81650);
\coordinate (4.00000, 0.00000, 0.00000) at (4.00000, 0.00000, 0.00000);
\coordinate (2.66667, -0.47140, 2.44949) at (2.66667, -0.47140, 2.44949);
\coordinate (2.66667, 0.94281, 0.00000) at (2.66667, 0.94281, 0.00000);
\coordinate (2.66667, 0.94281, 1.63299) at (2.66667, 0.94281, 1.63299);
\draw[edge,postaction={decorate},back] (1.33333, 0.47140, -0.81650) -- (0.00000, 0.00000, 0.00000);
\draw[edge,postaction={decorate},back] (2.66667, -0.47140, -0.81650) -- (1.33333, 0.47140, -0.81650);
\draw[edge,postaction={decorate},back] (2.66667, 0.94281, 0.00000) -- (1.33333, 0.47140, -0.81650);
\draw[edge,postaction={decorate},back] (4.00000, 0.00000, 0.00000) -- (2.66667, 0.94281, 0.00000);
\draw[edge,postaction={decorate},back] (2.66667, 0.94281, 1.63299) -- (2.66667, 0.94281, 0.00000);
\node[vertex] at (1.33333, 0.47140, -0.81650)     {};
\node[vertex] at (2.66667, 0.94281, 0.00000)     {};
\fill[facet] (1.33333, -0.94281, 1.63299) -- (0.00000, 0.00000, 1.63299) -- (0.00000, 0.00000, 0.00000) -- (1.33333, -0.94281, 0.00000) -- cycle {};
\fill[facet] (2.66667, -0.47140, 2.44949) -- (1.33333, -0.94281, 1.63299) -- (0.00000, 0.00000, 1.63299) -- (1.33333, 0.47140, 2.44949) -- cycle {};
\fill[facet] (1.33333, 0.47140, 2.44949) -- (2.66667, 0.94281, 1.63299) -- (4.00000, 0.00000, 1.63299) -- (2.66667, -0.47140, 2.44949) -- cycle {};
\fill[facet] (4.00000, 0.00000, 1.63299) -- (2.66667, -0.47140, 2.44949) -- (1.33333, -0.94281, 1.63299) -- (1.33333, -0.94281, 0.00000) -- (2.66667, -0.47140, -0.81650) -- (4.00000, 0.00000, 0.00000) -- cycle {};
\draw[edge,postaction={decorate}] (0.00000, 0.00000, 1.63299) -- (0.00000, 0.00000, 0.00000);
\draw[edge,postaction={decorate}] (1.33333, -0.94281, 0.00000) -- (0.00000, 0.00000, 0.00000);
\draw[edge,postaction={decorate}] (1.33333, -0.94281, 1.63299) -- (0.00000, 0.00000, 1.63299);
\draw[edge,postaction={decorate}] (1.33333, 0.47140, 2.44949) -- (0.00000, 0.00000, 1.63299);
\draw[edge,postaction={decorate}] (1.33333, -0.94281, 1.63299) -- (1.33333, -0.94281, 0.00000);
\draw[edge,postaction={decorate}] (2.66667, -0.47140, -0.81650) -- (1.33333, -0.94281, 0.00000);
\draw[edge,postaction={decorate}] (2.66667, -0.47140, 2.44949) -- (1.33333, -0.94281, 1.63299);
\draw[edge,postaction={decorate}] (4.00000, 0.00000, 1.63299) -- (4.00000, 0.00000, 0.00000);
\draw[edge,postaction={decorate}] (4.00000, 0.00000, 1.63299) -- (2.66667, -0.47140, 2.44949);
\draw[edge,postaction={decorate}] (4.00000, 0.00000, 1.63299) -- (2.66667, 0.94281, 1.63299);
\draw[edge,postaction={decorate}] (2.66667, -0.47140, 2.44949) -- (1.33333, 0.47140, 2.44949);
\draw[edge,postaction={decorate}] (2.66667, 0.94281, 1.63299) -- (1.33333, 0.47140, 2.44949);
\draw[edge,postaction={decorate}] (4.00000, 0.00000, 0.00000) -- (2.66667, -0.47140, -0.81650);
\node[vertex] at (0.00000, 0.00000, 0.00000)     {};
\node[vertex] at (0.00000, 0.00000, 1.63299)     {};
\node[vertex] at (1.33333, -0.94281, 0.00000)     {};
\node[vertex] at (1.33333, -0.94281, 1.63299)     {};
\node[vertex] at (4.00000, 0.00000, 1.63299)     {};
\node[vertex] at (1.33333, 0.47140, 2.44949)     {};
\node[vertex] at (2.66667, -0.47140, -0.81650)     {};
\node[vertex] at (4.00000, 0.00000, 0.00000)     {};
\node[vertex] at (2.66667, -0.47140, 2.44949)     {};
\node[vertex] at (2.66667, 0.94281, 1.63299)     {};
\end{tikzpicture}
			\raisebox{-.7cm}{

\begin{tikzpicture}%
	[x={(-0.366215cm, -0.789554cm)},
	y={(0.235950cm, -0.590693cm)},
	z={(0.900119cm, -0.166391cm)},
	scale=1.000000,
	back/.style={very thin, opacity=0.5},
	edge/.style={color=red, thick, decoration={markings, mark=at position 0.5 with {\arrow{>}}}},
	facet/.style={fill=red, fill opacity=0},
	vertex/.style={}]
%
%

\coordinate (0.00000, 0.00000, 0.00000) at (0.00000, 0.00000, 0.00000);
\coordinate (0.00000, 1.41421, -0.81650) at (0.00000, 1.41421, -0.81650);
\coordinate (0.00000, 1.41421, 0.81650) at (0.00000, 1.41421, 0.81650);
\coordinate (0.00000, 2.82843, 0.00000) at (0.00000, 2.82843, 0.00000);
\coordinate (1.33333, -0.94281, 0.00000) at (1.33333, -0.94281, 0.00000);
\coordinate (4.00000, 2.82843, 0.00000) at (4.00000, 2.82843, 0.00000);
\coordinate (4.00000, 1.41421, 0.81650) at (4.00000, 1.41421, 0.81650);
\coordinate (1.33333, 1.88562, -1.63299) at (1.33333, 1.88562, -1.63299);
\coordinate (4.00000, 1.41421, -0.81650) at (4.00000, 1.41421, -0.81650);
\coordinate (1.33333, 1.88562, 1.63299) at (1.33333, 1.88562, 1.63299);
\coordinate (1.33333, 3.29983, -0.81650) at (1.33333, 3.29983, -0.81650);
\coordinate (1.33333, 3.29983, 0.81650) at (1.33333, 3.29983, 0.81650);
\coordinate (2.66667, -0.47140, -0.81650) at (2.66667, -0.47140, -0.81650);
\coordinate (2.66667, -0.47140, 0.81650) at (2.66667, -0.47140, 0.81650);
\coordinate (2.66667, 0.94281, -1.63299) at (2.66667, 0.94281, -1.63299);
\coordinate (4.00000, 0.00000, 0.00000) at (4.00000, 0.00000, 0.00000);
\coordinate (2.66667, 0.94281, 1.63299) at (2.66667, 0.94281, 1.63299);
\coordinate (2.66667, 3.77124, 0.00000) at (2.66667, 3.77124, 0.00000);
\draw[edge,postaction={decorate},back] (0.00000, 1.41421, -0.81650) -- (0.00000, 0.00000, 0.00000);
\draw[edge,postaction={decorate},back] (0.00000, 2.82843, 0.00000) -- (0.00000, 1.41421, -0.81650);
\draw[edge,postaction={decorate},back] (1.33333, 1.88562, -1.63299) -- (0.00000, 1.41421, -0.81650);
\draw[edge,postaction={decorate},back] (0.00000, 2.82843, 0.00000) -- (0.00000, 1.41421, 0.81650);
\draw[edge,postaction={decorate},back] (1.33333, 3.29983, -0.81650) -- (0.00000, 2.82843, 0.00000);
\draw[edge,postaction={decorate},back] (1.33333, 3.29983, 0.81650) -- (0.00000, 2.82843, 0.00000);
\draw[edge,postaction={decorate},back] (1.33333, 3.29983, -0.81650) -- (1.33333, 1.88562, -1.63299);
\draw[edge,postaction={decorate},back] (2.66667, 0.94281, -1.63299) -- (1.33333, 1.88562, -1.63299);
\draw[edge,postaction={decorate},back] (2.66667, 3.77124, 0.00000) -- (1.33333, 3.29983, -0.81650);
\node[vertex] at (0.00000, 2.82843, 0.00000)     {};
\node[vertex] at (0.00000, 1.41421, -0.81650)     {};
\node[vertex] at (1.33333, 1.88562, -1.63299)     {};
\node[vertex] at (1.33333, 3.29983, -0.81650)     {};
\fill[facet] (2.66667, -0.47140, 0.81650) -- (2.66667, 0.94281, 1.63299) -- (4.00000, 1.41421, 0.81650) -- (4.00000, 0.00000, 0.00000) -- cycle {};
\fill[facet] (4.00000, 0.00000, 0.00000) -- (2.66667, -0.47140, -0.81650) -- (1.33333, -0.94281, 0.00000) -- (2.66667, -0.47140, 0.81650) -- cycle {};
\fill[facet] (2.66667, 0.94281, 1.63299) -- (1.33333, 1.88562, 1.63299) -- (0.00000, 1.41421, 0.81650) -- (0.00000, 0.00000, 0.00000) -- (1.33333, -0.94281, 0.00000) -- (2.66667, -0.47140, 0.81650) -- cycle {};
\fill[facet] (2.66667, -0.47140, -0.81650) -- (4.00000, 0.00000, 0.00000) -- (4.00000, 1.41421, -0.81650) -- (2.66667, 0.94281, -1.63299) -- cycle {};
\fill[facet] (4.00000, 0.00000, 0.00000) -- (4.00000, 1.41421, 0.81650) -- (4.00000, 2.82843, 0.00000) -- (4.00000, 1.41421, -0.81650) -- cycle {};
\fill[facet] (1.33333, 1.88562, 1.63299) -- (1.33333, 3.29983, 0.81650) -- (2.66667, 3.77124, 0.00000) -- (4.00000, 2.82843, 0.00000) -- (4.00000, 1.41421, 0.81650) -- (2.66667, 0.94281, 1.63299) -- cycle {};
\draw[edge,postaction={decorate}] (0.00000, 1.41421, 0.81650) -- (0.00000, 0.00000, 0.00000);
\draw[edge,postaction={decorate}] (1.33333, -0.94281, 0.00000) -- (0.00000, 0.00000, 0.00000);
\draw[edge,postaction={decorate}] (1.33333, 1.88562, 1.63299) -- (0.00000, 1.41421, 0.81650);
\draw[edge,postaction={decorate}] (2.66667, -0.47140, -0.81650) -- (1.33333, -0.94281, 0.00000);
\draw[edge,postaction={decorate}] (2.66667, -0.47140, 0.81650) -- (1.33333, -0.94281, 0.00000);
\draw[edge,postaction={decorate}] (4.00000, 2.82843, 0.00000) -- (4.00000, 1.41421, 0.81650);
\draw[edge,postaction={decorate}] (4.00000, 2.82843, 0.00000) -- (4.00000, 1.41421, -0.81650);
\draw[edge,postaction={decorate}] (4.00000, 2.82843, 0.00000) -- (2.66667, 3.77124, 0.00000);
\draw[edge,postaction={decorate}] (4.00000, 1.41421, 0.81650) -- (4.00000, 0.00000, 0.00000);
\draw[edge,postaction={decorate}] (4.00000, 1.41421, 0.81650) -- (2.66667, 0.94281, 1.63299);
\draw[edge,postaction={decorate}] (4.00000, 1.41421, -0.81650) -- (2.66667, 0.94281, -1.63299);
\draw[edge,postaction={decorate}] (4.00000, 1.41421, -0.81650) -- (4.00000, 0.00000, 0.00000);
\draw[edge,postaction={decorate}] (1.33333, 3.29983, 0.81650) -- (1.33333, 1.88562, 1.63299);
\draw[edge,postaction={decorate}] (2.66667, 0.94281, 1.63299) -- (1.33333, 1.88562, 1.63299);
\draw[edge,postaction={decorate}] (2.66667, 3.77124, 0.00000) -- (1.33333, 3.29983, 0.81650);
\draw[edge,postaction={decorate}] (2.66667, 0.94281, -1.63299) -- (2.66667, -0.47140, -0.81650);
\draw[edge,postaction={decorate}] (4.00000, 0.00000, 0.00000) -- (2.66667, -0.47140, -0.81650);
\draw[edge,postaction={decorate}] (4.00000, 0.00000, 0.00000) -- (2.66667, -0.47140, 0.81650);
\draw[edge,postaction={decorate}] (2.66667, 0.94281, 1.63299) -- (2.66667, -0.47140, 0.81650);
\node[vertex] at (0.00000, 0.00000, 0.00000)     {};
\node[vertex] at (0.00000, 1.41421, 0.81650)     {};
\node[vertex] at (1.33333, -0.94281, 0.00000)     {};
\node[vertex] at (4.00000, 2.82843, 0.00000)     {};
\node[vertex] at (4.00000, 1.41421, 0.81650)     {};
\node[vertex] at (4.00000, 1.41421, -0.81650)     {};
\node[vertex] at (1.33333, 1.88562, 1.63299)     {};
\node[vertex] at (1.33333, 3.29983, 0.81650)     {};
\node[vertex] at (2.66667, -0.47140, -0.81650)     {};
\node[vertex] at (2.66667, -0.47140, 0.81650)     {};
\node[vertex] at (2.66667, 0.94281, -1.63299)     {};
\node[vertex] at (4.00000, 0.00000, 0.00000)     {};
\node[vertex] at (2.66667, 0.94281, 1.63299)     {};
\node[vertex] at (2.66667, 3.77124, 0.00000)     {};
\end{tikzpicture}}
			\raisebox{-.9cm}{

\begin{tikzpicture}%
	[x={(-0.366215cm, -0.789554cm)},
	y={(0.235950cm, -0.590693cm)},
	z={(0.900119cm, -0.166391cm)},
	scale=1.000000,
	back/.style={very thin, opacity=0.5},
	edge/.style={color=red, thick, decoration={markings, mark=at position 0.5 with {\arrow{>}}}},
	facet/.style={fill=red,fill opacity=0},
	vertex/.style={}]
%
%

\coordinate (0.00000, 0.00000, 0.00000) at (0.00000, 0.00000, 0.00000);
\coordinate (0.00000, 0.00000, 1.63299) at (0.00000, 0.00000, 1.63299);
\coordinate (0.00000, 1.41421, -0.81650) at (0.00000, 1.41421, -0.81650);
\coordinate (4.00000, 2.82843, 1.63299) at (4.00000, 2.82843, 1.63299);
\coordinate (0.00000, 1.41421, 2.44949) at (0.00000, 1.41421, 2.44949);
\coordinate (0.00000, 2.82843, 0.00000) at (0.00000, 2.82843, 0.00000);
\coordinate (0.00000, 2.82843, 1.63299) at (0.00000, 2.82843, 1.63299);
\coordinate (1.33333, -0.94281, 0.00000) at (1.33333, -0.94281, 0.00000);
\coordinate (1.33333, -0.94281, 1.63299) at (1.33333, -0.94281, 1.63299);
\coordinate (4.00000, 2.82843, 0.00000) at (4.00000, 2.82843, 0.00000);
\coordinate (4.00000, 1.41421, 2.44949) at (4.00000, 1.41421, 2.44949);
\coordinate (4.00000, 1.41421, -0.81650) at (4.00000, 1.41421, -0.81650);
\coordinate (1.33333, 1.88562, -1.63299) at (1.33333, 1.88562, -1.63299);
\coordinate (4.00000, 0.00000, 1.63299) at (4.00000, 0.00000, 1.63299);
\coordinate (4.00000, 0.00000, 0.00000) at (4.00000, 0.00000, 0.00000);
\coordinate (1.33333, 1.88562, 3.26599) at (1.33333, 1.88562, 3.26599);
\coordinate (1.33333, 3.29983, -0.81650) at (1.33333, 3.29983, -0.81650);
\coordinate (2.66667, 3.77124, 1.63299) at (2.66667, 3.77124, 1.63299);
\coordinate (1.33333, 3.29983, 2.44949) at (1.33333, 3.29983, 2.44949);
\coordinate (2.66667, -0.47140, -0.81650) at (2.66667, -0.47140, -0.81650);
\coordinate (2.66667, 3.77124, 0.00000) at (2.66667, 3.77124, 0.00000);
\coordinate (2.66667, -0.47140, 2.44949) at (2.66667, -0.47140, 2.44949);
\coordinate (2.66667, 0.94281, -1.63299) at (2.66667, 0.94281, -1.63299);
\coordinate (2.66667, 0.94281, 3.26599) at (2.66667, 0.94281, 3.26599);
\draw[edge,postaction={decorate},back] (0.00000, 1.41421, -0.81650) -- (0.00000, 0.00000, 0.00000);
\draw[edge,postaction={decorate},back] (0.00000, 2.82843, 0.00000) -- (0.00000, 1.41421, -0.81650);
\draw[edge,postaction={decorate},back] (1.33333, 1.88562, -1.63299) -- (0.00000, 1.41421, -0.81650);
\draw[edge,postaction={decorate},back] (0.00000, 2.82843, 1.63299) -- (0.00000, 1.41421, 2.44949);
\draw[edge,postaction={decorate},back] (0.00000, 2.82843, 1.63299) -- (0.00000, 2.82843, 0.00000);
\draw[edge,postaction={decorate},back] (1.33333, 3.29983, -0.81650) -- (0.00000, 2.82843, 0.00000);
\draw[edge,postaction={decorate},back] (1.33333, 3.29983, 2.44949) -- (0.00000, 2.82843, 1.63299);
\draw[edge,postaction={decorate},back] (4.00000, 2.82843, 0.00000) -- (2.66667, 3.77124, 0.00000);
\draw[edge,postaction={decorate},back] (1.33333, 3.29983, -0.81650) -- (1.33333, 1.88562, -1.63299);
\draw[edge,postaction={decorate},back] (2.66667, 0.94281, -1.63299) -- (1.33333, 1.88562, -1.63299);
\draw[edge,postaction={decorate},back] (2.66667, 3.77124, 0.00000) -- (1.33333, 3.29983, -0.81650);
\draw[edge,postaction={decorate},back] (2.66667, 3.77124, 1.63299) -- (2.66667, 3.77124, 0.00000);
\node[vertex] at (1.33333, 1.88562, -1.63299)     {};
\node[vertex] at (1.33333, 3.29983, -0.81650)     {};
\node[vertex] at (2.66667, 3.77124, 0.00000)     {};
\node[vertex] at (0.00000, 1.41421, -0.81650)     {};
\node[vertex] at (0.00000, 2.82843, 0.00000)     {};
\node[vertex] at (0.00000, 2.82843, 1.63299)     {};
\fill[facet] (2.66667, 0.94281, -1.63299) -- (4.00000, 1.41421, -0.81650) -- (4.00000, 0.00000, 0.00000) -- (2.66667, -0.47140, -0.81650) -- cycle {};
\fill[facet] (1.33333, -0.94281, 1.63299) -- (0.00000, 0.00000, 1.63299) -- (0.00000, 0.00000, 0.00000) -- (1.33333, -0.94281, 0.00000) -- cycle {};
\fill[facet] (2.66667, 0.94281, 3.26599) -- (1.33333, 1.88562, 3.26599) -- (0.00000, 1.41421, 2.44949) -- (0.00000, 0.00000, 1.63299) -- (1.33333, -0.94281, 1.63299) -- (2.66667, -0.47140, 2.44949) -- cycle {};
\fill[facet] (4.00000, 0.00000, 0.00000) -- (4.00000, 0.00000, 1.63299) -- (2.66667, -0.47140, 2.44949) -- (1.33333, -0.94281, 1.63299) -- (1.33333, -0.94281, 0.00000) -- (2.66667, -0.47140, -0.81650) -- cycle {};
\fill[facet] (2.66667, 0.94281, 3.26599) -- (4.00000, 1.41421, 2.44949) -- (4.00000, 0.00000, 1.63299) -- (2.66667, -0.47140, 2.44949) -- cycle {};
\fill[facet] (1.33333, 1.88562, 3.26599) -- (2.66667, 0.94281, 3.26599) -- (4.00000, 1.41421, 2.44949) -- (4.00000, 2.82843, 1.63299) -- (2.66667, 3.77124, 1.63299) -- (1.33333, 3.29983, 2.44949) -- cycle {};
\fill[facet] (4.00000, 0.00000, 0.00000) -- (4.00000, 1.41421, -0.81650) -- (4.00000, 2.82843, 0.00000) -- (4.00000, 2.82843, 1.63299) -- (4.00000, 1.41421, 2.44949) -- (4.00000, 0.00000, 1.63299) -- cycle {};
\draw[edge,postaction={decorate}] (0.00000, 0.00000, 1.63299) -- (0.00000, 0.00000, 0.00000);
\draw[edge,postaction={decorate}] (1.33333, -0.94281, 0.00000) -- (0.00000, 0.00000, 0.00000);
\draw[edge,postaction={decorate}] (0.00000, 1.41421, 2.44949) -- (0.00000, 0.00000, 1.63299);
\draw[edge,postaction={decorate}] (1.33333, -0.94281, 1.63299) -- (0.00000, 0.00000, 1.63299);
\draw[edge,postaction={decorate}] (4.00000, 2.82843, 1.63299) -- (4.00000, 2.82843, 0.00000);
\draw[edge,postaction={decorate}] (4.00000, 2.82843, 1.63299) -- (4.00000, 1.41421, 2.44949);
\draw[edge,postaction={decorate}] (4.00000, 2.82843, 1.63299) -- (2.66667, 3.77124, 1.63299);
\draw[edge,postaction={decorate}] (1.33333, 1.88562, 3.26599) -- (0.00000, 1.41421, 2.44949);
\draw[edge,postaction={decorate}] (1.33333, -0.94281, 1.63299) -- (1.33333, -0.94281, 0.00000);
\draw[edge,postaction={decorate}] (2.66667, -0.47140, -0.81650) -- (1.33333, -0.94281, 0.00000);
\draw[edge,postaction={decorate}] (2.66667, -0.47140, 2.44949) -- (1.33333, -0.94281, 1.63299);
\draw[edge,postaction={decorate}] (4.00000, 2.82843, 0.00000) -- (4.00000, 1.41421, -0.81650);
\draw[edge,postaction={decorate}] (4.00000, 1.41421, 2.44949) -- (4.00000, 0.00000, 1.63299);
\draw[edge,postaction={decorate}] (4.00000, 1.41421, 2.44949) -- (2.66667, 0.94281, 3.26599);
\draw[edge,postaction={decorate}] (4.00000, 1.41421, -0.81650) -- (4.00000, 0.00000, 0.00000);
\draw[edge,postaction={decorate}] (4.00000, 1.41421, -0.81650) -- (2.66667, 0.94281, -1.63299);
\draw[edge,postaction={decorate}] (4.00000, 0.00000, 1.63299) -- (4.00000, 0.00000, 0.00000);
\draw[edge,postaction={decorate}] (4.00000, 0.00000, 1.63299) -- (2.66667, -0.47140, 2.44949);
\draw[edge,postaction={decorate}] (4.00000, 0.00000, 0.00000) -- (2.66667, -0.47140, -0.81650);
\draw[edge,postaction={decorate}] (1.33333, 3.29983, 2.44949) -- (1.33333, 1.88562, 3.26599);
\draw[edge,postaction={decorate}] (2.66667, 0.94281, 3.26599) -- (1.33333, 1.88562, 3.26599);
\draw[edge,postaction={decorate}] (2.66667, 3.77124, 1.63299) -- (1.33333, 3.29983, 2.44949);
\draw[edge,postaction={decorate}] (2.66667, 0.94281, -1.63299) -- (2.66667, -0.47140, -0.81650);
\draw[edge,postaction={decorate}] (2.66667, 0.94281, 3.26599) -- (2.66667, -0.47140, 2.44949);
\node[vertex] at (0.00000, 0.00000, 0.00000)     {};
\node[vertex] at (0.00000, 0.00000, 1.63299)     {};
\node[vertex] at (4.00000, 2.82843, 1.63299)     {};
\node[vertex] at (0.00000, 1.41421, 2.44949)     {};
\node[vertex] at (1.33333, -0.94281, 0.00000)     {};
\node[vertex] at (1.33333, -0.94281, 1.63299)     {};
\node[vertex] at (4.00000, 2.82843, 0.00000)     {};
\node[vertex] at (4.00000, 1.41421, 2.44949)     {};
\node[vertex] at (4.00000, 1.41421, -0.81650)     {};
\node[vertex] at (4.00000, 0.00000, 1.63299)     {};
\node[vertex] at (4.00000, 0.00000, 0.00000)     {};
\node[vertex] at (1.33333, 1.88562, 3.26599)     {};
\node[vertex] at (2.66667, 3.77124, 1.63299)     {};
\node[vertex] at (1.33333, 3.29983, 2.44949)     {};
\node[vertex] at (2.66667, -0.47140, -0.81650)     {};
\node[vertex] at (2.66667, -0.47140, 2.44949)     {};
\node[vertex] at (2.66667, 0.94281, -1.63299)     {};
\node[vertex] at (2.66667, 0.94281, 3.26599)     {};
\end{tikzpicture}}
	}
	\caption{The graphical zonotopes for the directed acyclic graphs of \cref{fig:canonicalJoinComplexes}. Their normal fans are the graphical fans of \cref{fig:graphicalArrangements} and their graphs oriented appropriately are isomorphic to the Hasse diagrams of the acyclic reorientation lattices of \cref{fig:canonicalJoinComplexes}. The rightmost zonotope is the classical permutahedron. The middle zonotope is not simple while the other two are.}
	\label{fig:graphicalZonotopes}
\end{figure}

\para{Quotientopes and associahedra}
Assume now that~$D$ is skeletal as in \cref{sec:ropeDiagrams,sec:congruences}, so that the acyclic reorientation poset~$\AR$ is a congruence uniform lattice by \cref{prop:congruenceUniform}.
The main result of this section is the following statement.

\begin{theorem}
\label{thm:existenceQuotientopes}
Assume that $D$ is skeletal.
For any congruence~$\equiv$ of~$\AR$, the quotient fan~$\Fan[\equiv]$ is the normal fan of a polytope.
\end{theorem}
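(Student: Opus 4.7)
The strategy is to construct an explicit polytope realizing the quotient fan $\Fan[\equiv]$ as a Minkowski sum over the subrope ideal $\bb{I}_\equiv$ associated to~$\equiv$, following the shard polytope approach of Padrol--Pilaud--Ritter for the weak order. The plan rests on assigning to each rope a polytope that contributes exactly the corresponding shard as a wall, and then exploiting the compatibility of Minkowski sums with common refinement of normal fans.

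For each rope~$\rope = (u, v, \down, \up)$ of~$D$, I would define a shard polytope $\shardPolytope$ supported on the transitive support of~$(u,v)$ in~$D$. A natural candidate is the convex hull of characteristic vectors~$\one_X$ indexed by a well-chosen family of subsets~$X$ of the transitive support which encode ``alternating patterns'' along a linear extension of the directed path from~$u$ to~$v$ in the transitive reduction, modified according to the partition~$\down \sqcup \up$, as in~\cite{PadrolPilaudRitter}. The key property to verify is that the normal fan of~$\shardPolytope$ coarsens the graphical fan~$\Fan$ except along the shard~$\shard_\rope$, and that the union of its walls is precisely~$\shard_\rope$ (together possibly with walls contained in other shards forced by~$\rope$ under the subrope order, as in \cref{prop:subropeOrder}). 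This would be a direct but delicate computation of the linear programs defining facets of~$\shardPolytope$, using the filled condition on~$D$ to control which arcs are available on the transitive support.

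Granted this shard polytope construction, I would consider the Minkowski sum $\polytope{Q}_\equiv \eqdef \Zono + \sum_{\rope \in \bb{I}_\equiv} \shardPolytope$. Its normal fan is the common refinement of the normal fans of its summands, so its walls are obtained by collecting the walls of each summand. By the shard description of the quotient fan recalled in \cref{subsec:graphicalFanShardsQuotientFans}, the walls of~$\Fan[\equiv]$ are exactly the union of the shards~$\shard_\rope$ for~$\rope \in \bb{I}_\equiv$, which matches precisely the walls contributed by the summands of~$\polytope{Q}_\equiv$. Since both fans are complete and share the same walls, they coincide, proving that~$\Fan[\equiv]$ is the normal fan of~$\polytope{Q}_\equiv$.

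The main obstacle is establishing the wall property of the shard polytope in the graph setting: one must verify not only that $\shardPolytope$ has $\shard_\rope$ among its walls, but that no spurious walls arise outside the subrope ideal generated by~$\rope$. In the classical permutahedral case (when~$D$ is a tournament), this relies on an explicit case analysis of vertex--edge incidences; in our more general setting, one must handle additional arcs of~$D$ coming from the filled condition that join vertices of the transitive support without lying on the chosen path. I expect this verification, rather than the global Minkowski sum argument, to be the technical heart of the proof. An alternative realization, yielding the Hohlweg--Lange part of \cref{thm:quotientopes}, could be obtained by replacing each shard polytope by an associahedron attached to the tournament induced on the transitive support of~$\rope$ (which is filled, hence a tournament) with signature determined by $(\down, \up)$, and summing over a generating set of~$\bb{I}_\equiv$ in the subrope order.
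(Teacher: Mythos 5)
Your overall strategy is the same as the paper's (\cref{thm:MinkowskiSumShardPolytopes}): realize each rope in the subrope ideal~$\bb{I}_\equiv$ by a shard polytope whose normal-fan walls lie in the shard~$\shard_\rope$ together with shards of its subropes (this is the paper's \cref{prop:shardPolytopes}, which in turn reduces to the tournament case of~\cite{PadrolPilaudRitter} via the filled condition), then Minkowski-sum and use the characterization of quotient fan walls. Your closing remark about replacing shard polytopes by embedded Cambrian associahedra attached to transitive supports and summing over generators of~$\bb{I}_\equiv$ is also exactly the paper's alternative route (\cref{thm:MinkowskiSumAssociahedra}).

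There is, however, a concrete error in your Minkowski sum: you define $\polytope{Q}_\equiv \eqdef \Zono + \sum_{\rope \in \bb{I}_\equiv} \shardPolytope$, including the graphical zonotope as a summand. The normal fan of~$\Zono$ is the \emph{full} graphical fan~$\Fan$, whose walls consist of \emph{all} the hyperplanes of the arrangement, hence of \emph{all} shards of~$D$, not just those in~$\bb{I}_\equiv$. Since the normal fan of a Minkowski sum is the common refinement of the normal fans of the summands, adding~$\Zono$ forces the normal fan of~$\polytope{Q}_\equiv$ to be the full graphical fan~$\Fan$ itself (as the shard polytopes only coarsen~$\Fan$). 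For any non-trivial congruence~$\equiv$, this is strictly finer than~$\Fan[\equiv]$, so your~$\polytope{Q}_\equiv$ is \emph{not} a quotientope. You should drop the~$\Zono$ summand entirely and use~$\sum_{\rope \in \bb{I}_\equiv} \shardPolytope$ (or, as the paper does, $\sum_{\rope \in \bb{I}_\equiv} \b{s}_\rope \, \shardPolytope$ for any positive coefficients~$\b{s}$). With that correction, the wall-matching argument goes through: the walls contributed by the summands are exactly~$\bigcup_{\rope \in \bb{I}_\equiv} \shard_\rope$, because~$\bb{I}_\equiv$ is a lower ideal for the subrope order and so is closed under the ``subrope walls'' allowed by \cref{prop:shardPolytopes}.
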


A \defn{quotientope} is any polytopal realization of the quotient fan~$\Fan[\equiv]$.
We provide two general approaches to construct quotientopes in \cref{thm:MinkowskiSumAssociahedra,thm:MinkowskiSumShardPolytopes}, and we discuss a third approach specific to the coherent congruences in \cref{prop:associahedraRemovahedra,pb:CambrianAssociahedraRemovahedra,pb:coherentQuotientopesRemovahedra}.

\medskip
An \defn{associahedron} for~$D$ is any quotientope for the sylvester congruence~$\equiv_{(V,\varnothing)}$.
To avoid any confusion, let us insist that the associahedron of the directed acyclic graph~$D$ is not the associahedron of the underlying undirected graph~$G$ as defined by M.~Carr and S.~Devadoss in~\cite{CarrDevadoss}, except if~$D$ is a disjoint union of tournaments.
In fact, as suggested in \cref{pb:TamariRegular} and illustrated in \cref{fig:associahedra}\,(middle), the associahedron of~$D$ is not even always a simple polytope.

\medskip
The \defn{Cambrian associahedra} of~$D$ are the quotientopes for the Cambrian congruences~$\equiv_{(\decorationDown, \decorationUp)}$ with~$\decorationDown \sqcup \decorationUp = V$.
As already mentioned in \cref{pb:allCambrianSameGraphs}, not all Cambrian lattices have the same number of elements.
In fact, computer experiments on all skeletal directed acyclic graphs up to $6$ vertices indicate that the following stronger version of \cref{pb:allCambrianSameGraphs} should hold.

\begin{problem}
\label{pb:allCambrianSameFaceLattices}
Prove the equivalence of the following assertions for a skeletal directed acyclic~graph~$D$:
\begin{enumerate}[(i)]
\item $D$ has no induced subgraph isomorphic to\,\smash{\raisebox{-.25cm}{\includegraphics[scale=.9]{digraph1}}},
\item all Cambrian associahedra of~$D$ have the same number of vertices,
\item all Cambrian associahedra of~$D$ have isomorphic $1$-skeleta,
\item all Cambrian associahedra of~$D$ have isomorphic face lattices.
\end{enumerate}
\end{problem}

Note that Points~(ii) and~(iii) in \cref{pb:allCambrianSameFaceLattices} are just geometric translations of Points~(ii) and~(iii) in \cref{pb:allCambrianSameGraphs}.
Point~(iv) is a consequence of Point~(iii) when the associahedron is a simple polytope, since the face lattice of a simple polytope is determined by its graph~\cite{BlindMani, Kalai-simplePolytopes}.
However, Point~(iv) is stronger than Point~(iii) when the associahedron is not simple, which happens when $D$ has no induced subgraph isomorphic to\,\smash{\raisebox{-.25cm}{\includegraphics[scale=.9]{digraph1}}}\,but some induced subgraph isomorphic to\,\smash{\raisebox{-.25cm}{\includegraphics[scale=.9]{digraph2}}}\,or\,\smash{\raisebox{-.25cm}{\includegraphics[scale=.9]{digraph3}}}\,by \cref{pb:TamariRegular}.

\pagebreak

\para{Quotientopes from classical associahedra}
Our first approach to realize the quotient fan~$\Fan[\equiv]$ as the normal fan of a polytope is based on the associahedra~\cite{ShniderSternberg, Loday, HohlwegLange}.
Recall first that when~$D$ is the increasing tournament on~$[n]$, the sylvester fan is the normal fan of the classical associahedron, defined equivalently~as
\begin{itemize}
\item the convex hull of the points~$\sum_{j \in [n]} \ell(T,j) \, r(T,j) \, \b{e}_j$ for all binary trees~$T$ on~$n$ nodes, where $\ell(T,j)$ and~$r(T,j)$ respectively denote the numbers of leaves in the left and right subtrees of the node~$j$ of~$T$ (labeled in inorder), see~\cite{Loday},
\item the intersection of the hyperplane~$\bigset{\b{x} \in \R^n}{\dotprod{\one}{\b{x}} = \binom{n+1}{2}}$ with the halfspaces ${\bigset{\b{x} \in \R^n}{\dotprod{\one_{[a,b]}}{\b{x}} \ge \binom{b-a+2}{2}}}$ for all intervals~$1 \le a \le b \le n$, see~\cite{ShniderSternberg},
\item (a translate of) the Minkowski sum of~$\simplex_{[a,b]}$ for all intervals ${1 \le a \le b \le n}$, where for~${I \subseteq [n]}$, $\simplex_I \eqdef \conv\set{\b{e}_i}{i \in I}$ is the face of the standard simplex~$\simplex_{[n]}$ labeled by~$I$, see~\cite{Postnikov}.
\end{itemize}
Similar polytopal realizations were constructed for the quotient fans of the Cambrian congruences in~\cite{HohlwegLange, LangePilaud} and of the permutree congruences in~\cite{PilaudPons-permutrees} of the weak order, using analogous vertex and facet descriptions.
The resulting associahedra and permutreehedra can also be written as Minkowski sums and differences of faces of the standard simplex, though the description is not as simple, see \eg \cite{Lange}.
Here, we skip the precise vertex, facet, and Minkowski descriptions of all these polytopes and refer to \cite{HohlwegLange, LangePilaud, Lange, PilaudPons-permutrees} for details.
We just need to observe that the existence of these polytopes together with \cref{prop:extensionCongruence} ensure that the quotient fan of the principal congruence~$\equiv_\rope$ of any rope~$\rope$ of~$D$ is the normal fan of an associahedron~$\Asso[\rope]$ obtained as an embedding of a Cambrian associahedron of~\cite{HohlwegLange} in~$\R^V$.
Mimicking~\cite[Thm.~1]{PadrolPilaudRitter}, we now observe that any quotient fan can be realized as the normal fan of a Minkowski sum of (low dimensional) Cambrian associahedra of~\cite{HohlwegLange}.

\begin{theorem}
\label{thm:MinkowskiSumAssociahedra}
Assume that~$D$ is skeletal.
Consider any congruence~$\equiv$ of~$\AR$, and let~$\rope_1, \dots, \rope_p$ denote the ropes generating the lower ideal~$\bb{I}_\equiv$ of the subrope order.
Then the quotient fan~$\Fan[\equiv]$~is
\begin{itemize}
\item the common refinement of the Cambrian fans~$\Fan[\rope_1], \dots, \Fan[\rope_p]$,
\item the normal fan of the Minkowski sum of the Cambrian associahedra~$\Asso[\rope_1], \dots, \Asso[\rope_p]$.
\end{itemize}
\end{theorem}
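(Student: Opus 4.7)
The plan is to reduce everything to the shard description of quotient fans recalled in \cref{subsec:graphicalFanShardsQuotientFans}, which says that the union of the walls of the quotient fan $\Fan[\equiv]$ equals $\bigcup_{\rope \in \bb{I}_\equiv} \shard_\rope$.

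First, I would establish the first bullet by a direct set-theoretic argument on subrope ideals. Since $\rope_1, \dots, \rope_p$ generate $\bb{I}_\equiv$ as a lower ideal of the subrope order, we have the equality
\[
\bb{I}_\equiv \;=\; \bigcup_{i=1}^p \bb{I}_{\rope_i}
\]
by the very definition of principal ideal. Applying the shard characterization recalled above to $\equiv$ and to each principal congruence $\equiv_{\rope_i}$, the union of the walls of $\Fan[\equiv]$ equals the union of the walls of the $\Fan[\rope_i]$. Since a complete fan in $\R^V$ is determined by the union of its walls (the chambers are the closures of the connected components of the complement), this implies that $\Fan[\equiv]$ is the common refinement of $\Fan[\rope_1], \dots, \Fan[\rope_p]$. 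Here I am implicitly using that the common refinement of the complete fans $\Fan[\rope_i]$ is itself a complete fan whose walls are the union of the walls of the summands, which follows from general facts about fans.

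For the second bullet, I would invoke the general fact (recalled in \cref{subsec:graphicalZonotopeAssociahedraShardPolytopesQuotientopes}) that the normal fan of a Minkowski sum of polytopes is the common refinement of the normal fans of the summands. By hypothesis (supplied by \cref{prop:extensionCongruence} together with the known polytopality of Cambrian fans from~\cite{HohlwegLange}), each $\Asso[\rope_i]$ is a polytope in $\R^V$ whose normal fan is exactly $\Fan[\rope_i]$. Therefore the normal fan of $\Asso[\rope_1] + \dots + \Asso[\rope_p]$ is the common refinement of $\Fan[\rope_1], \dots, \Fan[\rope_p]$, which by the first bullet coincides with $\Fan[\equiv]$.

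The main step requiring care is the identification $\bb{I}_\equiv = \bigcup_i \bb{I}_{\rope_i}$ together with the claim that this transfers to an equality of unions of shards. The set-theoretic identification is immediate from the definition of ``generators of a lower ideal'' in the subrope order. Nothing else in the argument is delicate: once the shard description is in hand and the Cambrian associahedra $\Asso[\rope_i]$ are known to exist and to live naturally in $\R^V$ with the correct normal fan, the Minkowski sum argument is formal. The only other subtle point, which I would address in a short lemma if needed, is that the common refinement of a finite family of complete polyhedral fans in $\R^V$ is again a complete polyhedral fan whose set of walls is the union of the walls of the summands; this is standard but worth stating, since it is precisely what allows the passage from the shard-union equality to the fan-level equality.
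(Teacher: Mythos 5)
Your proof is correct and matches the paper's argument essentially verbatim: both use the identity $\bb{I}_\equiv = \bigcup_i \bb{I}_{\rope_i}$ together with the shard characterization of walls to get the common-refinement statement, and both then invoke the normal fan of a Minkowski sum for the second bullet. The extra care you take about the common refinement of complete fans being determined by the union of walls is a reasonable elaboration of what the paper treats as immediate.
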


\begin{proof}
The first point is immediate since~$\bb{I}_\equiv = \bb{I}_{\rope_1} \cup \dots \cup \bb{I}_{\rope_p}$ and the union of the walls of~$\Fan[\equiv]$ is the union of the shards~$\shard_\rope$ for~$\rope \in \bb{I}_\equiv$.
The second point follows from the fact that the Cambrian fan~$\Fan[\rope]$ is the normal fan of the Cambrian associahedron~$\Asso[\rope]$, and that the normal fan of a Minkowski sum is the common refinement of the normal fans of the summands.
\end{proof}

\para{Quotientopes from shard polytopes}
\enlargethispage{.4cm}
Our second approach to realize the quotient fan~$\Fan[\equiv]$ as the normal fan of a polytope is based on the shard polytopes of~\cite{PadrolPilaudRitter}.
Consider a rope~$\rope \eqdef (u, v, \down, \up)$ and let~$\pi$ denote the directed path from~$u$ to~$v$ in the transitive reduction of~$D$.
Define
\begin{itemize}
\item a \defn{$\rope$-alternating matching} as a pair~$(M_\down, M_\up)$ with~$M_\down \subseteq \{u\} \cup \down$ and~$M_\up \subseteq \up \cup \{v\}$ such that~$M_\down$ and~$M_\up$ are alternating along~$\pi$, 
\item a \defn{$\rope$-fall} (resp.~\defn{$\rope$-rise}) as a subset of the vertices of~$\pi$ situated between~$u$ and an arc~$(w,w')$ of~$\pi$ such that~$w \in \{u\} \cup \down$ while~$w' \in \up \cup \{v\}$ (resp.~$w \in \{u\} \cup \up$ while~$w' \in \down \cup \{v\}$).
\end{itemize}
The \defn{shard polytope}~$\shardPolytope$ of a rope~$\rope \eqdef (u, v, \down, \up)$ is the polytope of~$\R^V$ defined equivalently as
\begin{itemize}
\item the convex hull of the vectors~$\one_{M_\down} - \one_{M_\up}$ for all $\rope$-alternating matchings~$M \eqdef (M_\down, M_\up)$,
\item the subset of the plane~$\HH$ (orthogonal to the characteristic vectors of the connected components of~$D$) defined by
	\begin{enumerate}[$\circ$]
	\item $x_w = 0$ for all~$v \notin \{u,v\} \cup \down \cup \up$,
	\item $x_w \ge 0$ for~$w \in \down$ and~$x_w \le 0$ for~$w \in \up$,
	\item $\sum_{w \in F} x_w \le 1$ for each $\rope$-fall~$F$ and $\sum_{w \in R} x_w \ge 0$ for each $\rope$-rise~$R$.
	\end{enumerate}
\end{itemize}
For instance, the shard polytope~$\shardPolytope$ of a rope of the form~$\rope \eqdef (u, v, \down, \varnothing)$ is the face~$\simplex_{\{u,v\} \cup \down}$ of the standard simplex, translated by the vector~$-\b{e}_v$.
We refer to~\cite{PadrolPilaudRitter} for an alternative definition of the shard polytope~$\shardPolytope$ as the matroid polytope of a series-parallel graph associated to~$\rope$.
The following statement is the fundamental property of shard polytopes.

\begin{proposition}
\label{prop:shardPolytopes}
Assume that~$D$ is skeletal.
For any rope~$\rope$ of~$D$, the union of the walls of the normal fan of the shard polytope~$\shardPolytope$ contains the shard~$\shard_\rope$ and is contained in the union of the shards~$\shard_{\rope'}$ for all subropes~$\rope'$ of~$\rope$.
\end{proposition}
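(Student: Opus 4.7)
My plan is to work directly from the two descriptions of $\shardPolytope$ given in the statement and analyze its normal fan via its edges, each of which is normal to a wall. The outer normals of the facets of $\shardPolytope$ are read off from the inequality description: the coordinate vectors $\b{e}_w$ for $w \in \up$ and $-\b{e}_w$ for $w \in \down$ coming from the sign constraints, together with the vectors $\one_F$ coming from $\rope$-falls $F$ and $-\one_R$ coming from $\rope$-rises $R$. A wall of the normal fan is the normal cone of an edge of $\shardPolytope$, so establishing the two inclusions reduces to controlling these edges and checking that they lie in the right shards.

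For the inclusion $\bigcup \mathrm{walls}(\shardPolytope) \subseteq \bigcup_{\rope' \preceq \rope} \shard_{\rope'}$, I would work with the vertex description. Two vertices $\one_{M_\down} - \one_{M_\up}$ and $\one_{M_\down'} - \one_{M_\up'}$ joined by an edge of $\shardPolytope$ differ by swapping a single position in the alternation along the supporting path $\pi$. Since $D$ is skeletal, the transitive support of $(u,v)$ induces a tournament, so any two such adjacent positions $w, w'$ are joined by an arc of $D$, and the edge direction is $\pm(\b{e}_w - \b{e}_{w'})$. This shows that the wall lies in the hyperplane $\bb{H}_{ww'}$. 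Inspecting the $\rope$-alternating matchings on both sides of the swap exhibits, for each vertex $z$ of the transitive support of $(w,w')$, an inequality $x_z \le x_w = x_{w'}$ or $x_w = x_{w'} \le x_z$ satisfied on the whole wall. These inequalities partition the transitive support of $(w,w')$ into sets $\down'$ and $\up'$ contained respectively in $\down \cup \{u,v\}$ and $\up \cup \{u,v\}$ minus $\{w,w'\}$, defining a subrope $\rope' = (w, w', \down', \up')$ of $\rope$ whose shard $\shard_{\rope'}$ contains the wall. I expect the cleanest way to enumerate the edges is through the matroid-polytope description of $\shardPolytope$ as attached to a series--parallel graph in \cite{PadrolPilaudRitter}, where edges correspond to basis exchanges and automatically have the desired form $\b{e}_w - \b{e}_{w'}$.

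For the reverse inclusion $\shard_\rope \subseteq \bigcup \mathrm{walls}(\shardPolytope)$, I would exhibit an explicit edge whose normal cone is exactly $\shard_\rope$. The two $\rope$-alternating matchings $(\{u\} \cup \down, \varnothing)$ and $(\down, \{v\})$ are both vertices, and swapping $u$ for $v$ moves between them along an edge in direction $\b{e}_u - \b{e}_v$. The facets adjacent to this edge come from the maximal $\rope$-fall inequality $\sum_{w \in \{u\}\cup\down} x_w \le 1$ and the maximal $\rope$-rise inequality $\sum_{w \in \up \cup \{v\}} x_w \ge 0$, whose shared normal cone consists of the vectors in $\bb{H}_{uv}$ satisfying $x_w \le x_u = x_v$ for $w \in \down$ and $x_w \ge x_u = x_v$ for $w \in \up$, which is exactly $\shard_\rope$. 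The main obstacle will be the second paragraph: verifying that the matchings that appear as vertices adjacent to a given edge really do force the sign constraints for \emph{all} vertices in the transitive support of $(w,w')$, and not just those along $\pi$; here the series--parallel matroid model should allow this cleanly by reducing the exchange axiom to a local swap on the path and propagating the other constraints through the $\rope$-fall and $\rope$-rise inequalities.
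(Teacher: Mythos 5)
The paper's own proof of \cref{prop:shardPolytopes} is a single sentence: the statement was established for tournaments in~\cite{PadrolPilaudRitter}, and since $D$ is skeletal the transitive support of $\rope$ induces a tournament in $D$, so the shard polytope $\shardPolytope$ is just the classical shard polytope of the corresponding arc embedded in $\R^V$ (with coordinates $x_w=0$ outside that support), from which the general case follows by pulling back along the coordinate projection. Your proposal instead attempts a direct, self-contained proof by analyzing the edges of $\shardPolytope$, which amounts to reproving the cited result of~\cite{PadrolPilaudRitter} rather than reducing to it. That is a legitimate alternative route, but it is substantially longer and, more importantly, there is a concrete error in your third paragraph.

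You claim that $(\{u\}\cup\down,\varnothing)$ and $(\down,\{v\})$ are $\rope$-alternating matchings and that the segment between the corresponding points is an edge whose normal cone is $\shard_\rope$. Unless $\down=\varnothing$, neither pair is a $\rope$-alternating matching: alternation along $\pi$ forbids two consecutive elements of $M_\down\cup M_\up$ (in the order of $\pi$) from lying on the same side, so $M_\down=\{u\}\cup\down$ with $M_\up=\varnothing$ has $|\down|+1$ consecutive down elements and is not alternating. One can also see the point $\one_{\{u\}\cup\down}$ violates the $\rope$-fall inequalities (a fall containing all of $\{u\}\cup\down$ would give $\sum_{w\in F}x_w=|\down|+1>1$), so it is not even in $\shardPolytope$. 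The edge you want is the one joining the two matchings $(\varnothing,\varnothing)$ and $(\{u\},\{v\})$, i.e.\ the segment $[\b{0},\,\b{e}_u-\b{e}_v]$; a short computation with a generic point of $\shard_\rope$ shows these are exactly the two matchings maximizing the linear functional, so this edge's normal cone contains $\shard_\rope$. Your second paragraph (the reverse inclusion via basis exchanges in the series--parallel matroid) is in the right spirit, but as you note yourself it leaves the propagation of the sign constraints to the whole transitive support of $(w,w')$ unverified; filling that in is essentially the content of the arguments in~\cite{PadrolPilaudRitter} that the paper is simply invoking. If you want a short proof, the embedding reduction the paper uses is the way to go.
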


\begin{proof}
It was proved in~\cite{PadrolPilaudRitter} when~$D$ is a tournament, and thus follows in general since the shard polytope~$\shardPolytope$ is just an embedding of a classical shard polytope in~$\R^V$.
\end{proof}

Based on \cref{prop:shardPolytopes}, we obtain polytopal realizations of all lattice quotients of~$\AR$ as Minkowski sums of shard polytopes.

\begin{theorem}
\label{thm:MinkowskiSumShardPolytopes}
Assume that~$D$ is skeletal.
For any congruence~$\equiv$ of~$\AR$ and any positive coefficients~${\b{s} \in (\R_{>0})^{\bb{I}_\equiv}}$, the quotient fan~$\Fan[\equiv]$ is the normal fan of the Minkowski sum~$\sum_{\rope \in \bb{I}_\equiv} \b{s}_\rope \, \shardPolytope$.
\end{theorem}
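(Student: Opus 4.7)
The plan is to follow the strategy of~\cite{PadrolPilaudRitter}: a complete polyhedral fan in~$\R^V$ is determined by the union of the supports of its walls, since its open chambers are exactly the connected components of the complement of this union and every cone is then recovered as an intersection of chamber closures. It therefore suffices to check that the union of walls of the normal fan of the Minkowski sum~$\polytope{P} \eqdef \sum_{\rope \in \bb{I}_\equiv} \b{s}_\rope \, \shardPolytope$ equals~$\bigcup_{\rope \in \bb{I}_\equiv} \shard_\rope$, which is by construction the union of walls of the quotient fan~$\Fan[\equiv]$ (see \cref{subsec:graphicalFanShardsQuotientFans}).

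The key point is that the normal fan of a Minkowski sum is the common refinement of the normal fans of its summands, and that positive rescaling of a summand does not affect its normal fan. Hence the union of walls of the normal fan of~$\polytope{P}$ coincides with~$\bigcup_{\rope \in \bb{I}_\equiv} \text{walls}(\shardPolytope)$. Applying the two inclusions of \cref{prop:shardPolytopes} summand by summand and taking unions yields
\[
\bigcup_{\rope \in \bb{I}_\equiv} \shard_\rope \; \subseteq \; \bigcup_{\rope \in \bb{I}_\equiv} \text{walls}(\shardPolytope) \; \subseteq \; \bigcup_{\rope \in \bb{I}_\equiv} \, \bigcup_{\rope' \text{ subrope of } \rope} \shard_{\rope'}.
\]

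The conclusion then follows from \cref{coro:ropeIdeals}: since~$\bb{I}_\equiv$ is a lower ideal of the subrope order, every subrope~$\rope'$ of a rope~$\rope \in \bb{I}_\equiv$ belongs to~$\bb{I}_\equiv$, so the upper and lower bounds above coincide. This gives~$\text{walls}(\polytope{P}) = \bigcup_{\rope \in \bb{I}_\equiv} \shard_\rope$, which identifies the normal fan of~$\polytope{P}$ with~$\Fan[\equiv]$ by the preliminary observation.

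The main obstacle, and where the skeletal hypothesis truly intervenes, is already packaged inside \cref{prop:shardPolytopes}: it is what guarantees that the shard polytope~$\shardPolytope$ cuts out precisely the shard~$\shard_\rope$ (and at most pieces of the shards of its subropes) as walls of its normal fan, generalizing the analogous fact for tournaments from~\cite{PadrolPilaudRitter}. Once that result is in hand, the compatibility between the Minkowski sum operation on polytopes, the common refinement operation on fans, and the lower ideal structure of congruences makes the present theorem follow by formal manipulation, so no further non-trivial geometric input is required.
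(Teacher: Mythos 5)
Your proof is correct and follows essentially the same route as the paper: take the normal fan of the Minkowski sum as the common refinement of the normal fans of the shard polytopes, apply \cref{prop:shardPolytopes} to each summand to sandwich the union of walls, and use the fact that~$\bb{I}_\equiv$ is a lower ideal of the subrope order so the two bounds collapse to~$\bigcup_{\rope \in \bb{I}_\equiv} \shard_\rope$. You simply spell out the sandwich argument and the role of the ideal property that the paper leaves implicit in a single sentence.
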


\begin{proof}
The normal fan of a Minkowski sum is the common refinement of the normal fans of the summands.
Hence, by \cref{prop:shardPolytopes}, the union of the walls of the normal fan of~$\sum_{\rope \in \bb{I}_\equiv} \b{s}_\rope \, \shardPolytope$ is precisely the union of the shards~$\shard_\rope$ for all ropes~$\rope \in \bb{I}_\equiv$.
In other words, the normal fan of~$\sum_{\rope \in \bb{I}_\equiv} \b{s}_\rope \, \shardPolytope$ has the same walls as the quotient fan~$\Fan[\equiv]$, so that these two fans coincide.
\end{proof}

It follows that the Hasse diagram of the quotient~$\AR/{\equiv}$ is isomorphic to the graph of the polytope~$\sum_{\rope \in \bb{I}_\equiv} \b{s}_\rope \, \shardPolytope$, oriented in the direction~$\b{\omega}_D \eqdef \sum_{(u,v) \in A} \b{e}_v - \b{e}_u$.
These Minkowski sums are illustrated in \cref{fig:associahedra}.
It would be interesting to characterize which of these quotientopes are simple, which is a reformulation of \cref{pb:regularCoverGraph} for arbitrary congruences and \cref{pb:TamariRegular,pb:allCambrianRegular} for Cambrian congruences.

\begin{figure}
	\centerline{

\begin{tikzpicture}%
	[x={(-0.366215cm, -0.789554cm)},
	y={(0.235950cm, -0.590693cm)},
	z={(0.900119cm, -0.166391cm)},
	scale=1.000000,
	back/.style={very thin, opacity=0.5},
	edge/.style={color=blue, thick, decoration={markings, mark=at position 0.5 with {\arrow{>}}}},
	facet/.style={fill=blue,fill opacity=0},
	vertex/.style={}]
%
%

\coordinate (0.00000, 0.00000, 0.00000) at (0.00000, 0.00000, 0.00000);
\coordinate (0.00000, 0.00000, 1.63299) at (0.00000, 0.00000, 1.63299);
\coordinate (1.33333, -0.94281, 0.00000) at (1.33333, -0.94281, 0.00000);
\coordinate (1.33333, -0.94281, 1.63299) at (1.33333, -0.94281, 1.63299);
\coordinate (4.00000, 0.00000, 1.63299) at (4.00000, 0.00000, 1.63299);
\coordinate (1.33333, 0.47140, 2.44949) at (1.33333, 0.47140, 2.44949);
\coordinate (4.00000, 0.00000, -1.63299) at (4.00000, 0.00000, -1.63299);
\coordinate (2.66667, -0.47140, 2.44949) at (2.66667, -0.47140, 2.44949);
\coordinate (2.66667, 0.94281, -1.63299) at (2.66667, 0.94281, -1.63299);
\coordinate (2.66667, 0.94281, 1.63299) at (2.66667, 0.94281, 1.63299);
\draw[edge,postaction={decorate},back] (2.66667, 0.94281, -1.63299) -- (0.00000, 0.00000, 0.00000);
\draw[edge,postaction={decorate},back] (4.00000, 0.00000, -1.63299) -- (2.66667, 0.94281, -1.63299);
\draw[edge,postaction={decorate},back] (2.66667, 0.94281, 1.63299) -- (2.66667, 0.94281, -1.63299);
\node[vertex] at (2.66667, 0.94281, -1.63299)     {};
\fill[facet] (1.33333, -0.94281, 1.63299) -- (0.00000, 0.00000, 1.63299) -- (0.00000, 0.00000, 0.00000) -- (1.33333, -0.94281, 0.00000) -- cycle {};
\fill[facet] (2.66667, -0.47140, 2.44949) -- (1.33333, -0.94281, 1.63299) -- (0.00000, 0.00000, 1.63299) -- (1.33333, 0.47140, 2.44949) -- cycle {};
\fill[facet] (1.33333, 0.47140, 2.44949) -- (2.66667, 0.94281, 1.63299) -- (4.00000, 0.00000, 1.63299) -- (2.66667, -0.47140, 2.44949) -- cycle {};
\fill[facet] (4.00000, 0.00000, 1.63299) -- (2.66667, -0.47140, 2.44949) -- (1.33333, -0.94281, 1.63299) -- (1.33333, -0.94281, 0.00000) -- (4.00000, 0.00000, -1.63299) -- cycle {};
\draw[edge,postaction={decorate}] (0.00000, 0.00000, 1.63299) -- (0.00000, 0.00000, 0.00000);
\draw[edge,postaction={decorate}] (1.33333, -0.94281, 0.00000) -- (0.00000, 0.00000, 0.00000);
\draw[edge,postaction={decorate}] (1.33333, -0.94281, 1.63299) -- (0.00000, 0.00000, 1.63299);
\draw[edge,postaction={decorate}] (1.33333, 0.47140, 2.44949) -- (0.00000, 0.00000, 1.63299);
\draw[edge,postaction={decorate}] (1.33333, -0.94281, 1.63299) -- (1.33333, -0.94281, 0.00000);
\draw[edge,postaction={decorate}] (4.00000, 0.00000, -1.63299) -- (1.33333, -0.94281, 0.00000);
\draw[edge,postaction={decorate}] (2.66667, -0.47140, 2.44949) -- (1.33333, -0.94281, 1.63299);
\draw[edge,postaction={decorate}] (4.00000, 0.00000, 1.63299) -- (4.00000, 0.00000, -1.63299);
\draw[edge,postaction={decorate}] (4.00000, 0.00000, 1.63299) -- (2.66667, -0.47140, 2.44949);
\draw[edge,postaction={decorate}] (4.00000, 0.00000, 1.63299) -- (2.66667, 0.94281, 1.63299);
\draw[edge,postaction={decorate}] (2.66667, -0.47140, 2.44949) -- (1.33333, 0.47140, 2.44949);
\draw[edge,postaction={decorate}] (2.66667, 0.94281, 1.63299) -- (1.33333, 0.47140, 2.44949);
\node[vertex] at (0.00000, 0.00000, 0.00000)     {};
\node[vertex] at (0.00000, 0.00000, 1.63299)     {};
\node[vertex] at (1.33333, -0.94281, 0.00000)     {};
\node[vertex] at (1.33333, -0.94281, 1.63299)     {};
\node[vertex] at (4.00000, 0.00000, 1.63299)     {};
\node[vertex] at (1.33333, 0.47140, 2.44949)     {};
\node[vertex] at (4.00000, 0.00000, -1.63299)     {};
\node[vertex] at (2.66667, -0.47140, 2.44949)     {};
\node[vertex] at (2.66667, 0.94281, 1.63299)     {};
\end{tikzpicture}
			\raisebox{-.7cm}{

\begin{tikzpicture}%
	[x={(-0.366215cm, -0.789554cm)},
	y={(0.235950cm, -0.590693cm)},
	z={(0.900119cm, -0.166391cm)},
	scale=1.000000,
	back/.style={very thin, opacity=0.5},
	edge/.style={color=blue, thick, decoration={markings, mark=at position 0.5 with {\arrow{>}}}},
	facet/.style={fill=blue,fill opacity=0},
	vertex/.style={}]
%
%

\coordinate (0.00000, 0.00000, 0.00000) at (0.00000, 0.00000, 0.00000);
\coordinate (0.00000, 1.41421, -0.81650) at (0.00000, 1.41421, -0.81650);
\coordinate (0.00000, 1.41421, 0.81650) at (0.00000, 1.41421, 0.81650);
\coordinate (0.00000, 2.82843, 0.00000) at (0.00000, 2.82843, 0.00000);
\coordinate (4.00000, 2.82843, 0.00000) at (4.00000, 2.82843, 0.00000);
\coordinate (1.33333, 1.88562, -1.63299) at (1.33333, 1.88562, -1.63299);
\coordinate (4.00000, 0.00000, 1.63299) at (4.00000, 0.00000, 1.63299);
\coordinate (1.33333, 1.88562, 1.63299) at (1.33333, 1.88562, 1.63299);
\coordinate (1.33333, 3.29983, -0.81650) at (1.33333, 3.29983, -0.81650);
\coordinate (1.33333, 3.29983, 0.81650) at (1.33333, 3.29983, 0.81650);
\coordinate (4.00000, 0.00000, -1.63299) at (4.00000, 0.00000, -1.63299);
\coordinate (2.66667, 3.77124, 0.00000) at (2.66667, 3.77124, 0.00000);
\coordinate (4.00000, -2.82843, 0.00000) at (4.00000, -2.82843, 0.00000);
\draw[edge,postaction={decorate},back] (0.00000, 1.41421, -0.81650) -- (0.00000, 0.00000, 0.00000);
\draw[edge,postaction={decorate},back] (0.00000, 2.82843, 0.00000) -- (0.00000, 1.41421, -0.81650);
\draw[edge,postaction={decorate},back] (1.33333, 1.88562, -1.63299) -- (0.00000, 1.41421, -0.81650);
\draw[edge,postaction={decorate},back] (0.00000, 2.82843, 0.00000) -- (0.00000, 1.41421, 0.81650);
\draw[edge,postaction={decorate},back] (1.33333, 3.29983, -0.81650) -- (0.00000, 2.82843, 0.00000);
\draw[edge,postaction={decorate},back] (1.33333, 3.29983, 0.81650) -- (0.00000, 2.82843, 0.00000);
\draw[edge,postaction={decorate},back] (1.33333, 3.29983, -0.81650) -- (1.33333, 1.88562, -1.63299);
\draw[edge,postaction={decorate},back] (4.00000, 0.00000, -1.63299) -- (1.33333, 1.88562, -1.63299);
\draw[edge,postaction={decorate},back] (2.66667, 3.77124, 0.00000) -- (1.33333, 3.29983, -0.81650);
\node[vertex] at (0.00000, 2.82843, 0.00000)     {};
\node[vertex] at (0.00000, 1.41421, -0.81650)     {};
\node[vertex] at (1.33333, 1.88562, -1.63299)     {};
\node[vertex] at (1.33333, 3.29983, -0.81650)     {};
\fill[facet] (4.00000, 0.00000, 1.63299) -- (4.00000, -2.82843, 0.00000) -- (0.00000, 0.00000, 0.00000) -- (0.00000, 1.41421, 0.81650) -- (1.33333, 1.88562, 1.63299) -- cycle {};
\fill[facet] (4.00000, -2.82843, 0.00000) -- (4.00000, 0.00000, 1.63299) -- (4.00000, 2.82843, 0.00000) -- (4.00000, 0.00000, -1.63299) -- cycle {};
\fill[facet] (2.66667, 3.77124, 0.00000) -- (4.00000, 2.82843, 0.00000) -- (4.00000, 0.00000, 1.63299) -- (1.33333, 1.88562, 1.63299) -- (1.33333, 3.29983, 0.81650) -- cycle {};
\draw[edge,postaction={decorate}] (0.00000, 1.41421, 0.81650) -- (0.00000, 0.00000, 0.00000);
\draw[edge,postaction={decorate}] (4.00000, -2.82843, 0.00000) -- (0.00000, 0.00000, 0.00000);
\draw[edge,postaction={decorate}] (1.33333, 1.88562, 1.63299) -- (0.00000, 1.41421, 0.81650);
\draw[edge,postaction={decorate}] (4.00000, 2.82843, 0.00000) -- (4.00000, 0.00000, 1.63299);
\draw[edge,postaction={decorate}] (4.00000, 2.82843, 0.00000) -- (4.00000, 0.00000, -1.63299);
\draw[edge,postaction={decorate}] (4.00000, 2.82843, 0.00000) -- (2.66667, 3.77124, 0.00000);
\draw[edge,postaction={decorate}] (4.00000, 0.00000, 1.63299) -- (1.33333, 1.88562, 1.63299);
\draw[edge,postaction={decorate}] (4.00000, 0.00000, 1.63299) -- (4.00000, -2.82843, 0.00000);
\draw[edge,postaction={decorate}] (1.33333, 3.29983, 0.81650) -- (1.33333, 1.88562, 1.63299);
\draw[edge,postaction={decorate}] (2.66667, 3.77124, 0.00000) -- (1.33333, 3.29983, 0.81650);
\draw[edge,postaction={decorate}] (4.00000, 0.00000, -1.63299) -- (4.00000, -2.82843, 0.00000);
\node[vertex] at (0.00000, 0.00000, 0.00000)     {};
\node[vertex] at (0.00000, 1.41421, 0.81650)     {};
\node[vertex] at (4.00000, 2.82843, 0.00000)     {};
\node[vertex] at (4.00000, 0.00000, 1.63299)     {};
\node[vertex] at (1.33333, 1.88562, 1.63299)     {};
\node[vertex] at (1.33333, 3.29983, 0.81650)     {};
\node[vertex] at (4.00000, 0.00000, -1.63299)     {};
\node[vertex] at (2.66667, 3.77124, 0.00000)     {};
\node[vertex] at (4.00000, -2.82843, 0.00000)     {};
\end{tikzpicture}}
			\raisebox{-.9cm}{

\begin{tikzpicture}%
	[x={(-0.366215cm, -0.789554cm)},
	y={(0.235950cm, -0.590693cm)},
	z={(0.900119cm, -0.166391cm)},
	scale=1.000000,
	back/.style={very thin, opacity=0.5},
	edge/.style={color=blue, thick, decoration={markings, mark=at position 0.5 with {\arrow{>}}}},
	facet/.style={fill=blue,fill opacity=0},
	vertex/.style={}]
%
%

\coordinate (0.00000, 0.00000, 0.00000) at (0.00000, 0.00000, 0.00000);
\coordinate (0.00000, 0.00000, 1.63299) at (0.00000, 0.00000, 1.63299);
\coordinate (4.00000, 2.82843, 1.63299) at (4.00000, 2.82843, 1.63299);
\coordinate (0.00000, 1.41421, 2.44949) at (0.00000, 1.41421, 2.44949);
\coordinate (0.00000, 2.82843, -1.63299) at (0.00000, 2.82843, -1.63299);
\coordinate (4.00000, 2.82843, -3.26599) at (4.00000, 2.82843, -3.26599);
\coordinate (0.00000, 2.82843, 1.63299) at (0.00000, 2.82843, 1.63299);
\coordinate (4.00000, 0.00000, 3.26599) at (4.00000, 0.00000, 3.26599);
\coordinate (1.33333, 1.88562, 3.26599) at (1.33333, 1.88562, 3.26599);
\coordinate (4.00000, -2.82843, 1.63299) at (4.00000, -2.82843, 1.63299);
\coordinate (1.33333, 3.29983, 2.44949) at (1.33333, 3.29983, 2.44949);
\coordinate (2.66667, 3.77124, -3.26599) at (2.66667, 3.77124, -3.26599);
\coordinate (4.00000, -2.82843, 0.00000) at (4.00000, -2.82843, 0.00000);
\coordinate (2.66667, 3.77124, 1.63299) at (2.66667, 3.77124, 1.63299);
\draw[edge,postaction={decorate},back] (0.00000, 2.82843, -1.63299) -- (0.00000, 0.00000, 0.00000);
\draw[edge,postaction={decorate},back] (0.00000, 2.82843, 1.63299) -- (0.00000, 1.41421, 2.44949);
\draw[edge,postaction={decorate},back] (0.00000, 2.82843, 1.63299) -- (0.00000, 2.82843, -1.63299);
\draw[edge,postaction={decorate},back] (2.66667, 3.77124, -3.26599) -- (0.00000, 2.82843, -1.63299);
\draw[edge,postaction={decorate},back] (4.00000, 2.82843, -3.26599) -- (2.66667, 3.77124, -3.26599);
\draw[edge,postaction={decorate},back] (1.33333, 3.29983, 2.44949) -- (0.00000, 2.82843, 1.63299);
\draw[edge,postaction={decorate},back] (2.66667, 3.77124, 1.63299) -- (2.66667, 3.77124, -3.26599);
\node[vertex] at (0.00000, 2.82843, -1.63299)     {};
\node[vertex] at (2.66667, 3.77124, -3.26599)     {};
\node[vertex] at (0.00000, 2.82843, 1.63299)     {};
\fill[facet] (4.00000, -2.82843, 0.00000) -- (0.00000, 0.00000, 0.00000) -- (0.00000, 0.00000, 1.63299) -- (4.00000, -2.82843, 1.63299) -- cycle {};
\fill[facet] (4.00000, 0.00000, 3.26599) -- (4.00000, -2.82843, 1.63299) -- (0.00000, 0.00000, 1.63299) -- (0.00000, 1.41421, 2.44949) -- (1.33333, 1.88562, 3.26599) -- cycle {};
\fill[facet] (2.66667, 3.77124, 1.63299) -- (4.00000, 2.82843, 1.63299) -- (4.00000, 0.00000, 3.26599) -- (1.33333, 1.88562, 3.26599) -- (1.33333, 3.29983, 2.44949) -- cycle {};
\fill[facet] (4.00000, -2.82843, 0.00000) -- (4.00000, 2.82843, -3.26599) -- (4.00000, 2.82843, 1.63299) -- (4.00000, 0.00000, 3.26599) -- (4.00000, -2.82843, 1.63299) -- cycle {};
\draw[edge,postaction={decorate}] (0.00000, 0.00000, 1.63299) -- (0.00000, 0.00000, 0.00000);
\draw[edge,postaction={decorate}] (4.00000, -2.82843, 0.00000) -- (0.00000, 0.00000, 0.00000);
\draw[edge,postaction={decorate}] (0.00000, 1.41421, 2.44949) -- (0.00000, 0.00000, 1.63299);
\draw[edge,postaction={decorate}] (4.00000, -2.82843, 1.63299) -- (0.00000, 0.00000, 1.63299);
\draw[edge,postaction={decorate}] (4.00000, 2.82843, 1.63299) -- (4.00000, 2.82843, -3.26599);
\draw[edge,postaction={decorate}] (4.00000, 2.82843, 1.63299) -- (4.00000, 0.00000, 3.26599);
\draw[edge,postaction={decorate}] (4.00000, 2.82843, 1.63299) -- (2.66667, 3.77124, 1.63299);
\draw[edge,postaction={decorate}] (1.33333, 1.88562, 3.26599) -- (0.00000, 1.41421, 2.44949);
\draw[edge,postaction={decorate}] (4.00000, 2.82843, -3.26599) -- (4.00000, -2.82843, 0.00000);
\draw[edge,postaction={decorate}] (4.00000, 0.00000, 3.26599) -- (1.33333, 1.88562, 3.26599);
\draw[edge,postaction={decorate}] (4.00000, 0.00000, 3.26599) -- (4.00000, -2.82843, 1.63299);
\draw[edge,postaction={decorate}] (1.33333, 3.29983, 2.44949) -- (1.33333, 1.88562, 3.26599);
\draw[edge,postaction={decorate}] (4.00000, -2.82843, 1.63299) -- (4.00000, -2.82843, 0.00000);
\draw[edge,postaction={decorate}] (2.66667, 3.77124, 1.63299) -- (1.33333, 3.29983, 2.44949);
\node[vertex] at (0.00000, 0.00000, 0.00000)     {};
\node[vertex] at (0.00000, 0.00000, 1.63299)     {};
\node[vertex] at (4.00000, 2.82843, 1.63299)     {};
\node[vertex] at (0.00000, 1.41421, 2.44949)     {};
\node[vertex] at (4.00000, 2.82843, -3.26599)     {};
\node[vertex] at (4.00000, 0.00000, 3.26599)     {};
\node[vertex] at (1.33333, 1.88562, 3.26599)     {};
\node[vertex] at (4.00000, -2.82843, 1.63299)     {};
\node[vertex] at (1.33333, 3.29983, 2.44949)     {};
\node[vertex] at (4.00000, -2.82843, 0.00000)     {};
\node[vertex] at (2.66667, 3.77124, 1.63299)     {};
\end{tikzpicture}}
	}
	\caption{The associahedra for the directed acyclic graphs of \cref{fig:canonicalJoinComplexes}. Their normal fans are the sylvester fans of \cref{fig:quotientFans} and their graphs oriented appropriately are isomorphic to the Hasse diagrams of the Tamari lattices of \cref{fig:sylvesterCongruences}. The rightmost associahedron is the classical associahedron of~\cite{ShniderSternberg,Loday}. The middle associahedron is not simple while the other two are.}
	\label{fig:associahedra}
\end{figure}
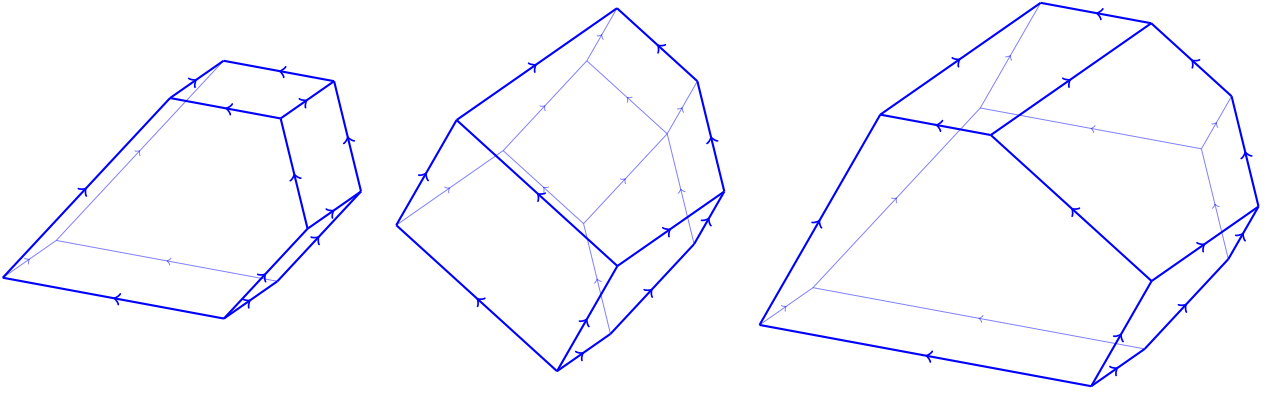

\para{Shard polytopes and deformed graphical zonotopes}
A \defn{deformation} of the graphical zonotope~$\Zono$ is any polytope whose normal fan coarsens the graphical fan~$\Fan$.
Under dilation and Minkowski addition, these deformations form a polyhedral cone, called the \defn{deformation cone} of the graphical zonotope~$\Zono$, whose interior is also called the \defn{type cone} of the graphical fan~$\Fan$.
We refer to~\cite{Postnikov, PostnikovReinerWilliams, McMullen-typeCone, PadrolPaluPilaudPlamondon, AlbertinPilaudRitter, PadrolPilaudRitter, PadrolPilaudPoullot-typeConeNestedFan, PadrolPilaudPoullot-typeConeGraphicalFan} for more details on the deformation cone of a polytope and type cone of a fan, in particular in the context of permutahedra and associahedra.

\medskip
\enlargethispage{.2cm}
For instance, deformations of the classical permutahedron were called generalized permutahedra in~\cite{Postnikov}.
One important result on deformed permutahedra is that they can all be written as Minkowski sum and difference of dilates of the faces of the standard simplex~\cite{Postnikov, ArdilaBenedettiDoker}.
This extends for graphical zonotopes as follows.

\begin{proposition}[\cite{PadrolPilaudPoullot-typeConeGraphicalFan}]
\label{prop:MinkowskiBasisFacesStandardSimplex}
For any directed acyclic graph~$D$ (not necessarily skeletal), any deformation of~$\Zono$ can be written as a Minkowski sum and difference of dilates of the faces~$\simplex_K$ of the standard simplex~$\simplex_V$ for all cliques~$K$ of~$D$ with~$|K| \ge 2$.
In fact, the faces~$\simplex_K$ for all cliques~$K$ of~$D$ with~$|K| \ge 2$ form a linear basis of rays of the deformation cone of the graphical zonotope~$\Zono$.
\end{proposition}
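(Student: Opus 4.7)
\noindent The plan is to derive this statement from the Ardila--Benedetti--Doker theorem, which asserts that the family $\set{\simplex_K}{K \subseteq V, |K| \ge 2}$ is a linear basis of rays of the deformation cone of the classical permutahedron on~$V$, modulo translations. Since the graphical fan~$\Fan$ is coarsened by the braid fan, any deformation of~$\Zono$ is in particular a generalized permutahedron, so the ABD basis applies and any such deformation~$\polytope{P}$ admits a unique expansion $\polytope{P} = \sum_{K \subseteq V, |K| \ge 2} y_K \, \simplex_K$ as a signed Minkowski combination. Conversely, each $\simplex_K$ with~$K$ a clique of~$D$ (and $|K| \ge 2$) is itself a deformation of~$\Zono$: its edges point in the directions $\b{e}_u - \b{e}_v$ for $u,v \in K$, which are edge directions of~$\Zono$ by the clique assumption.

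It therefore suffices to prove that, for any deformation~$\polytope{P}$ of~$\Zono$, the coefficient~$y_K$ in the ABD expansion vanishes whenever~$K$ is not a clique of~$D$; this will simultaneously yield spanning (every deformation is a signed sum of clique simplices) and linear independence (inherited from the full ABD basis). So I would fix a non-clique~$K$, pick two vertices~$a,b \in K$ with $\{a,b\}$ not an edge of~$D$, and for each subset~$U \subseteq V$ containing~$\{a,b\}$, build two linear orders $\sigma, \sigma'$ of~$V$ that agree outside~$\{a,b\}$, list $U \ssm \{a,b\}$ at the bottom, place~$a$ and~$b$ at the top of~$U$ in opposite orders, and list $V \ssm U$ at the top. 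Letting~$E$ be the acyclic reorientation of~$D$ induced by~$\sigma$, the fact that~$\{a,b\}$ is not an edge of~$D$ guarantees that~$\sigma'$ is also a linear extension of~$E$, so both direction vectors lie in the same chamber~$\polytope{C}_E$ of~$\Fan$. Since~$\polytope{P}$ deforms~$\Zono$, the maximising vertex of~$\polytope{P}$ must be common to~$\sigma$ and~$\sigma'$. Writing the maximiser of each~$\simplex_{K'}$ along a linear order as~$\b{e}_{\max K'}$, a direct case analysis shows that the maximiser changes under the swap precisely when~$\{a,b\} \subseteq K' \subseteq U$, and the change is exactly~$\b{e}_a - \b{e}_b$. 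Summing, the vertex equality forces the linear relation $\sum_{\{a,b\} \subseteq K' \subseteq U} y_{K'} = 0$.

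With these relations available for every~$U \supseteq \{a,b\}$, M\"obius inversion on the Boolean lattice of subsets of~$V \ssm \{a,b\}$ (after the change of variables $K' \mapsto K' \ssm \{a,b\}$) immediately yields~$y_{K'} = 0$ for every~$K' \supseteq \{a,b\}$, and in particular~$y_K = 0$. The step I expect to be the most delicate is the case analysis identifying which~$K'$ change their maximiser under the swap: it requires treating separately the sets that meet~$V \ssm U$ (whose maximiser lies above~$a$ and~$b$ in both orders and hence is unchanged) and the sets entirely contained in~$U$ (where the top element switches only when both~$a$ and~$b$ belong to~$K'$). Once this bookkeeping is handled cleanly, both conclusions of the proposition follow: the clique simplices~$\simplex_K$ span the deformation cone of~$\Zono$, and they inherit linear independence from the ABD basis.
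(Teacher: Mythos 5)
The paper attributes this proposition to~\cite{PadrolPilaudPoullot-typeConeGraphicalFan} and does not prove it itself, so there is no in-paper argument to compare your proposal against. On its own merits, your proof is correct and essentially complete. The reduction to the Ardila--Benedetti--Doker signed Minkowski decomposition is legitimate because the normal fan of any deformation of~$\Zono$ coarsens~$\Fan$, which in turn coarsens the braid fan, so every deformation of~$\Zono$ is a generalized permutahedron. The swap argument is also sound: for a non-adjacent pair~$\{a,b\}$, the two linear orders~$\sigma$ and~$\sigma'$ differing only by transposing~$a$ and~$b$ correspond to directions in the same chamber~$\polytope{C}_E$ of~$\Fan$, hence select the same vertex of the deformation~$\polytope{P}$. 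The bookkeeping of which summands~$\simplex_{K'}$ shift their maximizer is exactly as you describe: the shift is~$\b{e}_a - \b{e}_b$ precisely when~$\{a,b\} \subseteq K' \subseteq U$ (if~$K'$ meets~$V \ssm U$ the maximizer lives there and is unchanged; if~$K' \subseteq U$ but~$K'$ misses~$a$ or~$b$ the maximizer is unchanged as well). The resulting relations~$\sum_{\{a,b\} \subseteq K' \subseteq U} y_{K'} = 0$ for all~$U \supseteq \{a,b\}$ force~$y_{K'} = 0$ for every~$K' \supseteq \{a,b\}$ by M\"obius inversion, and iterating over all non-edges~$\{a,b\}$ kills every non-clique coefficient.

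Two small points are worth spelling out. First, to see that each clique simplex~$\simplex_K$ is a deformation of~$\Zono$, it is cleaner to observe directly that its normal fan is cut out by the hyperplanes~$x_u = x_v$ with~$u,v \in K$, all of which belong to the graphical arrangement since~$K$ is a clique; merely noting that the edge directions of~$\simplex_K$ occur among those of~$\Zono$ is necessary but, phrased that way, not obviously sufficient for a coarsening of normal fans. Second, the conclusion that the~$\simplex_K$ form a basis of \emph{rays} (and not merely linearly independent elements of the deformation cone) uses the classical fact that simplices are Minkowski indecomposable up to translation and dilation; this should be cited or at least stated. With those two remarks added, the argument is airtight.
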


Here, we just want to observe a similar property for shard polytopes when~$D$ is skeletal.
We first observe that it directly follows from~\cite{PadrolPilaudRitter} that shard polytopes are Minkowski indecomposable (thus correspond to certain rays of the deformation cone of the graphical fan~$\Fan$).

\begin{proposition}
When~$D$ is skeletal, any deformation of~$\Zono$ can be written as a Minkowski sum and difference of dilates of the shard polytopes~$\shardPolytope$ for the ropes~$\rope$ of~$D$.
In other words, the shard polytopes~$\shardPolytope$ for the ropes~$\rope$ of~$D$ form a linear basis of rays of the deformation cone of the graphical zonotope~$\Zono$.
\end{proposition}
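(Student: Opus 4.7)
The strategy is to combine Minkowski indecomposability with a dimension count and a linear independence argument driven by the subrope order. By the result of~\cite{PadrolPilaudRitter} mentioned just before the statement, every shard polytope~$\shardPolytope$ is Minkowski indecomposable and hence spans a ray of the deformation cone of~$\Zono$. By \cref{prop:MinkowskiBasisFacesStandardSimplex}, the linear span of the deformation cone has dimension equal to the number of cliques of~$D$ with at least two vertices, which by \cref{lem:numberRopes}\,(ii) equals the number of ropes of~$D$ since~$D$ is skeletal. It therefore suffices to prove that the shard polytopes are linearly independent as virtual polytopes: they will then automatically form a linear basis of rays.

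To show linear independence, I would peel off the support from the top of the subrope order. Suppose that $\sum_\rope c_\rope \, \shardPolytope = 0$, equivalently $\sum_\rope c_\rope \, h_{\shardPolytope} \equiv 0$ as piecewise linear support functions on~$\R^V$. Assume for contradiction that some $c_\rope$ is nonzero, and pick~$\rope^*$ maximal in the subrope order of \cref{subsec:subropes} among ropes~$\rope$ with~$c_\rope \ne 0$. By \cref{prop:shardPolytopes}, the shard $\shard_{\rope^*}$ appears in the walls of the normal fan of~$\shardPolytope[\rope^*]$, so $h_{\shardPolytope[\rope^*]}$ has a genuine bend across~$\shard_{\rope^*}$. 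For any other~$\rope$ with~$c_\rope \ne 0$, the same proposition forces each wall of the normal fan of~$\shardPolytope$ to lie in~$\shard_{\rope'}$ for some subrope~$\rope'$ of~$\rope$; since the interiors of distinct shards are disjoint (from their explicit inequality description in \cref{subsec:graphicalFanShardsQuotientFans}), this would force~$\rope^*$ to be a strict subrope of~$\rope$, contradicting the maximality of~$\rope^*$. Hence $h_{\shardPolytope}$ is linear across a dense open portion of~$\shard_{\rope^*}$ for every other~$\rope$ in the support, and comparing bends on the two sides of~$\shard_{\rope^*}$ forces $c_{\rope^*} = 0$, a contradiction.

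The delicate step is the transition from ``$\shard_{\rope^*}$ is a wall of the normal fan of~$\shardPolytope[\rope^*]$'' to ``$h_{\shardPolytope[\rope^*]}$ has a nonzero bend across~$\shard_{\rope^*}$ that cannot be cancelled by the remaining terms''. This rests on the standard fact that the gradient of a polytope's support function jumps across each wall of its normal fan by the difference of the two vertices dual to that wall (so the jump is nonzero by definition of a wall), combined with the disjointness of shard interiors recalled above. Once these points are pinned down, Minkowski indecomposability, the dimension count, and the maximal-rope argument combine cleanly to give the claim.
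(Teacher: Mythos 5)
Your proof is correct and follows essentially the same strategy as the paper: Minkowski indecomposability from~\cite{PadrolPilaudRitter} gives that each~$\shardPolytope$ is a ray of the deformation cone, \cref{prop:MinkowskiBasisFacesStandardSimplex} combined with the count of ropes gives the dimension of the linear span, and a linear independence argument upgrades the collection to a basis. The only substantive difference is that the paper simply cites ``the same proof as~\cite[Prop.~75]{PadrolPilaudRitter}'' for the linear independence, whereas you actually carry out the argument: peel a maximal rope~$\rope^*$ off the support, observe that by~\cref{prop:shardPolytopes} the support function~$h_{\shardPolytope[\rope^*]}$ bends across~$\shard_{\rope^*}$ while the other terms in the support cannot (since their bends lie in shards of their own subropes, and shard interiors are disjoint, any overlap would force~$\rope^*$ to be a strict subrope of a rope in the support, contradicting maximality), and conclude~$c_{\rope^*}=0$. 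This peeling-from-the-top argument is precisely the content of the cited Prop.~75, so you have faithfully reconstructed the hidden step rather than taking a genuinely different route. Your citation of \cref{lem:numberRopes}\,(ii) rather than (i) in the dimension count is if anything more direct than the paper's, since (ii) is the explicit bijection between ropes and cliques that makes the dimensions match.
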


\begin{proof}
The same proof as \cite[Prop.~75]{PadrolPilaudRitter} shows that the shard polytopes correspond to linearly independent rays of the deformation cone of the graphical fan~$\Fan$.
The fact that they indeed form a basis is thus a consequence of \cref{prop:MinkowskiBasisFacesStandardSimplex} and \cref{lem:numberRopes}\,(i).
\end{proof}

When~$D$ is skeletal, we thus have two linear bases of the deformation cone of the graphical fan~$\Fan$: the faces~$\simplex_K$ provide a basis adapted to graphical fans of subgraphs of~$D$, while the shard polytopes~$\shardPolytope$ provide a basis adapted to quotient fans of congruences of~$\AR$.

\medskip
The deformation cone of the graphical zonotope~$\Zono$ is studied in details in~\cite{PadrolPilaudPoullot-typeConeGraphicalFan}, with a precise description of its facet description.
In view of the recent results of~\cite{PadrolPaluPilaudPlamondon, AlbertinPilaudRitter, PadrolPilaudPoullot-typeConeNestedFan}, it seems relevant to investigate the deformation cones of quotientopes of congruences of the acyclic reorientation lattices.

\begin{problem}
\label{pb:deformationConesQuotientopes}
Provide a (irredundant) facet description of the deformation cones of the quotientopes of~$D$, in particular for the sylvester, Cambrian and coherent congruences.
\end{problem}

\vspace{-.1cm}
\para{Associahedra as removahedra}
We now focus on quotientopes for coherent congruences and more specifically on associahedra and Cambrian associahedra.
Our next statement, illustrated in \cref{fig:graphicalZonotopesAssociahedra}, relates two constructions to obtain an associahedron for~$D$:
\begin{itemize}
\item either by deleting inequalities in the facet description of~$\Zono$, generalizing~\cite{ShniderSternberg}, 
\item or as Minkowski sums of faces of the standard simplex, generalizing~\cite{Postnikov}.
\end{itemize}
Let us just recall from our discussion above that the graphical zonotope~$\Zono$ 
\begin{itemize}
\item lives in the affine subspace $\K^\perp + \sum_{(u,v) \in A} \b{e}_v$ defined by the equations $\dotprod{\one_K}{\b{x}} = |A \cap \binom{K}{2}|$ for all connected components~$K$ of~$D$, and
\item is defined by the facet inequalities~$\dotprod{\one_U}{\b{x}} \ge \iota_U$ for all biconnected subsets~$U$ of~$D$, where~$\iota_U \eqdef |\set{a \in A}{|a \cap U| = 2}|$ counts the arcs of~$D$ with both endpoints in~$U$.
\end{itemize}

\begin{proposition}
\label{prop:associahedraRemovahedra}
Assume that~$D$ is skeletal.
The sylvester fan~$\Fan[(V,\varnothing)]$ is the normal fan of the associahedron~$\Asso[D]$ defined equivalently as
\begin{itemize}
\item the intersection of $\K^\perp + \sum_{(u,v) \in A} \b{e}_v$ with the halfspaces ${\bigset{\b{x} \in \R^V}{\dotprod{\one_U}{\b{x}} \ge \iota_U}}$ for all biconnected subsets~$U$ of~$D$ which are connected in the transitive reduction of~$D$,
\item the Minkowski sum of the faces~$\simplex_\pi$ of the standard simplex~$\simplex_V$, for all directed paths~$\pi$ in the transitive reduction of~$D$ whose endpoints are connected by an arc of~$D$.
\end{itemize}
\end{proposition}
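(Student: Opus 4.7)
The plan is to leverage \cref{thm:MinkowskiSumShardPolytopes} to establish~(ii), and then to recover~(i) by matching facet inequalities with those of the graphical zonotope~$\Zono$. Let $\polytope{P}$ denote the Minkowski sum in~(ii).

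First, observe that each directed path~$\pi$ appearing in~(ii) corresponds bijectively to an arc~$(u,v) \in A$: by filled-ness, the vertex set of the transitive reduction path~$\pi_{u,v}$ from~$u$ to~$v$ equals the transitive support~$T_{u,v}$ of~$(u,v)$. Since a rope requires $\down \sqcup \up$ to partition $T_{u,v} \ssm \{u,v\}$, the unique rope of the sylvester ideal $\bb{I}_{(V,\varnothing)}$ supported on~$(u,v)$ is $\rope_{u,v} \eqdef (u, v, T_{u,v} \ssm \{u,v\}, \varnothing)$, whose shard polytope equals $\simplex_{T_{u,v}} = \simplex_{\pi_{u,v}}$ up to the translation~$-\b{e}_v$. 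Hence $\polytope{P}$ is, up to translation, the unit-coefficient Minkowski sum $\sum_{\rope \in \bb{I}_{(V,\varnothing)}} \shardPolytope$ over all ropes of the sylvester ideal, so \cref{thm:MinkowskiSumShardPolytopes} gives that its normal fan is the sylvester fan~$\Fan[(V,\varnothing)]$.

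Next, I would identify the rays of~$\Fan[(V,\varnothing)]$ with the biconnected subsets~$U$ of~$D$ that are connected in the transitive reduction. The ray characterization recalled in~\cref{subsec:graphicalFanShardsQuotientFans} reduces, for the sylvester ideal, to requiring $T_{u,v} \subseteq U$ for every arc~$(u,v) \in A$ with~$u, v \in U$. Since~$D$ is skeletal, the transitive reduction is a forest and $T_{u,v}$ coincides with the vertex set of the unique transitive reduction path from~$u$ to~$v$ (indeed, for any vertex on a directed $D$-path from~$u$ to~$v$, concatenating the unique forest paths through it yields the transitive reduction path). One then checks that this arc condition is equivalent to~$U$ being biconnected and connected in the transitive reduction, by iteratively replacing each $D$-edge of a $U$-connecting path by the corresponding transitive reduction subpath.

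Finally, I would match the facet inequalities in~(i). For each ray $\b{r}_U = |U|\one_{\bar U} - |\bar U|\one_U$ of the sylvester fan,
\[
h_\polytope{P}(\b{r}_U) \;=\; \sum_{(u,v) \in A} \max_{w \in V(\pi_{u,v})} r_w \;=\; \sum_{(u,v) \in A} \max(r_u, r_v) \;=\; h_\Zono(\b{r}_U),
\]
where the middle equality uses that $V(\pi_{u,v}) \subseteq U$ if and only if $\{u,v\} \subseteq U$, by connectivity of~$U$ in the transitive reduction. Thus the facet of~$\polytope{P}$ with outer normal~$\b{r}_U$ is supported by the same hyperplane as the corresponding $\Zono$-facet, namely the inequality $\dotprod{\one_U}{\b{x}} \ge \iota_U$. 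Since the facets of~$\polytope{P}$ are precisely those indexed by the rays of the sylvester fan, and $\polytope{P}$ lies in the affine hyperplane $\K^\perp + \sum_{(u,v) \in A} \b{e}_v$ (each summand~$\simplex_{\pi_{u,v}}$ contributing~$1$ to the sum of coordinates on its connected component of~$D$), this matches exactly the inequality description in~(i). The main obstacle is carefully establishing, using both the vertebrate and filled hypotheses, that the vertex set of any directed $D$-path from~$u$ to~$v$ equals the vertex set of the unique transitive reduction path between them, as this claim enters both the rope/ray correspondence and the support function computation.
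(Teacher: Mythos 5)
Your proposal is correct and follows essentially the same approach as the paper's own proof: identify the sylvester ideal's ropes $(u,v,T_{u,v}\ssm\{u,v\},\varnothing)$ with the transitive-reduction paths $\pi$, invoke \cref{thm:MinkowskiSumShardPolytopes} to identify the normal fan, and compare support functions with $\Zono$ at the rays $\b{r}_U$ (the paper computes $\min_{\b{x}}\dotprod{\one_U}{\b{x}}$ directly, which is the dual of your $h(\b{r}_U)$ computation, but the content is identical). One small imprecision: the equality $T_{u,v} = V(\pi_{u,v})$ holds already for vertebrate $D$ (it only needs the transitive reduction to be a forest), not filled-ness, and the correct claim is that the \emph{union} of vertex sets of all directed $D$-paths from $u$ to $v$ equals $V(\pi_{u,v})$ — an individual $D$-path may have a strictly smaller vertex set.
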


\begin{proof}
By definition, the subrope ideal~$\bb{I}_{(V,\varnothing)}$ of the sylvester congruence contains precisely the ropes of the form~$\rope \eqdef (u, v, \down, \varnothing)$ for all arcs~$(u,v)$ of~$D$.
We have already mentioned that the shard polytope~$\shardPolytope$ of a rope~$\rope \eqdef (u, v, \down, \varnothing)$ is the face~$\simplex_{\{u,v\} \cup \down}$ of the standard simplex, translated by the vector~$-\b{e}_v$.
It follows that the Minkowski sum of the faces~$\simplex_\pi$ of the standard simplex~$\simplex_V$, for all directed paths~$\pi$ in the transitive reduction of~$D$ whose endpoints are connected by an arc of~$D$, is indeed an associahedron~$\Asso[D]$ by \cref{thm:MinkowskiSumShardPolytopes}.

\enlargethispage{.3cm}
We now prove the facet description of~$\Asso[D]$.
Observe first that $\Asso[D]$ is indeed contained in $\K^\perp + \sum_{(u,v) \in A} \b{e}_v$.
Moreover, since the normal fan of~$\Asso[D]$ is the sylvester fan~$\Fan[(V,\varnothing)]$, its rays indeed correspond to the biconnected subsets of~$D$ which are connected in the transitive reduction of~$D$.
For such a subset~$U$, we have
\[
\min_{\b{x} \in \sum_\pi \simplex_\pi} \dotprod{\one_U}{\b{x}} = \sum_\pi \min_{\b{x} \in \simplex_\pi} \dotprod{\one_U}{\b{x}} = \sum_\pi \delta_{\pi \subseteq U} = |\set{a \in A}{|a \cap U| = 2}| = \iota_U,
\]
where all sums range over the directed paths~$\pi$ in the transitive reduction of~$D$ whose endpoints are connected by an arc of~$D$.
We conclude that the facet inequality of~$\Asso[D]$ corresponding to~$U$ is indeed given by~$\dotprod{\one_U}{\b{x}} \ge \iota_U$, which is the facet inequality of~$\Zono$ corresponding to~$U$.
\end{proof}

In contrast, note that we are still missing a simple vertex description of the associahedron~$\Asso[D]$ similar to that of~\cite{Loday} for the classical associahedron.
We leave this question open for further research.

\begin{problem}
\label{pb:vertexDescriptionAssociahedron}
Provide a simple formula to describe the vertex of the associahedron~$\Asso[D]$ corresponding to a partial acyclic reorientation of~$\c{R}_{(V, \varnothing)}$.
\end{problem}

We now switch to arbitrary Cambrian congruences~$\equiv_{(\decorationDown, \decorationUp)}$ with~$\decorationDown \sqcup \decorationUp = V$.
We believe that \cref{prop:associahedraRemovahedra} extends to any Cambrian congruence, generalizing~\cite{HohlwegLange}.
The proof however is not as immediate and requires further investigation.

\begin{problem}
\label{pb:CambrianAssociahedraRemovahedra}
Prove that the Minkowski sum of the shard polytopes of the ropes of~$\bb{I}_{(\decorationDown, \decorationUp)}$ is obtained by deleting from the facet description of the graphical zonotope~$\Zono$ the inequalities normal to the rays of the graphical fan~$\Fan$ that are not rays of the quotient fan~$\Fan[(\decorationDown, \decorationUp)]$ (\ie the inequalities given by the biconnected subsets~$U$ of~$D$ such that exist~$u, v, w \in V$ with~$w$ along a directed path in~$D$ joining~$u$ to~$v$ such that $u,v \in U$ and $w \in \decorationDown \ssm U$, or $u,v \notin U$ and $w \in \decorationUp \cap U$).
\end{problem}

Let us now switch to arbitrary coherent congruences~$\equiv_{(\decorationDown, \decorationUp)}$.
As already observed in~\cite{PadrolPilaudRitter}, \cref{prop:associahedraRemovahedra} fails when~$\decorationDown \cap \decorationUp \ne \varnothing$ or~$\decorationDown \cup \decorationUp \ne V$.
Indeed, the classical permutahedron is actually not a positive Minkowski combination of the shard polytopes, see~\cite[Coro.~59]{PadrolPilaudRitter}.
However, we still conjecture that removing the appropriate inequalities in the facet description of the graphical zonotope~$\Zono$ defines a realization of the quotient fan~$\Fan[(\decorationDown, \decorationUp)]$ of any coherent congruence, which is proved in~\cite{PilaudPons-permutrees, AlbertinPilaudRitter} when~$D$ is a tournament.
This still requires some work.

\begin{problem}
\label{pb:coherentQuotientopesRemovahedra}
Prove that for any~$\decorationDown, \decorationUp \subseteq V$, the quotient fan~$\Fan[(\decorationDown, \decorationUp)]$ is the normal fan of the polytope obtained by deleting from the facet description of the graphical zonotope~$\Zono$ the inequalities normal to the rays of the graphical fan~$\Fan$ that are not rays of the quotient fan~$\Fan[(\decorationDown, \decorationUp)]$.
\end{problem}

\begin{figure}
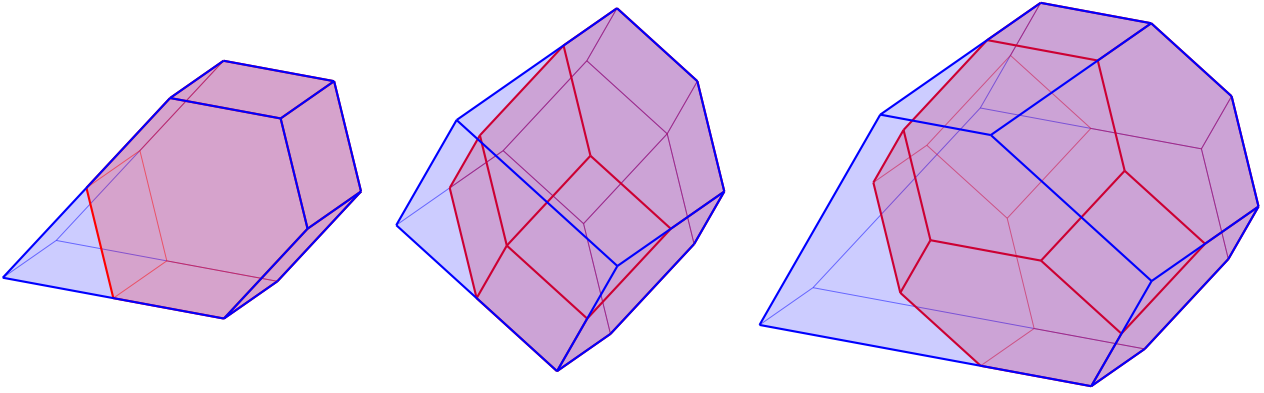

	\centerline{

\begin{tikzpicture}%
	[x={(-0.366215cm, -0.789554cm)},
	y={(0.235950cm, -0.590693cm)},
	z={(0.900119cm, -0.166391cm)},
	scale=1.000000,
	back/.style={very thin, opacity=0.5},
	edgeZono/.style={color=red, thick, decoration={markings, mark=at position 0.5 with {}}},
	edgeAsso/.style={color=blue, thick, decoration={markings, mark=at position 0.5 with {}}},
	facetZono/.style={fill=red,fill opacity=.2},
	facetAsso/.style={fill=blue,fill opacity=.2},
	vertex/.style={}]
%
%

\coordinate (0.00000, 0.00000, 0.00000) at (0.00000, 0.00000, 0.00000);
\coordinate (0.00000, 0.00000, 1.63299) at (0.00000, 0.00000, 1.63299);
\coordinate (1.33333, -0.94281, 0.00000) at (1.33333, -0.94281, 0.00000);
\coordinate (1.33333, -0.94281, 1.63299) at (1.33333, -0.94281, 1.63299);
\coordinate (4.00000, 0.00000, 1.63299) at (4.00000, 0.00000, 1.63299);
\coordinate (1.33333, 0.47140, 2.44949) at (1.33333, 0.47140, 2.44949);
\coordinate (4.00000, 0.00000, -1.63299) at (4.00000, 0.00000, -1.63299);
\coordinate (2.66667, -0.47140, 2.44949) at (2.66667, -0.47140, 2.44949);
\coordinate (2.66667, 0.94281, -1.63299) at (2.66667, 0.94281, -1.63299);
\coordinate (2.66667, 0.94281, 1.63299) at (2.66667, 0.94281, 1.63299);
\draw[edgeAsso,postaction={decorate},back] (2.66667, 0.94281, -1.63299) -- (0.00000, 0.00000, 0.00000);
\draw[edgeAsso,postaction={decorate},back] (4.00000, 0.00000, -1.63299) -- (2.66667, 0.94281, -1.63299);
\draw[edgeAsso,postaction={decorate},back] (2.66667, 0.94281, 1.63299) -- (2.66667, 0.94281, -1.63299);
\node[vertex] at (2.66667, 0.94281, -1.63299)     {};
\fill[facetAsso] (1.33333, -0.94281, 1.63299) -- (0.00000, 0.00000, 1.63299) -- (0.00000, 0.00000, 0.00000) -- (1.33333, -0.94281, 0.00000) -- cycle {};
\fill[facetAsso] (2.66667, -0.47140, 2.44949) -- (1.33333, -0.94281, 1.63299) -- (0.00000, 0.00000, 1.63299) -- (1.33333, 0.47140, 2.44949) -- cycle {};
\fill[facetAsso] (1.33333, 0.47140, 2.44949) -- (2.66667, 0.94281, 1.63299) -- (4.00000, 0.00000, 1.63299) -- (2.66667, -0.47140, 2.44949) -- cycle {};
\fill[facetAsso] (4.00000, 0.00000, 1.63299) -- (2.66667, -0.47140, 2.44949) -- (1.33333, -0.94281, 1.63299) -- (1.33333, -0.94281, 0.00000) -- (4.00000, 0.00000, -1.63299) -- cycle {};
\coordinate (0.00000, 0.00000, 0.00000) at (0.00000, 0.00000, 0.00000);
\coordinate (0.00000, 0.00000, 1.63299) at (0.00000, 0.00000, 1.63299);
\coordinate (1.33333, -0.94281, 0.00000) at (1.33333, -0.94281, 0.00000);
\coordinate (1.33333, -0.94281, 1.63299) at (1.33333, -0.94281, 1.63299);
\coordinate (1.33333, 0.47140, -0.81650) at (1.33333, 0.47140, -0.81650);
\coordinate (4.00000, 0.00000, 1.63299) at (4.00000, 0.00000, 1.63299);
\coordinate (1.33333, 0.47140, 2.44949) at (1.33333, 0.47140, 2.44949);
\coordinate (2.66667, -0.47140, -0.81650) at (2.66667, -0.47140, -0.81650);
\coordinate (4.00000, 0.00000, 0.00000) at (4.00000, 0.00000, 0.00000);
\coordinate (2.66667, -0.47140, 2.44949) at (2.66667, -0.47140, 2.44949);
\coordinate (2.66667, 0.94281, 0.00000) at (2.66667, 0.94281, 0.00000);
\coordinate (2.66667, 0.94281, 1.63299) at (2.66667, 0.94281, 1.63299);
\draw[edgeZono,postaction={decorate},back] (1.33333, 0.47140, -0.81650) -- (0.00000, 0.00000, 0.00000);
\draw[edgeZono,postaction={decorate},back] (2.66667, -0.47140, -0.81650) -- (1.33333, 0.47140, -0.81650);
\draw[edgeZono,postaction={decorate},back] (2.66667, 0.94281, 0.00000) -- (1.33333, 0.47140, -0.81650);
\draw[edgeZono,postaction={decorate},back] (4.00000, 0.00000, 0.00000) -- (2.66667, 0.94281, 0.00000);
\draw[edgeZono,postaction={decorate},back] (2.66667, 0.94281, 1.63299) -- (2.66667, 0.94281, 0.00000);
\node[vertex] at (1.33333, 0.47140, -0.81650)     {};
\node[vertex] at (2.66667, 0.94281, 0.00000)     {};
\fill[facetZono] (1.33333, -0.94281, 1.63299) -- (0.00000, 0.00000, 1.63299) -- (0.00000, 0.00000, 0.00000) -- (1.33333, -0.94281, 0.00000) -- cycle {};
\fill[facetZono] (2.66667, -0.47140, 2.44949) -- (1.33333, -0.94281, 1.63299) -- (0.00000, 0.00000, 1.63299) -- (1.33333, 0.47140, 2.44949) -- cycle {};
\fill[facetZono] (1.33333, 0.47140, 2.44949) -- (2.66667, 0.94281, 1.63299) -- (4.00000, 0.00000, 1.63299) -- (2.66667, -0.47140, 2.44949) -- cycle {};
\fill[facetZono] (4.00000, 0.00000, 1.63299) -- (2.66667, -0.47140, 2.44949) -- (1.33333, -0.94281, 1.63299) -- (1.33333, -0.94281, 0.00000) -- (2.66667, -0.47140, -0.81650) -- (4.00000, 0.00000, 0.00000) -- cycle {};
\draw[edgeZono,postaction={decorate}] (0.00000, 0.00000, 1.63299) -- (0.00000, 0.00000, 0.00000);
\draw[edgeZono,postaction={decorate}] (1.33333, -0.94281, 0.00000) -- (0.00000, 0.00000, 0.00000);
\draw[edgeZono,postaction={decorate}] (1.33333, -0.94281, 1.63299) -- (0.00000, 0.00000, 1.63299);
\draw[edgeZono,postaction={decorate}] (1.33333, 0.47140, 2.44949) -- (0.00000, 0.00000, 1.63299);
\draw[edgeZono,postaction={decorate}] (1.33333, -0.94281, 1.63299) -- (1.33333, -0.94281, 0.00000);
\draw[edgeZono,postaction={decorate}] (2.66667, -0.47140, -0.81650) -- (1.33333, -0.94281, 0.00000);
\draw[edgeZono,postaction={decorate}] (2.66667, -0.47140, 2.44949) -- (1.33333, -0.94281, 1.63299);
\draw[edgeZono,postaction={decorate}] (4.00000, 0.00000, 1.63299) -- (4.00000, 0.00000, 0.00000);
\draw[edgeZono,postaction={decorate}] (4.00000, 0.00000, 1.63299) -- (2.66667, -0.47140, 2.44949);
\draw[edgeZono,postaction={decorate}] (4.00000, 0.00000, 1.63299) -- (2.66667, 0.94281, 1.63299);
\draw[edgeZono,postaction={decorate}] (2.66667, -0.47140, 2.44949) -- (1.33333, 0.47140, 2.44949);
\draw[edgeZono,postaction={decorate}] (2.66667, 0.94281, 1.63299) -- (1.33333, 0.47140, 2.44949);
\draw[edgeZono,postaction={decorate}] (4.00000, 0.00000, 0.00000) -- (2.66667, -0.47140, -0.81650);
\node[vertex] at (0.00000, 0.00000, 0.00000)     {};
\node[vertex] at (0.00000, 0.00000, 1.63299)     {};
\node[vertex] at (1.33333, -0.94281, 0.00000)     {};
\node[vertex] at (1.33333, -0.94281, 1.63299)     {};
\node[vertex] at (4.00000, 0.00000, 1.63299)     {};
\node[vertex] at (1.33333, 0.47140, 2.44949)     {};
\node[vertex] at (2.66667, -0.47140, -0.81650)     {};
\node[vertex] at (4.00000, 0.00000, 0.00000)     {};
\node[vertex] at (2.66667, -0.47140, 2.44949)     {};
\node[vertex] at (2.66667, 0.94281, 1.63299)     {};
\draw[edgeAsso,postaction={decorate}] (0.00000, 0.00000, 1.63299) -- (0.00000, 0.00000, 0.00000);
\draw[edgeAsso,postaction={decorate}] (1.33333, -0.94281, 0.00000) -- (0.00000, 0.00000, 0.00000);
\draw[edgeAsso,postaction={decorate}] (1.33333, -0.94281, 1.63299) -- (0.00000, 0.00000, 1.63299);
\draw[edgeAsso,postaction={decorate}] (1.33333, 0.47140, 2.44949) -- (0.00000, 0.00000, 1.63299);
\draw[edgeAsso,postaction={decorate}] (1.33333, -0.94281, 1.63299) -- (1.33333, -0.94281, 0.00000);
\draw[edgeAsso,postaction={decorate}] (4.00000, 0.00000, -1.63299) -- (1.33333, -0.94281, 0.00000);
\draw[edgeAsso,postaction={decorate}] (2.66667, -0.47140, 2.44949) -- (1.33333, -0.94281, 1.63299);
\draw[edgeAsso,postaction={decorate}] (4.00000, 0.00000, 1.63299) -- (4.00000, 0.00000, -1.63299);
\draw[edgeAsso,postaction={decorate}] (4.00000, 0.00000, 1.63299) -- (2.66667, -0.47140, 2.44949);
\draw[edgeAsso,postaction={decorate}] (4.00000, 0.00000, 1.63299) -- (2.66667, 0.94281, 1.63299);
\draw[edgeAsso,postaction={decorate}] (2.66667, -0.47140, 2.44949) -- (1.33333, 0.47140, 2.44949);
\draw[edgeAsso,postaction={decorate}] (2.66667, 0.94281, 1.63299) -- (1.33333, 0.47140, 2.44949);
\node[vertex] at (0.00000, 0.00000, 0.00000)     {};
\node[vertex] at (0.00000, 0.00000, 1.63299)     {};
\node[vertex] at (1.33333, -0.94281, 0.00000)     {};
\node[vertex] at (1.33333, -0.94281, 1.63299)     {};
\node[vertex] at (4.00000, 0.00000, 1.63299)     {};
\node[vertex] at (1.33333, 0.47140, 2.44949)     {};
\node[vertex] at (4.00000, 0.00000, -1.63299)     {};
\node[vertex] at (2.66667, -0.47140, 2.44949)     {};
\node[vertex] at (2.66667, 0.94281, 1.63299)     {};
\end{tikzpicture}
			\raisebox{-.7cm}{\input{zonotopeAssociahedron4}}
			\raisebox{-.9cm}{\input{zonotopeAssociahedron7}}
	}
	\caption{The associahedra of \cref{fig:associahedra} are obtained by deleting inequalities in the facet description of the graphical zonotopes of \cref{fig:graphicalZonotopes}.}
	\label{fig:graphicalZonotopesAssociahedra}
\end{figure}

Finally, we switch to arbitrary congruences of~$\AR$.
when~$D$ is a tournament, it was shown in~\cite{AlbertinPilaudRitter} that the permutree congruences are the only congruences of the weak order whose quotient fan can be realized by deleting inequalities in the facet description of the classical permutahedron.
The analogue statement still needs to be investigated for an arbitrary skeletal directed acyclic graph~$D$.

\begin{problem}
\label{pb:coherentQuotientopesRemovahedra}
Prove that the coherent congruences are the only congruences of~$\AR$ whose quotient fan~$\Fan[\equiv]$ can be realized by deleting from the facet description of the graphical zonotope~$\Zono$ the inequalities normal to the rays of the graphical fan~$\Fan$ that are not rays of the quotient fan~$\Fan[\equiv]$.
\end{problem}


\section{Posets of regions of hyperplane arrangements}
\label{sec:posetRegions}

To conclude, we discuss the possible extensions of our results to the posets of regions of arbitrary hyperplane arrangements studied by A.~Bj\"orner, P.~Edelman and G.~Ziegler \mbox{in~\cite{Edelman, BjornerEdelmanZiegler}}.
Let~$\b{A}$ be a finite collection of non-zero vectors in~$\R^n$ which all belong to a halfspace.
Consider
\begin{itemize}
\item the arrangement~$\Arrang[\b{A}]$ formed by the hyperplanes~$\set{\b{x} \in \R^n}{\dotprod{\b{x}}{\b{a}} = 0}$ for~$\b{a} \in \b{A}$,
\item the zonotope~$\Zono[\b{A}]$ defined as the Minkowski sum of all segments~$[-\b{a}, \b{a}]$ for~$\b{a} \in \b{A}$.
\end{itemize}
These two objects are normal to each other: the regions of~$\Arrang[\b{A}]$ correspond to the vertices of~$\Zono[\b{A}]$ and the rays of~$\Arrang[\b{A}]$ correspond to the facets of~$\Zono[\b{A}]$.
We say that a region~$R$ of~$\Arrang[\b{A}]$ lies on the positive (resp.~negative) side of~$\b{a} \in \b{A}$ if it lies in the halfspace where the scalar product with~$\b{a}$ is positive (resp.~negative).
The \defn{positive set} of a region~$R$ is the subset of vectors of~$\b{A}$ for which $R$ lies on the positive side.
The region~$\polytope{B}$ on the negative side of all vectors in~$\b{A}$ is called the \defn{base region}.
The \defn{poset of regions}~$\PR[\b{A}]$ is the partial order on all regions of~$\Arrang[\b{A}]$ ordered by inclusion of their positive sets.
In other words, the Hasse diagram of this poset is the graph of the zonotope~$\Zono[\b{A}]$ oriented in the direction~$\sum_{\b{a} \in \b{A}} \b{a}$.
For instance, as already discussed in \cref{sec:quotientFansQuotientopes}, the acyclic reorientation poset~$\AR$ of a directed acyclic graph~$D$ is isomorphic to the poset of regions~$\PR[\b{A}_D]$ of the incidence configuration~$\b{A}_D \eqdef \set{\b{e}_u-\b{e}_v}{(u,v) \in D}$ of~$D$.
In general, it was proved in~\cite{Edelman, BjornerEdelmanZiegler} that
\begin{itemize}
\item if the poset of regions~$\PR[\b{A}]$ is a lattice, then the base region~$\polytope{B}$ is simplicial (or dually if the cone generated by~$\b{A}$ is simplicial),
\item if the arrangement~$\Arrang[\b{A}]$ is simplicial, then the poset of regions~$\PR[\b{A}]$ is a lattice.
\end{itemize}
We also note that N.~Reading showed in~\cite{Reading-PosetRegionsChapter} that the poset of regions~$\PR[\b{A}]$ is a congruence uniform lattice if and only if~$\Arrang[\b{A}]$ is \defn{tight} with respect to~$\polytope{B}$, meaning that for each region~$\polytope{R}$ of~$\Arrang[\b{A}]$, every pair of upper (resp.~lower) facets of~$\polytope{R}$ with respect to~$\polytope{B}$ intersects in a codimension~$2$ face.
However, there is still no characterization of the collections of vectors~$\b{A}$ whose poset of regions~$\PR[\b{A}]$ is a lattice.
In view of \cref{thm:characterizationLattice}, it is natural to consider the following conditions.

\begin{proposition}
\label{prop:criteriaPosetRegions}
The following conditions are equivalent for a set~$\b{A}$ of non-zero vectors in~$\R^n$:
\begin{itemize}
\item for any linear hyperplane~$H$ of~$\R^n$, the cone generated by the vectors of~$\b{A} \cap H$ is simplicial,
\item for any $d$-dimensional face~$\polytope{F}$ of~$\Zono[\b{A}]$, the source of~$\polytope{F}$ in~$\PR[\b{A}]$ has degree $d$ in~$\polytope{F}$.
\end{itemize}
\end{proposition}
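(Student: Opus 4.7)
The plan is to translate both conditions into a common statement about \emph{flats} of $\b{A}$, where a flat is a linear subspace $V \subseteq \R^n$ satisfying $V = \text{span}(\b{A} \cap V)$. The faces $\polytope{F}$ of $\Zono[\b{A}]$ are in bijection with the flats of $\b{A}$: to a face $\polytope{F}$ one associates the linear subspace $V_F$ parallel to its affine hull, and then $\polytope{F}$ is a translate of the subzonotope $\Zono[\b{A}_F]$ with $\b{A}_F \eqdef \b{A} \cap V_F$, whose vertices correspond bijectively to the regions of the sub-arrangement $\Arrang[\b{A}_F]$ in $V_F$.

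The main step is to identify the source of $\polytope{F}$ in $\PR[\b{A}]$ with the base region $\polytope{B}_F \eqdef \bigset{\b{x} \in V_F}{\dotprod{\b{a}}{\b{x}} < 0 \text{ for all } \b{a} \in \b{A}_F}$ of the sub-arrangement, and then to count its degree in $\polytope{F}$. The identification uses the following decomposition: for any vertex of $\polytope{F}$ (corresponding to some region $R$ of $\Arrang[\b{A}_F]$), its separating set from $\polytope{B}$ is the disjoint union of a \emph{fixed} part (the vectors of $\b{A} \ssm \b{A}_F$ that are positive on the entire normal cone of $\polytope{F}$, identical for all vertices of $\polytope{F}$) and a \emph{variable} part (the vectors of $\b{A}_F$ separating $R$ from $\polytope{B}_F$ inside $V_F$), so the minimum vertex of $\polytope{F}$ in $\PR[\b{A}]$ is the one minimizing the variable part, realized uniquely by $R = \polytope{B}_F$. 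Each edge of $\polytope{F}$ at this source corresponds to an adjacent region of $\Arrang[\b{A}_F]$ obtained by crossing one wall of $\polytope{B}_F$, so the degree of the source in $\polytope{F}$ equals the number of facets of the cone $\polytope{B}_F$ inside $V_F$. Since $\polytope{B}_F$ is the polar dual of $\cone(\b{A}_F)$ in $V_F$, it is simplicial (equivalently, has exactly $d = \dim V_F$ facets) if and only if $\cone(\b{A} \cap V_F)$ is simplicial. This reformulates condition~(ii) as: for every flat $V$ of $\b{A}$, the cone $\cone(\b{A} \cap V)$ is simplicial.

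It remains to match this flat-based condition with condition~(i) on hyperplanes. One direction is immediate: for any linear hyperplane $H$, the subspace $V \eqdef \text{span}(\b{A} \cap H)$ is a flat satisfying $\b{A} \cap V = \b{A} \cap H$, so simpliciality transfers. For the converse, given a flat $V$ of positive codimension, I would find a linear hyperplane $H \supseteq V$ with $\b{A} \cap H = \b{A} \cap V$: such an $H$ is determined by a normal vector $\b{n} \in V^\perp$, the equality $\b{A} \cap H = \b{A} \cap V$ amounts to $\dotprod{\b{n}}{\b{a}} \ne 0$ for every $\b{a} \in \b{A} \ssm V$, and the bad normals form a finite union of proper linear subspaces of the positive-dimensional space $V^\perp$, so a generic choice of $\b{n}$ works. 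The hard part of the plan is the separating-set decomposition identifying the source of $\polytope{F}$ with $\polytope{B}_F$; once that is established, the remainder is polar duality inside $V_F$ together with the genericity argument in $V^\perp$.
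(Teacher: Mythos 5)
Your proof is correct and follows the same route as the paper's three-sentence sketch: match each face $\polytope{F}$ of $\Zono[\b{A}]$ with a linear hyperplane $H$ normal to a generic vector in the normal cone of $\polytope{F}$ (so that $\b{A} \cap H = \b{A} \cap V_F$), and identify the source of $\polytope{F}$ with the base region $\polytope{B}_F$ of the sub-arrangement via the separating-set decomposition and polar duality — steps the paper leaves implicit. One minor imprecision is your claim that faces of $\Zono[\b{A}]$ are \emph{in bijection} with flats of $\b{A}$: the assignment $\polytope{F} \mapsto V_F$ is far from injective (antipodal faces and, more generally, many faces share the same parallel linear span), but your argument only needs surjectivity onto proper flats, which your genericity step at the end correctly establishes.
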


\begin{proof}
Given a linear hyperplane~$H$ of~$\R^n$, let~$\polytope{F}$ be the face of~$\Zono[\b{A}]$ maximizing the dot product with a normal vector of~$H$.
Conversely, given a face~$\polytope{F}$ of~$\Zono[\b{A}]$, let~$H$ be an arbitrary supporting hyperplane of~$\polytope{F}$.
Then the cone generated by~$\b{A} \cap H$ is simplicial if and only if the source of~$\polytope{F}$ in~$\PR[\b{A}]$ is a simple vertex of~$\polytope{F}$.
\end{proof}

The conditions of \cref{prop:criteriaPosetRegions} just translate to arbitrary arrangements the conditions of \cref{thm:characterizationLattice} for graphical arrangements.
Indeed, for a directed acyclic graph~$D$, choosing a face~$\polytope{F}$ of~$\Zono[D]$ is choosing an ordered partition~$(\mu, \omega)$ of~$D$, and requiring the source of~$\polytope{F}$ in~$\PR[\b{A}]$ to be a simple vertex is equivalent to requiring that the transitive reduction of the subgraph of~$D$ induced by each part of~$\mu$ is a forest.
It is not difficult to see that these conditions are necessary for~$\PR[\b{A}]$ to be a lattice.

\begin{lemma}
\label{lem:criteriaPosetRegions}
If the poset of regions~$\PR[\b{A}]$ is a lattice, then~$\b{A}$ fulfills the conditions of \cref{prop:criteriaPosetRegions}.
\end{lemma}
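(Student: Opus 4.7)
The plan is to mimic the proof of \cref{lem:transitiveReductionForest}, translating its cycle argument into one about linear dependences among vectors. Proceeding by contrapositive, I will exhibit a set of regions of $\PR[\b{A}]$ admitting no join under the assumption that the conditions of \cref{prop:criteriaPosetRegions} fail. First, I would fix a linear hyperplane $H$ for which $\cone(\b{A} \cap H)$ is non-simplicial, and select among the extreme rays of this cone a minimal linearly dependent subset $\{\b{v}_1, \ldots, \b{v}_m\}$, which carries a unique (up to scalar) dependence $\sum_{i=1}^{m} \alpha_i \b{v}_i = 0$ with every $\alpha_i \neq 0$. Setting $F \eqdef \{i : \alpha_i > 0\}$ and $B \eqdef \{i : \alpha_i < 0\}$, the halfspace hypothesis on $\b{A}$ forbids dependences of constant sign, so both sets are non-empty; moreover, $|F| = 1$ would force the unique positive generator to be a non-negative combination of the others, contradicting its extremality, hence $|F|, |B| \geq 2$.

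Next I would perturb $H$ to a generic hyperplane $H'$ containing $\text{span}(\b{v}_1, \ldots, \b{v}_m)$ with $\b{A} \cap H' = \{\b{v}_1, \ldots, \b{v}_m\}$, and examine the face $\polytope{F}$ of $\Zono[\b{A}]$ parallel to $H'$ whose source $R_0$ in the poset direction has each $\b{v}_i$ on its negative side. Then $R_0$ admits exactly $m$ neighbors $R_1, \ldots, R_m$ in $\polytope{F}$, one per $i$, each covering $R_0$ in $\PR[\b{A}]$ and with positive set obtained from that of $R_0$ by adjoining $\b{v}_i$.

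To conclude, I would show that $\{R_f\}_{f \in F}$ has no join. Any upper bound $R$ must satisfy $\dotprod{\b{x}}{\b{v}_f} > 0$ for every $f \in F$ and every $\b{x} \in R$; the relation, rewritten as $\sum_{f \in F} \alpha_f \b{v}_f = \sum_{b \in B} |\alpha_b| \b{v}_b$, then forces $\dotprod{\b{x}}{\b{v}_b} > 0$ for at least one $b \in B$. For each $b \in B$, a Farkas-type argument using the uniqueness of the linear dependence produces a region $S_b$ whose positive set is obtained from that of $R_0$ by adjoining $\{\b{v}_f : f \in F\} \cup \{\b{v}_b\}$, and each such $S_b$ is a minimal upper bound of $\{R_f\}_{f \in F}$. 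Since $|B| \geq 2$, there are at least two distinct minimal upper bounds, so no join exists, contradicting the lattice assumption on $\PR[\b{A}]$. The main obstacle will be verifying that all the regions $R_0$, $R_i$, and $S_b$ are genuinely realized in $\PR[\b{A}]$: this requires combining the extremality of the $\b{v}_i$ in $\cone(\b{A} \cap H)$, the genericity of $H'$, and the uniqueness of the linear dependence to realize each of the needed sign patterns via Farkas' lemma applied in the sub-arrangement $\Arrang[\b{v}_1, \ldots, \b{v}_m]$.
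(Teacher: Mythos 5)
The paper's proof is a two-line restriction argument: the face $\polytope{F}$ of $\Zono[\b{A}]$ parallel to $H$ determines an interval of $\PR[\b{A}]$ isomorphic to $\PR[\b{A} \cap H]$; since intervals of lattices are lattices, $\PR[\b{A} \cap H]$ is a lattice; and the Bj\"orner--Edelman--Ziegler theorem (quoted earlier in \cref{sec:posetRegions} as ``the base region~$\polytope{B}$ is simplicial when the poset of regions~$\PR$ is a lattice'') applied to the restricted arrangement immediately gives that $\cone(\b{A} \cap H)$ is simplicial. You instead re-derive the relevant part of that BEZ theorem from scratch, via a geometric translation of the cycle argument of \cref{lem:transitiveReductionForest}. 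This is a genuinely different route; your approach is more self-contained, while the paper's buys economy by citing a result already recalled in the same section.

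The circuit analysis (that $|F|, |B| \geq 2$ by extremality, and that distinct minimal upper bounds $S_b$ preclude a join) is essentially sound and reproduces the key structure of the BEZ proof. However, there is a real gap in the reduction step: the claimed perturbation to a hyperplane $H'$ with $\b{A} \cap H' = \{\b{v}_1, \ldots, \b{v}_m\}$ is only possible when $\b{A}$ contains no vectors in $\text{span}(\b{v}_1, \ldots, \b{v}_m)$ other than the circuit; any hyperplane containing that span necessarily contains all vectors of $\b{A}$ lying in it. When such extra (non-extreme) vectors exist, the sub-arrangement you work in has additional hyperplanes, the circuit dependence is no longer the only relation among the vectors of $\b{A} \cap H'$, and your Farkas argument for the existence and minimality of the regions $S_b$ must account for the signs on those extra hyperplanes --- which is exactly the work the full BEZ theorem does. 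To repair this you would either need to apply BEZ's theorem to the interval in place of your ad hoc circuit analysis (at which point you have the paper's proof), or carry out the sign-vector bookkeeping for the entire arrangement $\Arrang[\b{A} \cap H]$ rather than only the circuit.
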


\begin{proof}
Fix a linear hyperplane~$H$ of~$\R^n$, and let~$\polytope{F}$ be one of the two faces of~$\Zono[\b{A}]$ whose supporting hyperplanes are parallel to~$H$.
Then the restriction of~$\PR[\b{A}]$ to the vertices of~$\polytope{F}$ is an interval of~$\PR[\b{A}]$ isomorphic to the poset of region~$\PR[\b{A} \cap H]$.
Since~$\PR[\b{A}]$ is a lattice, we obtain that $\PR[\b{A} \cap H]$ is a lattice and thus that the cone generated by the vectors of~$\b{A} \cap H$ is simplicial.
\end{proof}

However, in contrast to \cref{thm:characterizationLattice} for graphical arrangements, the conditions of \cref{prop:criteriaPosetRegions} are not sufficient for~$\PR[\b{A}]$ to be a lattice.
Note that the first counter-examples arise in dimension~$4$ (since in dimension~$3$, the poset of regions is a lattice as soon as the base region is simplicial~\cite[Thm.~3.2]{BjornerEdelmanZiegler}, which is implied by the conditions of \cref{prop:criteriaPosetRegions}).
The following counter-example is an adaptation of~\cite[Exm.~3.3]{BjornerEdelmanZiegler}.

\begin{example}
\label{exm:criteriaPosetRegions}
The set~$\b{A}$ of vectors
\[
\begin{bmatrix} 1 \\ 0 \\ 0 \\ 0 \end{bmatrix} \quad
\begin{bmatrix} 0 \\ 1 \\ 0 \\ 0 \end{bmatrix} \quad
\begin{bmatrix} -1 \\ -2 \\ -1 \\ 0 \end{bmatrix} \quad
\begin{bmatrix} 2 \\ 1 \\ 1 \\ 0 \end{bmatrix} \quad
\begin{bmatrix} 0 \\ 0 \\ 1 \\ -1 \end{bmatrix} \quad
\begin{bmatrix} 0 \\ 0 \\ 0 \\ -1 \end{bmatrix}
\]
fulfills the conditions of \cref{prop:criteriaPosetRegions} but the poset of regions~$\PR[\b{A}]$ is not a lattice.
\end{example}

Even if the conditions of \cref{prop:criteriaPosetRegions} fail to characterize the collections of vectors whose poset of regions is a lattice, they might be sufficient for certain well-behaved collections of vectors, in particular for subsets of root systems of finite Coxeter groups.
It holds for root systems of rank at most~$3$ by~\cite[Thm.~3.2]{BjornerEdelmanZiegler}, for all type~$A$ root systems by \cref{thm:characterizationLattice}, and was checked by computer experiments for the type~$D_4$ root system.
However, it already fails in type~$B_4$.

\begin{example}
\label{exm:CoxeterArrangements}
The set~$\b{A}$ of vectors
\[
\begin{bmatrix} 1 \\ 0 \\ 0 \\ 0 \end{bmatrix} \quad
\begin{bmatrix} 0 \\ 1 \\ 0 \\ 0 \end{bmatrix} \quad
\begin{bmatrix} 0 \\ 0 \\ 1 \\ 0 \end{bmatrix} \quad
\begin{bmatrix} 0 \\ 0 \\ 0 \\ 1 \end{bmatrix} \quad
\begin{bmatrix} 1 \\ 1 \\ 1 \\ 0 \end{bmatrix} \quad
\begin{bmatrix} 0 \\ 1 \\ 2 \\ 2 \end{bmatrix} \quad
\]
of the type~$B_4$ root system fulfills the conditions of \cref{prop:criteriaPosetRegions} but the poset of regions~$\PR[\b{A}]$ is not a lattice.
\end{example}

In contrast to the satisfactory characterization of the poset of regions which are congruence uniform lattices~\cite{Reading-PosetRegionsChapter}, the characterization of the posets of regions which are just lattices thus remains largely open.


\addtocontents{toc}{ \vspace{.1cm} }
\section*{Acknowledgements}

I am grateful to A.~Padrol and G.~Poullot for discussions on graphical zonotopes (in particular for pointing out references for \cref{prop:simplicialGraphicalFan}), to N.~Cartier for the counter-example of \cref{exm:CoxeterArrangements}, and to the DGeCo seminar group for suggestions on a preliminary exposition of the results of this paper.
I also thank an anonymous referee for many comments and suggestions that improved the presentation of this paper.


\addtocontents{toc}{ \vspace{.1cm} }
\bibliographystyle{alpha}
\bibliography{acyclicReorientationLattices}
\label{sec:biblio}

\end{document}